\patchcmd{\section}{\scshape}{\bfseries}{}{}
\renewcommand{\@secnumfont}{\bfseries}
\newcommand\testname{Abstract}
\newenvironment{abs}{%
    \small
    \begin{center}%
        {\textbf \testname\vspace{-.2em}\vspace{\z@}}%
    \end{center}%
    \quote
    }
   {\endquote}
\DeclareMathOperator*{\Id}{Id}
\newcommand{\Div}{\mathrm{div}}
\newcommand{\D}{\mathcal{\wtilde D}}
\newcommand{\dd}{\mathrm{d}}
\newcommand{\Z}{\mathcal{Z}}
\newcommand{\J}{\mathcal{J}}
\newcommand{\I}{\mathcal{I}}
\newcommand{\EE}{\mathcal{E}}
\newcommand{\Ff}{\mathscr{F}}
\newcommand{\RR}{\mathbb{R}}
\theoremstyle{definition}
\newtheorem{defin}{Definition}[section]
\newtheorem{rem}[defin]{Remark}
\theoremstyle{plane}
\newtheorem{thm}[defin]{Theorem}
\newtheorem{prop}[defin]{Proposition}
\newtheorem{coroll}[defin]{Corollary}
\newtheorem{lemma}[defin]{Lemma}
\newcommand{\mbb}{\mathbb}
\newcommand{\mc}{\mathcal}
\newcommand{\veps}{\varepsilon}
\newcommand{\what}{\widehat}
\newcommand{\wtilde}{\widetilde}
\newcommand{\vphi}{\varphi}
\newcommand{\ra}{\rightarrow}
\newcommand{\g}{\gamma}
\newcommand{\s}{\sigma}
\newcommand{\z}{\zeta}
\newcommand{\de}{\delta}
\newcommand{\lan}{\langle}
\newcommand{\ran}{\rangle}
\newcommand{\R}{\mathbb{R}}
\newcommand{\N}{\mathbb{N}}
\newcommand{\T}{\mathbb{T}}
\renewcommand{\div}{{\rm div}\,}
\def\d{\partial}
\def\div{{\rm div}\,}
\def\rot{{\rm rot}\,}
\title{\textsc{\Large{\textbf{Global well-posedness and long-time dynamics  \\ \vspace{.1cm}
for a higher order Quasi-Geostrophic type equation 
}}}\vspace{.3cm} }
\author{\textsl{Francesco De Anna$\,^1\quad$}}
\author{\textsl{$\quad$ Francesco Fanelli$\,^2$} \vspace{.3cm}}
\affil{	\small\textsc{$\,^1\,$ Penn State University} \\ 
		\small\textit{Department of Mathematics}\\ \vspace{0.1cm}
		\small\texttt{fzd16@psu.edu}}
\affil{\small \textsc{$\,^2\,$ Universit\'e de Lyon, Universit\'e Claude Bernard -- Lyon 1} \\ 
		\small\textit{Institut Camille Jordan, UMR CNRS 5208}\\ \vspace{0.1cm}
		\small\texttt{fanelli@math.univ-lyon1.fr} \vspace{.5cm}}
\date{\today}
\begin{document}
\maketitle

\vspace{.3cm}
\begin{abs}
In this paper we study a higher order viscous quasi-geostrophic type equation. This equation was derived in \cite{F_2016_MA} as the limit dynamics of
a singularly perturbed Navier-Stokes-Korteweg system with Coriolis force, when the Mach, Rossby and Weber numbers go to zero at the same rate.

The scope of the present paper is twofold. First of all, we investigate well-posedness of such a model on the whole space $\R^2$: we prove that it is well-posed in $H^s$ for any $s\geq3$,
globally in time. Interestingly enough, we show that this equation owns two levels of energy estimates, for which one gets existence and uniqueness of weak
solutions with different regularities (namely, $H^3$ and $H^4$ regularities); this fact can be viewed as a remainder of the so called BD-entropy structure of the original system. 

In the second part of the paper we investigate the long-time behaviour of these solutions. We show that they converge to the solution of the corresponding linear parabolic type equation,
with same initial datum and external force. Our proof is based on dispersive estimates both for the solutions to the linear and non-linear problems.
\end{abs}

\subsubsection*{\textbf{2010 Mathematics Subject Classification}:}{\small 35Q35 
(primary); 
35K25, 
35B65, 
35B40, 
35Q86 
(secondary).}

\subsubsection*{\textbf{Keywords}:}{\small Quasi-geostrophic equation; energy estimates; BD-entropy structure; global well-posedness; long-time behaviour; decay estimates.}

\section{Introduction}

In this paper we are concerned with well-posedness and log-time dynamics issues for the non-linear parabolic-type equation
\begin{equation}\label{intro_eq:main}
		\partial_t \left(\Id-\Delta+\Delta^2\right)r\,+\,\nabla^{\perp}(\Id-\Delta)r\cdot\nabla\Delta^2 r\,+\,\mu\,\Delta^2(\Id-\Delta)r\,=\,f\,,
\end{equation}
where $r$ and $f$ are functions of $(t,x)\in\R_+\times\R^2$. The parameter $\mu>0$ will be kept fixed throughout all the paper;
it can be interpreted as a sort of viscosity coefficient. We supplement equation \eqref{intro_eq:main} with the initial condition $r_{|t=0}\,=\,r_0$, where $r_0$ is a suitably smooth function defined
over $\R^2$.

\subsection{Derivation of the model} \label{ss:i-model}

Equation \eqref{intro_eq:main} was derived in \cite{F_2016_MA} as the equation describing the limit dynamics, for $\veps\ra0$, of the following singular perturbation problem:
\begin{equation} \label{intro_eq:NSK+rot}
\begin{cases}
\d_t\rho_\veps+\div\left(\rho_\veps u_\veps\right)\,=\,0 \\[2ex]
\d_t\left(\rho_\veps u_\veps\right)+\div\bigl(\rho_\veps u_\veps\otimes u_\veps\bigr)+\dfrac{1}{\veps^2}\,\nabla P(\rho_\veps)+
\dfrac{1}{\veps}\,e^3\times\rho_\veps u_\veps-\nu\div\bigl(\rho_\veps Du_\veps\bigr)-
\dfrac{1}{\veps^{2}}\,\rho_\veps \nabla\Delta\rho_\veps\,=\,0\,.
\end{cases}
\end{equation}
The previous equations are posed on $\R^2\times\,]0,1[\,$ and supplemented by conplete slip boundary conditions, which allow to avoid boundary layers effects.

System \eqref{intro_eq:NSK+rot} is the so-called Navier-Stokes-Korteweg system; it describes the dynamics of a compressible viscous fluid, whose motion is mainly influenced
by internal tension forces and Earth rotation. Here above, at each value of $\veps$ fixed, the scalar function $\rho_\veps=\rho_\veps(t,x)\geq0$ represents the density of the fluid,
$u_\veps=u_\veps(t,x)\in\R^3$ its velocity field and the function $P(\rho_\veps)$ its pressure. The number $\nu>0$ is the viscosity coefficient; the viscous stress tensor is supposed
to depend on (and possibly degenerate with) the density. Finally, the term $\rho_\veps\, \nabla\Delta\rho_\veps$ is the capillarity tensor, which takes into account the effects of
a strong surface tension, while the term $e^3\times\rho_\veps\,u_\veps\,=\,\rho_\veps\,\bigl(-u^2_\veps,u^1_\veps,0\bigr)$ is the Coriolis operator, which takes into account effects due
to the fast rotation of the ambient space.
We refer e.g. to \cite{B-D-L}, \cite{F_2016_sub} and references therein for more details on the previous model.

The scaling introduced in \eqref{intro_eq:NSK+rot} corresponds to taking the Mach number \textit{Ma}, the Rossby number \textit{Ro} and the Weber number \textit{We} to be all proportional
to the small parameter $\veps$. In turn, this means that we are studying the incompressible, fast rotation and strong capillarity limits at the same time, focusing our attention on
their mutual interaction.
See again \cite{F_2016_sub} and the references therein for additional comments about the adimensionalisation of the equations and for more insights on this scaling.

A similar asymptotic analysis was performed e.g. in \cite{F-G-N} for the classical barotropic Navier-Stokes equations (no capillarity forces were taken into account), and, for models
much closer to \eqref{intro_eq:NSK+rot}, in e.g. \cite{B-D_2003} and \cite{J-L-W}. Notice however that, in these last two references, the low Mach and low Rossby numbers limit
was coupled with a vanishing capillarity limit, which corresponds to take \textit{We} of a smaller order $\veps^{1-\alpha}$, for some $\alpha\in\,]0,1]$: then the authors
found that the limit dynamics was characterized by a quasi-geostrophic equation
\begin{equation} \label{intro_eq:q-geo}
\d_t\bigl(r\,-\,\Delta r\bigr)\,+\,\nabla^\perp r\,\cdot\,\nabla \Delta r\,+\,\frac{\nu}{2}\,\Delta^2r\,=\,0
\end{equation}
over $\R^2$. We will say something more about this system in Subsection \ref{ss:i-work} below. For the time being, let us just mention that this equation is well-known
in physical theories for geophysical flows (see e.g. \cite{Ped}, \cite{Z}) as an approximate model when considering the limit of fast Earth rotation.

Let us comment on the fact that, although the original problem \eqref{intro_eq:NSK+rot} is three-dimensional, the limit system \eqref{intro_eq:main}, or its analogue \eqref{intro_eq:q-geo}
in absence of capillarity effects, becomes $2$-D. This issue is not surprising at all, since it is an expression of the celebrated Taylor-Proudman theorem
for geophysical flows (see e.g. \cite{Ped}). More precisely, it is well-known that fast rotation has a stabilizing effect on the fluid motion, in the sense that it introduces some vertical rigidity:
then, the dynamics tends to be purely horizontal, meaning that it takes place on planes which are hortogonal to the rotation axis.

As a final comment, we want to highlight the strong analogy linking equations \eqref{intro_eq:main} and \eqref{intro_eq:q-geo}: they both share the same structure, although in \eqref{intro_eq:main}
we can notice the presence of higher order derivatives and, more importantly, of the operator $\Id-\Delta$ in some terms.
We remark that, in light of the study carried out in \cite{F_2016_MA}, the presence of higher order derivatives in \eqref{intro_eq:main} has to be interpreted as a remainder
of the action of strong capillarity forces, which persist also after passing to the limit for $\veps\ra0$ in \eqref{intro_eq:NSK+rot}.
We then expect this equation to be somehow more pertinent in physical approximations when describing non-homogeneous flows with strong internal tension forces, or whenever one wants to keep track of
potential energy due to steep changes of density in small regions (like in diffuse interface theories and propagation of interfaces, for instance).

We also point out that the $\d_tr$ term, appearing in both equations \eqref{intro_eq:main} and \eqref{intro_eq:q-geo}, is a remainder of the balance between pressure forces and
Coriolis effects. Technically, it arises from the singular limit problem, when using the mass equation to get rid of the singular behaviour of the rotation term.

\subsection{Related works, and content of the paper} \label{ss:i-work}

To the best of our knowledge, the derivation of \eqref{intro_eq:NSK+rot} given in \cite{F_2016_MA} is completely new, and we are not aware of previous studies carried out
on this equation. 
As already pointed out, it is fair to mention that it shares strong similarities with the quasi-geostrophic equation \eqref{intro_eq:q-geo}, which has been esxtensively studied so far, also from the mathematical point of view.
See e.g. books \cite{Maj} and \cite{Maj-Wang}, and paper \cite{No-Va} for interesting recent advances.
We remark here also the analogy of our equation with equations for second-grade fluids, and especially with the so-called Leray-$\alpha$ and Euler-$\alpha$ equations,
which are often adopted as sub-grid scale models for turbulence. We refer the interested reader to e.g. \cite{Far-L-T} and \cite{B-I-LF-NL} and the references quoted therein.

However, it is important to highlight some apparently small differences which exist between equations \eqref{intro_eq:main} and \eqref{intro_eq:q-geo}, and which make the analysis carried out
in the present paper not to be an obvious adaptation of what is known for the quasi-geostrophic equation. Apart from the higher order operators involved in the former model,
the probabily most relevant difference is represented by the appearing of the operator $\Id-\Delta$: in particular, in \eqref{intro_eq:main} we lose the
stream-function relation linking, in the convective term, the transport velocity field to the transported quantity. More precisely, if we set $v\,:=\,\nabla^\perp r$ in \eqref{intro_eq:q-geo},
we notice that $\rot v\,=\,\Delta r$, which is exactly the transported term; on the contrary, in \eqref{intro_eq:main}, if we set $u\,:=\,\nabla^\perp(\Id-\Delta)$, we get that 
$\rot u$ is different from both $\Delta^2r$ and $(\Id-\Delta+\Delta^2)r$.

This fact slightly complicates the structure of our equation. As a matter of fact, as a consequence of the presence of this operator $\Id-\Delta$, simple energy estimates
do not work anymore: after multiplying \eqref{intro_eq:main} by $r$ and integrating by parts, it seems impossible to get rid of a fourth-order derivative occurring in the
convective term, which cannot be absorbed since one disposes at most of three derivatives for $r$ (thanks to the smoothing effect provided by the viscosity term). We refer also
to the beginning of Subsection \ref{ss:energy} for further comments about this point.
This problem forces us to test the equation rather against $(\Id-\Delta)r$:
an easy inspection of the structure of the convective term shows that it thus identically vanishes, and this is the key to get first-order energy estimates. The reason why they are called
``first-order'' will be manifest in a while; for the time being, let us point out that, in this way, one gets bounds for $r$ in $L^\infty_T(H^3)\,\cap\,L^2_T(H^4)$. For the reason expressed above,
such a regularity seems to be the minimal one which is required for bluiding up a theory of weak solutions for equation \eqref{intro_eq:main}.
Indeed, thanks exactly to this energy estimate of first kind, we are able to prove existence of weak solutions to \eqref{intro_eq:main}, which
are weak solutions \textsl{\`a la Leray} (see the masterwork \cite{Leray} about the homogeneous incompressible Navier-Stokes equations), since they possess finite
energy. But this is not all: taking advantage of the fact that the space dimension is $d=2$, by the study of the associated parabolic equation (which can be viewed as the analogue of the time-dependent
Stokes problem in our context) we are able to show that weak solutions are in fact unique, and that they actually verify an energy equality.

Let us go further: after observing once again the particular form of the convective term, it is not hard to convince oneself that this term identically vanishes also when tested against the quantity
$\mbb Dr\,:=\,(\Id-\Delta+\Delta^2)r$. Therefore, if one multiply the equation by $\mbb Dr$, one finds a second energy conservation, which gives propagation of $L^\infty_T(H^4)\,\cap\,L^2_T(H^5)$
regularity (provided the initial datum and the external force are smooth enough). This is a remarkable property of our system, which can be viewed as a remainder of
the \emph{BD-entropy structure} owned by the primitive system \eqref{intro_eq:NSK+rot}, see e.g. papers \cite{B-D_2003} and \cite{B-D-L}.
Furthermore, the previous cancellations in the convective term and the second-order energy conservation prompt us to look also for propagation of intermediate and higher regularities,
and indeed we are able to prove existence and uniqueness of solutions at the $H^s$ level of regularity, for any $s\geq 3$.

The proof of higher regularity energy estimates  (namely, for $s>4$) relies on a paralinearization of the convective term and a special decomposition for treating it, which has already been used in \cite{DeA_2017}:
in particular, thanks to this approach we are able to reproduce the special cancellations in the transport operator, up to some remainders; now, a careful analysis of these remainder terms
allows us to control them by the $H^{s}$-energy of the solution $r$, so that one can conclude by an application of the Gronwall lemma.
On the contrary, propagation of intermediate regularities (namely for $3<s<4$) is surprisingly more involved, since now the special cancellations concern only the lower order item in the
convective term, and no more $\Delta^2r$, which hence needs to be controlled very carefully. This can be done by resorting once again to a paralinearization of the transport term
and to delicate estimates concerninig the commutators involved in the computations; eventually, we manage to bound all the terms, and Gronwall lemma allows us to close the estimates
as before.

\medbreak
The well-posedness having been established, globally in time, we pass to investigate the log-time dynamics. Not too surprisingly, we show the convergence of solutions to \eqref{intro_eq:main}
to solutions of the related linear parabolic equation with same bulk force and initial datum, namely
$$
\begin{cases}
\partial_t \left(\Id-\Delta+\Delta^2\right)w\,+\,\mu\,\Delta^2(\Id-\Delta)w\,=\,f \\[1ex]
w_{|t=0}\,=\,r_0\,.
\end{cases}
$$

For proving this result, we follow an approach initiated in \cite{Scho_1985} for the Navier-Stokes equations (see also \cite{Scho_1986} and \cite{A-B-S}), and adapted in \cite{C-Wu}
for the classical quasi-geostrophic equation (roughly, the term $\d_tr$ in \eqref{intro_eq:q-geo} is missing). In our case, some complications appear at the technical level:
once again, they come from the higher order differential operators involved in the computations, and by somehow the non-homogeneity of the operator $\Id-\Delta$.
The latter point entails the presence of remainder terms in the estimates, which require some little effort to be absorbed; the former point is more deep, and we are going to comment it
in a while.

The approach we adopt is the following: first of all we establish decay properties for the solution $w$ of the linear equation. Even though the operator is parabolic, we have to notice
that its symbol (or better, the elliptic part of its symbol) vanishes at order $4$ close to $0$; so we expect to recover a worst decay than for e.g. the classical heat equation.
However, since we are interested in first-type energy estimates, it is enough for us to bound the $L^\infty$ norm of higher order derivatives of $w$: this is a key point, since for them we can establish
a faster decay (namely, like $t^{-1/2}$ rather than $t^{-1/4}$). Nonetheless, this decay reveals to be not enough for our scopes, so that we need to find dispersion properties also for the solution
$r$ to the original equation \eqref{intro_eq:main}: this can be done with some more work, looking at both first- and second-type energy estimates.
Finally, as a last step, we take advantage of these properties to establish decay for the difference $z\,:=\,r-w$ in the $H^3$ norm.

\subsection{Organization of the paper} \label{ss:i-org}
Before going on, let us give an overview of the paper. In the next section we present our main assumptions and state our main results. In Section \ref{s:math}
we discuss some mathematical properties of equation \eqref{intro_eq:main}: namely, we study in detail the non-linear term (transport operator) and
we present the basic energy estimates, of first and second type, as mentioned above. Section \ref{s:weak} is devoted to the theory of weak solutions related to minimal (namely $H^3$)
regularity initial data; in Section \ref{s:strong} we show propagation of higher regularities. Finally, in Section \ref{s:long-time} we study the long-time dynamics.
We collect in Appendix \ref{app:LP} some tools from Littlewood-Paley theory which are needed in the course of our study.

\subsection*{Acknowledgements}

The work of the second author has been partially supported by the LABEX MILYON (ANR-10-LABX-0070) of Universit\'e de Lyon, within the program ``Investissement d'Avenir''
(ANR-11-IDEX-0007), and by the project BORDS, both operated by the French National Research Agency (ANR).

\section{Well-posedness results} \label{s:results}

In $\R_+\times\R^2$, let us consider the scalar equation
\begin{equation}\label{eq:main}
	\begin{cases}
		\partial_t \left(\Id-\Delta+\Delta^2\right)r\,+\,\nabla^{\perp}(\Id-\Delta)r\cdot\nabla\Delta^2 r\,+\,\mu\,\Delta^2(\Id-\Delta)r\,=\,f 	\\[1ex]
		r_{|t=0}\,=\,r_0\,,
	\end{cases}
\end{equation}
where $r$ and $f$ are functions of $(t,x)$ and $r_0$ is a function defined on the whole $\R^2$. The parameter $\mu>0$ is fixed, and it can be interpreted as a sort of viscosity coefficient.

We are interested in both weak and strong solutions theory for the previous equation.
Let us start by considering the former framework, and more precisely by giving
the definition of weak solution which is relevant for us. We point out that our analysis relies basically on energy estimates, so that we will look for weak solutions
\textsl{\`a la Leray}.

\begin{defin} \label{d:weak}
Let $r_0$ belong to $H^3(\R^2)$ and $f\in L^2_{\rm loc}\bigl(\R_+;H^{-2}(\R^2)\bigr)$. Then $r$ is a \emph{weak solution} to equation \eqref{eq:main}
on $[0,T[\,\times\R^2$, supplemented with initial datum $r_0$ and external force $f$, if
$$
r\,\in\,L^\infty\bigl([0,T[\,;H^3(\R^2)\bigr)\,\cap\,L^2\bigl([0,T[\,;H^4(\R^2)\bigr)
$$
and it solves the equation in the weak sense: for any $\phi\in\mc{C}_0^\infty\bigl([0,T[\,\times\R^2\bigr)$ one has
\begin{eqnarray*}
& & \hspace{-0.7cm}
-\int^T_0\!\!\!\int_{\R^2}\bigl(\Id-\Delta+\Delta^2\bigr)r\;\d_t\phi\,dx\,dt\,-\,
\int^T_0\!\!\!\int_{\R^2}\Delta^2r\;\nabla^\perp(\Id-\Delta)r\cdot\nabla\phi\,dx\,dt\,+ \\
& & \hspace{-0.3cm}
+\,\mu\int^T_0\!\!\!\int_{\R^2}\Delta(\Id-\Delta)r\,\Delta\phi\,dx\,dt\,=\,\int^T_0\!\!\!\langle f(t),\phi(t)\rangle_{H^{-2}\times H^2}\,dt\,+\,
\int_{\R^2}\Bigl((\Id-\Delta)r_0\,\phi\,-\,\nabla\Delta r_0\cdot\nabla\phi\Bigr)dx\,,
\end{eqnarray*}
where we have denoted by $\langle\cdot\,,\,\cdot\rangle_{H^{-s}\times H^s}$ the duality pair of $H^{-s}\times H^s$, for any $s>0$.

The solution is said to be \emph{global} if the previous properties are satisfied for all fixed $T>0$.
\end{defin}

\begin{rem} \label{r:initial}
The requirement $r_0\in H^3$ may look to be ``too much'' for a weak solutions theory. Nonetheless, this is somehow the natural regularity for the initial datum, because it is
imposed by the singular perturbation problem from which our model derives. In addition, it also seems to us the minimal smoothness which is needed to get
basic energy estimates (see also Subsection \ref{ss:energy} below with respect to this point).
\end{rem}

%

We now state our first main result. It asserts the \emph{existence and uniqueness} of weak solutions to our system, for any given initial datum and external force.
\begin{thm} \label{t:weak}
For all initial datum $r_0\in H^3(\R^2)$ and all external force $f\in L^2_{\rm loc}\bigl(\R_+;H^{-2}(\R^2)\bigr)$,
there exists a unique global in time weak solution $r$ to equation \eqref{eq:main}, such that
$$
r\,\in\,\mc C\bigl(\R_+;H^3(\R^2)\bigr)\,\cap\,L^\infty_{\rm loc}\bigl(\R_+;H^3(\R^2)\bigr)\,\cap\,L^2_{\rm loc}\bigl(\R_+;H^4(\R^2)\bigr)\,.
$$
Moreover, for any $T>0$ fixed, $r$ satisfies the following energy equality, for all $t\in[0,T]$:
\begin{eqnarray*}
& & \hspace{-1cm} 
\mc E[r(t)]\,+\,\mu\int^t_0\left(\|\Delta r(\tau)\|^2_{L^2}+2\|\nabla\Delta r(\tau)\|^2_{L^2}+\|\Delta^2r(\tau)\|^2_{L^2}\right)\,d\tau\,= \\
& & \qquad\qquad\qquad\qquad\qquad\qquad\qquad\qquad
=\,\mc E[r_0]\,+\,\int^t_0\lan f(\tau),(\Id-\Delta)r(\tau)\ran_{H^{-2}\times H^2}\,d\tau\,,
\end{eqnarray*}
where, for all functions $\vphi\in H^3$, we have defined $\mc E[\vphi]$ to be the quantity
$$
\mc E[\vphi]\,:=\,\bigl(\|\vphi\|^2_{L^2}\,+\,2\|\nabla \vphi\|^2_{L^2}\,+\,2\|\Delta \vphi\|^2_{L^2}\,+\,\|\nabla\Delta \vphi\|^2_{L^2}\bigr)/2\,.
$$
\end{thm}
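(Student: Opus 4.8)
The whole construction hinges on the cancellation already signalled in the introduction: since $\nabla^\perp g\cdot\nabla g\equiv0$ for any scalar $g$, testing the equation against $(\Id-\Delta)r$ annihilates the convective term. I would first record the resulting \emph{a priori} identity. A direct integration by parts, using that $\Id-\Delta$, $\Delta$ and $\Delta^2$ are self-adjoint Fourier multipliers, shows that pairing $\partial_t(\Id-\Delta+\Delta^2)r$ with $(\Id-\Delta)r$ reconstructs exactly $\tfrac{d}{dt}\mc E[r]$, that pairing $\mu\Delta^2(\Id-\Delta)r$ with $(\Id-\Delta)r$ gives $\mu\|\Delta(\Id-\Delta)r\|^2_{L^2}=\mu(\|\Delta r\|^2_{L^2}+2\|\nabla\Delta r\|^2_{L^2}+\|\Delta^2 r\|^2_{L^2})$, and that the convective contribution vanishes. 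Since the Fourier symbol $1+2|\xi|^2+2|\xi|^4+|\xi|^6$ of $2\mc E$ is comparable to $(1+|\xi|^2)^3$, we have $\mc E[\cdot]\simeq\|\cdot\|^2_{H^3}$; moreover the dissipation symbol $|\xi|^4(1+|\xi|^2)^2$ together with $\mc E$ controls the full $H^4$ norm. These two facts are what turn the identity into the announced functional framework $L^\infty_TH^3\cap L^2_TH^4$.

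For existence I would run a Friedrichs scheme. Let $J_n$ be the sharp frequency truncation onto $\{|\xi|\le n\}$: being an orthogonal projection that commutes with every Fourier multiplier, it preserves both the self-adjointness used above and the pointwise identity $\nabla^\perp g\cdot\nabla g=0$, hence the cancellation survives the approximation. Solving $\partial_t(\Id-\Delta+\Delta^2)r_n+J_n[\nabla^\perp(\Id-\Delta)r_n\cdot\nabla\Delta^2 r_n]+\mu\Delta^2(\Id-\Delta)r_n=J_nf$ with data $J_nr_0$ is, after inverting the invertible multiplier $\Id-\Delta+\Delta^2$, an ODE in the space of functions spectrally supported in $\{|\xi|\le n\}$, so Cauchy--Lipschitz yields a local solution. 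The energy identity holds verbatim for $r_n$; estimating the forcing by $\langle f,(\Id-\Delta)r_n\rangle\le\|f\|_{H^{-2}}\|r_n\|_{H^4}$ and absorbing the $H^4$ part into the dissipation (up to a multiple of $\mc E[r_n]+\|f\|^2_{H^{-2}}$) gives, via Gronwall, global existence together with bounds in $L^\infty_TH^3\cap L^2_TH^4$ uniform in $n$. Reading $\partial_tr_n$ off the equation (again inverting $\Id-\Delta+\Delta^2$) bounds it in, say, $L^2_TH^{-1}$, so Aubin--Lions furnishes strong convergence in $L^2_TH^{4-\delta}_{\mathrm{loc}}$; this lets me pass to the limit in the weak formulation, the nonlinear term being a product of the weakly convergent $\Delta^2r_n$ in $L^2_{t,x}$ with the strongly convergent lower-order factor $\nabla^\perp(\Id-\Delta)r_n$.

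Uniqueness is where the real work lies. For two solutions with the same data I set $z:=r_1-r_2$ and test the difference equation against $(\Id-\Delta)z$. Splitting the nonlinearity as $\nabla^\perp(\Id-\Delta)z\cdot\nabla\Delta^2 r_1+\nabla^\perp(\Id-\Delta)r_2\cdot\nabla\Delta^2 z$, the self-interaction piece (the one carrying $\Delta^2 z$ against the transport field built on $z$) cancels exactly as above, leaving the single term $N:=\int\Delta^2 z\,\nabla^\perp(\Id-\Delta)r_1\cdot\nabla(\Id-\Delta)z$. This term sits at top order and is the main obstacle: the coefficient $\nabla^\perp(\Id-\Delta)r_1$ is a third derivative of an $H^4$ function, which in $d=2$ just fails to be bounded, so it cannot be put in $L^\infty$. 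The remedy is the two-dimensional Gagliardo--Nirenberg (Ladyzhenskaya) inequality: estimating $N$ by $\|\Delta^2 z\|_{L^2}\,\|\nabla^\perp(\Id-\Delta)r_1\|_{L^4}\,\|\nabla(\Id-\Delta)z\|_{L^4}$ and interpolating the two $L^4$ factors between $\dot{H}^{3}$ and $\dot{H}^{4}$ produces $\|z\|^{3/2}_{\dot{H}^{4}}\|z\|^{1/2}_{\dot{H}^{3}}$ times $\|r_1\|^{1/2}_{\dot{H}^{3}}\|r_1\|^{1/2}_{\dot{H}^{4}}$. Since $\dot{H}^{4}$ enters with the sub-quadratic power $3/2$, Young's inequality absorbs it into the dissipation $\mu\|\Delta^2 z\|^2_{L^2}$ at the cost of the factor $\|r_1\|^2_{\dot{H}^{3}}\|r_1\|^2_{\dot{H}^{4}}\,\mc E[z]$, whose prefactor is integrable in time because $r_1\in L^\infty_TH^3\cap L^2_TH^4$. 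Gronwall then forces $z\equiv0$. This is the concrete incarnation of treating $N$ as a source for the linear parabolic, Stokes-like, operator.

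It remains to upgrade the energy inequality to the stated equality and to obtain time continuity. I would invoke a Lions--Magenes type lemma for the Gelfand triple induced by the inner product of $\mc E$, with pivot space $H^3$ and energy space $H^4$: the same product estimates used for uniqueness show that the nonlinear term, and hence $\partial_t(\Id-\Delta+\Delta^2)r$, lies in $L^2_TH^{-2}$, so $t\mapsto\mc E[r(t)]$ is absolutely continuous and the chain rule legitimates pairing the equation with $(\Id-\Delta)r$ itself. Integrating in time yields the energy equality, and the combination of $r\in L^\infty_TH^3$ with the continuity of $t\mapsto\mc E[r(t)]$ promotes weak to strong continuity, giving $r\in\mc C(\R_+;H^3)$.
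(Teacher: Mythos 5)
Your proposal follows essentially the same route as the paper: the same cancellation $\langle\Lambda(r,r),(\Id-\Delta)r\rangle=0$ giving the $\mc E$-identity, the same Friedrichs scheme (invert $\Id-\Delta+\Delta^2$, solve the spectrally truncated ODE, note that $\J_n$ commutes with the cancellation), the same compactness-and-limit argument, and the same uniqueness mechanism (test the difference against $(\Id-\Delta)z$, kill the self-interaction term, control the surviving top-order term by Ladyzhanskaya--Gagliardo--Nirenberg and absorb $\|z\|_{\dot H^4}^{3/2}$ by Young, then Gronwall with the integrable weight $\|r_1\|^2_{H^3}\|r_1\|^2_{H^4}$). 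This is, in all essentials, the paper's proof.

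One technical point needs repair. You claim that the nonlinear term, hence $\partial_t(\Id-\Delta+\Delta^2)r$, lies in $L^2_T H^{-2}$, and you use this to invoke a Lions--Magenes lemma for the symmetric triple $H^4\subset H^3\subset H^2$. This is not true in general: with only $u=\nabla^\perp(\Id-\Delta)r\in L^\infty_T L^2\cap L^2_T H^1$ and $\Delta^2 r\in L^2_T L^2$, the product $u\,\Delta^2 r$ lies in $L^2_T L^1\hookrightarrow L^2_T H^{-1-\delta}$ (in $d=2$, $L^1$ does not embed in $H^{-1}$), or, interpolating $u\in L^4_T L^4$, in $L^{4/3}_T H^{-1/2}$; thus $\Lambda(r,r)$ belongs to $L^{4/3}_T H^{-3/2}$ (this is the paper's Lemma \ref{l:bilinear}) or to $L^2_T H^{-2-\delta}$, but not to $L^2_T H^{-2}$. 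The consequence is that the chain rule justifying the energy equality (and, for arbitrary weak solutions, the very legitimacy of testing the difference equation against $(\Id-\Delta)z$ in your uniqueness step) cannot be obtained from the standard symmetric-exponent Lions--Magenes lemma; one needs its mixed-exponent variant, pairing $L^{4/3}_T H^{-3/2}$ against $(\Id-\Delta)r\in L^4_T H^{3/2}$, the latter coming from interpolation of $L^\infty_T H^1\cap L^2_T H^2$. This is precisely why the paper, instead of quoting an abstract lemma, develops the well-posedness theory for the linear parabolic problem \eqref{eq:parab} with \emph{two} sources, $f\in L^2_T H^{-2}$ and $g\in L^{4/3}_T H^{-3/2}$ (Theorem \ref{t:parabolic}), and then feeds $g=-\Lambda(r,r)$ into it to deduce continuity, the energy equality, and the stability estimate. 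Your argument is reparable along exactly these lines, but as written the function-space claim on which the final step rests is false.
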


We postpone to Subsection \ref{ss:h-4} (see Theorem \ref{t:weak_2} therein) a similar statement, for slightly more regular initial data and external forces. That statement corresponds to
energy estimates of second kind, as pointed out in the Introduction.

\medbreak
Let us now devote attention to more regular solutions, for which we are able to establish existence and uniqueness, globally in time. More precisely, we have the following general statement.
\begin{thm} \label{t:higher-reg}
Let $s>0$, and assume the initial datum $r_0$ to belong to $H^{3+s}(\RR^2)$ and the external force $f\in L^2_{\rm loc}\bigl(\R_+;H^{s-2}(\R^2)\bigr)$ .
Then system \eqref{eq:main} admits a unique global in time solution
\begin{equation*}
	r\,\in\,\mc C\bigl(\RR_+;H^{3+s}(\RR^2)\bigr)\,\cap\,L^\infty_{\rm loc}\bigl(\R_+;H^{3+s}(\R^2)\bigr)\,\cap\,L^2_{\rm loc}\bigl(\RR_+; H^{4+s}(\RR^2)\bigr)\,.
\end{equation*}
Moreover, there exist two positive constants $C_1$ and $C_2$ such that
\begin{equation} \label{intro_reg_ineq}  
	\| r(t)\|_{H^{3+s}}^2 + 
	\int_0^t\| \nabla \Delta^2 r(\tau) \|_{H^s}^2\dd \tau\,\leq\, C_1
	\Big(	
		\|r_0\|_{H^{3+s}}^2 + 
		\frac{1}{\mu}\int_0^t \| f(\tau)\|^2_{H^{s-2}}\dd \tau
	\Big)
	\exp
	\left\{
		\frac{C_2}{\mu}\,F(t)  
	\right\}\,, \nonumber
\end{equation} 
is fulfilled for any time $t\in\RR_+$, where we have defined the function
$$
F(t)\,:=\,\left\{\begin{array}{lcl}
                 e^{2\,\mu\,t}\,\Big(\| r_0\|_{H^3}^2\, +\, \frac{1}{\mu}\, \int_0^t \| f (\tau) \|_{H^{-2}}^2\,d\tau\Big)^2 & \qquad\mbox{ if } & 0<s<1 \\[2ex]
                 \mu^2\,t & \qquad\mbox{ if } & s=1 \\[2ex]
                 e^{\mu\,t}\,\Big(\| r_0\|_{H^3}^2\, +\, \frac{1}{\mu}\, \int_0^t \| f (\tau) \|_{H^{-2}}^2\,d\tau\Big) & \qquad\mbox{ if } & s>1\,.
                 \end{array}
\right.
$$
\end{thm}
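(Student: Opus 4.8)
The plan is to reduce everything to the a priori estimate claimed in the statement, since the other two assertions are comparatively soft. Uniqueness is essentially free: a solution in the stated regularity class is in particular a finite-energy weak solution in the sense of Definition \ref{d:weak}, hence it coincides with the one furnished by Theorem \ref{t:weak}. For existence I would set up a standard approximation scheme (a Friedrichs truncation of the nonlinearity, or a spectral cut-off $\Sd_n$), prove the a priori bound uniformly on the approximate solutions, and pass to the limit by Aubin--Lions compactness; the time-continuity with values in $H^{3+s}$ then follows from the parabolic smoothing together with the $L^2_{\rm loc}(H^{4+s})$ control. So the heart of the matter is the energy inequality for smooth solutions.

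To reach the $H^{3+s}$ level I would localise \eqref{eq:main} in frequency, applying the dyadic block $\Dd_j$ and testing the localised equation against $(\Id-\Delta)\Dd_j r$, exactly as in the first-order energy computation. The elliptic part then reproduces $\tfrac{\rm d}{{\rm d}t}\mc E[\Dd_j r]$ together with a nonnegative parabolic term from the viscosity; multiplying by $2^{2sj}$ and summing over $j$ produces $\tfrac{\rm d}{{\rm d}t}\|r\|_{H^{3+s}}^2$ plus a dissipative term of the expected order, while the forcing gives a duality pairing that Young's inequality absorbs into the dissipation and into $\tfrac1\mu\int\|f\|_{H^{s-2}}^2$. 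Everything thus reduces to controlling the localised convective term $C_j:=\lan\Dd_j\bigl(\nabla^\perp(\Id-\Delta)r\cdot\nabla\Delta^2 r\bigr),(\Id-\Delta)\Dd_j r\ran$.

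Writing $u:=\nabla^\perp(\Id-\Delta)r$, which is divergence free, and $g:=(\Id-\Delta)r$, I would paralinearise $u\cdot\nabla\Delta^2 r=\Th_u\cdot\nabla\Delta^2 r+\Th_{\nabla\Delta^2 r}\cdot u+\Rd(u,\nabla\Delta^2 r)$ \`a la Bony. The transport paraproduct $\Th_u\cdot\nabla\Delta^2 r$ is the carrier of the cancellations seen for the global energies: its principal part is $\Sd_{j-1}u\cdot\nabla\Dd_j\Delta^2 r$, and splitting $\Delta^2 r=g+(\Delta^2+\Delta-\Id)r$ one reproduces the exact cancellation through the block-wise identity $\lan\Sd_{j-1}u\cdot\nabla\Dd_j g,\Dd_j g\ran=0$ (valid because $\div\Sd_{j-1}u=0$), at the price of a commutator $[\Dd_j,\Sd_{j-1}u\cdot\nabla]\Delta^2 r$ and of the mismatch term carrying $(\Delta^2+\Delta-\Id)r$. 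The other paraproduct and the remainder $\Rd$ are of lower order and can be bounded directly, and all commutators are handled with the Littlewood--Paley tools of Appendix \ref{app:LP}. The bound I would aim for is $\sum_j 2^{2sj}|C_j|\le\eta\,\mu\,(\text{dissipation})+\tfrac{C}{\mu}\,\kappa(t)\,\|r\|_{H^{3+s}}^2$, in which the first term is absorbed on the left and the coefficient $\kappa(t)$ is controlled \emph{only} through the first-order energy of $r$, i.e. through $\|r(t)\|_{H^3}$ and $\|r(t)\|_{H^4}$. Gronwall's lemma, together with the first-order control $\|r(t)\|_{H^3}^2\lesssim\|r_0\|_{H^3}^2+\tfrac1\mu\int_0^t\|f\|_{H^{-2}}^2$ and its $L^2_t(H^4)$ counterpart from Theorem \ref{t:weak}, then manufactures the exponential $\exp\{\tfrac{C_2}{\mu}F(t)\}$.

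The main obstacle, and the reason the statement distinguishes three regimes, is precisely the mismatch term carrying $(\Delta^2+\Delta-\Id)r$, whose leading part $\Delta^2 r$ is no longer protected by the transport cancellation. For $s>1$ I would instead run the estimate on the second-order energy, testing against $\mbb Dr=(\Id-\Delta+\Delta^2)r$: there \emph{both} cancellations are exact up to commutators, the coefficient $\kappa(t)$ is linear in the first-order energy, and one obtains the factor $\|r_0\|_{H^3}^2+\tfrac1\mu\int_0^t\|f\|_{H^{-2}}^2$ with the exponent $e^{\mu t}$. For $0<s<1$ the cancellation only reaches the lower-order part $g$, so $\Delta^2 r$ genuinely survives and must be split once more by paralinearisation and estimated through delicate commutator bounds; in doing so \emph{two} factors of the solution appear, each estimated by the first-order energy, which is the origin of the square and of the $e^{2\mu t}$ in $F(t)$ for $0<s<1$. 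The borderline $s=1$ is the cleanest, the scaling giving directly the linear-in-time $F(t)=\mu^2 t$. Summing the dyadic pieces, absorbing the dissipative contributions and applying Gronwall in each regime closes the estimate.
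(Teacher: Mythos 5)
Your proposal follows essentially the same route as the paper: dyadic localization of \eqref{eq:main}, Bony paralinearization of $u\cdot\nabla\Delta^2 r$ with $u=\nabla^\perp(\Id-\Delta)r$, block-wise energy estimates of first type for $0<s<1$ and of second type for $s>1$ (with the $H^4$ theory covering $s=1$), the divergence-free cancellation $\int S_{j-1}u\cdot\nabla\Delta_j\varphi\,\Delta_j\varphi\,dx=0$, commutator and remainder bounds, and Gronwall with coefficients controlled by the first-order energy --- which is precisely how the paper obtains the three forms of $F(t)$, including the squared factor and $e^{2\mu t}$ for $0<s<1$. The only deviations are inessential: for $s>1$ the paper tests against $\Delta^2\Delta_j r$ (where the cancellation of the principal paraproduct is exact) rather than $(\Id-\Delta+\Delta^2)\Delta_j r$, whose use leaves a harmless lower-order cross term $\int S_{j-1}u\cdot\nabla(\Id-\Delta)\Delta_jr\;\Delta^2\Delta_j r\,dx$ that is not literally a commutator; and for $0<s<1$ the paper forgoes the partial cancellation you invoke and simply estimates the dangerous term directly by H\"older, Gagliardo--Nirenberg and Bernstein, which is where the two first-order-energy factors arise.
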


We postpone to Section \ref{s:strong} the precise statements, with the previse energy estimates, concerning the various cases $s=1$ (see Subsection \ref{ss:h-4}), $s>1$
(treated in Subsection \ref{ss:higher}) and $0<s<1$ (see Subsection \ref{ss:intermediate}).



\section{Mathematical structure of the equation} \label{s:math}

In this section we show some mathematical properties of equation \eqref{eq:main}. The main goal is to establish energy estimates for smooth solutions: this will be done in
Subsection \ref{ss:energy}.
It is interesting to notice that we can identify two kinds of energy estimates: this fact seems to be a sort of remainder of the BD entropy structure of the original
Navier-Stokes-Korteweg system.

Energy estimates will be the key to the proof of our main theorems. However, first of all we need to study the structure of the bilinear term: the next subsection
is devoted to this.

\subsection{Analysis of the bilinear term} \label{ss:bilinear}

We start by studying the bilinear term in \eqref{eq:main}, namely the operator $\Lambda$ formally defined by the formula
\begin{equation} \label{eq:bilin}
\Lambda(\rho,\z)\,:=\,\div\left(\nabla^\perp(\Id-\Delta)\rho\;\;\Delta^2\z\right)\,.
\end{equation}
We collect in the next statement some continuity properties and useful formulas related to it.

\begin{lemma} \label{l:bilin}
The bilinear operator $\Lambda$, defined in \eqref{eq:bilin} above, acts continuously on the following spaces:
\begin{enumerate}[(a)]
 \item from $H^3\times H^4$ into $H^{-(2+s)}$, for all $s>0$ arbitrarily small;
 \item from $H^{3+s}\times H^4$ into $H^{-(2-s)}$, for all $0<s<1$;
 \item from $H^4\times H^4$ into $H^{-(1+s)}$, for all $s>0$ arbitrarily small; 
\item  for all $s>0$, from $H^{4+s}\times H^4$ into $H^{-1}$.
\end{enumerate}

Moreover, given $s\in\,]0,1[\,$, for any $(\rho,\z)\in H^{3+s}\times H^4$ and any $\phi\in H^{2-s}$, one has the identity
$$
\langle \Lambda(\rho,\z)\,,\,\phi\rangle_{H^{-(2-s)}\times H^{2-s}}\,=\,-\,\langle \Lambda\bigl(\wtilde{\phi},\z\bigr)\,,\,(\Id-\Delta)\rho\rangle_{H^{-(2-\sigma)}\times H^{2-\sigma}}\,,
$$
where we have defined $\sigma=1-s$ and  $\wtilde{\phi}:=(\Id-\Delta)^{-1}\phi$.
The same formula holds true also for $\rho\in H^4$ and $\phi\in H^{1+s}$, for some $s>0$ arbitrarily small,
up to take the $H^{-(1+s)}\times H^{1+s}$ duality on the left-hand side, and the $H^{-(2-s)}\times H^{2-s}$ duality on the right-hand side. \\
In particular, for all tempered distributions $\rho$ and $\z$ in $H^4$, one has $\langle \Lambda(\rho,\z)\,,\,(\Id-\Delta)\rho\rangle_{H^{-2}\times H^2}\,=\,0$.
\end{lemma}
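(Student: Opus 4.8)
The plan is to exploit the divergence structure of $\Lambda$ together with the pointwise antisymmetry of the transport bracket, and to track carefully the Sobolev scales so that every pairing below is well defined. First I would rewrite $\Lambda$ conveniently. Since $\nabla^\perp(\Id-\Delta)\rho$ is divergence free, one has the pointwise identity $\Lambda(\rho,\z)=\nabla^\perp(\Id-\Delta)\rho\cdot\nabla\Delta^2\z$; moreover, for smooth decaying $\phi$, an integration by parts gives
$$
\langle\Lambda(\rho,\z),\phi\rangle\,=\,-\int_{\R^2}\Delta^2\z\;\nabla^\perp(\Id-\Delta)\rho\cdot\nabla\phi\,dx\,,
$$
and this representation is what I would use throughout.

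For the continuity statements (a)--(d) I would estimate the right-hand side above by H\"older's inequality and two-dimensional Sobolev embeddings. In all four cases $\z\in H^4$, so $\Delta^2\z\in L^2$ is a fixed factor; only the splitting of the remaining two derivatives between $\nabla^\perp(\Id-\Delta)\rho$ and $\nabla\phi$ changes. For (a) and (d) one factor is placed in $L^\infty$ via the embedding $H^{1+\varepsilon}\hookrightarrow L^\infty$ (valid in dimension two for $\varepsilon>0$), the conjugate factor and $\Delta^2\z$ remaining in $L^2$; for (b) and (c) one instead uses a product estimate of type $H^{a}\cdot H^{b}\hookrightarrow L^2$ with $a+b\geq1$ (again two-dimensional Sobolev embedding, $H^\theta\hookrightarrow L^{2/(1-\theta)}$ for $0\le\theta<1$), so that $\nabla^\perp(\Id-\Delta)\rho\cdot\nabla\phi\in L^2$ pairs against $\Delta^2\z\in L^2$. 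A short case check of the indices in each item yields the claimed mapping properties, and these bounds also exhibit both sides of the duality identity as genuinely continuous multilinear forms, which I need below.

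The heart of the lemma is the duality identity, which I would prove first for Schwartz functions and then pass to the limit. For smooth data I integrate by parts on both sides via the representation above; the crucial algebraic fact is the pointwise antisymmetry $\nabla^\perp g\cdot\nabla h=-\nabla^\perp h\cdot\nabla g$ for scalars $g,h$. Applying it with $g=(\Id-\Delta)\rho$, $h=\phi$, together with $(\Id-\Delta)\wtilde\phi=\phi$ (the very definition of $\wtilde\phi$), turns $-\int\Delta^2\z\;\nabla^\perp(\Id-\Delta)\rho\cdot\nabla\phi$ into $\int\Delta^2\z\;\nabla^\perp\phi\cdot\nabla(\Id-\Delta)\rho$, which is precisely $-\langle\Lambda(\wtilde\phi,\z),(\Id-\Delta)\rho\rangle$; thus the identity reduces to this antisymmetry. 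To justify it at the stated regularity I argue by density, the main care being the index bookkeeping: for $\phi\in H^{2-s}$ one has $\wtilde\phi=(\Id-\Delta)^{-1}\phi\in H^{4-s}=H^{3+\sigma}$ with $\sigma=1-s\in\,]0,1[\,$, while $(\Id-\Delta)\rho\in H^{1+s}=H^{2-\sigma}$, so item (b) applied with parameter $\sigma$ gives $\Lambda(\wtilde\phi,\z)\in H^{-(2-\sigma)}$ and makes the right-hand pairing meaningful, matching the left-hand pairing controlled by (b) with parameter $s$. Approximating $\rho,\z,\phi$ by Schwartz functions in $H^{3+s}$, $H^4$, $H^{2-s}$ respectively (so that $\wtilde\phi$ converges in $H^{3+\sigma}$), bilinearity and the bounds of the previous step let me pass to the limit on both sides; the variant for $\rho\in H^4$, $\phi\in H^{1+s}$ is handled identically, now invoking items (a) and (c) for the two dualities.

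The final cancellation then follows by specialization: taking $\phi=(\Id-\Delta)\rho$ in the $H^4$-variant gives $\wtilde\phi=\rho$, so the identity reads $\langle\Lambda(\rho,\z),(\Id-\Delta)\rho\rangle=-\langle\Lambda(\rho,\z),(\Id-\Delta)\rho\rangle$, whence the quantity vanishes. The main obstacle I anticipate is not the algebra --- the antisymmetry is immediate --- but the functional-analytic consistency of the two pairings, which live on different dual scales: one must verify that $\wtilde\phi$ and $(\Id-\Delta)\rho$ land in exactly the spaces required by the continuity items and that the approximation is compatible across both dualities at once. The endpoint nature of some embeddings (the arbitrarily small $s$ in (a) and (c), reflecting the failure of $H^1\hookrightarrow L^\infty$ in two dimensions) is precisely what forces the slight loss of an $s$ in the Sobolev indices.
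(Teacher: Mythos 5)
Your proof is correct and follows essentially the same route as the paper: integration by parts to the representation $-\int\Delta^2\z\;\nabla^\perp(\Id-\Delta)\rho\cdot\nabla\phi\,dx$, H\"older's inequality plus the two-dimensional embeddings $H^{1+\varepsilon}\hookrightarrow L^\infty$ and $H^\theta\hookrightarrow L^{2/(1-\theta)}$ for the continuity items, and the pointwise antisymmetry $\nabla^\perp g\cdot\nabla h=-\nabla g\cdot\nabla^\perp h$ combined with a density argument for the duality identity and the final cancellation. The only difference is cosmetic: you make the index bookkeeping and the specialization $\phi=(\Id-\Delta)\rho$ (giving $\wtilde{\phi}=\rho$) more explicit than the paper, which leaves the latter as an immediate consequence.
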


\begin{proof}
By density of $\mc{C}^\infty_0$ into $H^\s$ for any $\s\geq0$, it is enough to prove the previous properties assuming that all the functions involved in the computations are smooth.

First of all, by integrating by parts once, we have
\begin{equation} \label{eq:bil-action}
\langle\Lambda(\rho,\z)\,,\,\phi\rangle_{H^{-(2+s)}\times H^{2+s}}\,=\,-\,\int_{\R^2}\nabla^\perp(\Id-\Delta)\rho\cdot\nabla\phi\,\Delta^2\z\,dx\,.
\end{equation}
Then, by H\"older inequality there exists a constant $C>0$ such that
$$
\left|\langle\Lambda(\rho,\z)\,,\,\phi\rangle_{H^{-(2+s)}\times H^{s+2}}\right|\,\leq\,C\,\left\|\nabla^\perp(\Id-\Delta)\rho\right\|_{L^2}\,\left\|\Delta^2\z\right\|_{L^2}\,
\left\|\nabla\phi\right\|_{L^\infty}\,.
$$
Now, since $s>0$ (and the space dimension $d$ is equal to $2$), the space $H^{1+s}$ embeds continuously into $L^\infty$, so that $\left\|\nabla\phi\right\|_{L^\infty}\,\leq\,C\,\|\phi\|_{H^{s+2}}$.

The other properties easily follow in an analogous way, by using, in estimating the integral in \eqref{eq:bil-action},
the Sobolev embeddings $H^1\hookrightarrow L^p$ for all $p\in[2,+\infty[\,$ and $H^s\hookrightarrow L^q$, where $q=2/(1-s)$, which hold true
in dimension $d=2$ (see also Theorem 1.66 of \cite{B-C-D}).

\medbreak
As for the identity given in the statement of the lemma, by density again and continuity properties of $\Lambda$ established above, it is enough to work with smooth compactly supported functions.

First of all we observe that, for all $g$ and $h$ in $\mc C^\infty_0$, one has $\nabla^\perp g\cdot\nabla h\,=\,-\nabla g\cdot\nabla^\perp h$ pointwise.
In addition, we remark that if $\phi\in H^{2-s}$, then $\wtilde{\phi}\in H^{4-s}=H^{3+\s}$, by definition of $\s=1-s$, and if $\rho\in H^{3+s}$, then $(\Id-\Delta)\rho\in H^{1+s}=H^{2-\s}$.

When $\rho\in H^4$ and $\phi\in H^{1+s}$, the proof is exactly identical, but one has to work in the right spaces where the formula is continuous.
\end{proof}

The requirement $\z\in H^4$ is somehow critical in the theory of weak solutions. However, if one disposes of more regularity for $\z$, some easy generalizations of the previous lemma
can be obtained.
Here we limit ourselves to state the following properties.

\begin{lemma} \label{l:bilin-high}
For all $s>0$, $\Lambda$ acts continuously from $H^4\times H^{4+s}$ into $H^{-1}$. In particular, for any $\rho\in H^4$ and $\z\in H^5$, one has the equality
$$
\langle \Lambda(\rho,\z)\,,\,\Delta^2\z\rangle_{H^{-1}\times H^1}\,=\,0\,.
$$
\end{lemma}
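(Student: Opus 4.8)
The plan is to follow the same two-step scheme used in the proof of Lemma \ref{l:bilin}: first establish the continuity estimate by a single integration by parts and Hölder's inequality, and then deduce the orthogonality relation through a density argument combined with an algebraic cancellation in the transport structure.

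For the continuity statement I would argue by density of $\mc{C}^\infty_0$ in the relevant Sobolev spaces, assuming $\rho$, $\z$ and the test function $\phi\in H^1$ smooth. Integrating by parts once exactly as in \eqref{eq:bil-action}, the pairing reads
\[
\langle \Lambda(\rho,\z)\,,\,\phi\rangle_{H^{-1}\times H^1}\,=\,-\int_{\R^2}\nabla^\perp(\Id-\Delta)\rho\cdot\nabla\phi\,\Delta^2\z\,dx\,.
\]
Since $\rho\in H^4$ we have $\nabla^\perp(\Id-\Delta)\rho\in H^1$, while $\z\in H^{4+s}$ gives $\Delta^2\z\in H^s$, and $\nabla\phi\in L^2$. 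The idea is then to distribute the three factors by Hölder with exponents satisfying $1/p+1/q+1/2=1$. For $0<s<1$ I would place $\Delta^2\z$ using the two-dimensional embedding $H^s\hookrightarrow L^q$ with $q=2/(1-s)$ (Theorem 1.66 of \cite{B-C-D}), which forces $1/p=s/2$, i.e. $p=2/s<\infty$; the factor $\nabla^\perp(\Id-\Delta)\rho$ is then controlled in $L^p$ through $H^1\hookrightarrow L^p$, valid for every finite $p$ in dimension two. This yields
\[
\bigl|\langle \Lambda(\rho,\z)\,,\,\phi\rangle_{H^{-1}\times H^1}\bigr|\,\leq\,C\,\|\rho\|_{H^4}\,\|\z\|_{H^{4+s}}\,\|\phi\|_{H^1}\,.
\]
For $s\geq1$ the estimate is even easier, since $\Delta^2\z\in H^s\hookrightarrow L^\infty$ and one may take $p=2$. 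This proves continuity from $H^4\times H^{4+s}$ into $H^{-1}$ for every $s>0$.

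For the cancellation I would specialise to $\z\in H^5$, so that $\Delta^2\z\in H^1$ is an admissible test element for $\Lambda(\rho,\z)\in H^{-1}$ by the case $s=1$ just established. By density it again suffices to check the identity for smooth functions, for which the integration-by-parts formula gives
\[
\langle \Lambda(\rho,\z)\,,\,\Delta^2\z\rangle_{H^{-1}\times H^1}\,=\,-\int_{\R^2}\nabla^\perp(\Id-\Delta)\rho\cdot\nabla(\Delta^2\z)\,\Delta^2\z\,dx\,.
\]
The key observation is that $\nabla(\Delta^2\z)\,\Delta^2\z=\tfrac12\nabla\bigl(|\Delta^2\z|^2\bigr)$, so that a further integration by parts transfers the gradient onto the transport field and produces the factor $\div\bigl(\nabla^\perp(\Id-\Delta)\rho\bigr)$, which vanishes identically because the divergence of any perpendicular gradient is zero. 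Hence the integral is $0$ and the claimed orthogonality follows.

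Since the structure closely mirrors Lemma \ref{l:bilin}, I do not expect a genuine obstacle; the only points requiring care are the bookkeeping of the Hölder exponents so that the argument covers all $s>0$ (handled by splitting $0<s<1$ and $s\geq1$), and the verification that $\z\in H^5$ provides just enough regularity, namely $\Delta^2\z\in H^1$, to legitimise both the duality pairing against $\Delta^2\z$ and the final integration by parts after passing to the smooth approximations.
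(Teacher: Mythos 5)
Your proof is correct and follows essentially the same route as the paper's: the same single integration by parts leading to the trilinear integral, the same three-factor H\"older estimate with exponents $p=2/s$, $q=2/(1-s)$ and $2$ combined with the two-dimensional embeddings $H^1\hookrightarrow L^p$ and $H^s\hookrightarrow L^q$, and the same cancellation $\nabla\Delta^2\z\,\Delta^2\z=\nabla\bigl(|\Delta^2\z|^2/2\bigr)$ together with $\div\nabla^\perp(\Id-\Delta)\rho=0$, concluded by density and continuity on $H^4\times H^5$. One minor slip: in dimension two $H^1\not\hookrightarrow L^\infty$, so your treatment of the borderline case $s=1$ (which you then invoke to legitimise the pairing against $\Delta^2\z$ for $\z\in H^5$) is not quite right as stated; the fix is immediate, since for every $s\geq1$ the claim follows from any case $0<s'<1$ via the embedding $H^{4+s}\hookrightarrow H^{4+s'}$, which in particular gives $\Lambda(\rho,\z)\in H^{-1}$ whenever $\z\in H^5$.
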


\begin{proof}
First of all, let us take $q=2/(1-s)$ and $p>2$ such that $1/q\,+\,1/p\,=\,1/2$. Hence, making use of Sobolev embeddings, we can estimate
\begin{eqnarray*}
\left|\langle\Lambda(\rho,\z)\,,\,\phi\rangle_{H^{-1}\times H^{1}}\right| & \leq & C\,\left\|\nabla^\perp(\Id-\Delta)\rho\right\|_{L^p}\,\left\|\Delta^2\z\right\|_{L^q}\,
\left\|\nabla\phi\right\|_{L^2} \\
& \leq & C\,\left\|\nabla^\perp(\Id-\Delta)\rho\right\|_{H^1}\,\left\|\Delta^2\z\right\|_{H^s}\,\left\|\nabla\phi\right\|_{L^2}\,.
\end{eqnarray*}

The equality $\langle \Lambda(\rho,\z)\,,\,\Delta^2\z\rangle_{H^{-1}\times H^1}\,=\,0$ can be obtained arguing once again by density. First of all, for smooth functions we have
the identity
$$
\langle \Lambda(\rho,\z)\,,\,\Delta^2\z\rangle_{H^{-1}\times H^1}\,=\,-\,\int_{\R^2}\nabla^\perp(\Id-\Delta)\rho\cdot\nabla\Delta^2\z\,\Delta^2\z\,dx\,;
$$
now, since $\nabla\Delta^2\z\,\Delta^2\z\,=\,\nabla\left(|\Delta^2z|^2/2\right)$, integrating by parts with respect to the $\nabla^\perp$ operator gives us $0$. One then concludes the proof
by noticing that both sides of the final expression are continuous on $H^4\times H^5$.
\end{proof}

\begin{rem} \label{r:bilin-high}
By Bony's paraproduct decomposition \eqref{eq:bony} and Proposition \ref{p:op}, it is possible to see that the statement of Lemma \ref{l:bilin} is not optimal.
For instance, using also the fact that the space dimension is $d=2$, one has that $\Lambda$ maps continuously
$H^{4}\times H^{4+s}$ into $H^{s-1-\delta}$ for any $\delta>0$ arbitrarily small when $0<s<1$, into $H^{-\delta}$ for all $\delta>0$ when $s=1$, and into $L^2$ when $s>1$.
In particular, for all $\z\in H^{4+s}$ with $s>1/2$, it makes sense to apply $\Lambda(\rho,\z)$ to $\Delta^2\z$, obtaining that
$\langle \Lambda(\rho,\z)\,,\,\Delta^2\z\rangle_{H^{-(s-1-\delta)}\times H^{1-s+\delta}}\,=\,0$ (where $\delta>0$ is fixed small enough).

We preferred to keep the previous statement for simplicity of exposition, since this version is enough for our scopes.
\end{rem}

\begin{rem} \label{r:bil-formular}
We also point out that, whenever we take $(\rho,\phi)\in H^4\times H^{1+s}$ or $(\rho,\phi)\in H^{4+s}\times H^1$, for some $s>0$, we have the identity
$$
\langle \Lambda(\rho,\rho)\,,\,\phi\rangle\,=\,-\int_{\R^2}\nabla^\perp(\Id-\Delta)\rho\,\cdot\nabla\phi\;\left(\Id-\Delta+\Delta^2\right)\!\rho\;dx\,,
$$
where the symbol $\langle\,\cdot\,,\,\cdot\,\rangle$ denotes the suitable duality pair, respectively in the spaces $H^{-(1+s)}\times H^{1+s}$ and $H^{-1}\times H^1$.

Indeed, working first with $\mc C^\infty_0$ functions, several integrations by parts lead us to
\begin{align*}
\langle \Lambda(\rho,\rho)\,,\,\phi\rangle\,
&=\,-\,\int_{\R^2}\nabla^\perp(\Id-\Delta)\rho\,\cdot\nabla\phi\;\Delta^2\rho\,dx\;=\;-\,\int_{\R^2}\Div\left(\nabla^\perp(\Id-\Delta)\rho\,\phi\right)\;\Delta^2\rho\,dx\\
&=\quad\int_{\R^2}\nabla^\perp(\Id-\Delta)\rho\,\cdot\nabla\Delta^2\rho\,\phi\;\,dx\;=\;\int_{\R^2}\nabla^\perp(\Id-\Delta)\rho\,\cdot\nabla\left(\Id-\Delta+\Delta^2\right)\!\rho\;\phi\;dx\\
&=\,-\,\int_{\R^2}\nabla^\perp(\Id-\Delta)\rho\,\cdot\nabla\phi\;\left(\Id-\Delta+\Delta^2\right)\!\rho\;dx\,,
\end{align*}
where, in the last equality of the second line, we have used that $\nabla g\times\nabla^\perp g\,=\,0$ pointwise, for all smooth $g$.
\end{rem}

The analysis of the bilinear operator will be of fundamental importance in the theory weak solutions with minimal regularity, as well as in the propagation of higher smoothness,
for which we refer respectively to Sections \ref{s:weak} and \ref{s:strong}.

\subsection{Basic energy estimates} \label{ss:energy}

In the present subsection, we want to keep studying the structure of our equation, by establishing (formal) energy estimates for smooth enough solutions. As announced above, it is interesting
to notice that we can identify two levels energy conservation laws, which correspond to different levels of regularity of solutions.

Energy estimates of the first kind involve the quantity $(\Id-\Delta)r$: this seems to us the minimal kind of regularity which can be derived.
Indeed we are not able to deduce anything from testing the equation just on $r$: this fact may seem surprising, but actually it is quite natural
if one looks at the transport term.
As a matter of fact, taking the $L^2$ scalar product of the bilinear term with $r$ and integrating by parts, it is impossible to get rid of a fourth-order term; but the viscosity term gives a
control on the derivatives of $r$ up to the third order, so that the bilinear term seems to be out of control at this stage. See also relation \eqref{est:en-0} below about this point.
On the other hand, testing the bilinear operator on $-\Delta r$ will allow us to erase this bad term (keep in mind also Lemma \ref{l:bilin}).

In the end, we obtain the following statement.
\begin{prop} \label{p:a-priori_I}
Let us fix an $r_0\in H^3(\R^2)$ and a forcing term $f\in L^2_{\rm loc}\bigl(\R_+;H^{-2}(\R^2)\bigr)$. Let $r$ be a smooth solution of equation \eqref{eq:main} with external force $f$
and initial datum $r_0$.

Then, there exists a constant $C>0$ such that, for any time $t\geq0$ fixed, one has the estimate
\begin{align*}
 & \hspace{-0.5cm} \|r(t)\|^2_{L^2}\,+\,\|\nabla r(t)\|^2_{L^2}\,+\,\|\Delta r(t)\|^2_{L^2}\,+\,\|\nabla\Delta r(t)\|^2_{L^2}\,+\,\mu\int^t_0\|\Delta r(\tau)\|^2_{H^2}\,d\tau\,\leq \\
 & \qquad\qquad\leq\,C\;e^{\mu\,t}\left(\|r_0\|^2_{L^2}\,+\,\|\nabla r_0\|^2_{L^2}\,+\,\|\Delta r_0\|^2_{L^2}\,+\,\|\nabla\Delta r_0\|^2_{L^2}\,+\,\frac{1}{\mu}\int^t_0\|f(\tau)\|^2_{H^{-2}}\,d\tau\right)\,.
\end{align*}
\end{prop}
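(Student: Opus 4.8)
The plan is to run an energy estimate by testing equation \eqref{eq:main} not against $r$, but against $(\Id-\Delta)r$; this is exactly the multiplier for which the nonlinearity drops out. Indeed, since the divergence of a perpendicular gradient vanishes identically, the convective term in \eqref{eq:main} can be written as $\Lambda(r,r)$, with $\Lambda$ the bilinear operator of \eqref{eq:bilin}, and the last assertion of Lemma \ref{l:bilin} gives precisely $\langle\Lambda(r,r),(\Id-\Delta)r\rangle_{H^{-2}\times H^2}=0$. Hence the whole quadratic term disappears from the energy balance, and only linear contributions remain to be accounted for.

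For those, I would integrate by parts. Since $\Id-\Delta+\Delta^2$ and $\Id-\Delta$ are self-adjoint Fourier multipliers with real symbol, the time-derivative term equals $\tfrac12\frac{d}{dt}\int_{\R^2}(\Id-\Delta)r\,(\Id-\Delta+\Delta^2)r\,dx$, and expanding the integrand identifies this quantity with $\frac{d}{dt}\mc E[r(t)]$, where $\mc E$ is the energy functional introduced in Theorem \ref{t:weak}. Testing the viscous term $\mu\,\Delta^2(\Id-\Delta)r$ against $(\Id-\Delta)r$ and integrating by parts produces the non-negative dissipation $\mu\,\|\Delta(\Id-\Delta)r\|_{L^2}^2$, whose expansion $\mu\bigl(\|\Delta r\|_{L^2}^2+2\|\nabla\Delta r\|_{L^2}^2+\|\Delta^2 r\|_{L^2}^2\bigr)$ dominates $\mu\,\|\Delta r\|_{H^2}^2$. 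Collecting these computations with the cancellation above gives the exact differential identity
\[
\frac{d}{dt}\mc E[r(t)]\,+\,\mu\,\|\Delta(\Id-\Delta)r(t)\|_{L^2}^2\,=\,\langle f(t),(\Id-\Delta)r(t)\rangle_{H^{-2}\times H^2}\,.
\]

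The only genuinely delicate step, and the one I expect to be the main obstacle, is the forcing term. By duality, $|\langle f,(\Id-\Delta)r\rangle|\le\|f\|_{H^{-2}}\,\|(\Id-\Delta)r\|_{H^2}$, and the difficulty is that the top-order part of $\|(\Id-\Delta)r\|_{H^2}^2=\|r\|_{H^4}^2$ carries exactly the same highest-order frequency weight as the dissipation $\|\Delta(\Id-\Delta)r\|_{L^2}^2$; there is therefore no free room, and one must calibrate the absorption to $\mu$. I would apply Young's inequality with a parameter proportional to $\mu$ (choosing it so as to generate the coefficient $1/\mu$ in front of $\|f\|_{H^{-2}}^2$), and then use the elementary Fourier-side identity $\|(\Id-\Delta)r\|_{H^2}^2=\|\Delta(\Id-\Delta)r\|_{L^2}^2+R$, whose remainder $R$ is of lower order and hence bounded by $C\,\mc E[r]$. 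In this way the leading piece is swallowed by a fraction of the dissipation, while $R$ feeds a term of the type $C\mu\,\mc E[r]$.

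Inserting this bound into the differential identity yields an inequality of the form $\frac{d}{dt}\mc E[r]+c\,\mu\,\|\Delta(\Id-\Delta)r\|_{L^2}^2\le C\mu\,\mc E[r]+\tfrac{C}{\mu}\|f\|_{H^{-2}}^2$, with $c\in(0,1)$. Gronwall's lemma applied to $\mc E[r]$, followed by a time integration to recover the dissipative term, then closes the estimate with the exponential factor $e^{\mu t}$ prescribed in the statement. A final comparison of norms --- the functional $\mc E[r]$ being equivalent to $\|r\|_{L^2}^2+\|\nabla r\|_{L^2}^2+\|\Delta r\|_{L^2}^2+\|\nabla\Delta r\|_{L^2}^2$, and $\|\Delta(\Id-\Delta)r\|_{L^2}^2\ge\|\Delta r\|_{H^2}^2$ --- turns this into exactly the asserted inequality, up to the choice of the universal constant $C$.
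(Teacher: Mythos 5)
Your proposal is correct and follows essentially the same route as the paper: the paper tests the equation against $r$ and $-\Delta r$ separately and sums (which is exactly your single test against $(\Id-\Delta)r$), kills the convective term by the same cancellation (the pointwise identity $\nabla^\perp g\cdot\nabla g=0$ underlying the last assertion of Lemma \ref{l:bilin}), and handles the forcing term by the same duality--Young argument, absorbing the dissipation-order part of $\|(\Id-\Delta)r\|_{H^2}$ and feeding the unabsorbable lower-order remainder into Gronwall to produce the factor $e^{\mu t}$. The only cosmetic differences are that you invoke the lemma rather than the pointwise identity and organize the absorption via the Fourier-side splitting $\|(\Id-\Delta)r\|_{H^2}^2=\|\Delta(\Id-\Delta)r\|_{L^2}^2+R$ with $R\leq C\,\mc E[r]$, which is equivalent to the paper's term-by-term treatment.
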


\begin{proof}
The proof is carried out in two steps: the former consists testing the equation on $r$, while in the latter we test it on $-\Delta r$; finally, it is just a matter of summing up the two expressions.

Let us start by taking the $L^2$ scalar product of equation \eqref{eq:main} by $r$: easy computations lead us to
\begin{eqnarray}
& & \frac{1}{2}\,\frac{d}{dt}\int\left(|r|^2+|\nabla r|^2+|\Delta r|^2\right)dx\,- \label{est:en-0} \\
& & \qquad\qquad-\,\int\nabla^\perp(\Id-\Delta)r\cdot\nabla r\;\Delta^2r\,dx\,+\,
\mu\int\left(|\Delta r|^2+|\nabla\Delta r|^2\right)\,dx\,=\,\lan f,r\ran_{H^{-2}\times H^2}\,. \nonumber
\end{eqnarray}

Now, we take the $L^2$ scalar product of the equation by $-\Delta r$: this time we get
\begin{eqnarray}
& & \frac{1}{2}\,\frac{d}{dt}\int\left(|\nabla r|^2+|\Delta r|^2+|\nabla\Delta r|^2\right)dx\,+ \label{est:en_1} \\ 
& & \qquad\qquad+\,\int\nabla^\perp(\Id-\Delta)r\cdot\nabla\Delta r\;\Delta^2r\,dx\,+\,\mu\int\left(|\nabla\Delta r|^2+|\Delta^2r|^2\right)\,dx\,=\,\lan f,-\Delta r\ran_{H^{-2}\times H^2}\,. \nonumber
\end{eqnarray}

Summing up \eqref{est:en-0} and \eqref{est:en_1} and integrating in time, we find the equality
\begin{align*}
& \|r(t)\|^2_{L^2}\,+\,2\,\|\nabla r(t)\|^2_{L^2}\,+\,2\,\|\Delta r(t)\|^2_{L^2}\,+\,\|\nabla\Delta r(t)\|^2_{L^2}\,+ \\ 
& +\,2\,\mu\int^t_0\left(\|\Delta r(\tau)\|^2_{L^2}+2\|\nabla\Delta r(\tau)\|^2_{L^2}+\|\Delta^2r(\tau)\|^2_{L^2}\right)d\tau\,= \\ 
& =\,\|r_0\|^2_{L^2}\,+\,2\,\|\nabla r_0\|^2_{L^2}+\,2\,\|\Delta r_0\|^2_{L^2}\,+\,\|\nabla\Delta r_0\|^2_{L^2}\,+\,
2\int^t_0\lan f(\tau),(\Id-\Delta)r(\tau)\ran_{H^{-2}\times H^2}\,d\tau\,. 
\end{align*}
for all $t\geq0$, owing to the fact that $\int\nabla^\perp(\Id-\Delta)r\cdot\nabla(\Id-\Delta)r\;\Delta^2r=0$. For simplicity, let us introduce the notation
$$
X(t)\,:=\,\|r(t)\|^2_{L^2}\,+\,\|\nabla r(t)\|^2_{L^2}+\,\|\Delta r(t)\|^2_{L^2}\,+\,\|\nabla\Delta r(t)\|^2_{L^2}\,,
$$
and let us set $X_0$ to be the same quantity computed on the initial datum.

Now we estimate the forcing term in the following way:
\begin{eqnarray*}
\left|\lan f,(\Id-\Delta)r\ran_{H^{-2}\times H^2}\right| & \leq & \|f\|_{H^{-2}}\,\left\|(\Id-\Delta)r\right\|_{H^2} \\
& \leq & C\,\|f\|_{H^{-2}}\,\left(\|r\|_{L^2}+\|\nabla r\|_{L^2}+\|\Delta r\|_{L^2}+\|\nabla\Delta r\|_{L^2}+\|\Delta^2r\|_{L^2}\right)\,,
\end{eqnarray*}
and an application of Young inequality immediately gives us
\begin{align}
&X(t)\,+\,\mu\int^t_0\left(\|\Delta r(\tau)\|^2_{L^2}+\|\nabla\Delta r(\tau)\|^2_{L^2}+\|\Delta^2r(\tau)\|^2_{L^2}\right)d\tau\,\leq \label{est:X} \\
&\qquad\qquad\qquad\qquad\leq\,X_0\,+\,\frac{C}{\mu}\int^t_0\|f(\tau)\|^2_{H^{-2}}\,d\tau\,+\,\mu\int^t_0\left(\|r(\tau)\|^2_{L^2}\,+\,\|\nabla r(\tau)\|^2_{L^2}\right)d\tau\,. \nonumber
\end{align}
At this point, Gronwall lemma allows us to obtain the claimed estimate.
\end{proof}

\begin{rem} \label{r:en-1}
We point out here that having additional regularity in space on $f$, like e.g. $f\in L^2_{\rm loc}(\R_+;L^2)$, does not help to improve the estimates. Indeed, the presence of
the lower order term forces us to apply Gronwall's inequality, and this produces an exponential growth in time of the norm of the solution.

On the other hand, we have to notice that, whenever $f\equiv0$, the estimates simply reduce to $X(t)\,+\,\mu\,\left\|\Delta^2 r\right\|^2_{L^2_t(H^2)}\,\leq\,C\,X_0$,
where $X(t)$ and $X_0$ are the quantities introduced in the proof of the previous proposition. See also Lemma \ref{l:r-prelim} below for a similar estimate in the case when $f\neq0$.
\end{rem}

This having been established, let us derive higher regularity estimates, under suitable assumptions on the initial datum and the external force.
\begin{prop} \label{p:a-priori_II}
Suppose now that $r_0\in H^4(\R^2)$ and $f\in L^2_{\rm loc}\bigl(\R_+;H^{-1}(\R^2)\bigr)$. Let $r$ be a smooth solution of equation \eqref{eq:main} with external force $f$
and initial datum $r_0$.

Then, there exists a constant $C>0$ such that, for any time $t\geq0$ fixed, one has the estimate
\begin{eqnarray*}
& & \hspace{-0.5cm} \|r(t)\|^2_{L^2}\,+\,\|\nabla r(t)\|^2_{L^2}\,+\,\|\Delta r(t)\|^2_{L^2}\,+\,\|\nabla\Delta r(t)\|^2_{L^2}\,+\,\|\Delta^2r(t)\|^2_{L^2}\,+\,
\mu\int^t_0{\|\nabla \Delta^2r(\tau)\|^2_{L^2}}\,d\tau\,\leq \\
& & \leq\,C\;e^{\mu\,t}\left(\|r_0\|^2_{L^2}\,+\,\|\nabla r_0\|^2_{L^2}\,+\,\|\Delta r_0\|^2_{L^2}\,+\,\|\nabla\Delta r_0\|^2_{L^2}\,+\,\|\Delta^2r_0\|^2_{L^2}\,+\,
\frac{1}{\mu}\int^t_0\|f(\tau)\|^2_{H^{-1}}\,d\tau\right)\,.
\end{eqnarray*}
\end{prop}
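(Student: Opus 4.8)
The plan is to reproduce the scheme of Proposition \ref{p:a-priori_I} one order higher, testing equation \eqref{eq:main} against the multiplier $\mbb Dr:=(\Id-\Delta+\Delta^2)r$ rather than against $(\Id-\Delta)r$. This is the ``second kind'' energy anticipated in the Introduction, and the whole mechanism rests on the fact that the convective term is annihilated precisely by this multiplier.

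Taking the $L^2$ scalar product of \eqref{eq:main} with $\mbb Dr$, the time derivative yields $\tfrac12\tfrac{d}{dt}\|\mbb Dr\|_{L^2}^2$, and a Plancherel computation (the symbol of $\Id-\Delta+\Delta^2$ being $1+|\xi|^2+|\xi|^4$) gives
$$
\|\mbb Dr\|_{L^2}^2=\|r\|_{L^2}^2+2\|\nabla r\|_{L^2}^2+3\|\Delta r\|_{L^2}^2+2\|\nabla\Delta r\|_{L^2}^2+\|\Delta^2 r\|_{L^2}^2\,,
$$
a quantity equivalent to $\|r\|_{H^4}^2$ and hence to the left-hand side of the statement. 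The viscous term is treated by integration by parts (or by reading off the symbol $(|\xi|^4+|\xi|^6)(1+|\xi|^2+|\xi|^4)$) and produces the non-negative expression
$$
\mu\,\bigl\langle\Delta^2(\Id-\Delta)r,\mbb Dr\bigr\rangle=\mu\Bigl(\|\Delta r\|_{L^2}^2+2\|\nabla\Delta r\|_{L^2}^2+2\|\Delta^2 r\|_{L^2}^2+\|\nabla\Delta^2 r\|_{L^2}^2\Bigr)\,,
$$
whose top-order part is exactly the dissipation $\mu\,\|\nabla\Delta^2 r\|_{L^2}^2$ required in the statement.

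The crucial step is the vanishing of the convective term. By Remark \ref{r:bil-formular} applied with $\phi=\mbb Dr$,
$$
\bigl\langle\Lambda(r,r),\mbb Dr\bigr\rangle=-\int_{\R^2}\nabla^\perp(\Id-\Delta)r\cdot\nabla\mbb Dr\;\mbb Dr\,dx=-\frac12\int_{\R^2}\nabla^\perp(\Id-\Delta)r\cdot\nabla|\mbb Dr|^2\,dx=0\,,
$$
the last equality coming from one integration by parts together with $\div\nabla^\perp\equiv0$. This algebraic cancellation, the remainder of the BD-entropy structure, is the conceptual heart of the argument and the only genuinely non-routine point; for a smooth solution every manipulation above is legitimate, so the resulting a priori estimate is rigorous.

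It then remains to absorb the forcing. I would bound $|\langle f,\mbb Dr\rangle|\le\|f\|_{H^{-1}}\|\mbb Dr\|_{H^1}$ and estimate $\|\mbb Dr\|_{H^1}$ by $\|r\|_{L^2}+\|\nabla r\|_{L^2}+\|\Delta r\|_{L^2}+\|\nabla\Delta r\|_{L^2}+\|\Delta^2 r\|_{L^2}+\|\nabla\Delta^2 r\|_{L^2}$. A Young inequality then throws the high-order contributions (notably $\|\nabla\Delta^2 r\|_{L^2}$, but also $\|\Delta r\|_{L^2}$, $\|\nabla\Delta r\|_{L^2}$ and $\|\Delta^2 r\|_{L^2}$) into the viscous dissipation just computed and produces the term $\tfrac{C}{\mu}\|f\|_{H^{-1}}^2$, leaving only a lower-order remainder of the type $\mu(\|r\|_{L^2}^2+\|\nabla r\|_{L^2}^2)$ which no dissipation controls. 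Integrating in time and applying Gronwall's lemma exactly as in Proposition \ref{p:a-priori_I} supplies the factor $e^{\mu t}$ and closes the estimate; a final appeal to the equivalence $\|\mbb Dr\|_{L^2}^2\simeq\|r\|_{H^4}^2$ rewrites it in the stated form. The choice $f\in H^{-1}$ is exactly what makes the forcing absorbable by the one extra order of smoothing available at top order.
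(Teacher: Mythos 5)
Your proof is correct, but it follows a different route than the paper's own argument for this proposition. The paper does not test against $\mbb Dr\,=\,(\Id-\Delta+\Delta^2)r$: it multiplies the equation by $\Delta^2 r$ alone, kills the convective term by Lemma \ref{l:bilin-high} (rather than by Remark \ref{r:bil-formular}, which is what you invoke), and exploits the fact that the resulting forcing term, bounded by $C\,\|f\|_{H^{-1}}\bigl(\|\Delta^2r\|_{L^2}+\|\nabla\Delta^2 r\|_{L^2}\bigr)$, is \emph{entirely} absorbable by the new dissipation $\mu\bigl(\|\Delta^2 r\|^2_{L^2}+\|\nabla\Delta^2 r\|^2_{L^2}\bigr)$. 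This yields the Gronwall-free estimate \eqref{est:en-3} for $Y(t)=\|\Delta r\|^2_{L^2}+\|\nabla\Delta r\|^2_{L^2}+\|\Delta^2 r\|^2_{L^2}$, with no exponential growth; the statement is then obtained by simply adding the estimate of Proposition \ref{p:a-priori_I}, which is the sole source of the factor $e^{\mu t}$. Your multiplier $\mbb Dr$ is precisely the sum of the two multipliers $(\Id-\Delta)r$ and $\Delta^2 r$, so you re-derive Proposition \ref{p:a-priori_I} and the new piece in a single computation: since $\mbb Dr$ contains low frequencies, your forcing bound produces the unabsorbable remainder $\mu\bigl(\|r\|^2_{L^2}+\|\nabla r\|^2_{L^2}\bigr)$ and obliges you to run Gronwall yourself (which does give $e^{\mu t}$, as $\|r\|^2_{L^2}+\|\nabla r\|^2_{L^2}\leq\|\mbb Dr\|^2_{L^2}$). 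Both routes are legitimate; yours is in fact the computation the paper performs for Theorem \ref{t:weak_2}, where the equation is tested against $(\Id-\Delta+\Delta^2)w$. What the paper's split buys is that the genuinely new, second-kind estimate is isolated in a time-uniform form, which is exactly what gets reused later (Remark \ref{r:en-2}, the second inequality of Lemma \ref{l:r-prelim}, inequality \eqref{ineq_low_frqs} in the proof of Proposition \ref{p:higer-reg}, and the decay analysis of Section \ref{s:long-time}); your fused version proves the stated proposition just as well, but it buries the structural fact that the exponential growth affects only the low-order part of the energy.
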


\begin{proof}
We start by multiplying equation \eqref{eq:main} by $\Delta^2r$. We notice that, in light of Lemma \ref{l:bilin-high} above, the transport term identically vanishes. So, straightforward computations
give us the equality
\begin{equation} \label{est:en-2}
\frac{1}{2}\,\frac{d}{dt}\int\left(|\Delta r|^2+|\nabla\Delta r|^2+|\Delta^2 r|^2\right)dx\,+\,\mu\int\left(|\Delta^2 r|^2+|\nabla\Delta^2 r|^2\right)\,dx\,=\,\lan f,\Delta^2r\ran_{H^{-1}\times H^1}\,.
\end{equation}

The control of the forcing term can be performed similarly as before: we have
$$ 
\left|\lan f,\Delta^2r\ran_{H^{-1}\times H^1}\right|\,\leq\,\|f\|_{H^{-1}}\,\left\|\Delta^2r\right\|_{H^1}\,\leq\,C\,\|f\|_{H^{-1}}\,\left(\|\Delta^2r\|_{L^2}+\|\nabla\Delta^2 r\|_{L^2}\right)\,.
$$ 
Applying Young inequality to the previous relation, inserting the result in \eqref{est:en-2} and integrating in time, we finally find
\begin{equation} \label{est:en-3}
Y(t)\,+\,\mu\int^t_0\left(\|\Delta^2 r(\tau)\|^2_{L^2}+\|\nabla\Delta^2 r(\tau)\|^2_{L^2}\right)d\tau\,\leq\,C\left(Y_0\,+\,\frac{1}{\mu}\int^t_0\|f(\tau)\|^2_{H^{-1}}\,d\tau\right)\,,
\end{equation}
where we have defined
\begin{equation} \label{def:Y}
Y(t)\,:=\,\|\Delta r(t)\|^2_{L^2}\,+\,\|\nabla\Delta r(t)\|^2_{L^2}\,+\,\|\Delta^2 r(t)\|^2_{L^2}
\end{equation}
and $Y_0$ to be the same quantity, when computed on the initial datum $r_0$.

At this point, we sum inequality \eqref{est:en-3} to the one found in Proposition \ref{p:a-priori_I}. Noticing that $e^{\mu t}\geq1$ for all $t\geq0$, and that $H^{-1}\hookrightarrow H^{-2}$,
it is easy to get the desired estimate.
\end{proof}

\begin{rem} \label{r:en-2}
When $f$ is globally integrable in time, i.e. $f\in L^2\bigl(\R_+;H^{-1}(\R^2)\bigr)$ , one gets the better estimate
$Y(t)\,+\,\mu\,\left\|\Delta^2 r\right\|^2_{L^2_t(H^1)}\,\leq\,K\,\bigl(Y_0+\|f\|^2_{L^2_t(H^{-1})}\bigr)$.

In particular, whenever $f\equiv0$,  one has no more the exponential growth of the Sobolev norms in the estimates of both Propositions \ref{p:a-priori_I} and \ref{p:a-priori_II}, and
the regularity in time (see the statement of Theorem \ref{t:weak}) becomes global.
We refer again to Lemma \ref{l:r-prelim} below for further results in this spirit.
\end{rem}

We postpone to Section \ref{s:strong} the proof of both intermediate and higher order energy estimates, because they are based on a different technique (namely, a paralinearization
of the equation and on a broad use of paradifferential calculus).

\section{Weak solutions theory} \label{s:weak}

This section is devoted to the proof of Theorem \ref{t:weak}. Namely, in a first time we will establish existence of global in time weak solutions to equation \eqref{eq:main} having minimal
regularity. Then, we will derive their uniqueness by studying a higher order parabolic equation, which can be viewed as the analogue of the time-dependent Stokes problem in our context. Besides,
this analysis will allow us to recover the energy equality.

In fact, our system shares a lot of similarities with the homogeneous incompressible Navier-Stokes equations; hence, in this section we mainly resort to arguments which are typical
for dealing with that problem (we refer e.g. to Chapters 2 and 3 of \cite{C-D-G-G}).

\subsection{Global in time existence} \label{ss:weak-ex}
We start by treating the existence issue. 
First of all, let us define the Fourier multipliers $a(\xi)\,:=\,( 1+ |\xi|^2 + |\xi|^4)$ and $d(\xi)\,:=\,a^{-1}(\xi)$; let respectively $\mc A$ and $\D$ be the related pseudo-differential
operators, defined by the formula
\begin{equation} \label{def:A-D}
\mc A g\, :=\, \Ff^{-1} \big( ( 1+ |\xi|^2 + |\xi|^4) \hat g (\xi)\big)\qquad\quad\mbox{ and }\qquad\quad
\D g\, :=\, \Ff^{-1} \big( ( 1+ |\xi|^2 + |\xi|^4)^{-1} \hat g (\xi)\big)
\end{equation}
for any $g\in\mc{S}(\R^2)$, where $\mc S$ denotes the Schwartz class and $\Ff$ the Fourier transform in the space variables.
Straightforward computations show that $\mc A$ and $\D $ are continuous operator respectivly from $H^{s+4}$ to $H^{s}$ and from $H^s$ to $H^{s+4}$, for any real $s$
(see also Proposition 2.78 of \cite{B-C-D}), and each of them is the inverse operator of the other.

Applying operator $\D$ to  system \eqref{eq:main}, we can recast it in the following form:
\begin{equation}\label{eq:main_reform}
	\begin{cases}
		\d_t r\,+\,\D\Lambda(r,r)\,+\,\mu \D \Delta^2 (\Id - \Delta )r\,=\,\D f \\[1ex]
		r_{|t=0}\,=\,r_0\,.
	\end{cases}
\end{equation}

Our next goal is to contstruct smooth approximate solutions to system \eqref{eq:main_reform}, and then prove that they converge to a weak solution of the original equation \eqref{eq:main}.

\subsubsection{Construction of approximate solutions and uniform bounds} \label{sss:approx}
Denoting by $\mathds{1}_{A}$ the characteristic function of a set $A$, for any $n$ we introduce the regularizing operator $\J_n$ by the formula
\begin{equation*}
	\J_n g\, :=\, \Ff^{-1}\big( \mathds{1}_{\{|\xi|^2\leq n\}}(\xi)\; \what g (\xi) \big)\,,
\end{equation*}
which localizes the Fourier transform of $g$ in the ball $B_\xi(0, n^{1/2})$. Correspondingly, we define the space $L^2_n\,:=\,\J_nL^2$; remark that, for any $n\geq1$, functions in $L^2_n$
are actually smooth, since they have compact spectrum, and more precisely the embedding $L^2_n\,\subset\,H^s$ holds for all $s\in\R$. We also set
$$
r_0^n\,:=\,\J_nr_0\qquad\qquad\mbox{ and }\qquad\qquad f^n(t)\,:=\,\J_nf(t)\,.
$$
Notice that, for all time $T>0$ fixed, we have the inequalities $\left\|r^n_0\right\|_{H^3}\leq C\left\|r_0\right\|_{H^3}$ and
$\left\|f^n\right\|_{L^2_T(H^{-2})}\leq C\left\|f\right\|_{L^2_T(H^{-2})}$, together with the convergence properties
\begin{equation} \label{conv:initial}
\lim_{n\ra+\infty}\left\|r^n_0\,-\,r_0\right\|_{H^3}\,=\,0\qquad\qquad\mbox{ and }\qquad\qquad\lim_{n\ra+\infty}\left\|f^n\,-\,f\right\|_{L^2_T(H^{-2})}\,=\,0\,.
\end{equation}
The former limit can be easily proved by using dyadic characterization of Sobolev spaces (see e.g. Lemma \ref{l:Id-S} in the Appendix); for the latter, one has also to
apply Lebesgue dominated convergence theorem in order to deal with the time integral.

Now, we consider the approximate system
\begin{equation}\label{appx_system}
	\begin{cases}
		\d_tr\,+\,\J_n\D\Lambda(r,r)\,+\,\mu\,\J_n\D\Delta^2(\Id - \Delta )r\,=\,\D f^n \\[1ex]
		r_{|t=0}\,=\,r^n_0\,,
	\end{cases}
\end{equation}
for which we can show existence of global in time smooth solutions.

\begin{prop} \label{p:ex-smooth}
For all $n\geq1$ fixed, system \eqref{appx_system} admits a unique classical solution $r^n$, which is globally defined in time and which fulfills
	\begin{equation*}
		r^n\,\in\, 
L_{\rm loc}^\infty\bigl(\RR_+;H^3(\R^2)\bigr)\,\cap\, L^2_{\rm loc}\bigl(\RR_+; H^4(\R^2)\bigr)\,.
	\end{equation*}
Moreover for all $T>0$, there exists a positive constant $C_T$, possibly depending on $T$ but not on $n$, such that
	\begin{equation*}
		\| r^n \|_{L^\infty_T(H^3)}\,+\,\|r^n \|_{L^2_T(H^4)}\,\leq\,C_T\, \left(\| r_0 \|_{H^3}\,+\,\|f\|_{L^2_T(H^{-2})}\right)\,.
	\end{equation*}
\end{prop}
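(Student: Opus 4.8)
The plan is to construct $r^n$ via a fixed-point/ODE argument on the finite-dimensional (or rather, spectrally localized) space $L^2_n$, then propagate the energy estimates of Proposition~\ref{p:a-priori_I} uniformly in $n$. First I would observe that $\J_n$ is a projection onto $L^2_n$ whose elements are smooth with compact spectrum, and that $\J_n^2=\J_n$; thus if the initial datum $r_0^n=\J_n r_0$ lies in $L^2_n$, the structure of system~\eqref{appx_system} guarantees the solution stays in $L^2_n$ for all times, since every term on the right-hand side is hit by $\J_n$ (one checks $\d_t r=\J_n(\dots)+\D f^n$, and $\D f^n=\D\J_n f\in L^2_n$ because $\D$ and $\J_n$ commute as Fourier multipliers). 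On $L^2_n$, all Sobolev norms are equivalent, so the operators $\J_n\D\Delta^2(\Id-\Delta)$ and $\J_n\D\Lambda(\cdot,\cdot)$ become, respectively, a bounded linear map and a bounded bilinear map. In particular the map
$$
\Phi(r)\,:=\,-\,\J_n\D\Lambda(r,r)\,-\,\mu\,\J_n\D\Delta^2(\Id-\Delta)r\,+\,\D f^n
$$
is locally Lipschitz from $L^2_n$ into $L^2_n$ (the bilinear term gives quadratic, hence locally Lipschitz, dependence). The Cauchy--Lipschitz (Picard--Lindelöf) theorem for ODEs in the Banach space $L^2_n$ then yields a unique maximal solution $r^n\in\mc C^1\bigl([0,T^*_n[\,;L^2_n\bigr)$.

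The second step is to upgrade this local solution to a global one by ruling out blow-up of the $L^2_n$-norm in finite time. Here I would invoke the energy estimate of Proposition~\ref{p:a-priori_I}: since $r^n$ is smooth and, crucially, solves the \emph{genuine} equation~\eqref{eq:main} with forcing $\D f^n$ replaced appropriately---more precisely, because $\J_n r^n=r^n$, testing~\eqref{appx_system} against $(\Id-\Delta)r^n$ reproduces the same cancellation $\langle\Lambda(r^n,r^n),(\Id-\Delta)r^n\rangle=0$ furnished by Lemma~\ref{l:bilin}, up to verifying that the extra $\J_n$ factors are harmless. The key point is that $\J_n$ is self-adjoint and commutes with $\D$ and all the constant-coefficient differential operators, so that when we pair $\J_n\D\Lambda(r^n,r^n)$ with $(\Id-\Delta)r^n$ we may move $\J_n$ onto the test function, use $\J_n(\Id-\Delta)r^n=(\Id-\Delta)r^n$, and recover exactly the cancellation of the a~priori estimate. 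This gives the bound
$$
\|r^n\|_{L^\infty_T(H^3)}\,+\,\|r^n\|_{L^2_T(H^4)}\,\leq\,C_T\,\bigl(\|r_0^n\|_{H^3}+\|f^n\|_{L^2_T(H^{-2})}\bigr)\,,
$$
and since in the $L^2_n$-norm this controls every Sobolev norm, the solution cannot leave $L^2_n$ in finite time, whence $T^*_n=+\infty$. Finally, using $\|r_0^n\|_{H^3}\leq C\|r_0\|_{H^3}$ and $\|f^n\|_{L^2_T(H^{-2})}\leq C\|f\|_{L^2_T(H^{-2})}$ noted just above the statement, I obtain the uniform-in-$n$ bound claimed.

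The main obstacle I anticipate is the careful verification that the energy cancellation survives the regularization. A naive application of Proposition~\ref{p:a-priori_I} is not immediate, because $r^n$ solves~\eqref{appx_system}, not~\eqref{eq:main}; the projector $\J_n$ sits inside both the transport and the viscosity terms. The delicate point is to confirm that $\langle\J_n\D\Lambda(r^n,r^n),(\Id-\Delta)r^n\rangle_{L^2}=0$ and that the viscosity term still produces the correct nonnegative dissipation $\mu\|\Delta r^n\|_{H^2}^2$ after the $\J_n$ and $\D$ factors are accounted for. Both reduce to the commutation and self-adjointness of the Fourier multipliers $\J_n$, $\D$, $\mc A$ together with the algebraic identity $\J_n(\Id-\Delta)r^n=(\Id-\Delta)r^n$ (valid since $\J_n r^n=r^n$); granting these, the computation collapses onto the one already performed in Proposition~\ref{p:a-priori_I}, and the constant $C_T$ it produces is manifestly independent of $n$ because the Gronwall argument there depends only on $\mu$, $T$, and the data norms.
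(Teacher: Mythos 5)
Your overall strategy coincides with the paper's: solve \eqref{appx_system} as an ODE in $L^2_n$ via Cauchy--Lipschitz, rule out finite-time blow-up through an energy bound, and inherit uniformity in $n$ from $\|r^n_0\|_{H^3}\leq C\|r_0\|_{H^3}$ and $\|f^n\|_{L^2_T(H^{-2})}\leq C\|f\|_{L^2_T(H^{-2})}$. However, the central step of your energy estimate, as written, fails. You propose to test \eqref{appx_system} directly against $(\Id-\Delta)r^n$ and to recover the cancellation by moving the self-adjoint projector $\J_n$ onto the test function. Moving $\J_n$ costs nothing, but the operator $\D$ remains in the way: what you actually obtain is
$$
\langle \J_n\D\Lambda(r^n,r^n)\,,\,(\Id-\Delta)r^n\rangle_{L^2}\,=\,\langle \Lambda(r^n,r^n)\,,\,\D(\Id-\Delta)r^n\rangle_{L^2}\,=\,\langle \Lambda(r^n,r^n)\,,\,(\Id-\Delta)\D r^n\rangle_{L^2}\,,
$$
and this quantity is \emph{not} zero. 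The cancellation of Lemma \ref{l:bilin} rests on the pointwise identity $\nabla^\perp(\Id-\Delta)\rho\cdot\nabla(\Id-\Delta)\rho=0$, so it requires the test function to be $(\Id-\Delta)\rho$ with $\rho$ equal to the \emph{first argument} of $\Lambda$; here the test function involves $\D r^n\neq r^n$, and no amount of commuting or self-adjointness of the multipliers $\J_n$, $\D$, $\mc A$ can identify the two pairings, since they are genuinely different numbers. The same obstruction corrupts the other terms: pairing $\partial_tr^n$ with $(\Id-\Delta)r^n$ produces only the $H^1$-type energy $\tfrac12\tfrac{d}{dt}\bigl(\|r^n\|^2_{L^2}+\|\nabla r^n\|^2_{L^2}\bigr)$, and the viscosity term produces a dissipation weighted by $a^{-1}(\xi)$, of strength roughly $\mu\|\Delta r^n\|^2_{L^2}$ only. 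So this choice of test function can never yield the claimed $L^\infty_T(H^3)\cap L^2_T(H^4)$ bound, uniform in $n$ or not.

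The repair is exactly the move the paper makes. Since $r^n$ is a classical solution with compact spectrum, one may apply $\mc A=\D^{-1}$ to \eqref{appx_system} and recast it as
$$
\partial_t\left(\Id-\Delta+\Delta^2\right)r^n\,+\,\J_n\Lambda(r^n,r^n)\,+\,\mu\,\J_n\Delta^2(\Id-\Delta)r^n\,=\,f^n\,,
$$
i.e.\ equation \eqref{eq:approx}, and \emph{then} test against $(\Id-\Delta)r^n$ (equivalently: test \eqref{appx_system} against $\mc A(\Id-\Delta)r^n$). At that point your argument works verbatim: moving the self-adjoint $\J_n$ onto the test function and using $\J_n(\Id-\Delta)r^n=(\Id-\Delta)\J_nr^n=(\Id-\Delta)r^n$ reduces the bilinear term to $\langle\Lambda(r^n,r^n),(\Id-\Delta)r^n\rangle_{L^2}=0$ by Lemma \ref{l:bilin}, and the remaining terms reproduce the computations of Proposition \ref{p:a-priori_I} with constants independent of $n$. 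With this correction, your globality argument (any Sobolev bound prevents the solution from leaving $L^2_n$ in finite time) and the passage to the uniform-in-$n$ estimate are sound.
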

\begin{proof}
To begin with, let us define the functional $\mc L_n$ by the formula
	\begin{equation*}
		\mc L_n ( g )\,:=\, -\,\J_n \D \big( \nabla^{\perp}( \Id - \Delta )g \cdot \nabla \Delta^2 g \big)\,-\, \mu\,  \D \Delta^2 (\Id - \Delta )\J_n g\,.
	\end{equation*}
Observe that, using the embeddings $L^2_n\,\subset\,H^s$ for all $s\in\R$ to control the bilinear term, one easily deduces that $\mc L_n$ is an endomorphism of the space $L^2_n$;
more precisely, there exists a constant $C_n$ such that, for any function $g\in L^2_n$,
	\begin{align*}
		\| \mc L_n ( g ) \|_{L^2_n}\,\leq\,C_n\,\| g \|_{L^2_n}\,.
	\end{align*}
	
Next, we rewrite system \eqref{appx_system} as an ODE in the closed subspace $L^2_n$:
	\begin{equation}\label{CL_system}
\partial_t r\,=\, \mc L_n(r)\,+\,\D f^n\;,\qquad\qquad\qquad\mbox{ with }\qquad r_{|t=0}\,=\,r^n_0\,.
	\end{equation}
Then, Cauchy-Lipschitz theorem yields the existence of a unique maximal solution $r^n$ which belongs to $\mc C^1\bigl([0,T_n[\,;L^2_n\bigr)$, for some positive time $T_n$
(notice that, actually, $r^n$ is $\mc C^\infty$ in time).

Next, we observe that, from equation \eqref{CL_system}, Lemma \ref{l:bilin} and the continuity of $\D$ as an operator from $H^s$ to $H^{s+4}$, we infer the estimate
$$
\left\|\d_tr^n(t)\right\|_{L^2}\,\leq\,C\left(\left\|f^n(t)\right\|_{L^2}\,+\,n\,\left\|r^n(t)\right\|_{L^2}\,+\,n^{5/2}\,\left\|r^n(t)\right\|^2_{L^2}\right)\,.
$$
From this inequality we deduce that, if $\left\|r^n(t)\right\|_{L^2}$ is bounded on some interval $[0,T[\,$, then $\left\|\d_tr^n(t)\right\|_{L^2}$ remains bounded too. Therefore,
by Cauchy criterion, the solution can be extended beyond $T$. This immediately implies that $T_n=+\infty$ for all $n$, provided we show a uniform bound on
$\left\|r^n(t)\right\|_{L^2}$.

\medbreak
Hence, our next goal is to get estimates on $r^n$, uniformly in $n\in\N$. Recall that $r^n$ is a classical solution of \eqref{appx_system}: then, applying operator $\mc A$
to the equation, where $\mc A$ has been defined in \eqref{def:A-D} above, we see that $r^n$ satisfies
\begin{equation} \label{eq:approx}
	\partial_t ( \Id - \Delta  + \Delta^2   )r^n\,+\,\J_n\Lambda(r^n,r^n)\,+\, \mu\,\J_n\Delta^2 (\Id - \Delta ) r^n\,  =\, f^n\,.
	\end{equation}

Now, we multiply both the left and right-hand side by $(\Id-\Delta)r^n$ and we integrate over $\RR^2$. Notice that $\J_n$ is a self-adjoint operator on $L^2$,
$\J_n^2\,\equiv\,\J_n$ and $\J_nr^n\,=\,r^n$; then, the bilinear term identically vanishes, thanks to the last property of Lemma \ref{l:bilin}.
Therefore, the same computations as in Proposition \ref{p:a-priori_I} entail
$$
\|r^n(t)\|^2_{H^3}\,+\,\mu\int^t_0\|\Delta r^n(\tau)\|^2_{H^2}\,d\tau\,\leq\,C\;e^{\mu\,t}\left(\|r^n_0\|^2_{H^3}\,+\,
\frac{1}{\mu}\int^t_0\|f^n(\tau)\|^2_{H^{-2}}\,d\tau\right)\,,
$$
for some constant $C>0$ not depending on $n\geq1$. This estimate, together with the properties of the appoximate families $\bigl(r^n_0\bigr)_n$ and $\bigl(f^n\bigr)_n$, immediately implies
that, for all $T>0$ fixed, there exists a constant $C_T>0$ such that  
\begin{equation*}
\sup_{t\in[0,T]}\| r^n (t) \|_{H^3}^2\,+\,\mu\int_0^T \| r^n(t)\|_{H^4}^2\,dt\, \leq\,C_T\,\left(\| r_0\|_{H^3}\,+\,\left\|f\right\|^2_{L^2_T(H^{-2})}\right)\,.
	\end{equation*}

The proposition is hence completely proved.
\end{proof}

As a consequence of the previous proposition, up to the extraction of a subsequence, we gather the existence of a function
\begin{equation} \label{def:r}
r\,\in\,L_{\rm loc}^\infty\bigl(\RR_+;H^3(\R^2)\bigr)\,\cap\, L^2_{\rm loc}\bigl(\RR_+; H^4(\R^2)\bigr)\qquad\qquad\mbox{ such that }\qquad r^n\,\stackrel{*}{\rightharpoonup}\,r
\end{equation}
in the previous space, where the symbol $\stackrel{*}{\rightharpoonup}$ denotes the convergence in the weak-$*$ topology.

\subsubsection{Compactness and convergence} \label{sss:comp-conv}
The previous convergence property is not enough to pass to the limit in the weak formulation of equation \eqref{appx_system}, the problem relying of course in the convergence
of the non-linear term. Therefore, we need to derive compactness properties for the family of approximate solutions $\bigl(r^n\bigr)_n$: this is our next goal.

\begin{lemma} \label{l:cpt}
The family of smooth solutions $\bigl(r^n\bigr)_n$, constructed in the previous paragraph, is compact in the space $L^2_T\bigl(H^3_{\rm loc}(\R^2)\bigr)$ for all time $T>0$ fixed.
In particular, up to a further extraction, $r^n$ strongly converges to $r$ in the previous space.
\end{lemma}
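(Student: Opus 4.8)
The plan is to apply an Aubin--Lions--Simon type compactness argument, combined with a localization procedure which is forced upon us by the lack of compactness of Sobolev embeddings on the whole space $\R^2$.

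First I would record the uniform bounds already at our disposal. By Proposition \ref{p:ex-smooth}, for every $T>0$ the family $(r^n)_n$ is bounded in $L^\infty_T(H^3)\cap L^2_T(H^4)$, uniformly in $n$. The additional ingredient needed is a uniform control of the time derivative, which I would read off directly from the reformulated equation \eqref{appx_system}, namely
$$
\partial_t r^n\,=\,-\,\J_n\D\Lambda(r^n,r^n)\,-\,\mu\,\J_n\D\Delta^2(\Id-\Delta)r^n\,+\,\D f^n\,.
$$
Since $\J_n$ has operator norm at most $1$ on every Sobolev space and $\D$ maps $H^\sigma$ into $H^{\sigma+4}$, each term can be estimated separately: the linear term $\D\Delta^2(\Id-\Delta)r^n$ is bounded in $L^2_T(H^2)$ (its symbol is of order $2$) because $r^n$ is bounded in $L^2_T(H^4)$; the forcing $\D f^n$ is bounded in $L^2_T(H^2)$ since $f^n$ is bounded in $L^2_T(H^{-2})$; and, using continuity property (a) of Lemma \ref{l:bilin} together with the $L^\infty_T(H^3)$ and $L^2_T(H^4)$ bounds on $r^n$, the bilinear term $\D\Lambda(r^n,r^n)$ is bounded in $L^2_T(H^{2-s})$ for any small $s>0$. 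Altogether this gives a uniform bound for $\partial_t r^n$ in $L^2_T(H^{2-s})$.

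The compactness in space then comes from localization. I would fix an arbitrary $R>0$ and a cutoff $\chi\in\mc C^\infty_0(\R^2)$ identically equal to $1$ on the ball $B_R$; then $\chi\, r^n$ is bounded in $L^2_T(H^4)$ with support contained in a fixed compact set, while $\partial_t(\chi\, r^n)=\chi\,\partial_t r^n$ remains bounded in $L^2_T(H^{2-s})$, since multiplication by $\chi$ is continuous on the relevant Sobolev scales. On the fixed compact support the embedding $H^4\hookrightarrow H^3$ is now compact (Rellich--Kondrachov) and $H^3\hookrightarrow H^{2-s}$ is continuous, so the Aubin--Lions--Simon lemma yields that $(\chi\, r^n)_n$, and hence the family $(r^n)_n$ restricted to $B_R$, is relatively compact in $L^2_T\bigl(H^3(B_R)\bigr)$.

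Finally I would let $R$ run through an exhaustion $(B_k)_{k\in\N}$ of $\R^2$ and perform a diagonal extraction: on each $B_k$ we extract a subsequence converging strongly in $L^2_T\bigl(H^3(B_k)\bigr)$, and the diagonal subsequence then converges in $L^2_T\bigl(H^3(B_k)\bigr)$ for every $k$, that is, strongly in $L^2_T\bigl(H^3_{\rm loc}(\R^2)\bigr)$. Uniqueness of the limit, together with the weak-$*$ convergence $r^n\stackrel{*}{\rightharpoonup}r$ recorded in \eqref{def:r}, identifies the strong limit with $r$. The main obstacle is precisely the absence of compact Sobolev embeddings on the unbounded domain $\R^2$; once this is circumvented by the cutoff-and-diagonal scheme, the remaining estimates are routine.
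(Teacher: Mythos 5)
Your proposal is correct, and up to the derivation of the time-derivative bound it coincides with the paper's argument: both start from the reformulated equation \eqref{CL_system}, use the uniform bounds of Proposition \ref{p:ex-smooth} together with Lemma \ref{l:bilin} and the smoothing of $\D$, and conclude that $\bigl(\d_t r^n\bigr)_n$ is uniformly bounded in $L^2_T(H^{2-s})$. The two arguments part ways in how compactness is extracted from this information. The paper proceeds by hand: the $L^2_T(H^{2-s})$ bound on $\d_t r^n$ gives $\bigl(r^n\bigr)_n$ bounded in $\mc C^{0,1/2}_T(H^{2-s})$, Ascoli--Arzel\`a combined with Rellich--Kondrachov then yields compactness in $L^\infty_T(H^\alpha_{\rm loc})$ for $2-s\leq\alpha<3$, hence strong convergence in $L^\infty_T(H^2_{\rm loc})$, and finally interpolation with the uniform $L^2_T(H^4)$ bound upgrades this to strong convergence in $L^2_T(H^3_{\rm loc})$. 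You instead invoke the Aubin--Lions--Simon lemma on the localized triple $H^4\subset\subset H^3\subset H^{2-s}$ (made legitimate by the cutoff $\chi$, which fixes a compact support and commutes with $\d_t$), and then perform a diagonal extraction over an exhaustion of $\R^2$; identification of the limit with $r$ via the weak-$*$ convergence \eqref{def:r} is the same in both cases. Both routes are sound: yours is shorter and delegates the equicontinuity-plus-interpolation mechanism to a standard black box, while the paper's is self-contained --- it effectively re-proves the needed instance of Aubin--Lions via H\"older continuity in time --- and produces, as a by-product, strong convergence in the stronger norm $L^\infty_T(H^2_{\rm loc})$, of which the $L^2_T(H^3_{\rm loc})$ convergence is then a consequence by interpolation.
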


\begin{proof}
We start by considering equation \eqref{CL_system}. Since $\bigl(r^n\bigr)_n\,\subset\,L^2_T(H^4)$ by Proposition \ref{p:ex-smooth}, we get, for some $C>0$ independent of $n$,
the bound
$$
\bigl\|\D \Delta^2 (\Id - \Delta )\J_nr^n\bigr\|_{L^2_T(H^2)}\,\leq\,C\,.
$$
On the other hand, combining the previous property with the fact that $\bigl(r^n\bigr)_n$ is uniformly bounded also in $L^\infty_T(H^3)$, by Lemma \ref{l:bilin} we deduce that
$$
\bigl\|\J_n \D\Lambda(r^n,r^n)\bigr\|_{L^2_T(H^{2-s})}\,\leq\,C\,,
$$
where $s>0$ is arbitrarily small and the estimate is still uniform with respect to $n\geq1$.

These inequalities together with equation \eqref{CL_system} imply that the sequence $\bigl(\d_tr^n\bigr)_n$ is uniformly bounded in $L^2_T(H^{2-s})$, and then it follows that
$\bigl(r^n\bigr)_n$ is uniformly bounded in $\mc{C}^{0,1/2}_T(H^{2-s})$.

Hence, by Ascoli-Arzel\`a and Rellich-Kondrachov theorems we gather that $\bigl(r^n\bigr)_n$ is compactly included in e.g. the space $L^\infty_T(H^\alpha_{\rm loc})$, for all
$2-s\leq\alpha<3$. Therefore, up to passing to a suitable subsequence, we can assume the strong convergence
$$
r^n\,\longrightarrow\,r\qquad\qquad\mbox{ in }\qquad L^\infty_T(H^{2}_{\rm loc})\,.
$$
Finally, combining this fact with the uniform boundedness $\bigl(r^n\bigr)_n\,\subset\,L^2_T(H^4)$ again gives us the strong convergence in $L^2_T(H^3_{\rm loc})$, as claimed.
\end{proof}

Thanks to the previous result, we can now pass to the limit in the weak formulation of equation \eqref{eq:approx}, and prove that $r$ is a weak solution to the original
system \eqref{eq:main}. This concludes the proof of the existence part of Theorem \ref{t:weak}.

\begin{prop} \label{p:limit}
The limit-point $r$ of the sequence $\bigl(r^n\bigr)_n$, identified in \eqref{def:r} above, solves equation \eqref{eq:main} in the weak sense. Moreover, it verifies the energy
inequality stated in Proposition \ref{p:a-priori_I}.
\end{prop}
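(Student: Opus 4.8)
The plan is to pass to the limit $n\to\infty$ in the weak formulation of the approximate equation \eqref{eq:approx}, and then in the energy estimate satisfied by each $r^n$. First I would record the weak form of \eqref{eq:approx}: testing against an arbitrary $\phi\in\mc{C}^\infty_0\bigl([0,T[\,\times\R^2\bigr)$, integrating by parts once in time (so that the datum $r^n_0$ appears, since $\phi(T)=0$ but $\phi(0)\ne0$) and suitably in space, and exploiting that $\J_n$ is self-adjoint on $L^2$ in order to move it onto $\phi$ in the bilinear and viscous terms, one reaches an identity of exactly the same shape as the one in Definition \ref{d:weak}, but relative to $r^n$, $r^n_0$, $f^n$ and with $\J_n\phi$ in place of $\phi$ in the convective and viscous integrals. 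In particular, by the integration by parts \eqref{eq:bil-action}, the convective contribution reads $-\int_0^T\!\!\int_{\R^2}\Delta^2 r^n\,\nabla^\perp(\Id-\Delta)r^n\cdot\nabla(\J_n\phi)$, while the initial datum enters through $\int_{\R^2}\bigl((\Id-\Delta)r^n_0\,\phi(0)-\nabla\Delta r^n_0\cdot\nabla\phi(0)\bigr)$. It then remains to let $n\to\infty$ term by term.

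The linear terms are routine. From \eqref{def:r} we have $r^n\rightharpoonup r$ weakly in $L^2_T(H^4)$, hence $(\Id-\Delta+\Delta^2)r^n\rightharpoonup(\Id-\Delta+\Delta^2)r$ and $\Delta(\Id-\Delta)r^n\rightharpoonup\Delta(\Id-\Delta)r$ weakly in $L^2_T(L^2)$. Pairing the former against the fixed function $\partial_t\phi\in L^2_T(L^2)$ handles the time-derivative term, while the viscous term converges by weak--strong convergence, since $\Delta(\J_n\phi)=\J_n(\Delta\phi)\to\Delta\phi$ strongly in $L^2_T(L^2)$. The forcing term converges because $f^n\to f$ strongly in $L^2_T(H^{-2})$ and $\phi$ is fixed, and the datum term converges thanks to $r^n_0\to r_0$ in $H^3$; both facts are contained in \eqref{conv:initial}.

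The main obstacle is the convective term, where $\Delta^2 r^n$ is only weakly convergent. The idea is again weak--strong convergence: with $A_n:=\Delta^2 r^n\rightharpoonup\Delta^2 r$ in $L^2_T(L^2)$, it suffices to prove that $B_n:=\nabla^\perp(\Id-\Delta)r^n\cdot\nabla(\J_n\phi)$ converges \emph{strongly} in $L^2_T(L^2)$ to $\nabla^\perp(\Id-\Delta)r\cdot\nabla\phi$. I would split $B_n=\nabla^\perp(\Id-\Delta)r^n\cdot\nabla\phi+\nabla^\perp(\Id-\Delta)r^n\cdot\nabla(\J_n\phi-\phi)$. The first summand converges strongly because $\nabla\phi$ is compactly supported and, by Lemma \ref{l:cpt}, $r^n\to r$ strongly in $L^2_T(H^3_{\rm loc})$, so that $\nabla^\perp(\Id-\Delta)r^n\to\nabla^\perp(\Id-\Delta)r$ in $L^2_T(L^2)$ on $\supp\phi$. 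The second summand tends to $0$ in $L^2_T(L^2)$ because $\nabla^\perp(\Id-\Delta)r^n$ is bounded in $L^2_T(L^2)$ uniformly in $n$, whereas $\|\nabla(\J_n\phi-\phi)\|_{L^\infty}\to0$ (indeed $\J_n\phi\to\phi$ in every $H^s$, and $H^s\hookrightarrow L^\infty$ in dimension two as soon as $s>1$). Weak--strong convergence then gives $\int_0^T\!\!\int A_nB_n\to\int_0^T\!\!\int\Delta^2 r\,\nabla^\perp(\Id-\Delta)r\cdot\nabla\phi$, as desired. The only genuine subtlety here is that the spectral cut-off $\J_n\phi$ is not compactly supported, which is precisely what dictates the splitting above.

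Finally, for the energy inequality I would use that each smooth $r^n$ satisfies the estimate of Proposition \ref{p:a-priori_I} relative to the data $(r^n_0,f^n)$ --- this is exactly what was established in the proof of Proposition \ref{p:ex-smooth}, the bilinear term vanishing by Lemma \ref{l:bilin}. On the left-hand side I would pass to the $\liminf$ by weak lower semicontinuity of the $L^2$-based norms: for each fixed $t$ one has $r^n(t)\rightharpoonup r(t)$ weakly in $H^3$ --- indeed the strong convergence in $L^\infty_T(H^2_{\rm loc})$ obtained in the proof of Lemma \ref{l:cpt} identifies the limit, and the uniform bound $\sup_t\|r^n(t)\|_{H^3}\le C$ upgrades it to weak $H^3$ convergence --- so the instantaneous norms are lower semicontinuous, while $\Delta r^n\rightharpoonup\Delta r$ in $L^2([0,t];H^2)$ controls the dissipation integral. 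On the right-hand side the datum and forcing contributions converge by \eqref{conv:initial}. Since $\mathrm{LHS}_n\le\mathrm{RHS}_n$ for every $n$ and $\liminf a_n+\liminf b_n\le\liminf(a_n+b_n)$, the inequality of Proposition \ref{p:a-priori_I} survives in the limit and holds for $r$, which concludes the proof.
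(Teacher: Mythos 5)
Your proof is correct and follows essentially the same route as the paper's: pass to the limit in the weak formulation of \eqref{eq:approx}, use Lemma \ref{l:cpt} together with weak--strong convergence to handle the convective term and the strong convergence $\J_n\phi\to\phi$ in $\mc C\bigl([0,T];H^s\bigr)$, $s>1$, to remove the spectral cut-off, and then recover the energy inequality by weak lower semicontinuity (Fatou) combined with \eqref{conv:initial}. The only cosmetic difference is that you establish strong $L^2_T(L^2)$ convergence of the whole factor $\nabla^\perp(\Id-\Delta)r^n\cdot\nabla(\J_n\phi)$, while the paper first replaces $\nabla\J_n\phi$ by $\nabla\phi$ via a uniform $L^2_T(L^1)$ bound on $\Delta^2r^n\,\nabla^\perp(\Id-\Delta)r^n$; the two estimates are the same.
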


\begin{proof}
Let us write down the weak formulation of \eqref{eq:approx}: for $\phi\in\mc C^{\infty}_0\bigl([0,T[\,\times\R^2\bigr)$, we have
\begin{eqnarray*}
& & \hspace{-0.7cm}
-\int^T_0\!\!\!\int_{\R^2}\bigl(\Id-\Delta+\Delta^2\bigr)r^n\;\d_t\phi\,-\,\int^T_0\!\!\!\int_{\R^2}\Delta^2r^n\;\nabla^\perp(\Id-\Delta)r^n\cdot\nabla\J_n\phi\,+ \\
& & \hspace{-0.3cm}
+\,\mu\int^T_0\!\!\!\int_{\R^2}\Delta(\Id-\Delta)r^n\,\J_n\Delta\phi\,=\,\int^T_0\!\!\!\langle f^n(t),\phi(t)\rangle_{H^{-2}\times H^2}\,+ \,
\int_{\R^2}\Bigl((\Id-\Delta)r^n_0\,\phi\,-\,\nabla\Delta r^n_0\cdot\nabla\phi\Bigr)\,.
\end{eqnarray*}
Thanks to the weak convergence properties of $r^n$ and the strong convergence properties of the approximate initial data, it is easy to pass to the limit in the $\d_t\phi$ terms
and on the right-hand side of the previous equality.

As for the ``viscosity'' and transport terms, first of all we notice that, by Lemma 2.4 of \cite{C-D-G-G}, the strong convergence $\J_n\phi\,\rightarrow\,\phi$ holds true in
$\mc C\bigl([0,T];H^s\bigr)$ for all $s\geq0$. So, taking e.g. $s=0$ and using the uniform boundedness of $\bigl(\Delta(\Id-\Delta)r^n\bigr)_n$ in $L^2_T(L^2)$ we immediately get
$$
\lim_{n\ra+\infty}\int^T_0\!\!\!\int_{\R^2}\Delta(\Id-\Delta)r^n\,\J_n\Delta\phi\,=\,\lim_{n\ra+\infty}\int^T_0\!\!\!\int_{\R^2}\Delta(\Id-\Delta)r^n\,\Delta\phi\,=\,
\int^T_0\!\!\!\int_{\R^2}\Delta(\Id-\Delta)r\,\Delta\phi\,.
$$
Moreover, picking some $s>1$ and taking advantage of the uniform boundedness of the family $\bigl(\Delta^2r^n\;\nabla^\perp(\Id-\Delta)r^n\bigr)_n$ in e.g. $L^2_T(L^1)$, we have
$$
\lim_{n\ra+\infty}\int^T_0\!\!\!\int_{\R^2}\Delta^2r^n\;\nabla^\perp(\Id-\Delta)r^n\cdot\nabla\J_n\phi\,=\,
\lim_{n\ra+\infty}\int^T_0\!\!\!\int_{\R^2}\Delta^2r^n\;\nabla^\perp(\Id-\Delta)r^n\cdot\nabla\phi\,,
$$
so that we reduce our convergence problem to study the limit of the term on the right-hand side of the previous equality. Now, for this term we can use the weak convergence
$\Delta^2r^n\,\rightharpoonup\,\Delta^2r$ in $L^2_T(L^2)$ and the strong convergence (recall Lemma \ref{l:cpt} above) of $\nabla^\perp(\Id-\Delta)r^n$ to $\nabla^\perp(\Id-\Delta)r$
in $L^2_T(L^2)$: we finally obtain
$$
\lim_{n\ra+\infty}\int^T_0\!\!\!\int_{\R^2}\Delta^2r^n\;\nabla^\perp(\Id-\Delta)r^n\cdot\nabla\phi\,=\,\int^T_0\!\!\!\int_{\R^2}\Delta^2r\;\nabla^\perp(\Id-\Delta)r\cdot\nabla\phi\,,
$$
and this completes the proof that $r$ is indeed a weak solution of equation \eqref{eq:main}.

The proof of the energy inequality directly follows from the weak convergence properties $\bigl(r^n\bigr)_n$ to $r$ combined with Fatou's lemma, and the strong convergence
properties \eqref{conv:initial}.
\end{proof}

To complete this part, we want to establish continuity properties of the solution in time. This analysis will justify also in which sense the initial datum is taken at $t=0$.
\begin{prop} \label{p:r-time}
Let $r$ be a weak solution to equation \eqref{eq:main} related to the initial datum $r_0\,\in\, H^3$ and to the external force $f\,\in\,L^2_{\rm loc}\bigl(\R_+;H^{-2}(\R^2)\bigr)$,
which moreover satisfies the energy inequality of Proposition \ref{p:a-priori_I}.

Then $(\Id-\Delta+\Delta^2)r\,\in\,\mc C\bigl(\R_+;H^{-2}(\R^2)\bigr)$.
\end{prop}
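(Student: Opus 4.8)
The plan is to read the time derivative of $(\Id-\Delta+\Delta^2)r$ directly off the equation, to show that it lives in a negative-order Sobolev space, and then to upgrade the resulting time continuity by interpolation. Set $g:=(\Id-\Delta+\Delta^2)r$. Testing the weak formulation of Definition \ref{d:weak} against time-space separated test functions, we find that
$$
\partial_t g\,=\,f\,-\,\Lambda(r,r)\,-\,\mu\,\Delta^2(\Id-\Delta)r
$$
holds in the sense of distributions, where we have used that $\Lambda(r,r)=\nabla^\perp(\Id-\Delta)r\cdot\nabla\Delta^2r$ (because $\nabla^\perp(\Id-\Delta)r$ is divergence-free). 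I would then estimate the three contributions on the right-hand side. By assumption $f\in L^2_{\rm loc}(\R_+;H^{-2})$; since $r\in L^2_{\rm loc}(\R_+;H^4)$ and $\Delta^2(\Id-\Delta)$ maps $H^4$ continuously into $H^{-2}$, the viscosity term belongs to $L^2_{\rm loc}(\R_+;H^{-2})$ as well. The delicate contribution is the bilinear one: combining part (a) of Lemma \ref{l:bilin} with the bounds $r\in L^\infty_{\rm loc}(\R_+;H^3)\cap L^2_{\rm loc}(\R_+;H^4)$ — provided by the definition of weak solution and by the energy inequality of Proposition \ref{p:a-priori_I} — one only obtains $\Lambda(r,r)\in L^2_{\rm loc}(\R_+;H^{-(2+s)})$, for any arbitrarily small $s>0$. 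Consequently $\partial_t g\in L^2_{\rm loc}(\R_+;H^{-(2+s)})\subset L^1_{\rm loc}(\R_+;H^{-(2+s)})$.

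On the other hand, since $r\in L^\infty_{\rm loc}(\R_+;H^3)$ and $\Id-\Delta+\Delta^2$ maps $H^3$ continuously into $H^{-1}$, we also have $g\in L^\infty_{\rm loc}(\R_+;H^{-1})$, and in particular $g\in L^1_{\rm loc}(\R_+;H^{-(2+s)})$. Together with the bound on $\partial_t g$ this shows $g\in W^{1,1}_{\rm loc}(\R_+;H^{-(2+s)})$, so that, after modification on a negligible set of times, $g\in\mc C(\R_+;H^{-(2+s)})$.

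The remaining and \emph{crucial} point is to improve this continuity from the space $H^{-(2+s)}$ to the target space $H^{-2}$; this is the main obstacle, precisely because the time derivative of $g$ is only controlled in the larger space $H^{-(2+s)}$, whereas we want continuity in the strictly smaller space $H^{-2}$. Here I would exploit that $-2$ lies strictly between $-(2+s)$ and $-1$, which yields the interpolation inequality
$$
\|\varphi\|_{H^{-2}}\,\leq\,\|\varphi\|_{H^{-(2+s)}}^{1/(1+s)}\,\|\varphi\|_{H^{-1}}^{s/(1+s)}\,,
$$
an immediate consequence of H\"older's inequality on the Fourier side. Applying it to $\varphi=g(t)-g(t')$ on a fixed compact interval $[0,T]$, the factor $\|g(t)-g(t')\|_{H^{-1}}^{s/(1+s)}$ stays bounded by $\bigl(2\,\|g\|_{L^\infty_T(H^{-1})}\bigr)^{s/(1+s)}$, while $\|g(t)-g(t')\|_{H^{-(2+s)}}\to0$ as $t'\to t$ by the continuity established above. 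Hence $\|g(t)-g(t')\|_{H^{-2}}\to0$, which proves that $g=(\Id-\Delta+\Delta^2)r\in\mc C(\R_+;H^{-2})$, as claimed. The finite-energy bound $g\in L^\infty_{\rm loc}(\R_+;H^{-1})$ is exactly the ingredient that makes this last interpolation argument close.
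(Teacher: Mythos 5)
Your proof is correct, but it takes a genuinely different route from the paper's. The paper does not settle for the soft mapping property of Lemma \ref{l:bilin}(a): it first proves a sharper integrability statement (Lemma \ref{l:bilinear}), namely $\Lambda(r,r)\in L^{4/3}_T\bigl(H^{-3/2}\bigr)$, obtained by writing $\Lambda(r,r)=\div\bigl(\nabla^\perp(\Id-\Delta)r\;\Delta^2r\bigr)$, using interpolation and Gagliardo--Nirenberg to place $\nabla^\perp(\Id-\Delta)r$ in $L^4_T(L^4)$, and then the embedding $L^{4/3}\hookrightarrow H^{-1/2}$. This puts $\d_t(\Id-\Delta+\Delta^2)r$ in $L^{4/3}_T(H^{-2})$, so a simple time integration yields $(\Id-\Delta+\Delta^2)r\in\mc C^{0,1/4}_T(H^{-2})$: the target space $H^{-2}$ is reached in one step, and one even gets H\"older continuity. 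Your argument instead accepts the weaker space $H^{-(2+s)}$ for the time derivative and recovers the $H^{-2}$ topology a posteriori by interpolating against the uniform $L^\infty_T(H^{-1})$ bound; this is more elementary (no product or Gagliardo--Nirenberg estimates beyond what Lemma \ref{l:bilin} already provides) and is an instance of the general principle that continuity in a weak norm plus boundedness in a strong norm gives continuity in every intermediate norm. What the paper's harder lemma buys is not only the H\"older exponent: Lemma \ref{l:bilinear} is reused crucially later, since the uniqueness theory (Theorem \ref{t:parabolic} with forcing $g\in L^{4/3}_T(H^{-3/2})$, Corollary \ref{c:en-eq}, Theorem \ref{t:stab}) is built around exactly that integrability of $\Lambda(r,r)$, for which your $L^2_T(H^{-(2+s)})$ bound would not suffice. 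One small point to tighten in your write-up: you use $\bigl\|g(t)-g(t')\bigr\|_{H^{-1}}\leq 2\,\|g\|_{L^\infty_T(H^{-1})}$ for \emph{every} pair of times, whereas the $L^\infty$ bound holds a priori only almost everywhere; for the continuous-in-$H^{-(2+s)}$ representative this follows by approximating any $t$ by times where the bound holds and invoking lower semicontinuity of the $H^{-1}$ norm under convergence in $H^{-(2+s)}$ (equivalently, $g$ is weakly continuous with values in $H^{-1}$). This is standard, but it is the one step your interpolation argument silently uses.
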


We start by proving a preliminary lemma, which will be fundamental also in the proof of uniqueness of weak solutions, carried out in Subsection \ref{ss:weak-u}.
\begin{lemma} \label{l:bilinear}
Under the hypotheses of Proposition \ref{p:r-time}, we have, for all time $T>0$,
$$
\Lambda(r,r)\,\in\,L^{4/3}\bigl([0,T];H^{-3/2}(\R^2)\bigr)\,. 
$$
\end{lemma}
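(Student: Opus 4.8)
The plan is to estimate $\Lambda(r,r)$ by duality, exactly as in the proof of Lemma \ref{l:bilin}, and then to gain the required integrability in time by interpolating between the two bounds $r\in L^\infty_T(H^3)$ and $r\in L^2_T(H^4)$ guaranteed by the definition of weak solution. The decisive observation is that the Sobolev regularity budget and the time-Lebesgue exponent fit together so as to produce precisely the target spaces $H^{-3/2}$ and $L^{4/3}$.

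First I would fix $t$ and estimate $\|\Lambda(r,r)(t)\|_{H^{-3/2}}$ pointwise in time. Testing against $\phi\in H^{3/2}(\R^2)$ and integrating by parts as in \eqref{eq:bil-action}, one has
$$\langle\Lambda(r,r),\phi\rangle_{H^{-3/2}\times H^{3/2}}\,=\,-\int_{\R^2}\nabla^\perp(\Id-\Delta)r\cdot\nabla\phi\;\Delta^2r\,dx\,.$$
I would then apply H\"older with the three exponents $(p_1,p_2,p_3)=(4,2,4)$. Since the space dimension is $d=2$, the embedding $H^{1/2}\hookrightarrow L^4$ gives both $\|\nabla\phi\|_{L^4}\lesssim\|\phi\|_{H^{3/2}}$ and, applied to $\nabla^\perp(\Id-\Delta)r\in H^{1/2}$, the bound $\|\nabla^\perp(\Id-\Delta)r\|_{L^4}\lesssim\|r\|_{H^{7/2}}$; on the other hand $\|\Delta^2r\|_{L^2}\le\|r\|_{H^4}$. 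Collecting these estimates and taking the supremum over $\|\phi\|_{H^{3/2}}\le1$ yields
$$\|\Lambda(r,r)(t)\|_{H^{-3/2}}\,\lesssim\,\|r(t)\|_{H^{7/2}}\,\|r(t)\|_{H^4}\,.$$

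The second step is to integrate this bound in time. By interpolation of Sobolev norms ($7/2$ being the midpoint of $3$ and $4$), $\|r(t)\|_{H^{7/2}}\lesssim\|r(t)\|_{H^3}^{1/2}\|r(t)\|_{H^4}^{1/2}$, whence $r\in L^4_T(H^{7/2})$ with $\|r\|_{L^4_T(H^{7/2})}\lesssim\|r\|_{L^\infty_T(H^3)}^{1/2}\|r\|_{L^2_T(H^4)}^{1/2}$. Raising the pointwise estimate to the power $4/3$ and applying H\"older in time with exponents $3$ and $3/2$ — so that the requirement $\|r\|_{H^{7/2}}^{4/3}\in L^3_T$ reduces to $r\in L^4_T(H^{7/2})$ and $\|r\|_{H^4}^{4/3}\in L^{3/2}_T$ reduces to $r\in L^2_T(H^4)$ — gives
$$\|\Lambda(r,r)\|_{L^{4/3}_T(H^{-3/2})}\,\lesssim\,\|r\|_{L^4_T(H^{7/2})}\,\|r\|_{L^2_T(H^4)}\,<\,+\infty\,,$$
which is the claim.

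The main point to watch is the exact bookkeeping of exponents. Placing $7/2$ derivatives on the first factor and $4$ on the second (their sum being $15/2$) is the splitting dictated by the available regularity: $\Delta^2r$ must be read in $L^2$ since no more than $H^4$ is at our disposal, and this forces $H^{1/2}\hookrightarrow L^4$ for the first factor, the unique choice that both closes the H\"older triple and makes the interpolation exponents sum to $1/4+1/2=3/4$, i.e. to the conjugate of $4/3$. Any attempt to use fewer derivatives on the first factor — for instance the crude bound $\|\nabla^\perp(\Id-\Delta)r\|_{L^2}\,\|\Delta^2r\|_{L^2}\,\|\nabla\phi\|_{L^\infty}$ used for part (a) of Lemma \ref{l:bilin} — only lands in $H^{-(2+s)}$ and destroys the $L^{4/3}$ integrability, so it is precisely the sharp use of the two-dimensional embedding $H^{1/2}\hookrightarrow L^4$ that makes the argument close.
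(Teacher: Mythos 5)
Your proof is correct and is essentially the paper's own argument in dual form: the paper bounds $\nabla^\perp(\Id-\Delta)r$ in $L^4_T(L^4)$ by Gagliardo--Nirenberg, puts the product $\nabla^\perp(\Id-\Delta)r\,\Delta^2r$ in $L^{4/3}_T(L^{4/3})$ by H\"older, and concludes via $L^{4/3}\hookrightarrow H^{-1/2}$ plus the loss of one derivative from the divergence, which is exactly your pairing against $\nabla\phi\in H^{1/2}\hookrightarrow L^4$ together with your time-H\"older step. Your detour through $\|r\|_{H^{7/2}}$ and Sobolev interpolation is just a repackaging of the same $L^\infty_T(H^3)\cap L^2_T(H^4)$ information, so there is nothing to fix.
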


\begin{proof}[Proof of Lemma \ref{l:bilinear}]
By definition, we have $\Lambda(r,r)\,=\,\div\left(\nabla^\perp(\Id-\Delta)r\;\Delta^2r\right)$, with
$\nabla^\perp(\Id-\Delta)r\,\in\,L^\infty_T(L^2)\cap L^2_T(H^1)$ and $\Delta^2r\,\in\,L^2_T(L^2)$ thanks to the energy inequality.

By interpolation and Gagliardo-Nirenberg inequality (see Corollary 1.2 of \cite{C-D-G-G}), we deduce that $\nabla^\perp(\Id-\Delta)r$ belongs to $L^4_T(L^4)$,
and hence $\nabla^\perp(\Id-\Delta)r\;\Delta^2r\,\in\,L^{4/3}_T(L^{4/3})$.
At this point, Sobolev embeddings imply that $L^{4/3}\,\hookrightarrow\,H^{-1/2}$, and this property concludes the proof of the Lemma.
\end{proof}

We are now in the position of proving Proposition \ref{p:r-time}.
\begin{proof}[Proof of Proposition \ref{p:r-time}]
Since $r$ is a weak solution to equation \eqref{eq:main}, we know that
$$
\d_t(\Id-\Delta+\Delta^2)r\,=\,f\,+\,\mu\,\Delta^2(\Id-\Delta)r\,-\,\Lambda(r,r)
$$
in the sense of distributions.
By hypothesis, $f\,\in\, L^2_T(H^{-2})$, while the viscosity term belongs to the same space in view of \eqref{def:r}. Moreover, thanks to Lemma \ref{l:bilinear}, we get
$\Lambda(r,r)\,\in\,L^{4/3}_T(H^{-2})$.

As a consequence, keeping in mind that $r_0\in H^3$, we deduce that $(\Id-\Delta+\Delta^2)r\,\in\,\mc C^{0,1/4}_T(H^{-2})$,
which in particular implies the claim of the proposition.
\end{proof}

\subsection{Uniqueness of weak solutions} \label{ss:weak-u}

The present subsection is devoted to the proof of uniqueness of weak solutions to our equation.

Rather than proving directly stability estimates, in a first time we focus on the study
of a higher order parabolic equation related to our problem, which can be viewed as a sort of generalization of the time-dependent Stokes equation in our context.
In passing, this analysis will allow us to improve time regularity of solutions, and to justify the energy equality in Theorem \ref{t:weak}.

After that, we will come back to the problem of uniqueness of weak solutions for \eqref{eq:main}. The previous part will be fundamental in the proof of uniqueness, since we will
exploit the underlying parabolic structure of our equation.

\subsubsection{A higher order parabolic equation} \label{sss:parabolic}

The scope of the present paragraph is to study existence and uniqueness of weak solutions to the following higher order parabolic-type equation:
\begin{equation}\label{eq:parab}
	\begin{cases}
		\partial_t\left(\Id-\Delta+\Delta^2\right)w\,+\,\mu\,\Delta^2(\Id-\Delta)w\,=\,f\,+\,g 	\\[1ex]
		w_{|t=0}\,=\,w_0\,.
	\end{cases}
\end{equation}
The weak formulation of \eqref{eq:parab} is the same as the one given in Definition \ref{d:weak} above, with the appropriate modifications in order to treat also the $g$ term and
to erase the bilinear term.

We notice that, the non-linear term having disappeared, this equation has ``basic'' energy estimates. Namely we can obtain \textsl{a priori} bounds by multiplying this equation just by $w$,
and then we can build up a theory of weak solutions for initial data just in $H^2$ (and suitable external forces).

Nonetheless, in view of the application of this study to our problem, we keep considering here initial data $w_0$ in $H^3$, and external forces $f$ in $L^2_T(H^{-2})$.
Moreover, in order to be able to deal with the bilinear term $\Lambda$, we add also a second forcing term $g\in L^{4/3}_T(H^{-3/2})$, keep in mind Lemma \ref{l:bilinear}.

We aim at proving the following result.
\begin{thm} \label{t:parabolic}
Let $w_0\in H^3(\R^2)$, $f\in L^2_{\rm loc}\bigl(\R_+;H^{-2}(\R^2)\bigr)$ and $g\in L^{4/3}_{\rm loc}\bigl(\R_+;H^{-3/2}(\R^2)\bigr)$ be given.

Then there exists a unique weak solution $w$ to equation \eqref{eq:parab}, which belongs to the energy space
$$
\mc{C}\bigl(\R_+;H^{3}(\R^2)\bigr)\,\cap\,L^\infty_{\rm loc}\bigl(\R_+;H^3(\R^2)\bigr)\,\cap\,L^2_{\rm loc}\bigl(\R_+;H^4(\R^2)\bigr)\,.
$$
In addition, $w$ 
satisfies the energy equality
\begin{eqnarray*}
& & \hspace{-0.7cm}
\mc E[w(t)]\,+\,\mu\int^t_0\left(\|\Delta w(\tau)\|^2_{L^2}+2\|\nabla\Delta w(\tau)\|^2_{L^2}+\|\Delta^2w(\tau)\|^2_{L^2}\right)\,d\tau\,= \\
& & \qquad =\,\mc E[w_0]\,+\,\int^t_0\lan f(\tau),(\Id-\Delta)w(\tau)\ran_{H^{-2}\times H^2}\,d\tau\,+\,\int^t_0\lan g(\tau),(\Id-\Delta)w(\tau)\ran_{H^{-3/2}\times H^{3/2}}\,d\tau\,,
\end{eqnarray*}
where the functional $\mc E$ is defined as in the statement of Theorem \ref{t:weak}.
\end{thm}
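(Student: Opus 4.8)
The plan is to treat \eqref{eq:parab} as a linear higher-order parabolic problem and to run a Galerkin-type scheme completely analogous to the one of Subsection \ref{ss:weak-ex}, the main simplification being that, the bilinear term having disappeared, no compactness argument is needed to pass to the limit. I would set $\mc A:=\Id-\Delta+\Delta^2$ and exploit throughout the following structural observation: $\mc A$ and $(\Id-\Delta)$ are commuting self-adjoint Fourier multipliers, and the energy functional of Theorem \ref{t:weak} satisfies $\mc E[\varphi]=\tfrac12\,\langle\mc A\varphi,(\Id-\Delta)\varphi\rangle_{L^2}$, with symbol comparable to $\langle\xi\rangle^6$, so that $\mc E[\varphi]^{1/2}$ is equivalent to the $H^3$ norm.

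For existence I would reproduce the truncation \eqref{appx_system}: with the same operators $\J_n$, set $w^n_0=\J_nw_0$, $f^n=\J_nf$, $g^n=\J_ng$ and solve the linear ODE
\[
\d_t w^n+\mu\,\J_n\D\Delta^2(\Id-\Delta)w^n\,=\,\D\J_n(f+g)
\]
in $L^2_n$; linearity gives at once a global solution. Uniform bounds follow by repeating the computation of Proposition \ref{p:a-priori_I}, testing the equation written at the $\mc A$-level against $(\Id-\Delta)w^n$: there is now no transport term to cancel, but an additional contribution $\langle g^n,(\Id-\Delta)w^n\rangle_{H^{-3/2}\times H^{3/2}}$ appears. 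I would bound it by $\|g^n\|_{H^{-3/2}}\|w^n\|_{H^{7/2}}$, interpolate $\|w^n\|_{H^{7/2}}\le\|w^n\|_{H^3}^{1/2}\|w^n\|_{H^4}^{1/2}$, and absorb the $H^4$ factor into the dissipation through Young's inequality (with exponents $4$ and $4/3$, matching the time-integrability of $g$), closing by Gronwall. This yields uniform bounds in $L^\infty_T(H^3)\cap L^2_T(H^4)$, hence a weak-$*$ limit $w$ in that space. Since every term of the weak formulation is now linear, weak-$*$ convergence together with weak convergence in $L^2_T(H^4)$ suffices to pass to the limit, and $w$ is a weak solution.

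To obtain continuity in time and the energy equality, I would first read off from the equation that
\[
\d_t\mc Aw\,=\,f+g-\mu\,\Delta^2(\Id-\Delta)w\,\in\,L^{4/3}_{\rm loc}\bigl(\R_+;H^{-2}\bigr)\,,
\]
since $f\in L^2_T(H^{-2})$, $g\in L^{4/3}_T(H^{-3/2})\hra L^{4/3}_T(H^{-2})$ and the viscous term lies in $L^2_T(H^{-2})$ because $w\in L^2_T(H^4)$. Thanks to the symmetric commuting multiplier structure recorded above, a Lions--Magenes type regularization in time (mollifying $w$, or equivalently passing to the limit in the identity which holds exactly along the Galerkin sequence) legitimizes the distributional identity
\[
\frac{d}{dt}\,\mc E[w]\,=\,\langle\d_t\mc Aw,(\Id-\Delta)w\rangle\,=\,\langle f,(\Id-\Delta)w\rangle_{H^{-2}\times H^2}+\langle g,(\Id-\Delta)w\rangle_{H^{-3/2}\times H^{3/2}}-\mu\,D[w]\,,
\]
where $D[w]:=\|\Delta w\|_{L^2}^2+2\|\nabla\Delta w\|_{L^2}^2+\|\Delta^2w\|_{L^2}^2$. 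The pairing with $g$ is meaningful because $w\in L^4_T(H^{7/2})$ by interpolation between $L^\infty_T(H^3)$ and $L^2_T(H^4)$, so that $(\Id-\Delta)w\in L^4_T(H^{3/2})$ is dual in time to $g\in L^{4/3}_T(H^{-3/2})$. Integrating in time gives the stated energy equality, and the continuity of $t\mapsto\mc E[w(t)]$, combined with weak continuity, yields $w\in\mc C(\R_+;H^3)$ and attainment of the datum $w_0$.

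Uniqueness is then immediate from linearity: the difference $W$ of two solutions in the energy class solves \eqref{eq:parab} with $f=g=0$ and $W_{|t=0}=0$, and the energy equality --- which, being deduced from the equation and the regularity rather than from the construction, holds for every weak solution in this class --- forces $\mc E[W(t)]+\mu\int_0^tD[W]\,d\tau=0$, whence $W\equiv0$. The genuinely delicate point, and the one I expect to require the most care, is precisely the justification of the energy identity above: one must handle the nonhomogeneous higher-order operators $\mc A$ and $(\Id-\Delta)$ in the integration by parts in time, and simultaneously accommodate the merely $L^{4/3}$ time-integrability of $g$, which is what forces the interpolation $w\in L^4_T(H^{7/2})$ and the careful H\"older pairing against $g$.
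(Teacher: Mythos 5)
Your proposal is correct. For existence it coincides with the paper's proof: the same truncation $\J_n$, the same ODE in $L^2_n$, and the same bound for the $g$-term (interpolation of $\|(\Id-\Delta)w^n\|_{H^{3/2}}$ between $H^1$ and $H^2$, Young's inequality with exponents $4$ and $4/3$, Gronwall). The genuine divergence is in how the energy equality, the $\mc C(\R_+;H^3)$ regularity and uniqueness are obtained. The paper never needs weak compactness nor an abstract integration-by-parts-in-time lemma: by linearity, the differences $\de^k_n w\,=\,w^{n+k}-w^n$ solve the same equation with data $\de^k_nw_0$, $\de^k_nf$, $\de^k_ng$, so the same energy computation plus Gronwall shows that $(w^n)_n$ is a Cauchy sequence in $\mc C_{\rm loc}(\R_+;H^3)\cap L^2_{\rm loc}(\R_+;H^4)$; strong convergence then yields time continuity for free, allows passing to the limit directly in the approximate energy equalities, and uniqueness follows from the same stability computation. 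You instead take a weak-$*$ limit and prove the energy identity a posteriori, for \emph{every} weak solution in the energy class, via regularization in time. This buys a cleaner uniqueness step --- the paper's stability estimate, as written, also runs an energy argument on the difference of two arbitrary weak solutions, hence implicitly needs exactly the justification you make explicit --- at the price of proving a mixed-integrability Lions--Magenes type lemma: after conjugating by the multiplier $B$ with $\mc E[w]=\tfrac12\|Bw\|^2_{L^2}$, one pairs $\d_tBw\in L^2_T(H^{-1})+L^{4/3}_T(H^{-1/2})$ against $Bw\in L^2_T(H^1)\cap L^4_T(H^{1/2})$. The one point to nail down there is the trace at $t=0$, i.e. that the absolutely continuous representative of $\mc E[w(\cdot)]$ starts at $\mc E[w_0]$: this follows from $\mc Aw\in\mc C(\R_+;H^{-2})$ with $\mc Aw(0)=\mc Aw_0$ (read off from the weak formulation) together with weak continuity in $H^3$; alternatively, for uniqueness alone you can sidestep the lemma entirely by applying $\J_m$ to the difference of two solutions --- the equation being a constant-coefficient Fourier multiplier equation, $\J_m$ of a weak solution is again a solution, smooth in space --- and running the classical energy estimate on it before letting $m\to+\infty$.
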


\begin{proof}
To begin with, let us prove existence. The argument being very similar to the one used in the previous subsection, we will just sketch it.

Resorting to the same notations as in Paragraph \ref{sss:approx}, we set $w^n_0=\J_nw_0$, $f^n=\J_nf$ and $g^n=\J_ng$. Then, we first solve the approximate problem
$$
	\begin{cases}
		\partial_tw^n\,+\,\mu\,\J_n\D\Delta^2(\Id-\Delta)w^n\,=\,\D f^n\,+\,\D g^n 	\\[1ex]
		w^n_{|t=0}\,=\,w^n_0\,.
	\end{cases}
$$
This is an ODE on the space $L^2_n$, so that Cauchy-Lipschitz theorem provides us with a global solution $w^n$ belonging to the space $\mc C^1\bigl(\R_+;L^2_n\bigr)$.
Moreover, since $w_n$ is a classical solution, we can recast the previous equation in the form
\begin{equation} \label{eq:w^n}
\partial_t\left(\Id-\Delta+\Delta^2\right)w^n\,+\,\mu\,\J_n\Delta^2(\Id-\Delta)w^n\,=\,f^n\,+\,g^n\,.
\end{equation}
Taking the $L^2$ scalar product of this equation with $(\Id-\Delta)w^n$ and then integrating in time (recall also the formal computations performed in Subsection \ref{ss:energy})
lead us to
\begin{eqnarray}
& & \hspace{-0.7cm}
\mc E[w^n(t)]\,+\,\mu\int^t_0\left(\|\Delta w^n\|^2_{L^2}+2\|\nabla\Delta w^n\|^2_{L^2}+\|\Delta^2w^n\|^2_{L^2}\right)\,d\tau\,= \label{est:en-eq_n} \\
& & \qquad =\,\mc E[w^n_0]\,+\,\int^t_0\lan f^n,(\Id-\Delta)w^n\ran_{H^{-2}\times H^2}\,d\tau\,+\,\int^t_0\lan g^n,(\Id-\Delta)w^n\ran_{H^{-3/2}\times H^{3/2}}\,d\tau\,. \nonumber
\end{eqnarray}
Notice that, from this relation, we immediately deduce uniform bounds for $w^n$ in the spaces of Theorem \ref{t:parabolic}. As a matter of fact, the control of 
the term in $f^n$ is straightforward, while for $g^n$ it is enough to observe that, by interpolation and Young inequality, one has
\begin{eqnarray*}
\bigl|\lan g^n,(\Id-\Delta)w^n\ran_{H^{-3/2}\times H^{3/2}}\bigr| & \leq & \|g^n\|_{H^{-3/2}}\;\|(\Id-\Delta)w^n\|^{1/2}_{H^1}\;\|(\Id-\Delta)w^n\|^{1/2}_{H^2} \\
& \leq & C_\mu\,\|g^n\|^{4/3}_{H^{-3/2}}\;\|(\Id-\Delta)w^n\|^{2/3}_{H^1}\;+\;\veps\,\mu\,\|(\Id-\Delta)w^n\|^{2}_{H^2} \\
& \leq & C_\mu\,\|g^n\|^{4/3}_{H^{-3/2}}\left(1\,+\,\|(\Id-\Delta)w^n\|^{2}_{H^1}\right)\,+\,\veps\,\mu\,\|(\Id-\Delta)w^n\|^{2}_{H^2}\,,
\end{eqnarray*}
for all $\veps>0$ small enough (so that the last term can be absorbed on the left-hand side).

Next, we claim that $\bigl(w^n\bigr)_n$ is a Cauchy sequence in the space $\mc C_{\rm loc}\bigl(\R_+;H^3\bigr)\,\cap\,L^2_{\rm loc}\bigl(\R_+;H^4\bigr)$. Indeed, let us set
$\de_n^k\vphi\,:=\,\vphi^{n+k}-\vphi^n$, where the function $\vphi$ can be either $w$ or $f$ or $g$. Then, by linearity of equation \eqref{eq:w^n}, it is easy to derive
an energy estimate for $\de_n^kw$: exactly as above, one finds
\begin{eqnarray}
& & \hspace{-0.7cm}
\mc E[\de_n^kw(t)]\,+\,\mu\int^t_0\left(\|\Delta \de_n^kw\|^2_{L^2}+2\|\nabla\Delta \de_n^kw\|^2_{L^2}+\|\Delta^2\de_n^kw\|^2_{L^2}\right)\,= \label{est:E_delta} \\
& & \qquad =\,\mc E[\de_n^kw_0]\,+\,\int^t_0\lan \de_n^kf,(\Id-\Delta)\de_n^kw\ran_{H^{-2}\times H^2}\,+\,\int^t_0\lan \de_n^kg,(\Id-\Delta)\de_n^kw\ran_{H^{-3/2}\times H^{3/2}}\,. \nonumber
\end{eqnarray}
The term in $f$ can be bounded as follows (recall the computations in the proof of Proposition \ref{p:a-priori_I}):
\begin{align*}
\left|\int^t_0\lan \de_n^kf,(\Id-\Delta)\de_n^kw\ran_{H^{-2}\times H^2}\right|\,&\leq\,C_\mu\int^t_0\left\|\de_n^kf\right\|^2_{H^{-2}}\,+\,
C_\mu\int^t_0\left(\|\de_n^kw\|^2_{L^2}+\|\nabla\de_n^kw\|^2_{L^2}\right)\,+ \\
&\qquad\qquad\qquad+\,\frac{\mu}{4}\int^t_0\left(\|\Delta\de_n^kw\|^2_{L^2}+\|\nabla\Delta\de_n^kw\|^2_{L^2}+\|\Delta^2\de_n^kw\|^2_{L^2}\right)\,.
\end{align*}
As for the term in $g$, we can argue as here above to get
\begin{align*}
&\bigl|\lan\de_n^kg,(\Id-\Delta)\de_n^kw\ran_{H^{-3/2}\times H^{3/2}}\bigr|\,\leq\,C_\mu\int^t_0\left\|\de_n^kg\right\|^{4/3}_{H^{-3/2}}\left(1+\mc E[\de_n^kw]\right)\,+ \\
&\qquad\qquad+\,C_\mu\int^t_0\left(\|\de_n^kw\|^2_{L^2}+\|\nabla\de_n^kw\|^2_{L^2}\right)\,+\,
\frac{\mu}{4}\int^t_0\left(\|\Delta\de_n^kw\|^2_{L^2}+\|\nabla\Delta\de_n^kw\|^2_{L^2}+\|\Delta^2\de_n^kw\|^2_{L^2}\right)\,.
\end{align*}
Inserting these last inequalities into \eqref{est:E_delta} and applying Gronwall lemma, we are finally led to
\begin{align*}
&\mc E[\de_n^kw(t)]\,+\,\mu\int^t_0\left(\|\Delta \de_n^kw\|^2_{L^2}+2\|\nabla\Delta \de_n^kw\|^2_{L^2}+\|\Delta^2\de_n^kw\|^2_{L^2}\right)\,\leq \\
&\qquad\qquad\leq\,C_1\left(\mc E[\de_n^kw_0]\,+\,\int^t_0\left\|\de_n^kf\right\|^2_{H^{-2}}\,+\,\int^t_0\left\|\de_n^kg\right\|^{4/3}_{H^{-3/2}}\right)\,
\exp\left(C_2\,t\,+\,2\,C_2\int^t_0\left\|g\right\|^{4/3}_{H^{-3/2}}\right)\,,
\end{align*}
for some constants $C_1$ and $C_2$ just depending on $\mu$. In the end, this estimate proves our claim. In particular, $\bigl(w^n\bigr)_n$ strongly converges
in $\mc C_{\rm loc}\bigl(\R_+;H^3\bigr)\,\cap\,L^2_{\rm loc}\bigl(\R_+;H^4\bigr)$.

Hence, let us call $w$ the limit of the sequence $\bigl(w^n\bigr)_n$ in $\mc C\bigl(\R_+;H^3\bigr)\,\cap\,L^2_{\rm loc}\bigl(\R_+;H^4\bigr)$.
Thanks to these properties, it is easy to pass to the limit in the weak formulation of the linear equation \eqref{eq:w^n}. As for the energy equality,
we notice that $\bigl((\Id-\Delta)w^n\bigr)_n$ is uniformly bounded in $L^\infty_T(H^1)\,\cap\,L^2_T(H^2)\,\hookrightarrow\,L^4_T(H^{3/2})$; then,
thanks to the strong convergence of $\bigl(f^n\bigr)_n$ in $L^2_T(H^{-2})$ and of $\bigl(g^n\bigr)_n$ in $L^{4/3}_T(H^{-3/2})$, we can pass to the limit also in \eqref{est:en-eq_n}.
Finally, uniqueness can be proved by a stability estimate, which is obtained arguing exactly as for showing that $\bigl(w^n\bigr)_n$ is a Cauchy sequence
in the space $\mc C_{\rm loc}\bigl(\R_+;H^3\bigr)\,\cap\,L^2_{\rm loc}\bigl(\R_+;H^4\bigr)$.
\end{proof}

From this result, we immediately infer the energy equality stated in Theorem \ref{t:weak}.
\begin{coroll} \label{c:en-eq}
Let $r$ be a weak solution to \eqref{eq:main}, related to the initial datum $r_0\,\in\, H^3$ and the external force $f\,\in\,L^2_{\rm loc}\bigl(\R_+;H^{-2}(\R^2)\bigr)$,
which moreover satisfies the energy inequality of Proposition \ref{p:a-priori_I}.

Then $r\,\in\,\mc C\bigl(\R_+;H^{3}(\R^2)\bigr)$ and it verifies the energy equality stated in Theorem \ref{t:weak}.
\end{coroll}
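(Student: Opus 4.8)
The plan is to recognize the weak solution $r$ as a solution of the linear parabolic problem \eqref{eq:parab} and then to invoke Theorem \ref{t:parabolic}. Precisely, I would set $g:=-\Lambda(r,r)$; by Lemma \ref{l:bilinear} one has $g\in L^{4/3}_{\rm loc}(\R_+;H^{-3/2}(\R^2))$, so $g$ is an admissible second forcing term for \eqref{eq:parab}. Since $\Lambda(r,r)=\div(\nabla^\perp(\Id-\Delta)r\,\Delta^2 r)$, an integration by parts shows that, for every test function $\phi$, the convective contribution $-\int_0^T\!\!\int_{\R^2}\Delta^2 r\,\nabla^\perp(\Id-\Delta)r\cdot\nabla\phi$ appearing in the weak formulation of \eqref{eq:main} (see Definition \ref{d:weak}) equals $\int_0^T\langle\Lambda(r,r),\phi\rangle$. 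Moving it to the right-hand side, the weak formulation of \eqref{eq:main} becomes exactly the weak formulation of \eqref{eq:parab} with initial datum $r_0$ and forcing terms $f$ and $g$.

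Next, I would note that, being a weak solution to \eqref{eq:main}, $r$ belongs to $L^\infty_{\rm loc}(\R_+;H^3)\cap L^2_{\rm loc}(\R_+;H^4)$, i.e. to the energy space of Theorem \ref{t:parabolic}. The uniqueness part of that theorem then forces $r$ to coincide with the unique weak solution $w$ of \eqref{eq:parab} associated with the data $r_0$, $f$ and this fixed $g$. Since Theorem \ref{t:parabolic} guarantees $w\in\mc C(\R_+;H^3(\R^2))$, we immediately obtain $r\in\mc C(\R_+;H^3(\R^2))$, which is the first assertion of the corollary and which also clarifies the sense in which the initial datum is attained.

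Finally, to recover the energy equality, I would write the energy equality of Theorem \ref{t:parabolic} for $w=r$ with forces $f$ and $g=-\Lambda(r,r)$. The term carrying $g$ reads $-\int_0^t\langle\Lambda(r,r),(\Id-\Delta)r\rangle_{H^{-3/2}\times H^{3/2}}$; for a.e. $\tau$ one has $r(\tau)\in H^4$, so by the last identity in Lemma \ref{l:bilin} this pairing vanishes. Its time integral therefore vanishes as well, and what survives is precisely the energy equality of Theorem \ref{t:weak}.

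The only genuinely delicate point is the vanishing of the $g$-term: one must check that the integral $\int_0^t\langle\Lambda(r,r),(\Id-\Delta)r\rangle$ is well defined (which follows from $\Lambda(r,r)\in L^{4/3}_T(H^{-3/2})$ together with $(\Id-\Delta)r\in L^4_T(H^{3/2})$, the latter obtained by interpolating $L^\infty_T(H^1)\cap L^2_T(H^2)$), and, above all, that the cancellation of Lemma \ref{l:bilin}, stated in the $H^{-2}\times H^2$ duality, transfers to the $H^{-3/2}\times H^{3/2}$ pairing used in the parabolic energy equality. This is ensured by the consistency of the two dualities on $\Lambda(r,r)(\tau)\in H^{-3/2}\cap H^{-2}$ tested against $(\Id-\Delta)r(\tau)\in H^{3/2}\cap H^2$; once this compatibility is recorded, everything else is a direct bookkeeping of the two weak formulations and an appeal to the uniqueness already established in Theorem \ref{t:parabolic}.
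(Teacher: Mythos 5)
Your proposal is correct and follows essentially the same route as the paper: identify $r$ as a weak solution of the parabolic problem \eqref{eq:parab} with second force $g=-\Lambda(r,r)\in L^{4/3}_{\rm loc}(\R_+;H^{-3/2})$ (via Lemma \ref{l:bilinear}), invoke Theorem \ref{t:parabolic} to get continuity and the energy equality, and then cancel the $\Lambda$-term through Lemma \ref{l:bilin}. Your extra remarks --- the explicit appeal to uniqueness in Theorem \ref{t:parabolic}, the sign of $g$, and the compatibility of the $H^{-2}\times H^2$ and $H^{-3/2}\times H^{3/2}$ dualities when cancelling the pairing --- are points the paper leaves implicit, and they are all handled correctly.
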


\begin{proof}
Thanks to Lemma \ref{l:bilinear}, $r$ solves the parabolic equation \eqref{eq:parab}, with external force $f+g$, where this time $g\,=\,\Lambda(r,r)$. Then, Theorem \ref{t:parabolic}
implies that $r$ belongs to $\mc C\bigl(\R_+;H^3(\R^2)\bigr)$ and it satisfies the equality
\begin{eqnarray*}
& & \hspace{-0.7cm}
\mc E[r(t)]\,+\,\mu\int^t_0\left(\|\Delta r(\tau)\|^2_{L^2}+2\|\nabla\Delta r(\tau)\|^2_{L^2}+\|\Delta^2r(\tau)\|^2_{L^2}\right)\,d\tau\,= \\
& & \quad =\,\mc E[r_0]\,+\,\int^t_0\lan f(\tau),(\Id-\Delta)r(\tau)\ran_{H^{-2}\times H^2}\,d\tau\,-\,\int^t_0\lan\Lambda(r,r)(\tau),(\Id-\Delta)r(\tau)\ran_{H^{-3/2}\times H^{3/2}}\,d\tau\,.
\end{eqnarray*}
At this point, we notice that the last term on the right-hand side actually vanishes, thanks to Lemma \ref{l:bilin},
and then the energy equality is fulfilled.
\end{proof}

\subsubsection{Stability estimates and uniqueness}

In this paragraph, we prove stability estimates for solutions to equation \eqref{eq:main}. Also in this case, the analysis performed for system \eqref{eq:parab}
will be fundamental.

\begin{thm} \label{t:stab}
For $j=1,2$, let us take $r^j_0\,\in\,H^3(\R^2)$ and $f^j\,\in\,L^2_{\rm loc}\bigl(\R_+;H^{-2}(\R^2)\bigr)$, and let us denote by $r^j$ the respective
weak solutions to equation \eqref{eq:main}.

Let us define $\de r_0\,:=\,r^1_0\,-\,r^2_0$, $\de f\,:=\,f^1\,-\,f^2$ and $\de r\,:=\,r^1\,-\,r^2$. Then, there exist two positive constants $C_\mu$ and $K_\mu$,
just depending on $\mu$, such that, for all time $t>0$, one has the estimate
\begin{eqnarray*}
& & \hspace{-1cm}
\mc E[\de r(t)]\,+\,\mu\int^t_0\left(\|\Delta\de r(\tau)\|^2_{L^2}+\|\nabla\Delta\de r(\tau)\|^2_{L^2}+\|\Delta^2\de r(\tau)\|^2_{L^2}\right)\,d\tau\,\leq \\
& & \qquad\qquad\qquad\qquad
\leq\,C_\mu\left(\mc E[\de r_0]\,+\,\int^t_0\|\de f(\tau)\|^2_{H^{-2}}\,d\tau\right)\;\exp\Bigl(K_\mu\,e^{2\,\mu\,t}\,m^2(t)\Bigr)\,,
\end{eqnarray*}
where we have defined the function
$$
m(t)\,:=\,\min\left\{\|r_0^1\|^2_{H^3}\,+\,\int^t_0\|f^1(\tau)\|^2_{H^{-2}}\,d\tau\;,\;\|r_0^2\|^2_{H^3}\,+\,\int^t_0\|f^2(\tau)\|^2_{H^{-2}}\,d\tau\right\}\,.
$$
\end{thm}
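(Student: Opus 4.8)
The plan is to exploit the \emph{linear} parabolic structure underlying the difference $\de r$: I would treat the difference of the two convective terms as an external force and then run the energy method of Subsection~\ref{ss:energy} on $\de r$, absorbing the single surviving nonlinear contribution into the viscous dissipation. Subtracting the two copies of \eqref{eq:main} and using the bilinearity of $\Lambda$ through the decomposition
\[
\Lambda(r^1,r^1)-\Lambda(r^2,r^2)\,=\,\Lambda(r^1,\de r)\,+\,\Lambda(\de r,r^2)\,,
\]
I see that $\de r$ solves
\[
\d_t(\Id-\Delta+\Delta^2)\de r\,+\,\mu\,\Delta^2(\Id-\Delta)\de r\,=\,\de f\,-\,\Lambda(r^1,\de r)\,-\,\Lambda(\de r,r^2)
\]
in the weak sense. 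Since $r^1,r^2\in L^\infty_T(H^3)\cap L^2_T(H^4)$, arguing exactly as in Lemma~\ref{l:bilinear} shows that both bilinear terms lie in $L^{4/3}_T(H^{-3/2})$; hence $\de r$ fits the framework of \eqref{eq:parab} with force $\de f+g$, where $g:=-\Lambda(r^1,\de r)-\Lambda(\de r,r^2)$, and Theorem~\ref{t:parabolic} furnishes the energy equality for $\de r$.

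Next comes the crucial cancellation. In that energy identity the nonlinear contribution is $-\int_0^t\lan\Lambda(r^1,\de r)+\Lambda(\de r,r^2),(\Id-\Delta)\de r\ran\,d\tau$. By the last property of Lemma~\ref{l:bilin} (applied for a.e.\ $\tau$, where $\de r(\tau),r^2(\tau)\in H^4$), the term $\lan\Lambda(\de r,r^2),(\Id-\Delta)\de r\ran$ vanishes identically, precisely because its first slot and the test function are built from the same function $\de r$. Only $\lan\Lambda(r^1,\de r),(\Id-\Delta)\de r\ran$ survives, and integrating by parts as in \eqref{eq:bil-action} it equals $-\int\nabla^\perp(\Id-\Delta)r^1\cdot\nabla(\Id-\Delta)\de r\,\Delta^2\de r\,dx$. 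I would estimate this by H\"older with exponents $(4,4,2)$ together with the two-dimensional Gagliardo--Nirenberg inequality $\|u\|_{L^4}\lesssim\|u\|_{L^2}^{1/2}\|\nabla u\|_{L^2}^{1/2}$, obtaining a bound by $C\,\|r^1\|_{H^3}^{1/2}\|r^1\|_{H^4}^{1/2}\,\|\de r\|_{H^3}^{1/2}\|\de r\|_{H^4}^{3/2}$. A Young inequality (with exponents $4/3$ and $4$) then splits off $\tfrac{\mu}{2}\|\de r\|_{H^4}^2$, which I absorb into the viscous dissipation on the left (the lower-order contributions being controlled by $\mc E[\de r]$), at the cost of a remainder $C_\mu\,\|r^1\|_{H^3}^2\|r^1\|_{H^4}^2\,\mc E[\de r]$.

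Finally I would close by Gronwall's lemma. The $\de f$ term is handled exactly as in Proposition~\ref{p:a-priori_I}, producing $\int_0^t\|\de f\|^2_{H^{-2}}$ together with lower-order pieces proportional to $\mc E[\de r]$; combined with the remainder above, the effective Gronwall coefficient is $\beta(\tau)=C_\mu\|r^1\|_{H^3}^2\|r^1\|_{H^4}^2$ (up to additive $\mu$-constants). Using the a priori bounds of Proposition~\ref{p:a-priori_I} for $r^1$,
\[
\int_0^t\beta\,d\tau\,\leq\,C_\mu\Big(\sup_{[0,t]}\|r^1\|_{H^3}^2\Big)\int_0^t\|r^1\|_{H^4}^2\,d\tau\,\leq\,K_\mu\,e^{2\mu t}\Big(\|r^1_0\|_{H^3}^2+\tfrac1\mu\!\int_0^t\|f^1\|^2_{H^{-2}}\Big)^{\!2}\,,
\]
which yields the claimed exponential factor carrying $r^1$'s data. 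Since the decomposition of $\Lambda(r^1,r^1)-\Lambda(r^2,r^2)$ could equally have been arranged so that the surviving term is $\Lambda(r^2,\de r)$, the symmetric estimate with $r^2$'s data also holds, and keeping the better of the two produces the minimum $m(t)$. The main obstacle I anticipate is the sharp bookkeeping in this nonlinear estimate: one must place exactly the top-order factor $\Delta^2\de r$ in $L^2$ and interpolate the two third-order factors into $L^4$, so that $\|\de r\|_{H^4}^{3/2}$ is the \emph{only} supercritical quantity and the Young step can return it to the dissipation; any other distribution of derivatives leaves an uncontrolled fourth-order term, which is precisely the difficulty already flagged at the beginning of Subsection~\ref{ss:energy}.
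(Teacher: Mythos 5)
Your proposal is correct and follows essentially the same route as the paper: both view $\de r$ as a solution of the linear parabolic problem of Theorem \ref{t:parabolic} with the nonlinear difference as extra forcing, reduce the nonlinear contribution by the cancellation $\nabla^\perp g\cdot\nabla g=0$ (the paper by direct pointwise manipulation of $\de\Lambda$, you via bilinearity and the last property of Lemma \ref{l:bilin} -- the surviving integrand $\nabla^\perp(\Id-\Delta)r^1\cdot\nabla(\Id-\Delta)\de r\;\Delta^2\de r$ is identical), and then close with the same H\"older $(4,4,2)$, Gagliardo--Nirenberg and Young absorption, Gronwall's lemma, and the a priori bounds yielding the exponent $K_\mu\,e^{2\mu t}\,m^2(t)$. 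Your observation that the decomposition can be symmetrized so that the surviving term carries $r^2$'s data instead of $r^1$'s is exactly how the paper obtains the minimum $m(t)$.
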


\begin{proof}
We start by remarking that $\de r$ is a solution of an equation of type \eqref{eq:parab}, with initial datum $\de r_0$ and external force $\de f\,-\,\de\Lambda$, where 
we have set $\de\Lambda\,:=\,\Lambda(r^1,r^1)\,-\,\Lambda(r^2,r^2)$. Then, by Theorem \ref{t:parabolic} we gather the energy equality
\begin{eqnarray}
& &
\mc E[\de r(t)]\,+\,\mu\int^t_0\left(\|\Delta\de r(\tau)\|^2_{L^2}+2\|\nabla\Delta\de r(\tau)\|^2_{L^2}+\|\Delta^2\de r(\tau)\|^2_{L^2}\right)\,d\tau\,= \label{est:en-eq_delta} \\
& & \quad =\,\mc E[\de r_0]\,+\,\int^t_0\lan\de f(\tau),(\Id-\Delta)\de r(\tau)\ran_{H^{-2}\times H^2}\,d\tau\,-\,
\int^t_0\lan\de\Lambda(\tau),(\Id-\Delta)\de r(\tau)\ran_{H^{-3/2}\times H^{3/2}}\,d\tau\,. \nonumber
\end{eqnarray}

As usual, we can control the $\de f$ term by the quantity
\begin{equation} \label{est:delta-f}
C_\mu\int^t_0\|\de f\|^2_{H^{-2}}\,d\tau\,+\,
\frac{\mu}{4}\int^t_0\left(\|\de r\|^2_{L^2}+\|\nabla\de r\|^2_{L^2}+\|\Delta\de r\|^2_{L^2}+\|\nabla\Delta\de r\|^2_{L^2}+\|\Delta^2\de r\|^2_{L^2}\right)\,d\tau\,.
\end{equation}

Let us now focus on the term in $\de\Lambda$. By definition, we have
\begin{align*}
&\int^t_0\lan\de\Lambda,(\Id-\Delta)\de r\ran_{H^{-3/2}\times H^{3/2}}\,d\tau\,= \\
&=\,\int^t_0\int_{\R^2}\nabla^\perp(\Id-\Delta)r^1\cdot\nabla(\Id-\Delta)\de r\;\Delta^2r^1\,dx\,d\tau\,-\,
\int^t_0\int_{\R^2}\nabla^\perp(\Id-\Delta)r^2\cdot\nabla(\Id-\Delta)\de r\;\Delta^2r^2\,dx\,d\tau\,,
\end{align*}
but, using the pointwise relations $\nabla^\perp g\cdot\nabla g=0$ and $\nabla^\perp g\cdot\nabla h=-\nabla g\cdot\nabla^\perp h$, straightforward computations allow us to write
\begin{align}
\int^t_0\lan\de\Lambda,(\Id-\Delta)\de r\ran_{H^{-3/2}\times H^{3/2}}\,d\tau\,&=\,
\int^t_0\int_{\R^2}\nabla^\perp(\Id-\Delta)r^2\cdot\nabla(\Id-\Delta)r^1\;\Delta^2\de r\,dx\,d\tau \label{eq:delta_non-lin} \\
&=\int^t_0\int_{\R^2}\nabla^\perp(\Id-\Delta)r^2\cdot\nabla(\Id-\Delta)\de r\;\Delta^2\de r\,dx\,d\tau \nonumber \\
&=\,\int^t_0\int_{\R^2}\nabla^\perp(\Id-\Delta)r^1\cdot\nabla(\Id-\Delta)\de r\;\Delta^2\de r\,dx\,d\tau\,. \nonumber
\end{align}
By symmetry, without loss of generality we choose to work with the second formulation. By H\"older and Gagliardo-Nirenberg inequality we immediately infer
\begin{align*}
&\hspace{-0.7cm}
\left|\int^t_0\lan\de\Lambda,(\Id-\Delta)\de r\ran_{H^{-3/2}\times H^{3/2}}\right|\,\leq\,C\,\left\|\Delta^2\de r\right\|_{L^2}\,\left\|\nabla(\Id-\Delta)\de r\right\|_{L^4}\,
\left\|\nabla(\Id-\Delta)r^2\right\|_{L^4} \\
&\qquad\qquad\qquad\qquad\leq\,C\,\left\|\Delta^2\de r\right\|_{L^2}\,\left\|\nabla(\Id-\Delta)\de r\right\|^{1/2}_{L^2}\,\left\|\nabla^2(\Id-\Delta)\de r\right\|^{1/2}_{L^2}\,
\left\|\nabla(\Id-\Delta)r^2\right\|_{L^4}\,.
\end{align*}
Now we remark that, by the continuity of the Calder\'on-Zygmund operator $\nabla^2(-\Delta)^{-1}$ on $L^2$, we can estimate
$\left\|\nabla^2(\Id-\Delta)\de r\right\|_{L^2}\,\leq\,C\,\left\|\Delta(\Id-\Delta)\de r\right\|_{L^2}\,\leq\,C\,\|\Delta\de r\|_{H^2}$; then we are led to
\begin{align*}
\left|\int^t_0\lan\de\Lambda,(\Id-\Delta)\de r\ran_{H^{-3/2}\times H^{3/2}}\right|\,&\leq\,C\,\int^t_0\left\|\Delta\de r\right\|^{3/2}_{H^2}\,\left\|\de r\right\|^{1/2}_{H^3}\,
\left\|\nabla(\Id-\Delta)r^2\right\|_{L^4}\,d\tau \\
&\leq\,\frac{\mu}{4}\int^t_0\left\|\Delta\de r\right\|^2_{H^2}\,d\tau\,+\,C_\mu\int^t_0\left\|\de r\right\|^2_{H^3}\,\left\|\nabla(\Id-\Delta)r^2\right\|^4_{L^4}d\tau\,.
\end{align*}

We remark that $\left\|\de r\right\|^2_{H^3}$ is controlled by $\mc E[\de r]$. Hence, combining this last inequality together with \eqref{est:E_delta} and \eqref{est:delta-f} gives us
\begin{eqnarray*}
& & \mc E[\de r(t)]\,+\,\mu\int^t_0\left(\|\Delta\de r\|^2_{L^2}+\|\nabla\Delta\de r\|^2_{L^2}+\|\Delta^2\de r\|^2_{L^2}\right)\,d\tau\,\leq \\
& & \qquad\qquad\qquad\qquad
\leq\,C\left(\mc E[\de r_0]\,+\,C_\mu\int^t_0\|\de f\|^2_{H^{-2}}\,d\tau\,+\,C_\mu\int^t_0\mc E[\de r]\,\left\|\nabla(\Id-\Delta)r^2\right\|^4_{L^4}d\tau\right)\,,
\end{eqnarray*}
and and the application of Gronwall lemma allows us to conclude that
\begin{eqnarray*}
& & \mc E[\de r(t)]\,+\,\mu\int^t_0\left(\|\Delta\de r\|^2_{L^2}+\|\nabla\Delta\de r\|^2_{L^2}+\|\Delta^2\de r\|^2_{L^2}\right)\,d\tau\,\leq \\
& & \qquad\qquad\qquad\qquad
\leq\,C_\mu\left(\mc E[\de r_0]\,+\,\int^t_0\|\de f\|^2_{H^{-2}}\,d\tau\right)\;\exp\left(\int^t_0\left\|\nabla(\Id-\Delta)r^2(\tau)\right\|^4_{L^4}d\tau\right)\,.
\end{eqnarray*}
Obviously, working on the third formulation in \eqref{eq:delta_non-lin}, one gets exactly the same estimate, up to replace $\left\|\nabla(\Id-\Delta)r^2\right\|^4_{L^4_tL^4}$ with
$\left\|\nabla(\Id-\Delta)r^1\right\|^4_{L^4_tL^4}$ inside the exponential term, so that one can take the minimum value of the two quantities.

Finally we observe that, by Gagliardo-Nirenberg inequality and the uniform bounds provided by Proposition \ref{p:limit}, for any $j\in\{1,2\}$ one gets
$$
\int^t_0\left\|\nabla(\Id-\Delta)r^j(\tau)\right\|^4_{L^4}d\tau\,\leq\,\|r^j\|^2_{L^\infty_t(H^3)}\,\|r^j\|^2_{L^2_t(H^4)}\,\leq\,
C_\mu\,e^{2\,\mu\,t}\left(\|r_0^j\|^2_{H^3}\,+\,\int^t_0\|f^j\|^2_{H^{-2}}\,d\tau\right)^2\,,
$$
and this estimate completes the proof of the theorem.
\end{proof}

In turn, the previous theorem immediately implies uniqueness of weak solutions, so that the proof of Theorem \ref{t:weak} is finally completed.
\begin{coroll} \label{c:uniq-weak}
Weak solutions to system \eqref{eq:main_reform}, in the sense of Definition \ref{d:weak}, are uniquely determined by the initial datum $r_0\,\in\,H^{3}(\R^2)$
and the external force $f\,\in\,L^2_{\rm loc}\bigl(\R_+;H^{-2}(\R^2)\bigr)$.
\end{coroll}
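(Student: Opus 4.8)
The plan is to read off the corollary directly from the stability estimate of Theorem \ref{t:stab}. Suppose $r^1$ and $r^2$ are two weak solutions, in the sense of Definition \ref{d:weak}, sharing the same initial datum $r_0\in H^3(\R^2)$ and the same external force $f\in L^2_{\rm loc}\bigl(\R_+;H^{-2}(\R^2)\bigr)$. I set $\de r:=r^1-r^2$, together with $\de r_0:=r_0^1-r_0^2=0$ and $\de f:=f^1-f^2=0$. Since weak solutions to \eqref{eq:main_reform} are, via the invertible operator $\D$, exactly weak solutions to \eqref{eq:main}, Theorem \ref{t:stab} applies to the pair $(r^1,r^2)$.

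The key observation is that, because the data coincide, the whole right-hand side of the stability estimate vanishes: one has $\mc E[\de r_0]=0$ and $\int_0^t\|\de f(\tau)\|^2_{H^{-2}}\,d\tau=0$, so the estimate reduces to $\mc E[\de r(t)]\le 0$ for every $t>0$. The functional $\mc E$ being a sum of nonnegative squared $L^2$-norms — and in fact controlling the full $H^3$-norm of $\de r(t)$ up to a harmless constant — the bound $\mc E[\de r(t)]\le 0$ forces $\de r(t)=0$ in $H^3$ for all $t\ge 0$. Therefore $r^1=r^2$, which is precisely the claimed uniqueness, and the proof of Theorem \ref{t:weak} is thereby completed.

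The only point I would treat with a little care is that Theorem \ref{t:stab} is invoked for \emph{arbitrary} weak solutions, not merely for the ones produced in Proposition \ref{p:limit}. Its proof closes a Gronwall argument whose exponential weight is $\int_0^t\bigl\|\nabla(\Id-\Delta)r^j(\tau)\bigr\|^4_{L^4}\,d\tau$; by the Gagliardo-Nirenberg inequality this is bounded by $\|r^j\|^2_{L^\infty_t(H^3)}\,\|r^j\|^2_{L^2_t(H^4)}$, which is finite for any function belonging to the energy space $L^\infty_{\rm loc}\bigl(\R_+;H^3\bigr)\cap L^2_{\rm loc}\bigl(\R_+;H^4\bigr)$ required in Definition \ref{d:weak}. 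Hence the exponential factor is finite for either solution, the stability estimate holds verbatim, and no additional regularity or a priori information is needed. Consequently there is no genuine obstacle here: once Theorem \ref{t:stab} is in hand, the corollary is an immediate specialization to the case of identical data.
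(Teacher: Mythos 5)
Your proof is correct and follows exactly the paper's route: the corollary is read off as an immediate specialization of the stability estimate of Theorem \ref{t:stab} to two solutions with identical data, so that the right-hand side vanishes and $\mc E[\de r(t)]\leq 0$ forces $\de r\equiv 0$ in $H^3$. Your additional remark --- that the Gronwall exponential only requires $\int_0^t\bigl\|\nabla(\Id-\Delta)r^j(\tau)\bigr\|^4_{L^4}\,d\tau$ to be \emph{finite}, which already follows from membership in the energy space $L^\infty_{\rm loc}(\R_+;H^3)\cap L^2_{\rm loc}(\R_+;H^4)$ of Definition \ref{d:weak} --- is a sound clarification of why the stability estimate applies to arbitrary weak solutions, not just the constructed ones.
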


\section{Propagation of higher regularities} \label{s:strong}

In the previous part, we have settle down the theory of weak solutions for equation \eqref{eq:main} under minimal smoothness assumptions on the initial data and forcing terms.
In this section, we aim at investigating more regular solutions. Of course, from Theorem \ref{t:weak} one gathers for free existence and uniqueness of weak solutions
at the $H^3$ level of regularity; our goal is then proving propagation of the initial smoothness.

We start by discussing, in Subsecton \ref{ss:h-4}, solutions corresponding to $H^4$ initial data and $L^2_T(H^{-1})$ external forces.
They deserve special attention, because their theory will directly follow from the energy estimates of second kind, which we have presented in Subsection \ref{ss:energy}.

Subsections \ref{ss:higher} and \ref{ss:intermediate} are devoted to propagation of $H^s$ regularities, respectively for $s>1$ and $0<s<1$.
The method of the proof is analogous to both cases: after a paralinearization of our equation, we will perform energy estimates for the solution in each dyadic block of a Littlewood-Paley
decomposition, estimating carefully the remainder terms which arise from the paralinearization.
As already explained in the Introduction, in the former case, this strategy will allow us to recover fundamental algebraic cancellations in the non-linear term,
which we have already exploited in a crucial way for the basic energy estimates of Subsection \ref{ss:energy}. In the latter case, instead, these cancellations do not involve anymore the highest
order term, thus they are only of partial help: propagation of intermediate smoothness requires a more delicate analysis.

\subsection{On the $H^4$-theory for weak solutions} \label{ss:h-4}
In this subsection we discuss briefly the case when $r_0\in H^4$ and $f\in L^2_T(H^{-1})$.
One can prove the following statement, analogous to Theorem \ref{t:weak}.
\begin{thm} \label{t:weak_2}
For all $r_0\in H^4(\R^2)$ and $f\in L^2_{\rm loc}\bigl(\R_+;H^{-1}(\R^2)\bigr)$,
there exists a unique global in time weak solution $r$ to equation \eqref{eq:main}, such that
$$
r\,\in\,\mc C\bigl(\R_+;H^4(\R^2)\bigr)\,\cap\,L^\infty_{\rm loc}\bigl(\R_+;H^4(\R^2)\bigr)\,\cap\,L^2_{\rm loc}\bigl(\R_+;H^5(\R^2)\bigr)\,.
$$
Moreover, for any $T>0$ fixed, $r$ satisfies the following energy equality, for all $t\in[0,T]$:
\begin{eqnarray*}
& & \hspace{-1cm}
\wtilde{\mc E}[r(t)]\,+\,\mu\int^t_0\left(\|\Delta r(\tau)\|^2_{L^2}+2\|\nabla\Delta r(\tau)\|^2_{L^2}+2\|\Delta^2r(\tau)\|^2_{L^2}+\|\nabla\Delta^2r(\tau)\|^2_{L^2}\right)\,d\tau\,= \\
& & \qquad\qquad\qquad\qquad\qquad\qquad\qquad
=\,\wtilde{\mc E}[r_0]\,+\,\int^t_0\lan f(\tau),(\Id-\Delta+\Delta^2)r(\tau)\ran_{H^{-1}\times H^1}\,d\tau\,,
\end{eqnarray*}
where, for all functions $\vphi\in H^4$, we have defined $\wtilde{\mc E}[\vphi]$ to be the quantity
$$
\wtilde{\mc E}[\vphi]\,:=\,\bigl(\|\vphi\|^2_{L^2}\,+\,2\|\nabla \vphi\|^2_{L^2}\,+\,3\|\Delta \vphi\|^2_{L^2}\,+\,2\|\nabla\Delta \vphi\|^2_{L^2}\,+\,\|\Delta^2 \vphi\|^2_{L^2}\bigr)/2\,.
$$
\end{thm}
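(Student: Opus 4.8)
The plan is to repeat, one regularity level higher, the scheme of Section~\ref{s:weak}, substituting the first-kind bounds by the second-kind ones of Proposition~\ref{p:a-priori_II}. First I would build the Friedrichs approximations $r^n$ solving \eqref{appx_system}, exactly as in Paragraph~\ref{sss:approx}: the Cauchy--Lipschitz argument of Proposition~\ref{p:ex-smooth} again yields global smooth solutions. The new ingredient is the uniform bound. Applying $\mc A$ to recover \eqref{eq:approx} and testing against $\Delta^2 r^n$, the bilinear contribution $\lan \J_n\Lambda(r^n,r^n),\Delta^2 r^n\ran$ equals $\lan\Lambda(r^n,r^n),\Delta^2 r^n\ran$ --- because $\J_n$ is self-adjoint and idempotent, commutes with $\Delta^2$, and $\J_n r^n=r^n$ --- and hence vanishes by Lemma~\ref{l:bilin-high}. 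Combining this with the first-kind test against $(\Id-\Delta)r^n$, the very computations of Propositions~\ref{p:a-priori_I} and~\ref{p:a-priori_II} give a bound for $(r^n)_n$ in $L^\infty_T(H^4)\cap L^2_T(H^5)$, uniform in $n$ and controlled by $\|r_0\|_{H^4}$ and $\|f\|_{L^2_T(H^{-1})}$.

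Next I would pass to the limit. As in Lemma~\ref{l:cpt}, equation~\eqref{CL_system} together with the uniform bounds controls $(\d_t r^n)_n$ in $L^2_T(H^{3-s})$ for small $s>0$, so Ascoli--Arzel\`a and Rellich--Kondrachov make $(r^n)_n$ compact in $L^2_T(H^4_{\rm loc})$; this strong convergence suffices to pass to the limit in the nonlinear term exactly as in Proposition~\ref{p:limit}, producing a weak solution $r\in L^\infty_T(H^4)\cap L^2_T(H^5)$ satisfying the corresponding energy inequality. Uniqueness then comes for free: a solution in this class is, in particular, a weak solution at the $H^3$ level in the sense of Definition~\ref{d:weak} (note $H^{-1}\hookrightarrow H^{-2}$), so Corollary~\ref{c:uniq-weak} applies verbatim.

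For the time continuity $r\in\mc C(\R_+;H^4)$ and the stated energy equality, I would run the parabolic argument of Paragraph~\ref{sss:parabolic} at one higher level of regularity, viewing $r$ as the solution of \eqref{eq:parab} with forcing $f+g$, where $g:=\Lambda(r,r)$. A computation analogous to Lemma~\ref{l:bilinear} shows $g\in L^2_T(H^{-1/2})$: indeed $\nabla^\perp(\Id-\Delta)r\in L^4_T(L^\infty)$ (from $L^\infty_T(H^1)\cap L^2_T(H^2)$ and the two-dimensional embedding $H^{3/2}\hookrightarrow L^\infty$) and $\Delta^2 r\in L^4_T(H^{1/2})$, so their product lies in $L^2_T(H^{1/2})$ and $g=\div(\cdot)\in L^2_T(H^{-1/2})$. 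The $H^4$-analogue of Theorem~\ref{t:parabolic} --- obtained by testing the linear equation \eqref{eq:parab} against both $(\Id-\Delta)w$ and $\Delta^2 w$ --- then yields continuity in $H^4$ together with an energy equality whose forcing integrand is $\lan f+g,(\Id-\Delta+\Delta^2)r\ran$. The contributions of $g$ drop out thanks to the two cancellations $\lan\Lambda(r,r),(\Id-\Delta)r\ran=0$ and $\lan\Lambda(r,r),\Delta^2 r\ran=0$ supplied by Lemmas~\ref{l:bilin} and~\ref{l:bilin-high}, leaving precisely the equality claimed, with $\wtilde{\mc E}$ equal to $\mc E$ augmented by one half of the second-kind energy $\|\Delta r\|_{L^2}^2+\|\nabla\Delta r\|_{L^2}^2+\|\Delta^2 r\|_{L^2}^2$ of \eqref{def:Y}.

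The main obstacle is justifying the energy equality rigorously: the formal identity is nothing but the sum of Propositions~\ref{p:a-priori_I} and~\ref{p:a-priori_II}, but testing a weak solution against $(\Id-\Delta+\Delta^2)r$ involves fifth-order derivatives and is not licit a priori. This is exactly what the parabolic detour circumvents, as in Corollary~\ref{c:en-eq}; the only genuinely new verifications are that $g=\Lambda(r,r)$ lands in the functional space required by the $H^4$ parabolic theory and that both duality cancellations remain meaningful at this regularity --- points guaranteed by the continuity properties of $\Lambda$ collected in Lemmas~\ref{l:bilin} and~\ref{l:bilin-high}.
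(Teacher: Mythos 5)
Your proposal is correct and follows essentially the same route as the paper: Friedrichs regularization with the second-kind energy estimates of Proposition \ref{p:a-priori_II} (the bilinear term killed via $\J_n$-manipulations and Lemma \ref{l:bilin-high}) to get uniform $L^\infty_T(H^4)\cap L^2_T(H^5)$ bounds, compactness to pass to the limit, and the parabolic detour of Paragraph \ref{sss:parabolic} run at the $H^4$ level --- with the two cancellations from Lemmas \ref{l:bilin} and \ref{l:bilin-high} --- to obtain continuity in time and the stated energy equality. Two minor remarks: your uniqueness shortcut (any solution in this class is an $H^3$-weak solution in the sense of Definition \ref{d:weak}, so Corollary \ref{c:uniq-weak} applies) is a legitimate simplification of the paper's rerun of the stability estimates at the $H^4$ level; and your justification that an $L^4_T(L^\infty)$ function times an $L^4_T(H^{1/2})$ function lands in $L^2_T(H^{1/2})$ is not literally valid as stated (multiplication by a merely bounded function does not preserve $H^{1/2}$; one needs the full $H^{3/2}$ regularity of $\nabla^\perp(\Id-\Delta)r$ and a paraproduct argument), but this is harmless, since Lemma \ref{l:bilin-high} directly gives $\Lambda(r,r)\in L^2_T(H^{-1})$, which is the space the paper itself uses and suffices for the $H^4$ parabolic theory.
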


The proof of the previous statement follows exactly the main lines of the arguments given in Subsections \ref{ss:weak-ex} and \ref{ss:weak-u} above. So, here we will limit ourselves to
pointing out the main differences and crucial points.

We start by remarking that, thanks to the additional regularity of $r$, one can give a weak formulation of equation \eqref{eq:main} which is slightly different from the one proposed in
Definition \ref{d:weak}: namely, for any $\phi\in\mc{C}_0^\infty\bigl([0,T[\,\times\R^2\bigr)$ one has
\begin{eqnarray*}
& & \hspace{-0.7cm}
-\int^T_0\!\!\!\int_{\R^2}\bigl(\Id-\Delta+\Delta^2\bigr)r\;\d_t\phi\,dx\,dt\,-\,
\int^T_0\!\!\!\int_{\R^2}\Delta^2r\;\nabla^\perp(\Id-\Delta)r\cdot\nabla\phi\,dx\,dt\,- \\
& & \hspace{-0.3cm}
-\,\mu\int^T_0\!\!\!\int_{\R^2}\nabla\Delta(\Id-\Delta)r\cdot\nabla\phi\,dx\,dt\,=\,\int^T_0\!\!\!\langle f(t),\phi(t)\rangle_{H^{-1}\times H^1}\,dt\,+\,
\int_{\R^2}(\Id-\Delta+\Delta^2)r_0\,\phi\,dx\,.
\end{eqnarray*}

\medbreak
Let us come back to the proof of Theorem \ref{t:weak_2}.
First of all, the construction of smooth approximate solutions $\bigl(r^n\bigr)_n$ is absolutely analogous to the one given in Paragraph \ref{sss:approx}. At this point, however,
one uses Proposition \ref{p:a-priori_II} to get more precise uniform bounds on the family $\bigl(r^n\bigr)_n$. In particular, we derive that the limit point
$r$ of this sequence verifies
$$
r\,\in\,L_{\rm loc}^\infty\bigl(\RR_+;H^4(\R^2)\bigr)\,\cap\, L^2_{\rm loc}\bigl(\RR_+; H^5(\R^2)\bigr)\,,
$$
and the weak-$*$ convergence holds in this space.
On the other hand, from Lemma \ref{l:bilin-high} and Sobolev embeddings, one easily gathers that $\bigl(\Lambda(r^n,r^n)\bigr)_n$ is uniformly bounded in e.g. $L^4_T(H^{-1})$.
Using this property and going along the guidelines of the proof to Lemma \ref{l:cpt}, it is easy to prove compactness of $\bigl(r^n\bigr)_n$ in e.g. $L^2_T(H^{4}_{\rm loc})$,
and then to pass to the limit in the weak formulation of \eqref{eq:main}.
In addition, taking advantage once again of the regularity of the non-linear term, one can prove that $(\Id-\Delta+\Delta^2)r$ belongs to $\mc C^{0,1/4}\bigl(\R_+;H^{-1}(\R^2)\bigr)$.

Energy equality and uniqueness are also in this case consequences of the analysis of the parabolic equation \eqref{eq:parab}. Performing energy estimates at the $H^4$ level
(namely, testing against the function $(\Id-\Delta+\Delta^2)w$), one can easily get the analogue of Theorem \ref{t:parabolic}. From this fact, combined with the property
$\Lambda(r,r)\,\in\,L^2_T(H^{-1})$ (keep in mind Lemma \ref{l:bilin-high} again), we deduce that our weak solution $r$ belongs actually to $\mc C\bigl(\R_+;H^4\bigr)$ and
it verifies the energy equality stated in Theorem \ref{t:weak_2}. Finally, the proof of the stability estimates, and then of uniqueness, is absolutely analogous to the previous one.

As a last comment, we notice that the energy equality of Theorem \ref{t:weak_2} easily implies the estimate claimed in Theorem \ref{t:higher-reg} for the case $s=1$
(keep in mind also Proposition \ref{p:a-priori_II}).

\medbreak
We conclude this part by remarking that $H^4$ regularity seems to us the minimal one required to give sense to the inviscid problem, namely equation \eqref{eq:main} with $\mu=0$.
The study of the inviscid case goes beyond the scopes of the present paper; however, for the sake of completeness we give the following statement.
\begin{thm} \label{t:inviscid}
Let $r_0\,\in\,H^4(\R^2)$ and $f\,\in\,L^2_{\rm loc}\bigl(\R_+;L^2(\R^2)\bigr)$, and fix $\mu=0$ in equation \eqref{eq:main}.

Then there exists a global in time weak solution $r\,\in\,L^\infty\bigl(\R_+;H^{4}(\R^2)\bigr)$ to that equation, which moreover verifies
the energy estimate
$$
\|r(t)\|_{H^4}\,\leq\,C\,\left(\|r_0\|_{H^4}\,+\,\int^t_0\left\|f(\tau)\right\|_{L^2}\,d\tau\right)\,,
$$
for a universal constant $C>0$ and for all $t>0$.
\end{thm}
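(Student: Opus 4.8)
The plan is to realize the solution as the vanishing--truncation (Faedo--Galerkin) limit of smooth approximants, exactly in the spirit of Subsection \ref{ss:weak-ex}, the one genuinely new ingredient being that now \emph{all} the viscous dissipation is absent, so the \emph{a priori} bound must rest entirely on the algebraic cancellations of the bilinear term. The structural observation is that, testing \eqref{eq:main} with $\mu=0$ against $\mc A r:=(\Id-\Delta+\Delta^2)r$ makes the convective term vanish \emph{completely}: indeed
\[
\langle\Lambda(r,r),\mc A r\rangle=\langle\Lambda(r,r),(\Id-\Delta)r\rangle+\langle\Lambda(r,r),\Delta^2 r\rangle=0,
\]
the first pairing vanishing by Lemma \ref{l:bilin} and the second by Lemma \ref{l:bilin-high}. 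Since $\mc A$ is the Fourier multiplier $1+|\xi|^2+|\xi|^4$ and $\wtilde{\mc E}[\varphi]=\tfrac12\|\mc A\varphi\|_{L^2}^2\simeq\|\varphi\|_{H^4}^2$, this yields the clean identity $\tfrac{d}{dt}\wtilde{\mc E}[r]=\langle f,\mc A r\rangle$, in which, crucially, no term is quadratic in $r$.

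For the construction I would regularize as in Paragraph \ref{sss:approx}: set $r_0^n=\J_n r_0$, $f^n=\J_n f$, and solve on the finite-dimensional space $L^2_n$ the truncated ODE $\partial_t\mc A r^n+\J_n\Lambda(r^n,r^n)=f^n$ (equivalently $\partial_t r^n+\J_n\D\Lambda(r^n,r^n)=\D f^n$), whose local solvability follows from Cauchy--Lipschitz exactly as in Proposition \ref{p:ex-smooth}. Because $r^n$ is spectrally supported it is smooth, $\J_n\mc A r^n=\mc A r^n$, and $\J_n$ is self-adjoint; hence testing against $\mc A r^n$ reproduces the cancellation above rigorously and gives $\tfrac{d}{dt}\wtilde{\mc E}[r^n]=\langle f^n,\mc A r^n\rangle\le\|f^n\|_{L^2}\|\mc A r^n\|_{L^2}=\sqrt2\,\|f^n\|_{L^2}\sqrt{\wtilde{\mc E}[r^n]}$. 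Dividing by $\sqrt{\wtilde{\mc E}[r^n]}$ and integrating directly in time --- with no Gronwall, hence no exponential factor, in agreement with the claimed estimate --- produces $\|r^n(t)\|_{H^4}\le C(\|r_0\|_{H^4}+\int_0^t\|f(\tau)\|_{L^2}\,d\tau)$. In particular $\wtilde{\mc E}[r^n]$ stays finite on every $[0,T]$, so the maximal time is $+\infty$ and $(r^n)_n$ is bounded in $L^\infty_{\rm loc}(\R_+;H^4)$.

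For compactness and the passage to the limit, I would note that from the reformulated equation and the continuity properties of $\D$ and $\Lambda$ (Lemma \ref{l:bilin}, part (c)), the sequence $\partial_t r^n=\D f^n-\J_n\D\Lambda(r^n,r^n)$ is uniformly bounded in $L^2_T(H^{3-s})$ for $s>0$ small; combined with the $L^\infty_T(H^4)$ bound, Ascoli--Arzel\`a and Rellich (as in Lemma \ref{l:cpt}) yield a subsequence converging strongly in $L^2_T(H^{4-s}_{\rm loc})$ and weakly-$*$ in $L^\infty_T(H^4)$ to some $r$. To pass to the limit in the weak formulation I would integrate the nonlinear term by parts, writing it as $-\int\Delta^2 r^n\,\nabla^\perp(\Id-\Delta)r^n\cdot\nabla\phi$, and then combine the weak $L^2_T(L^2)$ convergence of $\Delta^2 r^n$ with the strong $L^2_T(L^2_{\rm loc})$ convergence of $\nabla^\perp(\Id-\Delta)r^n$; the $\partial_t$ and forcing terms pass to the limit by linearity. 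The energy estimate for $r$ then follows from weak lower semicontinuity of the $H^4$ norm.

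The delicate point, and the main obstacle, is precisely that the cancellation $\langle\Lambda(r,r),\Delta^2 r\rangle=0$ is \emph{not} available at the $H^4$ level itself: as recorded in Lemma \ref{l:bilin-high} and Remark \ref{r:bilin-high}, the pairing $\langle\Lambda(r,r),\Delta^2 r\rangle$ only makes sense and vanishes once $r\in H^{4+s}$ with $s>1/2$. Hence the energy identity can be run only on the smooth truncated solutions and must be transferred to the limit by lower semicontinuity, never directly on $r$. Relatedly, since there is no parabolic smoothing there is no uniform bound on $\Delta^2 r^n$ beyond $L^\infty_T(L^2)$, so the convergence of the quadratic term genuinely relies on the weak--times--strong structure above rather than on any gain of regularity; this is also exactly why the hypothesis $f\in L^2_{\rm loc}(\R_+;L^2)$ --- rather than merely $H^{-1}$ --- is needed, in order to pair $f$ with $\mc A r\in L^2$ in the absence of dissipation.
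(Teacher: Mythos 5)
Your proposal is correct and follows essentially the same route as the paper: smooth approximation via the truncation operators $\J_n$ as in Paragraph \ref{sss:approx}, the viscosity-free $H^4$ energy bound obtained from the two cancellations of Lemmas \ref{l:bilin} and \ref{l:bilin-high} with the force paired in $L^2$ (so no Gronwall and no exponential factor), compactness through the equation as in Lemma \ref{l:cpt}, and a weak--strong argument to pass to the limit in the bilinear term. Your presentation merely unifies the paper's two separate tests (against $(\Id-\Delta)r^n$ and against $\Delta^2 r^n$, i.e.\ Propositions \ref{p:a-priori_I} and \ref{p:a-priori_II} with $\mu=0$) into a single pairing with $\mc A r^n=(\Id-\Delta+\Delta^2)r^n$, which is the same computation.
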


The previous statement can be easily derived from the analysis carried out here. Its proof uses the following ingredients:
\begin{enumerate}[(i)]
 \item construction of a sequence of smooth approximate solutions $\bigl(r^n\bigr)_n$, as done in Paragraph \ref{sss:approx};
 \item proof of uniform bounds for those approximate solutions, priveded by Proposition \ref{p:a-priori_II}, where one takes $\mu=0$
 (notice that, since we can take the $L^2$ norm of $f$, we have no factor $\mu^{-1}$ in front of it);
\item gain of compactness properties for $\bigl(r^n\bigr)_n$, by an inspection of the equation (arguing as in Lemma \ref{l:cpt}, one can deduce e.g. that $\bigl(r^n\bigr)_n$
is compact in the space $L^\infty_T(H^{3-\veps})$ for all $\veps>0$ arbitrarily small);
\item passing to the limit in the weak formulation of the equation.
\end{enumerate}

We will not pursue the study of the inviscid case further here.

\subsection{Higher order energy estimates} \label{ss:higher}

This section is devoted to the proof of Theorem \ref{t:higher-reg} when $s>1$. As explained at the beginning of this section, the main point is to establish higher order \textsl{a priori} estimates.
Indeed, once \textsl{a priori} bounds are obtained, both existence and uniqueness of solutions at this level of regularity will be straightforward consequences of the analysis carried out above.

Therefore, we just focus on energy estimates: we aim at proving the following statement.
\begin{prop} \label{p:higer-reg}
Let us assume the initial datum $r_0$ to be in $H^{4+\s}(\R^2)$ and the source term $f$ in $L^2_{\rm loc}\bigl(\RR_+;H^{\s-1}(\R^2)\bigr)$, for some $\s>0$.
Let $r$ be the solution to system \eqref{eq:main} given by Theorem \ref{t:weak_2}.
Then $r$ belongs to
$$
C(\RR_+, H^{4+\s}(\RR^2))\cap L^\infty_{\rm loc}\bigl(\R_+;H^{4+\s}(\R^2)\bigr)\,\cap\,L^2_{\rm loc}\bigl(\R_+;H^{5+\s}(\R^2)\bigr)\,.
$$ 
Mreover, there exist two positive constants $C_1$ and $C_2$, depending just on the regularity index $\s$, such that the following estimate holds true for all $t\geq0$:
\begin{align}
	&\EE_\s[r(t)]\,+\, \mu\,\int_0^t \big( \| \Delta^2 r (\tau) \|_{H^\s}^2\, +\, \| \nabla \Delta^2r (\tau) \|_{H^\s}^2\big)\,d\tau\, 
	\leq  \label{reg_prop_main_ineq} \\
	&\qquad\leq\,C_1\,
	\Big( 
		\EE_\s[r_0]\, +\, \frac{1}{\mu}\, \int_0^t \| f (\tau) \|_{H^{\s-1}}^2\,d\tau
	\Big)\;
	\exp
	\left\{
		\frac{C_2}{\mu}\,  e^{\mu\,t}\,\Big(\| r_0\|_{H^3}^2\, +\, \frac{1}{\mu}\, \int_0^t \| f (\tau) \|_{H^{-2}}^2\,d\tau\Big)
	\right\}\,, \nonumber
\end{align}
where, for all $\s\geq0$ and functions $\varphi\in H^{4+\s}(\R^2) $, we have defined $\EE_\s[\varphi]$ by
\begin{equation*}
	\EE_\s[\varphi]\,:=\, 
		\| \Delta\varphi \|_{H^{\s}}^2\, +\, 
		\| \nabla \Delta \varphi  \|_{H^{\s}}^2\,  +\,
		\|  \Delta^2  \varphi \|_{H^{\s}}^2 \,.
\end{equation*}
\end{prop}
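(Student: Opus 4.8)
The plan is to establish the a priori bound \eqref{reg_prop_main_ineq} for smooth solutions; existence, uniqueness and the time-continuity $r\in C(\RR_+;H^{4+\s})$ then follow from the approximation scheme of Paragraph \ref{sss:approx}, the stability estimate of Theorem \ref{t:stab} and the parabolic analysis of \eqref{eq:parab}, exactly as in the proof of Theorem \ref{t:weak_2}. To reproduce the functional $\EE_\s[r]$ together with the dissipation on the left-hand side of \eqref{reg_prop_main_ineq}, I would localise equation \eqref{eq:main} by a dyadic block $\Dd_q$ and test the resulting identity against $\Delta^2\Dd_q r$, weighting by $2^{2q\s}$ and summing over $q$ (this is the Littlewood--Paley lift of the second-kind estimate of Proposition \ref{p:a-priori_II}). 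The time-derivative term reconstructs $\tfrac{d}{dt}\EE_\s[r]$ up to equivalent norms, the viscous term reconstructs $\mu\bigl(\|\Delta^2 r\|_{H^\s}^2+\|\nabla\Delta^2 r\|_{H^\s}^2\bigr)$, and the forcing $\langle f,\cdot\rangle$ is absorbed by Young's inequality into the dissipation at the cost of $\tfrac1\mu\|f\|_{H^{\s-1}}^2$.

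The heart of the matter is the convective contribution $\sum_q 2^{2q\s}\langle\Dd_q\Lambda(r,r),\Delta^2\Dd_q r\rangle$. Writing $v:=\nabla^\perp(\Id-\Delta)r$ (divergence free) and $W:=\Delta^2 r$, so that $\Lambda(r,r)=v\cdot\nabla W$, I would paralinearise the product by Bony's decomposition \eqref{eq:bony} and isolate the transport paraproduct $T_v\cdot\nabla W$. Its localised principal part $S_{q-1}v\cdot\nabla\Dd_q W$, tested against $\Dd_q W$, integrates to zero because $\div(S_{q-1}v)=0$: this is precisely the per-block reincarnation of the cancellation $\langle\Lambda(\rho,\z),\Delta^2\z\rangle=0$ of Lemma \ref{l:bilin-high}. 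What survives is the commutator $[\Dd_q,\,S_{q-1}v\cdot\nabla]W$ together with the remaining pieces $T_{\nabla W}\cdot v$ and $R(v,\nabla W)$ of the Bony decomposition. These remainders I would estimate with the paradifferential and commutator tools of Appendix \ref{app:LP} (Proposition \ref{p:op}), using the two-dimensional embedding $H^{1+\s}\hookrightarrow L^\infty$ and interpolation to redistribute derivatives between $v$ and $W$, absorbing the top-order surplus into the dissipation $\mu\|\nabla\Delta^2 r\|_{H^\s}^2$. The crucial point is to arrange the bound in the shape $C\,\|r\|_{H^4}^2\,\EE_\s[r]$, that is with $\EE_\s[r]$ appearing linearly and a coefficient controlled only by the first-kind energy.

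Once the convective term is bounded in this way, $\ell^2$-summation over $q$ and integration in time lead to a differential inequality of the form $\tfrac{d}{dt}\EE_\s[r]+\mu(\ldots)\le \tfrac1\mu\|f\|_{H^{\s-1}}^2 + C\,\|r\|_{H^4}^2\,\EE_\s[r]$, to which Gronwall's lemma applies. Since Proposition \ref{p:a-priori_I} yields $\int_0^t\|r(\tau)\|_{H^4}^2\,d\tau\lesssim \tfrac1\mu\,e^{\mu t}\bigl(\|r_0\|_{H^3}^2+\tfrac1\mu\int_0^t\|f\|_{H^{-2}}^2\bigr)$, the exponential produced by Gronwall is exactly the factor displayed in \eqref{reg_prop_main_ineq}, while the prefactor combines $\EE_\s[r_0]$ with $\tfrac1\mu\int_0^t\|f\|_{H^{\s-1}}^2$. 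I expect the main obstacle to be the remainder analysis of the previous paragraph: one must check that the surviving commutator and paraproduct terms never produce a factor quadratic in $\EE_\s[r]$ (which would only give local-in-time control), and that the Gronwall coefficient is genuinely controlled by the first-kind energy alone, so that the final exponent depends only on the $H^3$ and $H^{-2}$ norms of the data, in accordance with the function $F(t)$ for $s>1$.
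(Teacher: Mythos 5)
Your plan reproduces the paper's own proof essentially step for step: the same Bony paralinearization of $\Lambda(r,r)$ with $u=\nabla^\perp(\Id-\Delta)r$, the same per-block cancellation $\int_{\R^2} S_{j-1}u\cdot\nabla\Delta^2 r_j\;\Delta^2 r_j\,dx=0$ coming from $\div S_{j-1}u=0$ (the localized analogue of Lemma \ref{l:bilin-high}), the same treatment of the surviving commutator and remainder pieces via Bernstein inequalities and Lemma \ref{l:commut}, and the same Gronwall closure in which $\EE_\s[r]$ appears linearly with a coefficient controlled by the first-kind energy. The only inaccuracy is quantitative: the Gronwall coefficient that actually comes out (and that the paper uses) is $\|\Delta r\|_{L^2}^2+\|\Delta^2 r\|_{L^2}^2\leq\|\Delta r\|_{H^2}^2$, whose time integral Proposition \ref{p:a-priori_I} bounds by $\frac{C}{\mu}\,e^{\mu t}\bigl(\|r_0\|_{H^3}^2+\frac1\mu\int_0^t\|f\|_{H^{-2}}^2\bigr)$, whereas your stated coefficient $\|r\|_{H^4}^2$ contains lower-order terms that are only bounded in $L^\infty_t$ by the first-kind energy, so its time integral picks up an extra factor $t$ and would slightly degrade the exponent in \eqref{reg_prop_main_ineq}.
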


The proof of the above result relies on a coupling between the second-order energy law, given by Proposition \ref{p:a-priori_II}, and a fine analysis of the non-linear terms.
This latter is based on a paralinearization of the operator $\Lambda$, defined in \eqref{eq:bilin}, and a special decomposition (already used in
\cite{DeA_2017} and first introduced in \cite{Marius-Arghir}) in order to treat the remainder which arise.

\begin{rem}\label{rmk:h-reg}
Let us point out that estimate \eqref{reg_prop_main_ineq} gives a control on the high frequencies of the solution $r$. On the other hand, the control on the low frequencies
comes from Theorem \ref{t:weak_2} (see also Proposition \ref{p:a-priori_II}).
Hence, combining Propositions \ref{p:higer-reg} and \ref{p:a-priori_II}, one immediately gets:
\begin{align*}
	&\| 							r(t) 	\|_{L^2}^2\, +\,
	\| 				\Delta 	r(t) 	\|_{H^{2+\s}}^2\,+\,
	\mu\, \int_0^t 
		\| \nabla \Delta^2 r(\tau) \|_{H^\s}^2\,
	d\tau \,\leq \\
	&\qquad\leq\,C_1\, e^{\mu t}\,
	\Big(
		\| r_0 \|_{H^{\s+4}}^2\,
		+\, \frac{1}{\mu}\,\int_0^t \| f(\tau) \|^2_{H^{\s-1}}\,d\tau	
	\Big)\;
	\exp
	\Big\{
		C_2\,  e^{\mu\,t}\,\Big(\| r_0\|_{H^3}^2\,+\, \frac{1}{\mu}\, \int_0^t \| f (\tau) \|_{H^{-2}}^2\,d\tau
	\Big)\Big\}\,,
\end{align*}
for two new suitable positive constants $C_1$ and $C_2$. This inequality easily leads to the one stated in Theorem \ref{t:higher-reg} for the case $s>1$
($s\,=\,1+\s$ here).

Whenever the source-term $f$ is null, one can achieve the following better estimate, for any $t\geq0$:
\begin{equation*}
	\| r(t) \|_{H^{\s+4}}^2\,+\, \mu\,\int_0^t \| \nabla \Delta^2 r(\tau) \|_{H^\s}^2\,\dd \tau\, \leq\,  
	C_1\,\| r_0 \|_{H^{\s+4}}^2\;e^{C_2\| r_0 \|_{H^3}^2}\,.
\end{equation*}
In this case, the solution acquires global integrability in time: namely, $r\,\in\,L^\infty\bigl(\R_+;H^{\s+4}(\R^2)\bigr)$, with $\nabla\Delta^2r\,\in\,L^2\bigl(\R_+;H^{\s}(\R^2)\bigr)$.
\end{rem}	

We are now in the  position to proving Proposition \ref{p:higer-reg}. In what follows, we will repeadetly use the isomorphism of Hilbert spaces $H^{\s}(\RR^2)\,\cong\,B_{2,2}^{\s}(\RR^2)$
(see also relation \eqref{eq:LP-Sob} in the Appendix), where $B_{2,2}^{\s}(\RR^2)$ stands for a nonhomogeneous Besov space (see Definition \ref{d:B} below).
For the sake of completeness, we postpone to Appendix \ref{app:LP} some specifics of these functional spaces, highlighting the main properties we are interested in;
we refer to that Section also for the notations we are going to use in the course of the proof.

\begin{proof}[Proof of Proposition \ref{p:higer-reg}]
Since $r_0$ belongs to $H^{4+\s}(\R^2)$, which is embedded in $H^{4}(\R^2)$, and 
$ f$ belongs to $ L^2_{\rm loc}\bigl(\RR_+;H^{\s-1}(\R^2)\bigr)\,\hookrightarrow\,L^2_{\rm loc}\bigl(\RR_+;H^{-1}(\R^2)\bigr)$, thanks to Proposition \ref{p:a-priori_II} and Theorem
\ref{t:weak_2} we can assume, as a starting point, that
\begin{equation*}
r\,\in\,\mc C\bigl(\R_+;H^4(\R^2)\bigr)\,\cap\,L^\infty_{\rm loc}\bigl(\R_+;H^4(\R^2)\bigr)\,\cap\,L^2_{\rm loc}\bigl(\R_+;H^5(\R^2)\bigr)\,.
\end{equation*}

In virtue of the previous identification $H^{\s}(\RR^2)\,\cong\,B_{2,2}^{\s}(\RR^2)$, it is apparent that our goal is to establish
suitable estimates on the frequency localization $r_j(t):=\Delta_jr(t)$ of the solution $r$, for any $j\geq-1$.
Consequently, we separately take into consideration low and high frequencies of $r$.

More precisely, let us fix a $N\in\N$ large enough, whose value will be determined in the course of the proof (the choice $N\geq7$ will be enough for us), and let us set
\begin{equation} \label{def:low-high}
r^l(t)\,:=\,S_{N}r(t) = \sum_{-1\leq j\leq  N-1}r_j(t)\qquad\quad\mbox{ and }\qquad\quad r^h(t)\, :=\,\left(\Id - S_{N}\right)r(t)\,=\, \sum_{j\geq N}r_j(t)\,.
\end{equation}
Notice that $r(t) = r^l(t) + r^h(t)$.

Thanks to this relation and the inequality $(a+b)^2\,\leq\,2\left(a^2+b^2\right)$, it is easy to see that it is enough to estimate the quantity on the left-hand side of \eqref{reg_prop_main_ineq},
when computed for $r^l$ and $r^h$ separately.

We begin with considering the low frequency term $r^l$. By spectral localization and the fact that $S_{N}$ is a bounded operator from $L^p$ into itself, for any $p\in [1,\infty]$,
we infer that there exists a positive constant $C$, depending also on the fixed $\s$ and $N$, such that
\begin{align}
	\EE_\s\left[r^l(t)\right]\,+\,\mu\, \int_0^t& \left( \left\| \Delta^2 r^l (\tau) \right\|_{H^\s}^2 \,+\, \left\| \nabla \Delta^2 r^l (\tau) \right\|_{H^\s}^2\right)\,d\tau\,
\leq \label{ineq:low_freq}\\
	&\leq\, 
	C\,\left(	
		\EE_0\left[r^l(t)\right]\,+\,  \mu\, \int_0^t \left( \left\| \Delta^2 r^l (\tau) \right\|_{L^2}^2\, +\, \left\| \nabla \Delta^2 r^l (\tau) \right\|_{L^2}^2\right)\,d \tau
	\right)\nonumber\\
	&\leq\, C\,\left(Y(t)\,+ \, \mu\, \int_0^t \left( \left\| \Delta^2 r (\tau)\right\|_{L^2}^2\, +\, \left\| \nabla \Delta^2 r (\tau) \right\|_{L^2}^2\right)\,d \tau\right)\nonumber\,,
\end{align}
where $Y(t)$ is the quantity defined in \eqref{def:Y}.
Thus the $H^4$-energy estimate \eqref{est:en-3}, provided by Proposition \ref{p:a-priori_II}, immediately implies
\begin{equation} \label{ineq_low_frqs}
\EE_\s\left[r^l(t)\right]\,+\,\mu\, \int_0^t\left( \left\| \Delta^2 r^l (\tau) \right\|_{H^\s}^2 \,+\, \left\| \nabla \Delta^2 r^l (\tau) \right\|_{H^\s}^2\right)\,d\tau\,\leq\,
	C\,\Big( Y_0\,+\,\frac{1}{\mu}\,\int_0^t \| f(\tau)\|^2_{H^{-1}}\,d\tau \Big)\,,
\end{equation}
where $Y_0$ is the same quantity of $Y(t)$, when computed on the initial datum $r_0$. Notice that this estimate is even better than what one needs, since its right-hand
side can be easily bounded by the right-hand side of \eqref{reg_prop_main_ineq}: indeed, one has $Y(t)\equiv\mc E_0(t)\leq\mc E_s(t)$ for all $s>0$ and all $t\geq0$.

\smallskip
Now we focus on $\EE_\s\left[r^h(t)\right]$, the term of higher-frequencies origin.  We claim that it is enough to prove the following inequality:
\begin{align}
&\EE_\s\left[r^h(t)\right]\,+\,\mu\int^t_0 
\Big(\left\|\Delta^2r^h\right\|_{H^\s}^2 \,+\,\left\|\nabla\Delta^2r^h\right\|_{H^\s}^2\Big)\,d\tau\,\leq \label{reg_prop_grnwl}  \\
&\qquad\qquad\leq\mc E_\s\left[r_0\right]\,+\,\frac{C}{\mu}\left(\int^t_0\|f\|_{H^{\s-1}}^2\,d\tau\,+\,
\int^t_0\left( \left\|\Delta r \right\|^2_{L^2}\,+\,\left\| \Delta^2 r\right\|^2_{L^2}\right)\,\mc E_\s[r]\,d\tau\right)\,, \nonumber
\end{align}
for a suitable positive constant $C$, which may depend on the fixed parameters $\s>0$ and $N\in\N$.

Indeed, suppose that \eqref{reg_prop_grnwl} holds true: then, we can sum it to \eqref{ineq_low_frqs}. By the fact that $(a+b)^2\,\leq\,2\left(a^2+b^2\right)$ and the
embedding $H^{s-1}\hookrightarrow H^{-1}$ for all $s>0$, we immediately deduce the analogue of \eqref{reg_prop_grnwl} but for the whole $r$: namely,
\begin{align}
&\EE_\s\left[r(t)\right]\,+\,\mu\int^t_0 
\Big(\left\|\Delta^2r\right\|_{H^\s}^2 \,+\,\left\|\nabla\Delta^2r\right\|_{H^\s}^2\Big)\,d\tau\,\leq \label{est:low-high} \\
&\qquad\qquad\leq\mc E_\s\left[r_0\right]\,+\,\frac{C}{\mu}\left(\int^t_0\|f\|_{H^{\s-1}}^2\,d\tau\,+\,
\int^t_0\left( \left\|\Delta r \right\|^2_{L^2}\,+\,\left\| \Delta^2 r\right\|^2_{L^2}\right)\,\mc E_\s[r]\,d\tau\right)\,. \nonumber
\end{align}
Next, we observe that, thanks to the estimate provided by Proposition \ref{p:a-priori_I}, we have
\begin{align*}
&\int_0^t\Big(
		\|\Delta 					r\|_{L^2}^2\,+\, 
		\left\|					 	\Delta^2 	r\right\|_{L^2}^2
	\Big)\,d\tau\,\leq\,
	\int_0^t \| \Delta r(\tau) \|_{H^2}^2\,d\tau\,\leq\,
	\frac{C}{\mu}\,e^{\mu\,t}\, \Big( \| r_0 \|^2_{H^3}\, +\, \frac{1}{\mu} \int_0^t \| f \|^2_{H^{-2}}\,d\tau\Big)\,.
\end{align*}
Thus, an application of Gronwall Lemma to \eqref{est:low-high}, combined with the previous inequality, immediately implies estimate \eqref{reg_prop_main_ineq}.

In the light of the previous discussion, in order to complete the proof of Proposition \ref{p:higer-reg}, it is sufficient to establish inequality \eqref{reg_prop_grnwl}.
The rest of the proof is devoted to this goal.
 
To begin with, we make a paralinearization of equation \eqref{eq:main}; namely, denoting $u\,:=\,\nabla^\perp (\Id-\Delta)r$, we expand the bilinear term
$\Lambda (r, r)$ into
\begin{equation*}
\Lambda (r,r)\, =\, u\cdot \nabla \Delta^2 r \,=\,T_{u\cdot} \nabla \Delta^2r\, +\,T'_{\nabla\Delta^2r\cdot} u\,,
\end{equation*}
where the bilinear operator $T$ is the non-homogeneous paraproduct operator, defined in \eqref{eq:T-R} in the Appendix, and we have set
\begin{equation*}
			T'_{v} u\,:=\,T_vu\,+\,R(u,v)\,=\,\sum_{q\geq -1}\Delta_{q}u\,S_{q+2}v\,.
\end{equation*}
In particular, by \eqref{eq:bony} we deduce that the equality $u\,v\,=\,T_{u} v \,+\, T'_{v}u$ holds true for all tempered distributions $u$ and $v$ for which the product
is well-defined.

Then, we apply the operator $\Delta_j$, for any index $j\geq N$, to the resulting equation: we gather that the function
$r_j = \Delta_j r$ is a smooth solution of the PDE
\begin{equation}\label{eq:r_j}
	\d_t \left(\Id-\Delta+\Delta^2\right)r_j\,+\,T_{u\cdot}\nabla\Delta^2 r_j\,+\,\mu\,\Delta^2(\Id-\Delta)r_j\,=\,\Delta_jf\,-\,\Delta_jT'_{\nabla\Delta^2r\cdot} u\,+\,
\left[T_{u\cdot}\,,\,\Delta_j\right]\nabla\Delta^2r\,.
\end{equation}

Next, we perform energy estimates of second type on the previous equation: multiplying both sides of \eqref{eq:r_j} by $\Delta^2 r_j$ and integrating over $\R^2$, we achieve
\begin{align}
&	\frac{1}{2}\,\frac{d}{dt}
	\Big[
		\|	\Delta					r_j		\|_{L^2}^2\, +\,
		\|	\nabla	\Delta		 r_j	\|_{L^2}^2\, +\,
		\|			\Delta^2	   r_j		\|_{L^2}^2
	\Big]\,+\,
	\mu \,
	\Big[
		\|			\Delta^2	r_j	\|_{L^2}^2 \,+\,
		\|	\nabla	\Delta^2	r_j	\|_{L^2}^2 
	\Big]\, 
	= \label{reg_prop_eq} \\ 
&=\,\int
	\Delta_j f\,\Delta^2 r_j\,
	-\,
	\underbrace{
	\int
		\big(T_{u \cdot} \nabla \Delta^2 r_j \big)\, 							\Delta^2 r_j}_{\I^{(j)}_1}\, +\,
	\underbrace{
	\int					
						\big[ T_{u \cdot} , \Delta_j\big]\nabla \Delta^2 r\,	\Delta^2 r_j}_{\I^{(j)}_2}\,-\,
	\underbrace{
		\int
		\Delta_jT'_{\nabla \Delta^2 r\cdot}u\, 							\Delta^2 r_j}_{\I^{(j)}_3} 			\nonumber
						.
\end{align}

The nonlinear terms of the above equality are bounded making use of Lemma \ref{lemma:hig_reg} below, whose proof is postponed at the end of this section.
\begin{lemma}\label{lemma:hig_reg}
	The following inequalities are satisfied, for some suitable constants also depending on the fixed value of $\s>0$:
	\begin{equation*}
	\begin{alignedat}{8}
	&(i)	&&\I^{(j)}_1(t)	&&&&\,\leq C\,\|\nabla u(t)\|_{L^2}\,\left\|	 \Delta^2 r_j(t)\right\|_{L^2}\,	\left\|\nabla \Delta^2 r_j(t)\right\|_{L^2}\,;\\
	&(ii)	&&\I^{(j)}_2(t)	&&&&\,\leq\,C\,\|  \nabla u(t)\|_{L^2}\,\left\|\Delta^2 r(t)\right\|_{H^\s}\,\left\|\nabla\Delta^2 r_j(t)\right\|_{L^2}\,a_{j}(t)\,2^{-j\s}\,;\\
	&(iii)\quad&&\I^{(j)}_3(t)	&&&&\,\leq\,C\,\left\|\Delta^2r\right\|_{L^2}\,\left\|\nabla u\right\|_{H^\s}\,\left\|\nabla\Delta^2 r_j\right\|_{L^2}\,\sum_{q\geq j-5}2^{-q\s}\,b_q\,.
	\end{alignedat}
	\end{equation*}
	where the sequences $(a_j(t))_{j\geq-1}$ and $(b_q(t))_{q\geq-1}$ belong to $\ell^2$ for any time $t\geq0$, and they verify the inequality
$$
\sup_{t\geq0}\left\|\bigl(a_j(t)\bigr)_{j\geq-1}\right\|_{\ell^2}\,+\,\sup_{t\geq0}\left\|\bigl(b_q(t)\bigr)_{q\geq-1}\right\|_{\ell^2}\,\leq\,C\,.
$$
\end{lemma}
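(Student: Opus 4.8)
The plan is to prove the three bounds $(i)$--$(iii)$ separately, using throughout the Littlewood--Paley toolbox recalled in Appendix \ref{app:LP}: Bony's decomposition \eqref{eq:bony}, the paraproduct and remainder operators from \eqref{eq:T-R}, Bernstein's inequalities, the $2$-D Gagliardo--Nirenberg inequality $\|h\|_{L^4}^2\le C\,\|h\|_{L^2}\|\nabla h\|_{L^2}$ (Corollary 1.2 of \cite{C-D-G-G}), and the identification $H^\s\cong B^\s_{2,2}$ from \eqref{eq:LP-Sob}. The structural fact used repeatedly is that the transport field $u=\nabla^\perp(\Id-\Delta)r$ is divergence free, $\div u=0$, since $\psi:=(\Id-\Delta)r$ gives $\div(\nabla^\perp\psi)=0$.

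For $(i)$, the guiding principle is that the \emph{full} transport term carries an exact cancellation: setting $g:=\Delta^2 r_j$, one has $\int u\cdot\nabla g\,g\,dx=-\tfrac12\int\div u\,|g|^2\,dx=0$. I would therefore write $\I^{(j)}_1=\int\bigl(T_{u\cdot}\nabla g-u\cdot\nabla g\bigr)g\,dx$, and expand the difference by Bony's formula into $-\sum_i\int\bigl(T_{\partial_i g}u_i+R(u_i,\partial_i g)\bigr)g\,dx$. Since $g$ is spectrally localized in the annulus $2^j$, the generic summand of each remaining term pairs with $g$ only for dyadic indices $q\sim j$; estimating the $u$-block through Bernstein, $\|\Delta_q u\|_{L^4}\le C\,2^{-q/2}\|\Delta_q\nabla u\|_{L^2}$, together with $\|g\|_{L^4}^2\le C\|g\|_{L^2}\|\nabla g\|_{L^2}$, the factors $2^{\pm q/2}\sim 2^{\pm j/2}$ cancel and one is left exactly with $C\,\|\nabla u\|_{L^2}\,\|\Delta^2 r_j\|_{L^2}\,\|\nabla\Delta^2 r_j\|_{L^2}$, as claimed.

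For $(ii)$, I would invoke a commutator estimate for $[\Delta_j,T_{u\cdot}\nabla]$ of the type in \cite{B-C-D}: writing it as $\sum_q[\Delta_j,S_{q-1}u\cdot\nabla]\Delta_q(\nabla\Delta^2 r)$ and Taylor-expanding the kernel of $\Delta_j$ gains one full derivative, which in the $2$-D setting lets the field be measured through $\|\nabla u\|_{L^2}$. Extracting the $H^\s=B^\s_{2,2}$ norm $\|\Delta^2 r\|_{H^\s}$ and encoding its dyadic distribution in an $\ell^2$ sequence $(a_j)_j$ yields $\|[\Delta_j,T_{u\cdot}]\nabla\Delta^2 r\|_{L^2}\le C\,2^{-j\s}a_j\,\|\nabla u\|_{L^2}\|\Delta^2 r\|_{H^\s}$; pairing with $\Delta^2 r_j$ by Cauchy--Schwarz and trading $\|\Delta^2 r_j\|_{L^2}$ for $2^{-j}\|\nabla\Delta^2 r_j\|_{L^2}$ via Bernstein gives the stated bound. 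For $(iii)$, I would expand $T'_{\nabla\Delta^2 r\cdot}u=\sum_q\Delta_q u\,S_{q+2}\nabla\Delta^2 r$; the generic summand has spectrum in a ball of radius $\sim 2^{q}$, so $\Delta_j$ annihilates it unless $q\ge j-5$, producing the sum $\sum_{q\ge j-5}$. Estimating each term by H\"older with the $2$-D embeddings of Lemma \ref{l:bilin}, placing $\|\Delta^2 r\|_{L^2}$ on the low-frequency factor and recording the $H^\s$ norm of $\nabla u$ through the normalized $\ell^2$ sequence $b_q$, one obtains $C\,\|\Delta^2 r\|_{L^2}\|\nabla u\|_{H^\s}\|\nabla\Delta^2 r_j\|_{L^2}\sum_{q\ge j-5}2^{-q\s}b_q$ after again converting $\|\Delta^2 r_j\|_{L^2}$ into $2^{-j}\|\nabla\Delta^2 r_j\|_{L^2}$.

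The uniform $\ell^2$ bounds on $(a_j)_j$ and $(b_q)_q$ are built into their definitions (they are the normalized Besov sequences of $\Delta^2 r$ and $\nabla u$), so $\sup_t\|(a_j)\|_{\ell^2}+\sup_t\|(b_q)\|_{\ell^2}\le C$ follows from $H^\s\cong B^\s_{2,2}$. I expect the genuine difficulty to lie in $(i)$: reproducing the divergence-free transport cancellation at the paraproduct level while matching the precise right-hand side, and in particular recovering the extra factor $\|\nabla\Delta^2 r_j\|_{L^2}$ rather than $2^j\|\Delta^2 r_j\|_{L^2}$, in the critical $2$-D regime where $\nabla u\in L^2$ fails to embed into $L^\infty$. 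This is what forces the combination of $\div u=0$, Bernstein, and the Gagliardo--Nirenberg inequality above, instead of a naive $L^\infty$ estimate on the transport field.
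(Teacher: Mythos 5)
Your proposal is correct. Parts (ii) and (iii) are essentially the paper's own proof: reduce to the blocks $|q-j|\le 4$ (resp.\ $q\ge j-5$), apply Lemma \ref{l:commut} together with the two-dimensional Bernstein inequality ($\|S_{q-1}\nabla u\|_{L^\infty}\le C\,2^{q}\,\|\nabla u\|_{L^2}$ for (ii), and $\|\Delta_q u\|_{L^\infty}\le C\,\|\Delta_q\nabla u\|_{L^2}$ for (iii) — Bernstein, not Lemma \ref{l:bilin}, is the tool there), pass to divergence form via $\div\Delta_q u=0$, and do the Besov bookkeeping with $a_j$, $b_q$ exactly as you indicate. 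The one thing to fix is the intermediate claim in (ii): $\bigl\|[\Delta_j,T_{u\cdot}]\nabla\Delta^2 r\bigr\|_{L^2}\le C\,2^{-j\s}\,a_j\,\|\nabla u\|_{L^2}\,\|\Delta^2 r\|_{H^\s}$ is too strong by a factor $2^j$; the commutator estimate plus Bernstein give, per block, $C\,2^{q-j}\,\|\nabla u\|_{L^2}\,\|\nabla\Delta^2 r_q\|_{L^2}\,\sim\,C\,2^{q}\,\|\nabla u\|_{L^2}\,\|\Delta^2 r_q\|_{L^2}$, hence $C\,2^{j(1-\s)}\,a_j\,\|\nabla u\|_{L^2}\,\|\Delta^2 r\|_{H^\s}$ in total. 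That extra $2^j$ is precisely what your final step (trading $\|\Delta^2 r_j\|_{L^2}$ for $2^{-j}\,\|\nabla\Delta^2 r_j\|_{L^2}$) absorbs, so the stated inequality (ii) is unharmed; only the intermediate line needs correcting.

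Part (i) is where you take a genuinely different route, and it works. The paper never compares $T_{u\cdot}\nabla g$ with the full product: it restricts the paraproduct to $|q-j|\le1$, splits $S_{q-1}u=(S_{q-1}-S_{j-1})u+S_{j-1}u$, kills the $S_{j-1}u$ contribution by the exact cancellation \eqref{eq:paraprod-0} ($S_{j-1}u$ is divergence free), and estimates the telescoping difference in $L^\infty$: since $(S_{q-1}-S_{j-1})u$ is spectrally supported in an annulus of size $\sim2^j$ (this is where $N$ large enough enters), Bernstein yields $\|(S_{q-1}-S_{j-1})u\|_{L^\infty}\le C\,\|\nabla u\|_{L^2}$, with no interpolation inequality at all. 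You instead subtract the full transport term $u\cdot\nabla g$, $g=\Delta^2 r_j$, whose pairing with $g$ vanishes since $\div u=0$, and estimate the Bony errors $T_{\nabla g}u+R(u,\nabla g)$; because $\nabla g$ and $g$ live at frequency $2^j$, only blocks $q\sim j$ survive, and H\"older ($L^2\times L^4\times L^4$) with $\|\Delta_q u\|_{L^4}\le C\,2^{-q/2}\,\|\nabla u\|_{L^2}$ and $\|g\|_{L^4}^2\le C\,\|g\|_{L^2}\,\|\nabla g\|_{L^2}$, combined with $\|\nabla g\|_{L^2}\sim 2^j\,\|g\|_{L^2}$, indeed reproduces $C\,\|\nabla u\|_{L^2}\,\|\Delta^2 r_j\|_{L^2}\,\|\nabla\Delta^2 r_j\|_{L^2}$. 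Both arguments rest on the same two pillars — $\div u=0$ and the fact that all surviving blocks sit at frequency $2^j$ — but yours makes the untruncated cancellation explicit at the price of Gagliardo--Nirenberg, while the paper's frequency-truncated cancellation stays in $L^\infty$ and avoids interpolation.
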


Combining estimate \eqref{reg_prop_eq} and Lemma \ref{lemma:hig_reg} together, we gather
\begin{align*}
&\frac{1}{2}\,\frac{d}{dt}\,
\Big(\|	\Delta					r_j	\|_{L^2}^2\,+\,
\|	\nabla	\Delta		 r_j	\|_{L^2}^2\, +\,
\left\|			\Delta^2	   r_j	\right\|_{L^2}^2
\Big)\,+\,\mu\, 
\Big(
\left\|			\Delta^2	r_j	\right\|_{L^2}^2 \,+\,
\left\|	\nabla	\Delta^2	r_j	\right\|_{L^2}^2 
\Big)\,\leq \\ 
&\leq\,C\,2^{-j}\,\|\Delta_j f\|_{L^2}\,\left\|\nabla\Delta^2 r_j \right\|_{L^2}\,+\,C\biggl(\|\nabla u\|_{L^2}\,\left\|\Delta^2 r_j\right\|_{L^2}\,\left\|\nabla \Delta^2 r_j\right\|_{L^2}\,+ \\
&\qquad+\,\|\nabla u\|_{L^2}\,\left\|\Delta^2 r\right\|_{H^\s}\,a_{j}\,2^{-j\s}\,\left\|\nabla \Delta^2 r_j\right\|_{L^2}\,+\,
\left\|\Delta^2r\right\|_{L^2}\,\left\|\nabla u\right\|_{H^\s}\,\left\|\nabla\Delta^2 r_j\right\|_{L^2}\,\sum_{q\geq j-5}2^{-q\s}\,b_q\biggr)\,.
\end{align*}
Hence, we multiply both sides by $2^{2j\s}$, and we take the integral in time; after using Young inequality for each term on the right-hand side of the previous relation,
we perform a sum over $j\geq N$, and we finally deduce that
\begin{align*}
&\EE_\s\left[r^h(t)\right]\,+\,2\,\mu\int^t_0 
\Big(
\left\|			\Delta^2	r^h	\right\|_{L^2}^2 \,+\,
\left\|	\nabla	\Delta^2	r^h	\right\|_{L^2}^2 
\Big)\,d\tau\,\leq \\
&\qquad\leq\,\mc E_\s\left[r^h_0\right]\,+\,\frac{C}{\mu}\sum_{j\geq N}2^{2j(\s-1)}\|\Delta_j f\|_{L^2_t(L^2)}^2\,+\, \mu\sum_{j\geq N}2^{2j\s}\,\left\|\nabla\Delta^2 r_j\right\|_{L^2_t(L^2)}^2\,+ \\
&\qquad\qquad+\,\frac{C}{\mu}\int^t_0\|\nabla u\|^2_{L^2}\sum_{j\geq N}
	2^{2j\s}\,\left\|\Delta^2 r_j\right\|_{L^2}^2\,d\tau\,+\, 
\frac{C}{\mu}\int^t_0\|\nabla u\|^2_{L^2}\,\left\|\Delta^2 r\right\|^2_{H^\s}\sum_{j\geq N}a_j^2\,d\tau\,+ \\
	&\qquad\qquad\qquad\qquad\qquad\qquad\qquad\qquad
+\,\frac{C}{\mu}\int^t_0\left\|\Delta^2r\right\|^2_{L^2}\left\|\nabla u\right\|^2_{H^\s}\,\sum_{j\geq N}\left(\sum_{q\geq j-5}2^{-(q-j)\s}\,b_q\right)^{\!\!2}\,d\tau\,.
\end{align*}
Observe that, by definition of $r^h$, we can absorbe the third term in the right-hand side into the left-hand side. Moreover, recalling the properties of the sequences
$\bigl(a_j(t)\bigr)_{j\geq-1}$ and $\bigl(b_q(t)\bigr)_{q\geq-1}$ stated in Lemma \ref{lemma:hig_reg} above, we can estimate
\begin{align*}
\sum_{j\geq N}a_j(\tau)^2\,&\leq\,\left\|\bigl(a_j(\tau)\bigr)_{j\geq-1}\right\|^2_{\ell^2}\,\leq\,C \\
\sum_{j\geq N}\left(\sum_{q\geq j-5}2^{-(q-j)\s}\,b_q(\tau)\right)^{\!\!2}\,&\leq\,\left\|\bigl(\theta_k\bigr)_{k\geq-1}\,*\,\bigl(b_q(\tau)\bigr)_{q\geq-1}\right\|^2_{\ell^2} \\
&\leq\,\left\|\bigl(\theta_k\bigr)_{k\geq-1}\right\|^2_{\ell^1}\,\left\|\bigl(b_q(\tau)\bigr)_{q\geq-1}\right\|^2_{\ell^2}\,\leq\,C\,,
\end{align*}
where, for all $k\geq-1$, we have defined $\theta_k\,=\,2^{-k\s}$, and where the constants $C$ are uniform with respect to time.
Therefore, we obtain the inequality
\begin{align}
&\EE_\s\left[r^h(t)\right]\,+\,\mu\int^t_0 
\Big(
\left\|			\Delta^2	r^h	\right\|_{L^2}^2 \,+\,
\left\|	\nabla	\Delta^2	r^h	\right\|_{L^2}^2 
\Big)\,d\tau\,\leq \label{est:E_s-I} \\
&\leq\,\mc E_\s\left[r_0\right]\,+\,\frac{C}{\mu}\int^t_0\|f\|_{H^{\s-1}}^2\,d\tau\,+\,\frac{C}{\mu}\int^t_0\|\nabla u\|^2_{L^2}\,\left\|\Delta^2 r\right\|^2_{H^\s}\,d\tau\,+\,
\frac{C}{\mu}\,\int^t_0\left\|\Delta^2r\right\|^2_{L^2}\left\|\nabla u\right\|^2_{H^\s}\,d\tau\,, \nonumber
\end{align}
where we used also the fact that $(\Id - S_{N})$ is a bounded operator from $L^p$ into itself, so that $\EE_\s\left[r^h_0\right]$ is actually bounded by $\EE_\s\left[r_0\right]$,
up to a multiplicative constant.

At this point, since $u\,=\,\nabla^\perp(\Id-\Delta)r$, we use the continuity of the Calder\'on-Zygmund operator $\nabla^2(-\Delta)^{-1}$ over $L^p$ for all $1<p<+\infty$,
to deduce
\begin{align*}
\|\nabla u\|^2_{L^2}\,&\leq\,C\,\left( \left\|\Delta r \right\|^2_{L^2}\, +\, \left\| \Delta^2 r\right\|^2_{L^2}\right) \\
\|\nabla u\|^2_{H^\s}\,&\leq\,C\,\left( \left\|\Delta r \right\|^2_{H^\s}\, +\, \left\| \Delta^2 r\right\|^2_{H^\s}\right)\,\leq\,\mc E_\s[r]\,.
\end{align*}
Therefore, from inequality \eqref{est:E_s-I} we immediately obtain, for all $t\geq0$, the bound
\begin{align*}
&\EE_\s\left[r^h(t)\right]\,+\,\mu\int^t_0 
\Big(\left\|\Delta^2r^h\right\|_{L^2}^2 \,+\,\left\|\nabla\Delta^2r^h\right\|_{L^2}^2\Big)\,d\tau\,\leq \\ 
&\qquad\qquad\leq\mc E_\s\left[r_0\right]\,+\,\frac{C}{\mu}\left(\int^t_0\|f\|_{H^{\s-1}}^2\,d\tau\,+\,
\int^t_0\left( \left\|\Delta r \right\|^2_{L^2}\,+\,\left\| \Delta^2 r\right\|^2_{L^2}\right)\,\mc E_\s[r]\,d\tau\right)\,,
\end{align*}
which corresponds exactly to inequality \eqref{reg_prop_grnwl}.
Therefore, as already explained above, this relation concludes the proof of Proposition \ref{p:higer-reg}, provided we show the bounds of Lemma \ref{lemma:hig_reg}.
\end{proof}

In order to complete the proof to Proposition \ref{p:higer-reg}, it remains us to get the estimates of Lemma \ref{lemma:hig_reg}.
\begin{proof}[Proof of Lemma \ref{lemma:hig_reg}]
We begin with proving inequality $(i)$.  First, using the almost-orthogonality property of the dyadic blocks, namely the fact that $\Delta_q\Delta_j\equiv 0$ for any couple $(q,j)$
such that $|q-j|\geq2$ (see e.g. Proposition 2.10 of \cite{B-C-D}), we can reformulate the term $\I^{(j)}_1$ as follows:
\begin{align*}
	\I_1^{(j)}\,=\, 
	\int_{\R^2}
		\big(T_{u \cdot} \nabla \Delta^2 r_j \big)\, 							\Delta^2 r_j \,d x 
	\,&=\,
	\sum_{q\geq -1} \int_{\R^2}
						\big( S_{q-1} u \cdot
						\Delta_q  \nabla \Delta^2 r_j\big)\,\Delta^2 r_j\,dx\\
	&=\sum_{q\geq -1,\, |q-j|\leq 1} \int_{\R^2}
						\big( S_{q-1} u \cdot
						\Delta_q  \nabla \Delta^2 r_j\big)\,\Delta^2 r_j\,dx\,.
\end{align*}
Next, we isolate the localized function $S_{j-1}u$ from the above term: namely, we split $\I_1^{(j)}$ into
\begin{equation} \label{eq:I_1}
	\I_1^{(j)}\,=\,
	\sum_{\substack{q\geq -1,\\|q-j|\leq 1}}\hspace{-0.1cm} \int
						\big( (S_{q-1} - S_{j-1}) u \cdot
						\Delta_q  \nabla \Delta^2 r_j\big)\,\Delta^2 r_j\, +\,\int
						\big( S_{j-1} u \cdot
						\nabla \Delta^2 r_j\big)\,\Delta^2 r_j\,, 					
\end{equation} 
where we have used also the fact that $\sum_{|q-j|\leq 1} \Delta_q r_j = r_j$, i.e. $\sum_{|q-j|\leq 1} \Delta_q\,\equiv\,1$ on the support of $\Delta_j$.
Such a decomposition has two fundamental consequencies in our estimates.
First of all, since $S_{j-1} u \,=\, \nabla^\perp(\Id-\Delta)S_{j-1}r$ is divergence-free, by an
integration by parts we gather that the last non-linear term in equality \eqref{eq:I_1} is null:
\begin{equation} \label{eq:paraprod-0}
	\int_{\R^2}
	S_{j-1} u \cdot
	\nabla \Delta^2 r_j\,\Delta^2 r_j\,dx\, =\, 0\,.
\end{equation}
We remark that this cancellation, which can be viewed as the analogue of Lemma \ref{l:bilin-high} in a frequency localization context, plays a main rule: indeed it allows one to get a
better estimate in the case $s>1$ with respect to the case $0<s<1$, see the statement of Theorem \ref{t:higher-reg}.
Therefore, $\I_1^{(j)}$ reduces to
\begin{align*}
	\I_1^{(j)}\,
	&=\,
	\sum_{\substack{|q-j|\leq 1}} \int
						\big( (S_{q-1} - S_{j-1}) u \cdot
						\Delta_q  \nabla \Delta^2 r_j\big)\,\Delta^2 r_j
	\\
	&\leq\,
	\sum_{\substack{|q-j|\leq 1}}
	\left\|	(S_{q-1} - S_{j-1}) u 			\right\|_{L^\infty}\,
	\left\|	\Delta_q  \nabla \Delta^2 r_j	\right\|_{L^2}\,
	\left\|		\Delta^2 r_j					\right\|_{L^2}\,.
\end{align*}
Now we observe that (this is the second main consequence of decomposition \eqref{eq:I_1} above), if one fixes $N\in\N$ large enough (e.g. $N\geq5$ is fine at this level),
the support of the Fourier transform of the term $(S_{q-1} - S_{j-1})u$ is contained in an annulus, whose radius is proportional to $2^q$ (since $j$ and $q$ are comparable).
Hence, Bernstein inequality and the property $|q-j|\leq 1$ yield the following chain of inequalities:
$$
	\left\|	(S_{q-1} - S_{j-1}) u 			\right\|_{L^\infty}\,
	\leq\,
	C\,2^q\,
	\left\|	(S_{q-1} - S_{j-1}) u 			\right\|_{L^2}\,
	\leq\,
	C\,
	\left\|	(S_{q-1} - S_{j-1})\nabla u 		\right\|_{L^2}
	\,\leq\,
	C\,
	\|	\nabla u 		\|_{L^2}\,.
$$
Summarizing what we have done until now, we finally get inequality $(i)$:
\begin{equation*}
	\I_1^{(j)}\,\leq\,C\,
	\sum_{\substack{|q-j|\leq 1}}
	\left\|\nabla u 			\right\|_{L^2}\,
	\left\|	\Delta_q  \nabla \Delta^2 r_j	\right\|_{L^2}\,
	\left\|		\Delta^2 r_j					\right\|_{L^2}
	\,\leq\, 
	C\,\left\|	\nabla u						\right\|_{L^2}\,
	 \left\| 	 \Delta^2 r_j			\right\|_{L^2}\,
	 \left\|\nabla \Delta^2 r_j					\right\|_{L^2}\,.
\end{equation*}

Next, we take into consideration inequality $(ii)$, involving the nonlinear term $\I^{(j)}_2$. Let us immediately remark that, by spectral localization, one has the equality
\begin{equation*}
\I_2^{(j)}\,=\,\int_{\R^2} \big[ T_{u \cdot} , \Delta_j\big]\nabla \Delta^2 r\,	\Delta^2 r_j\,dx\,=\,
\int_{\R^2}\sum_{|q-j|\leq 4}\big[ S_{q-1}u\,,\,\Delta_j\big]\nabla \Delta^2 r_q\,\Delta^2 r_j\,dx\,.
\end{equation*}
Thus, H\"older  inequality implies
\begin{align*}
	\I^{(j)}_2\,
	\leq\,
	\sum_{|q-j|\leq 4}
	\left\|	\big[ S_{q-1}u\,,\,\Delta_j\big]\nabla \Delta^2 r_q	\right\|_{L^2}\,
	\left\|	\Delta^2 r_j										\right\|_{L^2}\,.	
\end{align*}
Making use of Bernstein inequality and the commutator estimate provided by Lemma \ref{l:commut}, since $j$ and $q$ are comparable, one gets
\begin{equation*}
	\left\|	\big[ S_{q-1}u\,,\,\Delta_j\big]\nabla \Delta^2 r_q	\right\|_{L^2}\,
	\leq\,
	C\,2^{-q}\,\|  S_{q-1}\nabla u\|_{L^\infty}\,\left\|\nabla \Delta^2 r_q	\right\|_{L^2}
	\,\leq \,
	C\,\|  S_{q-1}\nabla u\|_{L^2}\,\|\nabla \Delta^2 r_q	\|_{L^2}\,,
\end{equation*}
and hence one deduces (by Bernstein inequalities again)
\begin{align*}
\I_2^{(j)}\,\leq\,C
\sum_{|q-j|\leq 4}
\|  \nabla u								\|_{L^2}\,
\left\|			\Delta^2 r_q				  	\right\|_{L^2}\,
\,\left\|	\nabla	\Delta^2 r_j				  	\right\|_{L^2}\,\leq\,
C_\s\,\|  \nabla u\|_{L^2}\,\left\|\Delta^2 r				  		\right\|_{H^\s}\,
\left\|	\nabla	\Delta^2 r_j				  	\right\|_{L^2}\,a_{j}(t)\,2^{-j\s}\,,
\end{align*}
where the constant $C_s$ depends also on $s$ and the sequence $\bigl(a_j(t)\bigr)_{j\geq-1}\,\in\,\ell^2$ is defined by the relation
$$
a_j(t)\,:=\,\sum_{|q-j|\leq 4}2^{q\s}\,\left\|	\Delta^2 r_q(t)	\right\|_{L^2}/\left\|\Delta^2 r(t)\right\|_{H^\s}\qquad\mbox{ for }\;j\geq N\,,\qquad\qquad a_j(t)\,=\,0
\quad\mbox{otherwise.}
$$
It is worth to remark that
\begin{equation*}
	\bigl\| (a_j(t))_j \bigr\|^2_{\ell^2}\, \leq\, \frac{C}{\left\|\Delta^2 r(t)\right\|^2_{H^\s}}\,\sum_{j\geq N}\sum_{|q-j|\leq 4}2^{2q\s}\,\left\|\Delta^2 r_q(t)\right\|_{L^2}\,\leq\,C\,,
\end{equation*}
for any time $t\in \R_+$. Thus inequality $(ii)$ is proven.

It remains to control $\I^{(j)}_3$, the non-linear term given by the reminder $\Delta_jT'_{ \nabla \Delta^2 r\cdot}u$. Since $\div u\,=\,0$, by a repeated use of Bernstein inequalities,
one has
\begin{align*}
	\I^{(j)}_3\,&=\,\int\Delta_j T'_{ \nabla \Delta^2 r\cdot} u\,\Delta^2 r_j\,=\, 
	\sum_{q\geq j-5}
	\int\Delta_j\div\bigl( \Delta_q u\, S_{q+2}\Delta^2 r\bigr)\,\Delta^2r_j \\
&\leq\,	 	
	C\sum_{q\geq j-5} 2^j\,
	\left\|  \Delta_j \bigl( \Delta_q u\, S_{q+2}\Delta^2 r\bigr)	\right\|_{L^2}\,
	\left\|			\Delta^2 r_j			  								\right\|_{L^2}\,\leq\, 	
	C\sum_{q\geq j-5} \left\|\Delta_qu\right\|_{L^\infty}\,\left\|S_{q+2}\Delta^2r\right\|_{L^2}\,
	\left\|			\nabla\Delta^2 r_j			  								\right\|_{L^2} \\
&\leq\,C\,\left\|\Delta^2r\right\|_{L^2}\,\left\|\nabla\Delta^2 r_j\right\|_{L^2}\sum_{q\geq j-5} \left\|\Delta_q\nabla u\right\|_{L^2}\,.
\end{align*}
At this point, we remark that
$$
\sum_{q\geq j-5}\left\|\Delta_q\nabla u\right\|_{L^2}\,\leq\,\left\|\nabla u\right\|_{H^\s}\,\sum_{q\geq j-5}2^{-q\s}\,b_q\,,
$$
where the sequence $\bigl(b_q(t)\bigr)_{q\geq-1}$ is defined, for all $t\geq0$, by the formula
$$
b_q(t)\,:=\,\left\|\nabla u(t)\right\|^{-1}_{H^\s}\;2^{q\s}\,\left\|\Delta_q\nabla u(t)\right\|_{H^\s}\,.
$$
Notice that $\bigl(b_q(t)\bigr)_{q\geq-1}$ belongs to $\ell^2$ and it has unitary norm. In the end, we gather
$$
\I^{(j)}_3\,\leq\,C\,\left\|\Delta^2r\right\|_{L^2}\,\left\|\nabla u\right\|_{H^\s}\,\left\|\nabla\Delta^2 r_j\right\|_{L^2}\,\sum_{q\geq j-5}2^{-q\s}\,b_q\,.
$$
This estimate concludes the proof of inequality $(iii)$, and so of the whole Lemma.
\end{proof}

\subsection{Propagation of intermediate regularities} \label{ss:intermediate}

The central issue to be discussed in what follows is the propagation of regularities when the initial datum $r_0$ is assumed to belong to $H^{3+s}$, with $s\in\,]0,1[\,$.
This result completes the analysis started in the previous section, devoted to the propagation of higher regularities $ H^{4+s}$, with $s>0$. We aim at proving the following result.
\begin{prop}\label{p:low-reg} Let us assume the initial datum $r_0$ to be in $H^{3+s}(\RR^2)$ and the source term $f$ to be in $L^2_{\rm loc}\bigl(\RR_+; H^{s-2}(\RR^2)\bigr)$, for some
$s\in\,]0,1[\,$. Let $r$ be the solution to \eqref{eq:main}, given by Theorem \ref{t:weak}. Then 
\begin{equation*}
r\;\in\;	\mc C\bigl(\RR_+; H^{3+s}(\RR^2)\bigr)\,\cap\, L^\infty_{\rm loc}\bigl(\RR_+;H^{3+s}(\RR^2)\bigr)\,\cap\, L^2_{\rm loc}\bigl(\RR_+; H^{4+s}(\RR^2)\bigr)\,.
\end{equation*}
Moreover, there exist two positive constant $C_1$ and $C_2$, depending just on the regularity index $s$, such that the following estimate holds true for all $t\geq 0$:
\begin{align}
\tilde \EE_s[r(t)]\,
 &+\,\mu
 \int_0^t
 \left(\| \Delta r(\tau) \|_{H^s}^2+
 \| \nabla \Delta r(\tau) \|_{H^s}^2+
 \| \Delta^2 r (\tau )		  \|_{H^s}^2\right)\,d\tau\,
 \leq \label{ineq:low_prop} \\
 \leq 
 C 
 \Big( 
	&	\tilde \EE_s[r_0]\, +\, \frac{1}{\mu}\, \int_0^t \| f (\tau) \|_{H^{s-2}}^2\,d\tau
	\Big)\;
	\exp
	\left\{
		\frac{C_2}{\mu}\,  e^{2\,\mu\,t}\,\Big(\| r_0\|_{H^3}^2\, +\, \frac{1}{\mu}\, \int_0^t \| f (\tau) \|_{H^{-2}}^2\,d\tau\Big)^2
	\right\},\nonumber
 \end{align}
where, for all $s\geq0$ and all functions $\varphi\in H^{s+3}$, we have defined $\tilde \EE_s[\varphi]$ by
\begin{equation*}
	\tilde \EE_s[\varphi]\,:=\,
   	 \|							\varphi	\|_{H^s}^2\,+\,
	 \| \nabla 				\varphi	\|_{H^s}^2\,+\,
	 \| 			\Delta 	\varphi	\|_{H^s}^2\,+\,
 	 \| \nabla	\Delta 	\varphi	\|_{H^s}^2\,.
\end{equation*}
\end{prop}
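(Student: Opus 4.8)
The plan is to reduce everything to the \textsl{a priori} bound \eqref{ineq:low_prop}, since, exactly as for Proposition \ref{p:higer-reg}, once this estimate is available the existence, uniqueness and time-continuity of the $H^{3+s}$ solution follow from the weak theory of Theorem \ref{t:weak} together with the propagation of the bound. To prove \eqref{ineq:low_prop} I would argue on the Littlewood--Paley decomposition $r=r^l+r^h$, with $r^l=S_N r$ and $r^h=(\Id-S_N)r$ for a fixed large $N$, as in \eqref{def:low-high}. The low-frequency piece $r^l$ carries only a finite number of dyadic blocks, so $\tilde{\EE}_s[r^l]$ and its dissipation are controlled by the \emph{first-type} energy estimate of Proposition \ref{p:a-priori_I} (this is the analogue of \eqref{ineq_low_frqs}, but here one uses the $H^3$-law rather than the $H^4$-law, consistently with the fact that $\tilde{\EE}_s$ is modelled on the functional $\mc E$ of Theorem \ref{t:weak}). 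The whole difficulty is therefore concentrated on $r^h$.

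For the high frequencies I would paralinearise the convective term as in Subsection \ref{ss:higher}, writing $\Lambda(r,r)=u\cdot\nabla\Delta^2 r=T_{u\cdot}\nabla\Delta^2 r+T'_{\nabla\Delta^2 r\cdot}u$ with $u=\nabla^\perp(\Id-\Delta)r$, apply $\Delta_j$ for $j\geq N$ to get the localised equation \eqref{eq:r_j}, and then perform \emph{first-type} energy estimates: namely, I would test \eqref{eq:r_j} against $(\Id-\Delta)r_j$ rather than against $\Delta^2 r_j$. A direct computation shows that the time-derivative and viscosity terms reproduce precisely $\frac{d}{dt}\mc E[r_j]$ and the first-type dissipation of Proposition \ref{p:a-priori_I} at the level of the $j$-th block. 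One is then left with the three nonlinear contributions, the leading paraproduct $\J^{(j)}_1=\int (T_{u\cdot}\nabla\Delta^2 r_j)\,(\Id-\Delta)r_j$, the commutator $\J^{(j)}_2=\int \big([T_{u\cdot},\Delta_j]\nabla\Delta^2 r\big)\,(\Id-\Delta)r_j$ and the remainder $\J^{(j)}_3=\int \big(\Delta_j T'_{\nabla\Delta^2 r\cdot}u\big)\,(\Id-\Delta)r_j$, for which I would prove a counterpart of Lemma \ref{lemma:hig_reg}.

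The crucial, and genuinely delicate, term is $\J^{(j)}_1$. Isolating the low-frequency velocity $S_{j-1}u$ as in \eqref{eq:I_1}, the difference $(S_{q-1}-S_{j-1})u$ is spectrally supported in an annulus of size $2^j$ and is handled by Bernstein's inequality exactly as before. The leading piece $\int S_{j-1}u\cdot\nabla\Delta^2 r_j\,(\Id-\Delta)r_j$, however, does \emph{not} vanish: contrary to the case $s>1$, where one tests against $\Delta^2 r_j$ and the divergence-free character of $S_{j-1}u$ kills the term (see \eqref{eq:paraprod-0}), here the cancellation $\int S_{j-1}u\cdot\nabla w\,w=0$ only removes the lower-order contribution, while the top-order factor $\nabla\Delta^2 r_j$ survives. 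I would therefore integrate by parts, using $\div S_{j-1}u=0$, to rewrite this piece as $-\int S_{j-1}u\cdot\nabla(\Id-\Delta)r_j\,\Delta^2 r_j$, so that the highest derivative is now $\Delta^2 r_j$ (the dissipation level) paired with the energy-level quantity $\nabla(\Id-\Delta)r_j$; the worst sub-term, quadratic in $\nabla\Delta r_j$, can be further recast through $\int v\cdot\nabla h\,\Delta h=-\int\nabla v:(\nabla h\otimes\nabla h)$ in order to lower its order. Controlling this product by H\"older and Gagliardo--Nirenberg inequalities and balancing the $2^{2js}$ weights so that the top-order factor is absorbed into $\mu\|\Delta^2 r^h\|_{H^s}^2$ is the main obstacle: this is precisely the ``careful control of $\Delta^2 r$'' announced in the Introduction, and it is what forces the worse Gronwall multiplier.

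Finally, I would treat $\J^{(j)}_2$ by the commutator estimate of Lemma \ref{l:commut} (which provides the one-derivative gain needed to turn $\nabla\Delta^2 r$ into $\Delta^2 r$) and $\J^{(j)}_3$ by first extracting a divergence, since $\div u=0$ gives $\Delta_j T'_{\nabla\Delta^2 r\cdot}u=\sum_{q\geq j-5}\Delta_j\div(\Delta_q u\,S_{q+2}\Delta^2 r)$ as in the estimate of $\I^{(j)}_3$, and then integrating by parts onto $(\Id-\Delta)r_j$. Multiplying the resulting block inequality by $2^{2js}$, using Young's inequality to absorb every top-order factor into the dissipation, and summing over $j\geq N$, I expect to reach an inequality of the form $\tilde{\EE}_s[r]+\mu\int_0^t(\cdots)\,d\tau\leq \tilde{\EE}_s[r_0]+\frac{C}{\mu}\int_0^t\|f\|_{H^{s-2}}^2\,d\tau+\frac{C}{\mu}\int_0^t\|\nabla^\perp(\Id-\Delta)r\|_{L^4}^4\,\tilde{\EE}_s[r]\,d\tau$, where the Gronwall multiplier $\|u\|_{L^4}^4$ involves only $H^3$- and $H^4$-norms of $r$. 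An application of Gronwall's lemma, together with the bound $\int_0^t\|u\|_{L^4}^4\,d\tau\lesssim\|r\|_{L^\infty_t(H^3)}^2\,\|r\|_{L^2_t(H^4)}^2$ estimated through Proposition \ref{p:a-priori_I} (exactly as in the proof of Theorem \ref{t:stab}), then produces the factor $e^{2\mu t}\big(\|r_0\|_{H^3}^2+\mu^{-1}\int_0^t\|f\|_{H^{-2}}^2\big)^2$ and hence precisely \eqref{ineq:low_prop}.
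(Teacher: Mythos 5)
Your proposal is correct and its architecture coincides with the paper's proof: the same splitting $r=r^l+r^h$ with the low frequencies controlled through Proposition \ref{p:a-priori_I}, the same paralinearised localised equation \eqref{eq:r_j} tested against $(\Id-\Delta)r_j$, the same three nonlinear terms $\J^{(j)}_1$, $\J^{(j)}_2$, $\J^{(j)}_3$ (estimated in the paper's Lemma \ref{lemma:low_reg}, with $\J^{(j)}_2$ via Lemma \ref{l:commut} and $\J^{(j)}_3$ via the divergence structure, exactly as you propose), and the same Gronwall conclusion through Proposition \ref{p:a-priori_I}, producing the factor $\exp\bigl(C\,e^{2\mu t}(\cdots)^2\bigr)$. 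The one substantive difference is in the crux, the leading paraproduct term $\J^{(j)}_1$. The paper does \emph{not} isolate $S_{j-1}u$ and performs no integration by parts there: it estimates $\J^{(j)}_1$ directly by H\"older in $L^4\times L^4\times L^2$, Gagliardo--Nirenberg, and Bernstein inequalities inside the block, arriving at $\J^{(j)}_1\,\lesssim\,\|u\|_{L^2}^{1/2}\,\|\nabla u\|_{L^2}^{1/2}\,\|\nabla\Delta r_j\|_{L^2}^{1/2}\,\bigl\|(\Delta r_j,\Delta^2 r_j)\bigr\|_{L^2}^{3/2}$, which Young's inequality absorbs into the dissipation. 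Your route --- isolating $S_{j-1}u$, using $\div S_{j-1}u=0$ to integrate by parts so that the top derivative drops from $\nabla\Delta^2 r_j$ to the dissipation-level $\Delta^2 r_j$, and lowering the worst quadratic piece via $\int v\cdot\nabla h\,\Delta h=-\int\nabla v:(\nabla h\otimes\nabla h)$ --- is equally valid and arguably more structural, as it parallels the decomposition \eqref{eq:I_1} of the case $s>1$; it yields bounds of the same strength, and your Gronwall multiplier $\|u\|_{L^4}^4\lesssim\|u\|_{L^2}^2\|\nabla u\|_{L^2}^2$ is essentially the paper's $\bigl(1+\|\nabla r\|_{L^2}^2+\|\nabla\Delta r\|_{L^2}^2\bigr)\|\Delta r\|_{H^2}^2$. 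Both treatments rest on the same underlying observation: once the cancellation \eqref{eq:paraprod-0} is lost, Bernstein's inequalities within a dyadic block still allow one to redistribute derivatives so that the dissipation appears only to a power strictly less than two, which Young's inequality can then absorb.
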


\begin{rem}
The proof of Proposition \ref{p:low-reg} should be seen as a coupling technique between the proof to Proposition \ref{p:higer-reg} and  the one to Theorem \ref{t:weak},
where we introduced a first-type balance equation. This balance was achieved as an energy estimate of the continuum variable $(\Id - \Delta )r $, leading to the non-trivial
cancellation of the most challenging non-linear term:
	\begin{equation}\label{non-triv-canc}
	 	\langle\Lambda (r,\,r ),\,(\Id-\Delta)r\rangle_{L^2} =0
	\end{equation}
One should expect this cancellation to be somehow preserved also in higher regularities than $L^2(\RR^2)$. 
Moreover, once introduced the paradifferential localization used in Proposition \ref{p:higer-reg}, it is natural to believe that a similar approach can be performed also in the case of
low regularities, treated in Proposition \ref{p:low-reg}. 

Eventually this claim results to be false, since the structure of the system does not allow any sort of preservation of the identity \eqref{non-triv-canc}, whenever $L^2$ is replaced by
$H^s$ with $s>0$. More specifically, Bony's decomposition, that was successful for Proposition \ref{p:low-reg}, here fails when used to cancel
the corresponding term in \eqref{eq:paraprod-0}: namely, one has
	$\langle \Lambda (S_{q-1} r,\, \Delta_q r),\, (\Id - \Delta)\Delta_q r\rangle_{L^2}\, \neq\, 0$.
Moreover, we notice that, by symmetry, one can write
$$
\Lambda(\rho,\z)\,=\,\nabla^\perp(\Id-\Delta)\rho\cdot\nabla\Delta^2\z\,=\,-\,\nabla^\perp\Delta^2\z\cdot\nabla(\Id-\Delta)\rho\,,
$$
from which we deduce the cancellation of the term $\langle \Lambda (\Delta_{q} r,\, S_{q-1} r),\, (\Id - \Delta)\Delta_q r\rangle_{L^2}$. But this relation reveals to be only of partial help,
since it does not allow to get rid of the most dangerous term (i.e. the previous one $\langle \Lambda (S_{q-1} r,\, \Delta_q r),\, (\Id - \Delta)\Delta_q r\rangle_{L^2}$,
where the high frequencies of the highest order term appear).
\end{rem}

The previous remark, although being quite technical, represents one of the main challenges when propagating $H^{3+s}(\RR^2)$ regularity, with $s\in\,]0,1[\,$: we cannot rely on our previous
energy balances and we need to independently estimate each non-linear term. This fact is deeply analyzed in Lemma \ref{lemma:low_reg} below, whose proof is postponed at the end of this section.

\begin{proof} With the notations introduced in \eqref{def:low-high}, we decompose once again $r\,=\,r^l\, +\, r^h$.
In the same spirit of the proof of Proposition \ref{p:higer-reg}, it is enough to separately estimate the quantity on the left-hand side of \eqref{ineq:low_prop} when computed in $r^l$ and in $r^h$.

Similarly as for proving inequality \eqref{ineq:low_freq} in the previous subsection, one can control the low-frequencies $r^l$ through
\begin{align}
\tilde \EE_s[r^l(t)]\, &+\, \mu\int_0^t \left(\| \Delta r^l(\tau)\|_{H^s}^2 + 2\|\nabla \Delta r^l(\tau)\|_{H^s}^2 +\| \Delta^2 r^l(\tau)\|_{H^s}^2\right)\,d\tau\,\leq \label{ineq-boh} \\
&\leq\,C\left(\tilde \EE_0[r^l(t)]\,+\,\mu\int_0^t \left(\| \Delta r^l(\tau)\|_{L^2}^2 + 2\|\nabla \Delta r^l(\tau)\|_{L^2}^2 +\| \Delta^2 r^l(\tau)\|_{L^2}^2\right)\,d\tau\right) \nonumber \\
&\leq\,C\left( \tilde \EE_0[r_0]\, +\, \frac{C}{\mu}\int_0^t \| f(\tau) \|^2_{H^{-2}}\,d\tau\right)\,, \nonumber
\end{align}
and this last quantity is clearly controlled by the right-hand side of \eqref{ineq:low_reg1}.
Thus, we can focus on the control of $\tilde \EE_s[r^h(t)]$, the term of higher-frequencies origin.

We start by multiplying equation \eqref{eq:r_j} by $(\Id-\Delta)r_j$ and integrating in $\R^2$: we gather 
\begin{align}	
&	\frac{1}{2}\,\frac{d}{dt}
	\Big(
		\|					r_j		\|_{L^2}^2+
		2\|	\nabla	 r_j	\|_{L^2}^2+
		2\|	\Delta	   r_j		\|_{L^2}^2+
		\|	\nabla \Delta	   r_j		\|_{L^2}^2
	\Big)\,+  \label{ineq:low_reg1} \\
&	+\,\mu \Big(
		\|			\Delta	r_j	\|_{L^2}^2 +
		2\|	\nabla	\Delta	r_j	\|_{L^2}^2 +
		\|\Delta^2	r_j	\|_{L^2}^2 
	\Big)\,=\,\lan\Delta_jf\,,\,(\Id-\Delta)r_j\ran_{H^{-2}\times H^2}\,-  \nonumber \\
&	-
	\underbrace{
	\int_{\RR^2}
		\big(T_{u \cdot} \nabla \Delta^2 r_j \big)\, 							(\Id-\Delta)r_j}_{\J^{(j)}_1}\, +\,
	\underbrace{
	\int_{\RR^2}					
						\big[ T_{u \cdot} , \Delta_j\big]\nabla \Delta^2 r\,	(\Id-\Delta)r_j}_{\J^{(j)}_2}\,-\,
	\underbrace{
		\int_{\RR^2}
		\Delta_jT'_{\nabla \Delta^2 r\cdot}u\, 							(\Id-\Delta)r_j}_{\J^{(j)}_3} 			\nonumber
						.
\end{align}
We provide now an important lemma, that allow us to control each non-linear term on the right-hand side. Its proof is postponed at the end of this section.
\begin{lemma}\label{lemma:low_reg}
	The following inequalities are satisfied for some suitable constants, which depend also on the value of $s>0$:
	\begin{equation*}
	\begin{alignedat}{8}
	&(i)	&&\J^{(j)}_1(t)	&&&&\,\leq\, C\,\big\|  u \big\|_{L^2}^{1/2}\;
	 \big\| \nabla u \big\|_{L^2}^{1/2}\;
	 \| \nabla \Delta r_j \|_{L^2}^{1/2}\;
	 \big\| (\Delta r_j\,,\,  \Delta^2 r_j)\big\|_{L^2}^{3/2} \\
	&(ii)	&&\J^{(j)}_2(t)	&&&&\,\leq\,C\,\|  \nabla u(t)\|_{L^2}\;\left\|\nabla \Delta r(t)\right\|_{H^s}\;\left\|(\Delta r_j\,,\, \Delta^2 r_j) 	\right\|_{L^2}\,a_{j}(t)\,2^{-js} \\
	&(iii)\quad&&\J^{(j)}_3(t)	&&&&\,\leq\,C\,\left\|\Delta^2r\right\|_{L^2}\;\left\| u\right\|_{H^s}\;\left\|(\Delta r_j\,,\, \Delta^2 r_j) \right\|_{L^2}\,\sum_{q\geq j-5}2^{-qs}\,b_q\,,
	\end{alignedat}
	\end{equation*}
	where we clarify the notation $\| (f,\,g) \|_X = \| f \|_X + \| g \|_X$, for any elements $f$, $g$ in a normed space $X$. 
	The sequences $(a_j(t))_{j\geq -1}$ and $(b_j(t))_{j\geq -1}$ belong to $\ell^2$ for any time $t>0$, and they fulfill the 
	inequality
	\begin{equation*}
		\sup_{t\geq 0} \left\|\bigl(a_j(t)\bigr)_{j\geq -1} \right\|_{\ell^2} +
		\sup_{t\geq 0} \left\|\bigl(b_j(t)\bigr)_{j\geq -1} \right\|_{\ell^2}
		\leq C,
	\end{equation*}
	for a suitable positive constant $C$.
\end{lemma}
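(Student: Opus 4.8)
The plan is to follow the architecture of the proof of Lemma \ref{lemma:hig_reg}, performing for each of the three contributions the same paraproduct localization, but with one essential change dictated by the Remark preceding this statement: when the equation \eqref{eq:r_j} is tested against $(\Id-\Delta)r_j$ instead of $\Delta^2 r_j$, the exact cancellation \eqref{eq:paraprod-0} is lost (this is the frequency-localized manifestation of the failure of \eqref{non-triv-canc} in $H^s$). Hence the leading piece of $\J_1^{(j)}$ can no longer be discarded; the whole point of $(i)$ is to estimate it while producing only a \emph{subquadratic} power of the top-order dissipated norm $\|\Delta^2 r_j\|_{L^2}$, so that it remains absorbable through Young's inequality when closing \eqref{ineq:low_reg1} at the level of Proposition \ref{p:low-reg}.

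For $(i)$ I would expand $T_{u\cdot}\nabla\Delta^2 r_j=\sum_{|q-j|\leq1}S_{q-1}u\cdot\nabla\Delta^2\Delta_q r_j$, using $\Delta_q\Delta_j\equiv0$ for $|q-j|\geq2$ and $\Delta_q\nabla=\nabla\Delta_q$. The structural input is that each coefficient $S_{q-1}u=\nabla^\perp(\Id-\Delta)S_{q-1}r$ is divergence-free, so that, in contrast with the high-regularity case, I do not need to isolate $S_{j-1}u$ to exploit a cancellation: a single integration by parts on the whole paraproduct gives
$$\J_1^{(j)}=-\sum_{|q-j|\leq1}\int_{\R^2}\Delta^2\Delta_q r_j\;S_{q-1}u\cdot\nabla(\Id-\Delta)r_j\,dx\,.$$
I would then apply H\"older's inequality with exponents $(2,4,4)$ together with the two two-dimensional Gagliardo--Nirenberg bounds $\|S_{q-1}u\|_{L^4}\lesssim\|u\|_{L^2}^{1/2}\|\nabla u\|_{L^2}^{1/2}$ and $\|\nabla(\Id-\Delta)r_j\|_{L^4}\lesssim\|\nabla\Delta r_j\|_{L^2}^{1/2}\|(\Delta r_j,\Delta^2 r_j)\|_{L^2}^{1/2}$ (the lower-order part $\nabla r_j$ being negligible by frequency localization for $j\geq N$), and with $\|\Delta^2\Delta_q r_j\|_{L^2}\lesssim\|(\Delta r_j,\Delta^2 r_j)\|_{L^2}$. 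Multiplying the three factors reproduces exactly the claimed distribution $\|\nabla\Delta r_j\|_{L^2}^{1/2}\,\|(\Delta r_j,\Delta^2 r_j)\|_{L^2}^{3/2}$, which is $(i)$.

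The commutator estimate $(ii)$ follows the same lines as $\I_2^{(j)}$ in Lemma \ref{lemma:hig_reg}: after writing $[T_{u\cdot},\Delta_j]\nabla\Delta^2 r=\sum_{|q-j|\leq4}[S_{q-1}u\cdot,\Delta_j]\nabla\Delta^2 r_q$, I would invoke Lemma \ref{l:commut} to obtain $\|[S_{q-1}u\cdot,\Delta_j]\nabla\Delta^2 r_q\|_{L^2}\lesssim\|\nabla u\|_{L^2}\|\nabla\Delta^2 r_q\|_{L^2}$, pair it with $\|(\Id-\Delta)r_j\|_{L^2}$, and use the frequency relations $\|\nabla\Delta^2 r_q\|_{L^2}\approx2^{2q}\|\nabla\Delta r_q\|_{L^2}$ and $\|(\Id-\Delta)r_j\|_{L^2}\approx2^{-2j}\|\Delta^2 r_j\|_{L^2}$ (with $2^{2q}2^{-2j}\approx1$ since $|q-j|\leq4$); repackaging $\sum_{|q-j|\leq4}2^{qs}\|\nabla\Delta r_q\|_{L^2}$ as $\|\nabla\Delta r\|_{H^s}\,a_j$ then yields $(ii)$. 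For $(iii)$ I would first use $\div u=0$ to rewrite $T'_{\nabla\Delta^2 r\cdot}u=\sum_q\div\bigl(\Delta_q u\,S_{q+2}\Delta^2 r\bigr)$, so that only the order-four quantity $\Delta^2 r$ survives; integrating the divergence and $\Delta_j$ by parts onto the test function and using the Bernstein bound $\|\nabla(\Id-\Delta)r_j\|_{L^\infty}\lesssim2^{j}\|\nabla\Delta r_j\|_{L^2}\approx\|\Delta^2 r_j\|_{L^2}$, I would keep $u$ in $L^2$ (so that no superfluous power of $2^q$ appears) and repackage $\sum_{q\geq j-5}\|\Delta_q u\|_{L^2}=\|u\|_{H^s}\sum_{q\geq j-5}2^{-qs}b_q$. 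This also explains why only $\|u\|_{H^s}$, and not $\|\nabla u\|_{H^s}$ as in the high-regularity case, enters $(iii)$: the test function $(\Id-\Delta)r_j$ carries two fewer derivatives than $\Delta^2 r_j$.

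The uniform-in-time $\ell^2$ summability of $\bigl(a_j(t)\bigr)_j$ and $\bigl(b_q(t)\bigr)_q$ is immediate from their definitions, $a_j(t)=\|\nabla\Delta r(t)\|_{H^s}^{-1}\sum_{|q-j|\leq4}2^{qs}\|\nabla\Delta r_q(t)\|_{L^2}$ and $b_q(t)=\|u(t)\|_{H^s}^{-1}\,2^{qs}\|\Delta_q u(t)\|_{L^2}$, together with almost-orthogonality, exactly as in Lemma \ref{lemma:hig_reg}. The only genuinely delicate point, and the one I expect to be the main obstacle, is the proof of $(i)$: one must verify that the Gagliardo--Nirenberg interpolation splits the four top derivatives as $1/2+3/2$ between $\|\nabla\Delta r_j\|_{L^2}$ and $\|(\Delta r_j,\Delta^2 r_j)\|_{L^2}$, since it is precisely this sub-quadratic power of the dissipated norm that makes the term absorbable (via Young's inequality) when closing the energy balance; any other placement of the derivatives — in particular the one that would arise from a naive $L^2\times L^\infty\times L^2$ estimate — would be useless here, and this is exactly the reason why propagation of intermediate regularity is more involved than the case $s>1$.
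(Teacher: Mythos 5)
Your proposal is correct, and for parts (ii) and (iii) it coincides with the paper's own proof: the same localization $[T_{u\cdot},\Delta_j]\nabla\Delta^2 r=\sum_{|q-j|\leq 4}[S_{q-1}u\cdot,\Delta_j]\nabla\Delta^2 r_q$ treated with Lemma \ref{l:commut} plus Bernstein, the same divergence rewriting of $T'_{\nabla\Delta^2 r\cdot}u$ with the $L^1$--$L^\infty$ pairing, and the same $\ell^2$ bookkeeping for $(a_j)$ and $(b_q)$. The genuine difference is in part (i). The paper does \emph{not} integrate by parts there: it bounds $\J_1^{(j)}$ by Cauchy--Schwarz, pairing $\bigl\|\sum_{q}S_{q-1}u\,\Delta_q\nabla\Delta^2 r_j\bigr\|_{L^2}$ against $\|(\Id-\Delta)r_j\|_{L^2}$, then applies Gagliardo--Nirenberg to \emph{both} $L^4$ factors --- which makes the sixth-order quantity $\Delta^3 r_j$ appear --- and finally invokes the Bernstein equivalences $\|\Delta^3 r_j\|_{L^2}\leq C\,2^{2j}\|\Delta^2 r_j\|_{L^2}$, $\|\nabla\Delta^2 r_j\|_{L^2}\leq C\,2^{2j}\|\nabla\Delta r_j\|_{L^2}$ and $2^{2j}\|(r_j,\Delta r_j)\|_{L^2}\leq C\,\|(\Delta r_j,\Delta^2 r_j)\|_{L^2}$ to trade the excess powers of $2^j$ between the high- and low-order factors. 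Your route instead uses $\div S_{q-1}u=0$ to integrate by parts once, landing the derivative on the test function, after which a single H\"older $(2,4,4)$ step and Gagliardo--Nirenberg on $\|S_{q-1}u\|_{L^4}$ and $\|\nabla(\Id-\Delta)r_j\|_{L^4}$ give exactly the stated $1/2+3/2$ splitting. Both arguments are valid and yield the identical bound; yours is arguably tidier, since no derivative of order higher than four ever acts on $r_j$ and it makes transparent that only the divergence structure (not any cancellation) is being used, while the paper's version stays entirely at the level of norm manipulations with no integration by parts. Two cosmetic remarks: your restriction to $|q-j|\leq 1$ (rather than the paper's $|q-j|\leq 5$) is harmless since the other summands vanish by almost-orthogonality, and your $a_j$ in (ii), normalized by $\|\nabla\Delta r\|_{H^s}$, differs from the paper's (normalized by $\|\Delta^2 r\|_{H^s}$, with companion factor $\|(\nabla r_j,\nabla\Delta r_j)\|_{L^2}$) by one Bernstein exchange --- yours in fact matches the lemma's statement more literally.
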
 
Combining estimate \eqref{ineq:low_reg1} together with Lemma \ref{lemma:low_reg}, we obtain
\begin{align*}
&\frac{1}{2}\frac{d}{d t}
	\Big(
		\|					r_j		\|_{L^2}^2 +
		2\|	\nabla	 r_j	\|_{L^2}^2+
		2\|	\Delta	   r_j		\|_{L^2}^2+
		\|	\nabla \Delta	   r_j		\|_{L^2}^2
	\Big)+
	\mu 
	\Big(
		\|			\Delta	r_j	\|_{L^2}^2 +
		2\|	\nabla	\Delta	r_j	\|_{L^2}^2 +
		\|\Delta^2	r_j	\|_{L^2}^2 
	\Big)\leq \\
& \leq 
 C\biggl(2^{-2j} \| \Delta_j f \|_{L^2}\| (\Delta r_j,\Delta^2r_j) \|_{L^2}+ 
\big\|  u \big\|_{L^2}^{1/2}
	 \big\| \nabla u \big\|_{L^2}^{1/2}
	 \| \nabla \Delta r_j \|_{L^2}^{1/2}
	 \big\| (\Delta r_j, \Delta^2 r_j)\big\|_{L^2}^{3/2} + \\
	& +\|  \nabla u\|_{L^2}\,\left\|\nabla \Delta r\right\|_{H^s}\,\left\|(\Delta r_j,\Delta^2 r_j) 		  	\right\|_{L^2}\,a_{j}2^{-js}
	 +\left\|\Delta^2r\right\|_{L^2}\,\left\|\nabla u\right\|_{H^s}\,\left\|(\nabla r_j,\nabla \Delta r_j) 		  \right\|_{L^2}\,\sum_{q\geq j-5}2^{-qs}\,b_q\biggr).
\end{align*}
Hence, we multiply both sides by $2^{2js}$, and we take the integral in time; after using Young inequality for each term on the right-hand side of the previous relation,
we perform a sum over $j\geq N$, and we finally deduce that
\begin{align*}
&\tilde \EE_s\left[r^h(t)\right]\,+\mu\int^t_0 
\Big(
\left\|			\Delta	r^h	\right\|_{L^2}^2 \,+\,
\left\|	\nabla		\Delta	r^h	\right\|_{L^2}^2\,+\,
\left\|					\Delta^2	r^h	\right\|_{L^2}^2 
\Big)\,d\tau\,\leq \\
&\quad\leq\,\tilde{ \mc E}_s\left[r^h_0\right]\,+\,\frac{C}{\mu}\sum_{j\geq N}2^{2j(s-2)}\|\Delta_j f\|_{L^2_t(L^2)}^2\,+\,
\frac{\mu}{2}\sum_{j\geq N}2^{2js}\,\left\| (\Delta r_j, \Delta^2 r_j)\,\right\|_{L^2_t(L^2)}^2\,+ \\
&\quad+\,
\frac{C}{\mu}\int^t_0\|u \|_{L^2}^2\|\nabla u\|^2_{L^2}\left(\sum_{j\geq N}
	2^{2js}\,\left\|\nabla \Delta r_j\right\|_{L^2}^2\right)\,d\tau\,+\, 
 \\
	&\quad +\,\frac{C}{\mu}\int^t_0\|\nabla u\|^2_{L^2}\,\left\|\nabla \Delta r \right\|^2_{H^s}\left(\sum_{j\geq N}a_j^2\right)\,d\tau\,+\,
	\frac{C}{\mu}\int^t_0\left\|\Delta^2r\right\|^2_{L^2}\left\| u\right\|^2_{H^s}\,\sum_{j\geq N}\left(\sum_{q\geq j-5}2^{-(q-j)s}\,b_q\right)^{\!\!2}\,d\tau\,.
\end{align*}
Observe that, by definition of $r^h$, we can absorbe the third term in the right-hand side into the left-hand side. Moreover, recalling the properties of the sequences
$\bigl(a_j(t)\bigr)_{j\geq-1}$ and $\bigl(b_q(t)\bigr)_{q\geq-1}$ stated in Lemma \ref{lemma:low_reg} above, we can estimate
\begin{align*}
\sum_{j\geq N}a_j(\tau)^2\leq C,\quad
\sum_{j\geq N}\Big(\sum_{q\geq j-5}2^{-(q-j)s}\,b_q(\tau)\Big)^{\!\!2}\, \leq\,C\,,
\end{align*}
where the constant $C$ is uniform with respect to time. Therefore, we obtain the inequality
\begin{align}
&\tilde \EE_s\left[r^h(t)\right]\,+\,\mu\int^t_0 
\Big(
\left\|			\Delta	r^h	\right\|_{L^2}^2 \,+\,
2\left\|\nabla			\Delta	r^h	\right\|_{L^2}^2 \,+\,
\left\|	\Delta^2	r^h	\right\|_{L^2}^2 
\Big)\,d\tau\,\leq \tilde \EE_s\left[r_0\right]+\label{est:E_s-I2} \\
&+\,\frac{C}{\mu}\int^t_0\|f\|_{H^{s-2}}^2\,d\tau\,+\,\frac{C}{\mu}\int^t_0
(\| u\|^2_{L^2}+1)\|\nabla u\|^2_{L^2}\,\left\|\nabla \Delta r\right\|^2_{H^s}\,d\tau\,+\,
\frac{C}{\mu}\,\int^t_0\left\|\Delta^2r\right\|^2_{L^2}\left\| u\right\|^2_{H^s}\,d\tau\,, \nonumber
\end{align}
where we used also the fact that $(\Id - S_{N})$ is a bounded operator from $L^p$ to itself, so that $\tilde\EE_s\left[r^h_0\right]$ is actually bounded by $\tilde\EE_s\left[r_0\right]$,
up to a multiplicative constant.

At this point, since $u\,=\,\nabla^\perp(\Id-\Delta)r$, we use the continuity of the Calder\'on-Zygmund operator $\nabla^2(-\Delta)^{-1}$ over $L^p$ for all $1<p<+\infty$,
to deduce
\begin{align*}
\| u\|^2_{L^2}\,&\leq\,C\,\left( \left\|\nabla r \right\|^2_{L^2}\, +\, \left\|\nabla  \Delta r\right\|^2_{L^2}\right) \\
\|\nabla u\|^2_{L^2}\,&\leq\,C\,\left( \left\|\Delta r \right\|^2_{L^2}\, +\, \left\| \Delta^2 r\right\|^2_{L^2}\right) \\
\| u\|^2_{H^s}\,&\leq\,C\,\left( \left\|\nabla r \right\|^2_{H^s}\, +\, \left\| \nabla \Delta r\right\|^2_{H^s}\right)\,\leq\,\tilde{ \EE}_s[r]\,.
\end{align*}
Therefore, from inequality \eqref{est:E_s-I2} we immediately obtain, for all $t\geq0$, the bound
\begin{align*}
&\tilde \EE_s\left[r^h(t)\right]\,+\,\mu\int^t_0 
\Big(\left\|\Delta r^h\right\|_{H^s}^2 \,+\,2\left\|\nabla\Delta r^h\right\|_{H^s}^2 \,+\,\left\|\Delta^2r^h\right\|_{H^s}^2\Big)\,d\tau\,\leq \\ 
&\leq\tilde \EE_s\left[r_0\right]\,+\,\frac{C}{\mu}\left(\int^t_0\|f\|_{H^{s-2}}^2\,d\tau\,+\,
\int^t_0\left( 1+\left\|\nabla r \right\|^2_{L^2}\, +\, \left\|\nabla  \Delta r\right\|^2_{L^2}\right)\left( \left\|\Delta r \right\|^2_{L^2}\,+\,\left\| \Delta^2 r\right\|^2_{L^2}\right)\,\mc E_s[r]\,d\tau\right).
\end{align*}
Combining this relation  together with \eqref{ineq-boh}, we finally gather an estimate for $\tilde \EE_s[r(t)]$, namely
\begin{align*}
&\tilde \EE_s\left[r(t)\right]\,+\,\mu\int^t_0 
\Big(\left\|\Delta r\right\|_{H^s}^2 \,+\,2\left\|\nabla\Delta r\right\|_{H^s}^2 \,+\,\left\|\Delta^2r\right\|_{H^s}^2\Big)\,d\tau\,\leq \tilde \EE_s\left[r_0\right]+ \\
&\qquad\qquad\qquad+\,\frac{C}{\mu}\left(\int^t_0\|f\|_{H^{s-2}}^2\,d\tau\,+\,
\int^t_0\left( 1+\left\|\nabla r \right\|^2_{L^2}\, +\, \left\|\nabla  \Delta r\right\|^2_{L^2}\right)\,\left\|\Delta r \right\|^2_{H^2}\,\mc E_s[r]\,d\tau\right). 
\end{align*}
Then, Gronwall inequality together with Proposition \ref{p:a-priori_I} conclude the proof of Proposition \ref{p:low-reg}.
\end{proof}

It remains us to show the estimates of Lemma \ref{lemma:low_reg}.

\begin{proof}[Proof of Lemma \ref{lemma:low_reg}] We begin with estimating the term $\J_1^{(j)}(t)$. Classical H\"older's inequalities yield
\begin{equation*}
\begin{aligned}
	\J^{(j)}_1(t)\,
	&=\, \int_{\RR^2}
		\big(T_{u \cdot} \nabla \Delta^2 r_j \big)\, 							(\Id-\Delta)r_j\,d x\,\leq\, 
		\Big\|
			\sum_{|q-j|\leq 5}S_{q-1} u\, \Delta_q \nabla \Delta^2 r_j 
		\Big\|_{L^2}\,
		\Big\|
			(\Id - \Delta )r_j 
		\Big\|_{L^2}	\\
	&\leq\, 
		\sum_{|q-j|\leq 5}
		\big\| S_{q-1} u \big\|_{L^4}
		\big\| 	\Delta_q \nabla \Delta^2 r_j  \big\|_{L^4}
		\big\| ( r_j\,,\, \Delta r_j)\big\|_{L^2}.	
\end{aligned}
\end{equation*}
On the other hand, by Gagliardo-Nirenberg inequality we infer
$	\| f \|_{L^4}\,\leq\, C\, \| f \|_{L^2}^{1/2}\; \| \nabla f \|_{L^2}^{1/2}$,
for a positive constant $C$ which does not depend on $f$. Thus, we deduce
\begin{equation*}
\begin{aligned}
	\J^{(j)}_1(t)\,
	&\leq \,
	C\,\sum_{|q-j|\leq 5}	
	\big\| S_{q-1} u \big\|_{L^2}^{1/2}\,
	\big\| S_{q-1}\nabla u \big\|_{L^2}^{1/2}\,
	\big\| 	\Delta_q \nabla \Delta^2 r_j  \big\|_{L^2}^{1/2}\,
	\big\| 	\Delta_q  \Delta^3 r_j  \big\|_{L^2}^{1/2}\,
	\big\| ( r_j\,,\, \Delta r_j)\big\|_{L^2}\\
	&\leq \,
	C	\,\big\|  u \big\|_{L^2}^{1/2}\,
	\big\| \nabla u \big\|_{L^2}^{1/2}\,
	\big\| 	\nabla \Delta^2 r_j  \big\|_{L^2}^{1/2}\,
	\big\| 	 \Delta^3 r_j  \big\|_{L^2}^{1/2}\,
	\big\| ( r_j\,,\, \Delta r_j)\big\|_{L^2}\,.
\end{aligned}
\end{equation*}
Now, to handle the large number of derivatives in $\Delta^3 r_j$, we make use of the Bernstein inequalities (see Lemma \ref{l:bern} in the Appendix) to find
\begin{equation*}
	\big\| 	 \Delta^3 r_j  \big\|_{L^2}\, \leq \,C\, 2^{2j}\, \| \Delta^2 r_j \|_{L^2}\qquad
	\text{ and }\qquad
	2^j\, \big\| ( r_j\,,\, \Delta r_j)\big\|_{L^2} \,\leq \,
	C\,\big\| (\nabla r_j\,,\, \nabla \Delta r_j)\big\|_{L^2}\,,
\end{equation*}
for a positive constant which does not depend on $j$. Using the same trick also to handle the term $\nabla\Delta^2r_j$, we finally gather
\begin{equation*}
\begin{aligned}
	\J^{(j)}_1(t)\,
	&\leq \,
	C\,\big\|  u \big\|_{L^2}^{1/2}\,
	 \big\| \nabla u \big\|_{L^2}^{1/2}\,
	\big\| 	\nabla \Delta r_j  \big\|_{L^2}^{1/2}\,
	 \|  \Delta^2 r_j \|_{L^2}^{1/2}\,
	 \big\| (\Delta r_j\,,\,  \Delta^2 r_j)\big\|_{L^2}	 \\
	  &\leq\, 
	C\,\big\|  u \big\|_{L^2}^{1/2}\,
	 \big\| \nabla u \big\|_{L^2}^{1/2}\,
	 \| \nabla \Delta r_j \|_{L^2}^{1/2}\,
	 \big\| (\Delta r_j\,,\,  \Delta^2 r_j)\big\|_{L^2}^{3/2}\,.
\end{aligned}
\end{equation*}
Hence inequality $(i)$ is proven.

Next, we take into consideration inequality $(ii)$, involving the nonlinear term $\J^{(j)}_2$. We proceed similarly as for proving $(ii)$ in Lemma \ref{lemma:hig_reg}.
By spectral localization, one has the equality
\begin{equation*}
\I_2^{(j)}\,=\,\int_{\R^2} \big[ T_{u \cdot} , \Delta_j\big]\nabla \Delta^2 r\,	(\Id - \Delta) r_j\,dx\,=\,
\int_{\R^2}\sum_{|q-j|\leq 4}\big[ S_{q-1}u\,,\,\Delta_j\big]\nabla \Delta^2 r_q\,(\Id - \Delta) r_j\,dx\,.
\end{equation*}
Thus, H\"older's  inequality implies
\begin{align*}
	\J^{(j)}_2\,
	\leq\,
	\sum_{|q-j|\leq 4}
	\left\|	\big[ S_{q-1}u\,,\,\Delta_j\big]\nabla \Delta^2 r_q	\right\|_{L^2}\,
	\left\|	(\Id - \Delta) r_j										\right\|_{L^2}\,.	
\end{align*}
Making use of Bernstein inequality and the commutator estimate provided by Lemma \ref{l:commut}, since $j$ and $q$ are comparable, one gets
\begin{equation*}
	\left\|	\big[ S_{q-1}u\,,\,\Delta_j\big]\nabla \Delta^2 r_q	\right\|_{L^2}\,
	\leq\,
	C\,2^{-q}\,\|  S_{q-1}\nabla u\|_{L^\infty}\,\left\|\nabla \Delta^2 r_q	\right\|_{L^2}
	\,\leq \,
	C\,\|  S_{q-1}\nabla u\|_{L^2}\,\|\nabla \Delta^2 r_q	\|_{L^2}\,,
\end{equation*}
and hence one deduces (by Bernstein inequalities again)
\begin{align*}
\J_2^{(j)}\,&\leq\,C
\sum_{|q-j|\leq 4}
\|  \nabla u								\|_{L^2}\,
\left\|		\nabla	\Delta^2 r_q				  	\right\|_{L^2}\,
\,\left\|	(\Id - \Delta) r_j 		  	\right\|_{L^2}\\
&\leq\, C\sum_{|q-j|\leq 4}
\|  \nabla u								\|_{L^2}\,
\left\|			\Delta^2 r_q				  	\right\|_{L^2}2^q\,
\,\left\|	(r_j,\,\Delta r_j) 		  	\right\|_{L^2}\\
&\leq\,
C_s\,\|  \nabla u\|_{L^2}\,\left\|\Delta^2 r				  		\right\|_{H^s}\,
\left\|(\nabla r_j,\,\nabla \Delta r_j) 		  	\right\|_{L^2}\,a_{j}(t)\,2^{-js}\,,
\end{align*}
where the constant $C_s$ depends also on $s$, and the sequence $\bigl(a_j(t)\bigr)_{j\geq-1}\,\in\,\ell^2$ is defined by the relation
$$
a_j(t)\,:=\,\sum_{|q-j|\leq 4}2^{qs}\,\left\|	\Delta^2 r_q(t)	\right\|_{L^2}/\left\|\Delta^2 r(t)\right\|_{H^s}\qquad\mbox{ for }\;j\geq N\,,\qquad\qquad a_j(t)\,=\,0
\quad\mbox{otherwise.}
$$
As already exposed in Lemma \ref{lemma:hig_reg}, it is worth to remark that
\begin{equation*}
	\bigl\| (a_j(t))_j \bigr\|^2_{\ell^2}\, \leq\, \frac{C}{\left\|\Delta^2 r(t)\right\|^2_{H^s}}\,\sum_{j\geq N}\sum_{|q-j|\leq 4}2^{2qs}\,\left\|\Delta^2 r_q(t)\right\|_{L^2}\,\leq\,C\,,
\end{equation*}
for any time $t\in \R_+$. Thus inequality $(ii)$ is proven.

It remains to control $\J^{(j)}_3$, the non-linear term given by the reminder $\Delta_jT'_{ \nabla \Delta^2 r\cdot}u$. Since $\div u\,=\,0$, by a repeated use of Bernstein inequalities,
one has
\begin{align*}
	\J^{(j)}_3\,&=\,\int\Delta_j T'_{ \nabla \Delta^2 r\cdot} u\,(\Id-\Delta) r_j\,=\, 
	\sum_{q\geq j-5}
	\int\Delta_j\div\bigl( \Delta_q u\, S_{q+2}\Delta^2 r\bigr)\,(\Id-\Delta) r_j \\
&\leq\,	 	
	C\sum_{q\geq j-5} 2^j\,
	\left\|  \Delta_j \bigl( \Delta_q u\, S_{q+2}\Delta^2 r\bigr)	\right\|_{L^1}\,
	\left\|			(\Id-\Delta)  r_j			  								\right\|_{L^\infty}\,\\
	&\leq\, 	
	C\sum_{q\geq j-5} \left\|\Delta_qu\right\|_{L^2}\,\left\|S_{q+2}\Delta^2r\right\|_{L^2}\,
	\left\|			(\nabla r_j,\, \nabla \Delta r_j)			  								\right\|_{L^\infty} \\
&\leq\,C\,\left\|\Delta^2r\right\|_{L^2}\,\left\|(\Delta r_j,\, \Delta^2 r_j)\right\|_{L^2}\sum_{q\geq j-5} \left\|\Delta_q u\right\|_{L^2}\,.
\end{align*}
At this point, we remark that
$$
\sum_{q\geq j-5}\left\|\Delta_q u\right\|_{L^2}\,\leq\,\left\| u\right\|_{H^s}\,\sum_{q\geq j-5}2^{-qs}\,b_q\,,
$$
where the sequence $\bigl(b_q(t)\bigr)_{q\geq-1}$ is defined, for all $t\geq0$, by the formula
$$
b_q(t)\,:=\,\left\| u(t)\right\|^{-1}_{H^s}\;2^{qs}\,\left\|\Delta_q u(t)\right\|_{L^2}\,.
$$
Notice that $\bigl(b_q(t)\bigr)_{q\geq-1}$ belongs to $\ell^2$ and it has unitary norm. In the end, we gather

$$
\J^{(j)}_3\,\leq\,C\,\left\|\Delta^2r\right\|_{L^2}\,\left\| u\right\|_{H^s}\,\left\|(\Delta r_j,\, \Delta^2 r_j)\right\|_{L^2}\,\sum_{q\geq j-5}2^{-qs}\,b_q\,.
$$
This estimate concludes the proof of inequality $(iii)$, and so of the whole Lemma.
\end{proof}

\section{Long-time dynamics} \label{s:long-time}

In this section, we investigate the long-time behaviour of (smooth enough) solutions to equation \eqref{eq:main}. Not surprisingly, we prove that they converge to the solution
of the linear parabolic equation \eqref{eq:parab} with $g=0$, which we recall here for convenience:
\begin{equation}\label{eq:lim-par}
	\begin{cases}
		\partial_t\left(\Id-\Delta+\Delta^2\right)w\,+\,\mu\,\Delta^2(\Id-\Delta)w\,=\,f 	\\[1ex]
		w_{|t=0}\,=\,w_0\,,
	\end{cases}
\end{equation}
where $w_0$ and $f$ are respectively the same initial datum and external force as for \eqref{eq:main}.

\begin{rem} \label{r:parabolic}
Equation \eqref{eq:lim-par} being linear, it is easy to establish energy estimates (see also the discussion at the beginning of Paragraph \ref{sss:parabolic}) of first and
second type, and propagation of higher regularities (just test the equation on $(-\Delta)^{s}(\Id-\Delta)w$, for $s>0$, and repeat the computations as for first-type energy estimates).

For all $r_0\,\in\,H^{3+s}$ and $f\,\in\,L^2_{T}(H^{s-2})$ for all $T>0$, a straightforward adaptation of Theorem \ref{t:parabolic} (where we take $g=0$) immediately gives us
the existence and uniqueness of solutions
$$
w\;\in\;\mc{C}\bigl(\R_+;H^{3+s}(\R^2)\bigr)\,\cap\,L^\infty_{\rm loc}\bigl(\R_+;H^{3+s}(\R^2)\bigr)\,\cap\,L^2_{\rm loc}\bigl(\R_+;H^{4+s}(\R^2)\bigr)\,,
$$
which verify in addition the energy estimates
$$
\|w(t)\|^2_{H^{3+s}}\,+\,\mu\int^t_0\|\Delta w(\tau)\|^2_{H^{2+s}}\,d\tau\,\leq\,C\,e^{\mu\,t}\left(\|w_0\|^2_{H^{3+s}}\,+\,\frac{1}{\mu}\int^t_0\|f(\tau)\|^2_{H^{s-2}}\,d\tau\right)\,.
$$
\end{rem}

\begin{rem} \label{r:parab-unif}
Notice that, whenever $f\,\in\,L^2\bigl(\R_+;H^{s-2}(\R^2)\bigr)$, or $f\,\in\,L^1\bigl(\R_+;H^{s-1}(\R^2)\bigr)$, one deduces the uniform (in time) control
$$
\|w(t)\|^2_{H^{3+s}}\,+\,\int^t_0\left\|\Delta r(\tau)\right\|^2_{H^{4+s}}\,d\tau\,\leq\,C\qquad\qquad\qquad\forall\;t\geq0\,.
$$
Indeed, in the former instance this estimate is clear from the inequality in Remark \ref{r:parabolic} above, where one can avoid the presence of the exponential term arguing as in Lemma \ref{l:r-prelim}
below (see also the subsequent Remark \ref{r:r-prelim} for further comments).

In the latter case, instead, let us sketch the proof for $s=0$: the general case follows by similar considerations. Summing up relations \eqref{est:en-0} and \eqref{est:en_1}, estimating
$$
\lan f\,,\,(\Id-\Delta)r\ran_{H^{-1}\times H^1}\,\leq\,C\,\|f\|_{H^{-1}}\,\sqrt{X}
$$
and forgetting about the viscosity terms, one gets
$$
\frac{d}{dt}X\,\leq\,C\,\|f\|_{H^{-1}}\,\sqrt{X}\qquad\qquad\Longrightarrow\qquad\qquad \sqrt{X}(t)\,\leq\,\sqrt{X_0}\,+\,C\int^t_0\|f(\tau)\|_{H^{-1}}\,d\tau\,\leq\,C\,.
$$
Coming back to the resulting expression with the viscosity terms, using the previous bound and integrating in time, one finally discovers the claimed inequality.
\end{rem}

This having been pointed out, let us state the main result of this section. For reasons which will appear clear in the course of the proof, we need to consider smooth enough initial data
and forces.
\begin{thm} \label{th:long-time}
Let $s>0$. Take an initial datum $r_0\,\in\,H^{4+s}(\R^2)$ and an external force $f\,\in\,L^2\bigl(\R_+;H^{s-1}(\R^2)\bigr)$. Suppose that
$$
f\;\in\;L^\infty\bigl(\R_+;H^{s}(\R^2)\bigr)\,\cap\,L^\infty\bigl(\R_+;L^1(\R^2)\bigr)\;,\qquad
\|f(t)\|_{H^{s}\cap L^1}\,\leq\,K\,(1+t)^{-1-\eta}\,,
$$
for some constant $K>0$ and some exponent $1/2<\eta<1$, for almost every $t>0$.
Let $r$ be the solution to equation \eqref{eq:main}, given by Theorem \ref{t:higher-reg}, and let $w$ be the solution to \eqref{eq:lim-par}, given by Remark \ref{r:parabolic} above.

Then, there exists a constant $C>0$ (depending only on $\mu$, $K$ and $\|r_0\|_{H^{4+s}}$) such that, for all $t\geq0$, one has
$$
\left\|r(t)\,-\,w(t)\right\|_{H^3}\,\leq\,C\;(1+t)^{1/2-\eta}\,.
$$
\end{thm}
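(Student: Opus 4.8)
The plan is to adapt Schonbek's Fourier-splitting method to the difference $z:=r-w$. Since $r$ and $w$ solve \eqref{eq:main} and \eqref{eq:lim-par} with the \emph{same} datum $r_0$ and force $f$, subtracting the two equations shows that $z$ solves the linear parabolic problem
\[
\partial_t(\Id-\Delta+\Delta^2)z+\mu\,\Delta^2(\Id-\Delta)z=-\Lambda(r,r),\qquad z_{|t=0}=0.
\]
First I would run a first-type energy estimate, testing against $(\Id-\Delta)z$ exactly as in Proposition~\ref{p:a-priori_I}. Writing $(\Id-\Delta)z=(\Id-\Delta)r-(\Id-\Delta)w$ and invoking the cancellation $\lan\Lambda(r,r),(\Id-\Delta)r\ran=0$ of Lemma~\ref{l:bilin}, the nonlinear forcing collapses to a single term, and one gets
\[
\frac{d}{dt}\,\mc E[z]+\mu\,D[z]=\lan\Lambda(r,r),(\Id-\Delta)w\ran=:I(t),
\]
where $D[z]:=\|\Delta z\|_{L^2}^2+2\|\nabla\Delta z\|_{L^2}^2+\|\Delta^2z\|_{L^2}^2$ and, after one integration by parts (see \eqref{eq:bil-action}), $I(t)=-\int_{\R^2}\nabla^\perp(\Id-\Delta)r\cdot\nabla(\Id-\Delta)w\,\Delta^2r\,dx$. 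Bare integration of this identity would only yield boundedness of $\mc E[z]$; the decay has to be squeezed out of the dissipation $D[z]$ through a frequency splitting.

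The second ingredient is a family of decay estimates for the linear solution $w$. In Fourier variables $\widehat w$ solves a scalar damped ODE with symbol $a(\xi)=\mu|\xi|^4(1+|\xi|^2)/(1+|\xi|^2+|\xi|^4)$, behaving like $\mu|\xi|^4$ as $|\xi|\to0$ and like $\mu|\xi|^2$ as $|\xi|\to\infty$. As stressed in the Introduction, I only need the $L^\infty$ norm of \emph{high-order} derivatives of $w$, e.g. $\|\nabla(\Id-\Delta)w\|_{L^\infty}$. Estimating it by $\|\widehat{\nabla(\Id-\Delta)w}\|_{L^1}$ and splitting the frequency integral, the extra factors $|\xi|(1+|\xi|^2)$ let me apply Cauchy--Schwarz against $e^{-a(\xi)t}\widehat r_0$ on low frequencies; since $\int_{|\xi|\le1}|\xi|^2e^{-2\mu|\xi|^4t}\,d\xi\lesssim t^{-1}$, this gives a decay $\|\nabla(\Id-\Delta)w(t)\|_{L^\infty}\lesssim(1+t)^{-1/2}$ using only $r_0\in L^2$, while the Duhamel contribution of the force is controlled by the hypotheses $f\in L^\infty(L^1)\cap L^\infty(H^s)$ with rate $(1+t)^{-1-\eta}$.

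The third ingredient, and the technical heart of the proof, is genuine decay for the \emph{nonlinear} solution $r$. The crucial structural fact is that $\Lambda(r,r)$ vanishes when tested both against $(\Id-\Delta)r$ (Lemma~\ref{l:bilin}) and against $\Delta^2r$ (Lemma~\ref{l:bilin-high}); hence the first- and second-type energy identities of Propositions~\ref{p:a-priori_I} and~\ref{p:a-priori_II} have \emph{exactly} the structure of the linear equation, with no nonlinear contribution to the balance. This lets me run the Fourier-splitting argument on these two energy laws and deduce algebraic decay of the top-order quantities, in particular of $\|\Delta^2r(t)\|_{L^2}$ and, by continuity of $\nabla^2(-\Delta)^{-1}$ on $L^2$, of $\|\nabla^\perp(\Id-\Delta)r(t)\|_{L^2}$. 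The self-interaction reappears only through the source $\Lambda(r,r)$ in the Duhamel formula for the leftover low frequencies, and it is absorbed by a bootstrap, using that $\|\Lambda(r,r)\|_{L^1}\le\|\nabla^\perp(\Id-\Delta)r\|_{L^2}\|\Delta^2r\|_{L^2}$ is already a product of two decaying factors.

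Finally I would assemble the pieces through the Fourier splitting of the first-type energy of $z$. Fixing the time-dependent radius $\rho(t)$ with $\rho(t)^4\sim1/(\mu(1+t))$, the dissipation obeys $D[z]\gtrsim\rho(t)^4\,(\mc E[z]-\mc E^{\mathrm{low}}[z])$ on $\{|\xi|\ge\rho(t)\}$, whence
\[
\frac{d}{dt}\,\mc E[z]+\frac{\alpha}{1+t}\,\mc E[z]\le I(t)+\frac{\alpha}{1+t}\,\mc E^{\mathrm{low}}[z]
\]
for a suitable $\alpha>0$. The low-frequency remainder $\mc E^{\mathrm{low}}[z]$ is controlled directly from the Fourier representation of $z$: since $z$ is driven by $\Lambda(r,r)$ alone and $|\widehat{\Lambda(r,r)}(\tau,\xi)|\le|\xi|\,\|\nabla^\perp(\Id-\Delta)r\|_{L^2}\|\Delta^2r\|_{L^2}$, the decay of $r$ from the previous step bounds $\|\widehat z(t,\cdot)\|_{L^\infty(\{|\xi|\le\rho\})}$ and hence $\mc E^{\mathrm{low}}[z]$. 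Inserting $|I(t)|\le\|\nabla^\perp(\Id-\Delta)r\|_{L^2}\|\Delta^2r\|_{L^2}\|\nabla(\Id-\Delta)w\|_{L^\infty}$ together with the rates of Steps~2 and~3, integrating with the factor $(1+t)^\alpha$, and using $\eta>1/2$ to guarantee time-integrability of the critical contribution near the borderline $(1+t)^{-2\eta}$, I would reach $\mc E[z(t)]\lesssim(1+t)^{1-2\eta}$, i.e. $\|z(t)\|_{H^3}\lesssim(1+t)^{1/2-\eta}$. The hard part is Step~3: proving real (not merely integrated) decay of the high-order norms of the \emph{nonlinear} solution, where the non-homogeneity of $\Id-\Delta$ forces a separate treatment of the quartic low-frequency and quadratic high-frequency regimes of $a(\xi)$ and the closing of a bootstrap on the nonlinear source; matching all the resulting decay rates is the most delicate bookkeeping of the argument.
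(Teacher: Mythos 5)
Your proposal is, in structure, exactly the paper's proof: the same equation \eqref{eq:delta} for $z=r-w$, the same cancellation reducing the nonlinear forcing to $I(t)=\lan\Lambda(r,r),(\Id-\Delta)w\ran$, the same $L^\infty$ decay of derivatives of the linear flow (Proposition \ref{p:w_decay}), the same Schonbek splitting applied to the first- and second-type energy laws of $r$ with the low frequencies controlled through the pointwise bound $|\mc F(u\cdot\nabla\Delta^2r)|\le|\xi|\,\|u\|_{L^2}\,\|\Delta^2r\|_{L^2}$ (Propositions \ref{p:r-decay} and \ref{p:r-decay_high}), and a final splitting for $z$. Your variants --- a quartic radius $\rho^4(t)\sim(1+t)^{-1}$ acting on the full $H^3$ energy rather than the paper's quadratic radius acting on the weighted quantity $\what Z(t,\xi)=|\xi|\bigl(2+2|\xi|^2+|\xi|^4\bigr)^{1/2}\what z(t,\xi)$ together with a crude uniform bound on the $L^2$ part, and the use of decay rather than mere boundedness of $r$ in the low-frequency control of $z$ --- are harmless.

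The gap is in the last accounting step. What Steps 2 and 3 actually deliver is $\|\nabla(\Id-\Delta)w(t)\|_{L^\infty}\lesssim(1+t)^{-1/2}$ and $\|r(t)\|_{H^3}+\|r(t)\|_{H^4}\lesssim(1+t)^{-\eta/2}$: in the nonlinear splitting the decay of the energies (squared norms) is dictated by the forcing contribution $\int_0^te^{\mc V(\tau)}\|f(\tau)\|_{H^{-1}}\,d\tau\lesssim(1+t)^{\alpha-\eta}$, so one gets $(1+t)^{-\eta}$ for the squares, hence $(1+t)^{-\eta/2}$ for the norms, and no better. Consequently $|I(t)|\lesssim(1+t)^{-\eta/2}\,(1+t)^{-\eta/2}\,(1+t)^{-1/2}=(1+t)^{-\eta-1/2}$, not $(1+t)^{-2\eta}$; since $\eta>1/2$ one has $\eta+1/2<2\eta$, so this slower term really is the dominant one. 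Feeding it into the weighted integral gives $\mc E[z(t)]\lesssim(1+t)^{1/2-\eta}$, hence $\|z(t)\|_{H^3}\lesssim(1+t)^{1/4-\eta/2}$, and not your claimed $\mc E[z(t)]\lesssim(1+t)^{1-2\eta}$. For what it is worth, the paper's own proof stops at the same place: its final display bounds $e^{\mc V(t)}\|z(t)\|_{H^3}^2$ by $C(1+t)^{\beta+1/2-\eta}$, so the rate $(1+t)^{1/2-\eta}$ is obtained there for the \emph{squared} $H^3$ norm, not for the norm itself as stated in Theorem \ref{th:long-time}. Your proposal reproduces that argument faithfully but then overstates the conclusion by exactly the same square; closing this discrepancy would require strictly stronger decay for $r$ (roughly $\|r\|_{H^3}\,\|r\|_{H^4}\lesssim(1+t)^{1/2-2\eta}$) than either your Step 3 or Propositions \ref{p:r-decay} and \ref{p:r-decay_high} provide.
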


The rest of this section is devoted to the proof of the previous theorem. In a first time (see Subsection \ref{ss:disp_lin}), we study the linear problem, establishing some decay properties
for $w$. These properties turn out to be not strong enough for proving our result; hence, we need to investigate the non-linear equation \eqref{eq:main} and to show (in Subsection \ref{ss:disp_non-lin})
dispersive estimates also for $r$. Finally, in Subsection \ref{ss:disp_end} we put these estimates together to gather the result.

\subsection{The linear problem} \label{ss:disp_lin}

Let us consider the linear problem \eqref{eq:lim-par}: we want to analyse dispersive properties for its solution $w$.

We recall that we have defined above $\mc A(D)\,:=\,\Id-\Delta+\Delta^2$, see \eqref{def:A-D}; for notational convenience, let us also set $\mc H(D)\,:=\,\Delta^2\bigl(\Id-\Delta\bigr)\mc A^{-1}(D)$,
namely $\mc A(D)$ and $\mc H(D)$ are the pseudo-differential operators associated to the Fourier multipliers
\begin{equation} \label{def:Lambda-a}
a(\xi)\,:=\,1\,+\,|\xi|^2\,+\,|\xi|^4\qquad\qquad\mbox{ and }\qquad\qquad h(\xi)\,:=\,\frac{1\,+\,|\xi|^2}{1\,+\,|\xi|^2\,+\,|\xi|^4}\;|\xi|^4\,.
\end{equation}

The first step is to apply the Fourier transform $\mc F$ to equation \eqref{eq:lim-par} and rewrite it as
$$
\d_t\what w\,+\,\mu\,h(\xi)\,\what w\,=\,\what f_0\,,
$$
where we have defined $f_0\,:=\,\mc A^{-1}(D)f$. From this relation we immediately deduce that
\begin{equation} \label{eq:Fw}
\what w(t,\xi)\,=\,e^{-\mu\, h(\xi)\,t}\,\what w_0\,+\,\int^t_0e^{-\mu\, h(\xi) \,(t-\tau)}\,\what f_0(\tau,\xi)\,d\tau\,.
\end{equation}

We first study the evolution through the free propagator, namely the case $f=0$. From this analysis, we will then deduce bounds for the term related to the external force
in a standard way.

\medbreak
Hence, let us suppose $f=0$ (and then $f_0=0$) for a while.
We begin with presenting an intermediate estimate, which we include it for the sake of completeness.
\begin{lemma} \label{l:w_0-prelim}
Let $w$ solve equation \eqref{eq:lim-par} with initial datum $w_0\,\in\,H^{2}(\R^2)$ and $f\equiv0$.

Then there exists a positive constant $C>0$, just depending on $\mu>0$, such that, for all $s>0$, one has the following estimate:
$$
\|w(t)\|_{L^\infty}\,\leq\,C\;\bigl(\|w_0\|_{L^1}\,+\,\|w_0\|_{H^{1+s}}\bigr)\;(1+t)^{-1/4}\,.
$$
\end{lemma}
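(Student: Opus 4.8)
The plan is to work entirely on the Fourier side and reduce the claim to an $L^1_\xi$ estimate on $\what w(t,\cdot)$. Setting $f\equiv0$ in \eqref{eq:Fw} gives $\what w(t,\xi)=e^{-\mu\,h(\xi)\,t}\,\what w_0(\xi)$, and Fourier inversion yields the elementary bound $\|w(t)\|_{L^\infty}\leq(2\pi)^{-2}\,\|\what w(t,\cdot)\|_{L^1}$. Hence the whole matter reduces to controlling $\int_{\R^2}e^{-\mu\,h(\xi)\,t}\,|\what w_0(\xi)|\,d\xi$, and the heart of the argument lies in the behaviour of the symbol $h$ defined in \eqref{def:Lambda-a}, which degenerates like $|\xi|^4$ near the origin and grows only like $|\xi|^2$ at infinity.

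First I would record the two pointwise lower bounds $h(\xi)\geq c\,|\xi|^4$ for $|\xi|\leq1$ and $h(\xi)\geq c\,|\xi|^2$ for $|\xi|\geq1$, for some constant $c>0$; both follow at once from the explicit formula for $h$ by bounding the rational prefactor $\tfrac{1+|\xi|^2}{1+|\xi|^2+|\xi|^4}$ from below on each region. Then I would split the integral at $|\xi|=1$. On the low-frequency part I use $|\what w_0(\xi)|\leq\|w_0\|_{L^1}$ together with the lower bound on $h$, so that $\int_{|\xi|\leq1}e^{-\mu h(\xi)t}|\what w_0|\,d\xi\leq\|w_0\|_{L^1}\int_{\R^2}e^{-c\mu|\xi|^4 t}\,d\xi$; the rescaling $\eta=(c\mu t)^{1/4}\xi$ in dimension $2$ shows the last integral equals $C\,(c\mu t)^{-1/2}$, and patching this with the trivial bound for small $t$ (the domain having finite measure) gives a contribution $\lesssim\|w_0\|_{L^1}\,(1+t)^{-1/2}$.

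On the high-frequency part I exploit that, for $|\xi|\geq1$, one has $e^{-\mu h(\xi)t}\leq e^{-c\mu|\xi|^2 t}\leq e^{-c\mu t/2}$, which produces genuine exponential decay; it then remains to check that $\int_{|\xi|\geq1}|\what w_0(\xi)|\,d\xi<\infty$, and this I get from Cauchy--Schwarz, $\int_{|\xi|\geq1}|\what w_0|\,d\xi\leq\big(\int_{|\xi|\geq1}\langle\xi\rangle^{-2(1+s)}\,d\xi\big)^{1/2}\,\|w_0\|_{H^{1+s}}$, the prefactor being finite precisely because $s>0$ and the space dimension is $2$. Collecting the two pieces yields $\|w(t)\|_{L^\infty}\lesssim(1+t)^{-1/2}\,\|w_0\|_{L^1}+e^{-c\mu t}\,\|w_0\|_{H^{1+s}}\lesssim(1+t)^{-1/2}\big(\|w_0\|_{L^1}+\|w_0\|_{H^{1+s}}\big)$.

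Since $(1+t)^{-1/2}\leq(1+t)^{-1/4}$ for every $t\geq0$, the stated estimate follows a fortiori — the argument in fact delivers the sharper rate $t^{-1/2}$, consistent with an $L^1$--$L^\infty$ smoothing of biharmonic-heat type in $d=2$. I do not expect a serious difficulty here; the only delicate points, which I would treat as the crux, are the elementary but indispensable two-regime lower bound on $h$ (without it the low-frequency decay rate is miscounted, since $h$ is \emph{not} homogeneous), and the high-frequency integrability of $\langle\xi\rangle^{-(1+s)}$, which is exactly what forces the hypothesis $s>0$ and the appearance of the $H^{1+s}$ norm on the right-hand side.
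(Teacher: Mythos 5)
Your proof is correct, and it follows essentially the same route as the paper's: Fourier inversion to reduce to $\|\what w(t)\|_{L^1_\xi}$, a splitting of the frequency integral at $|\xi|=1$, the bound $|\what w_0|\leq\|w_0\|_{L^1}$ at low frequencies, and Cauchy--Schwarz against the weight $\langle\xi\rangle^{1+s}$ (which is exactly where $s>0$ and the $H^{1+s}$ norm enter) at high frequencies. The one genuine difference lies in the low-frequency computation, and it works in your favour: you keep the two-dimensional volume element when rescaling, so that $\int_{|\xi|\leq1}e^{-c\mu|\xi|^4t}\,d\xi\,\lesssim\,t^{-1/2}$, whereas the paper, after passing to polar coordinates, bounds the Jacobian factor $r$ by $1$ and estimates the one-dimensional integral $\int_0^1e^{-\mu t r^4/2}\,dr\,\lesssim\,t^{-1/4}$; this is precisely why the lemma is stated with the rate $(1+t)^{-1/4}$. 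Your argument therefore delivers the strictly stronger decay $(1+t)^{-1/2}$, which is the natural $L^1\to L^\infty$ rate $t^{-d/4}$ for a fourth-order diffusion in $d=2$, and the stated estimate follows a fortiori, as you note. Two minor remarks: your constant, like the paper's, depends on $s$ (through $\int_{|\xi|\geq1}\langle\xi\rangle^{-2(1+s)}d\xi$, which blows up as $s\to0$) and not only on $\mu$, so the statement's ``just depending on $\mu$'' is equally loose under both proofs; and your handling of small times (the exponential factor $e^{-c\mu t/2}$ at high frequencies is valid for all $t\geq0$, and the low-frequency integral is trivially bounded by the measure of the unit ball) is slightly tidier than the paper's separate Cauchy--Schwarz bound on $t\in[0,1]$.
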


\begin{proof}
Recalling (see e.g. Theorem 1.19 of \cite{B-C-D}) that $\mc F^{-1}\,=\,(2\pi)^{-d}\,\check{\mc{F}}$, where $d$ is the space dimension and $\check{\mc{F}}$ is defined by the formula
$\check{\mc{F}}h(\xi)\,=\,\mc Fh(-\xi)$, by use of \eqref{eq:Fw} we can estimate
\begin{equation} \label{est:w-inf}
\|w(t)\|_{L^\infty}\,\leq\,C\,\left\|\what w(t)\right\|_{L^1}\,,\qquad\qquad\mbox{ with }\qquad \left\|\what w(t)\right\|_{L^1}\,=\,
\int_{\R^2}e^{-\mu\,h(\xi)\,t}\,\left|\what w_0(\xi)\right|\,d\xi\,.
\end{equation}
We observe that this quantity remains finite for (say) $t\in[0,1]$. Indeed, it is enough to multiply and divide the term inside the integral by the factor $(1+|\xi|^2)^{(1+s)/2}$ and use
Cauchy-Schwarz inequality, to bound it by $C\,\|w_0\|_{H^{1+s}}$.
Hence, we can restrict our attention to large times: namely in what follows we assume that $t\geq1$.

We now split the previous integral as the sum of the integral over the ball $B(0,1)$ of center $0$ and radius $1$ and the integral over the complement $\R^2\setminus B(0,1)$.
After passing in polar coordinates, straightforward computations lead us to
$$
\int_{\R^2\setminus B(0,1)}e^{-\mu\,h(\xi)\,t}\,\left|\what w_0(\xi)\right|\,d\xi\,\leq\,C\,\left\|\what w_0\right\|_{L^\infty}\,\int_{1}^{+\infty}e^{-\mu\,h(r)\,t}\,r\,dr\,\leq\,
C\,e^{-\mu\,t}\,\|w_0\|_{L^1}\,,
$$
where, with a little abuse of notation, we have set $h(r)\,=\,r^4\,(1+r^2)/(1+r^2+r^4)$.

As for the intregral over $B(0,1)$, instead, we use the fact that $(1+r^2)/(1+r^2+r^4)\,\geq\,1/2$ for $0\leq r\leq1$. Hence, passing again in polar coordinates we can estimate
$$
\int_{B(0,1)}e^{-\mu\,h(\xi)\,t}\,\left|\what w_0(\xi)\right|\,d\xi\,\leq\,C\,\left\|\what w_0\right\|_{L^\infty}\,\int_0^1e^{-\mu\,t\,r^4/2}\,dr\,,
$$
and a simple change of variables implies the final estimate
$$
\int_{B(0,1)}e^{-\mu\,h(\xi)\,t}\,\left|\what w_0(\xi)\right|\,d\xi\,\leq\,\frac{C}{\left(\mu\,t\right)^{1/4}}\,\|w_0\|_{L^1}\,.
$$

The previous inequality completes the proof of the lemma.
\end{proof}

Let us remark that the decay estimate of Lemma \ref{l:w_0-prelim} is much worse than the one expected for the classical heat equation, namely $\sim\,t^{-d/2}$. As it appears
clear from our computations, this is due to the fact that our parabolic operator vanishes at a higher order for $|\xi|\,\sim\,0$.

Nontheless, exactly due to the high smoothing effect of the parabolic operator, we can improve the decay rate if we consider first and higher order derivatives of $w$, and this with no
loss of derivatives. This is in sharp contrast with what happens in non-diffusive cases: we refer to e.g. paper \cite{Elg}, concerning the inviscid incompressible porus medium equation, where
the decay can be improved if one accepts to lose a high enough number of derivatives.
\begin{lemma} \label{l:w_decay}
Let $f\equiv0$ and $w_0\,\in\,H^{4+s}(\R^2)$, for some $s>0$. Let $w$ be a solution to system \eqref{eq:lim-par}.

Then the following estimates hold true:
\begin{align*}
\|\nabla w(t)\|_{L^\infty}\,&\leq\,C\,\Bigl(\left\|w_0\right\|_{L^2}\,+\,\min\bigl\{\|w_0\|_{L^1}\,,\,\|w_0\|_{L^2}\bigr\}\,+\,\|\nabla w_0\|_{H^{1+s}}\Bigr)\;(1+t)^{-1/2} \\
\|\nabla\Delta w(t)\|_{L^\infty}\,&\leq\,C\,\Bigl(\min\bigl\{\|w_0\|_{L^1}\,,\,\|w_0\|_{L^2}\bigr\}\,+\,\|\nabla w_0\|_{L^1}\,+\,\|\nabla\Delta w_0\|_{H^{1+s}}\Bigr)\;(1+t)^{-1}\,,
\end{align*}
for a suitable positive constant $C$, also depending on $\mu>0$.
\end{lemma}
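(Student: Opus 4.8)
The plan is to argue exactly as in the proof of Lemma \ref{l:w_0-prelim}, exploiting the explicit representation \eqref{eq:Fw} with $f\equiv0$, namely $\what w(t,\xi)=e^{-\mu\,h(\xi)\,t}\,\what w_0(\xi)$, together with the elementary bound $\|g\|_{L^\infty}\leq C\,\|\what g\|_{L^1}$. Since the Fourier symbols of $\nabla$ and $\nabla\Delta$ behave like $|\xi|$ and $|\xi|^3$, the two quantities to estimate reduce to the weighted integrals
$$
\int_{\R^2}|\xi|\,e^{-\mu\,h(\xi)\,t}\,|\what w_0(\xi)|\,d\xi\qquad\text{and}\qquad\int_{\R^2}|\xi|^3\,e^{-\mu\,h(\xi)\,t}\,|\what w_0(\xi)|\,d\xi\,.
$$
As before, I would restrict to $t\geq1$ (the range $t\in[0,1]$ being handled by dropping the exponential and bounding the same integrals by the relevant norms of $w_0$, which is harmless since $(1+t)^{-1/2}$ and $(1+t)^{-1}$ are then comparable to constants), and split each integral over the unit ball $B(0,1)$ and its complement.

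On the high-frequency region $\R^2\setminus B(0,1)$ I would use the lower bound $h(\xi)\geq|\xi|^2/3$, valid because $1+|\xi|^2+|\xi|^4\leq3|\xi|^4$ there; this yields $e^{-\mu\,h(\xi)\,t}\leq e^{-\mu t/3}$, hence exponential decay. Factoring out the weight $(1+|\xi|^2)^{-(1+s)/2}$ and applying Cauchy--Schwarz produces the Sobolev norms $\|\nabla w_0\|_{H^{1+s}}$ (resp. $\|\nabla\Delta w_0\|_{H^{1+s}}$), the remaining frequency integral $\int_{|\xi|\geq1}(1+|\xi|^2)^{-(1+s)}\,d\xi$ being finite precisely because $s>0$. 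As $e^{-\mu t/3}$ beats any negative power of $t$, the high-frequency contribution is absorbed into the claimed $(1+t)^{-1/2}$, resp. $(1+t)^{-1}$, rate.

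The core of the argument is the low-frequency region, where $h(\xi)\geq|\xi|^4/2$ (since $(1+|\xi|^2)/(1+|\xi|^2+|\xi|^4)\geq1/2$ on $B(0,1)$), so everything hinges on the scaling computation
$$
\int_{B(0,1)}|\xi|^{2k}\,e^{-c\,t\,|\xi|^4}\,d\xi\,\sim\,t^{-(k+1)/2}\,,
$$
obtained in polar coordinates via the substitution $u=t^{1/4}r$. I would bound $|\what w_0|$ in complementary ways and keep whichever is advantageous: estimating it by $\|\what w_0\|_{L^\infty}\leq C\|w_0\|_{L^1}$ gives the faster rates ($t^{-3/4}$ for $\nabla$, $t^{-5/4}$ for $\nabla\Delta$), while a Cauchy--Schwarz estimate against $\|w_0\|_{L^2}$ yields exactly $t^{-1/2}$ and $t^{-1}$; retaining both and comparing them produces the standalone $\|w_0\|_{L^2}$ term together with the $\min\{\|w_0\|_{L^1},\|w_0\|_{L^2}\}$ term. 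For the $\nabla\Delta w$ bound one additionally uses $\big\||\xi|\,\what w_0\big\|_{L^\infty}\leq C\|\nabla w_0\|_{L^1}$, which in the integral $\int_{B(0,1)}|\xi|^2 e^{-c\,t\,h(\xi)}\,d\xi\sim t^{-1}$ furnishes the $\|\nabla w_0\|_{L^1}$ contribution at the correct rate. Summing the low- and high-frequency pieces yields the two stated inequalities.

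The only genuinely delicate point I anticipate is the bookkeeping: matching each norm on the right-hand side to the appropriate frequency region, and verifying that in each case the \emph{slowest} decay rate produced is precisely the advertised one ($t^{-1/2}$, resp. $t^{-1}$), so that no term decays too slowly. Beyond this accounting, the estimate is a routine but careful adaptation of the computation already carried out in Lemma \ref{l:w_0-prelim}.
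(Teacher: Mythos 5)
Your proposal is correct and follows essentially the same route as the paper's own proof: the Fourier representation $\what w(t,\xi)=e^{-\mu h(\xi)t}\what w_0(\xi)$, the bound $\|g\|_{L^\infty}\leq C\|\what g\|_{L^1}$, the split at $|\xi|=1$, the quartic scaling $\int_{B(0,1)}|\xi|^{2k}e^{-ct|\xi|^4}\,d\xi\lesssim t^{-(k+1)/2}$ for the low frequencies (Cauchy--Schwarz against $\|w_0\|_{L^2}$ for $\nabla w$, the bound $\bigl\||\xi|\what w_0\bigr\|_{L^\infty}\leq\|\nabla w_0\|_{L^1}$ for $\nabla\Delta w$), and exponential-in-time decay on the high frequencies. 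The only deviation is cosmetic: on the high-frequency region you close with $\|\nabla w_0\|_{H^{1+s}}$ (resp. $\|\nabla\Delta w_0\|_{H^{1+s}}$) via the weighted Cauchy--Schwarz argument, where the paper uses $\min\bigl\{\|w_0\|_{L^1},\|w_0\|_{L^2}\bigr\}$; both norms appear on the stated right-hand sides, so either choice is admissible.
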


\begin{proof}
As above, we can limit ourselves to consider the case $t\geq1$, since for $t\in[0,1]$ the quantities $\|\nabla w(t)\|_{L^\infty}$ and $\|\nabla\Delta w(t)\|_{L^\infty}$
remain uniformly bounded by (up to a multiplicative constant) $\|\nabla w_0\|_{H^{1+s}}$ and $\|\nabla\Delta w_0\|_{H^{1+s}}$ respectively.

Take then $t\ge1$. We start by considering $\nabla w$. Arguing as in the proof of Lemma \ref{l:w_0-prelim}, we can estimate
\begin{align*}
\|\nabla w(t)\|_{L^\infty}\,&\leq\,C\int_{\R^2}e^{-\mu\,h(\xi)\,t}\,|\xi|\,\left|\what w_0(\xi)\right|\,d\xi \\
&\leq\,C\left(\int_{B(0,1)}e^{-\mu\,h(\xi)\,t}\,|\xi|\,\left|\what w_0(\xi)\right|\,d\xi\,+\,\int_{\R^2\setminus B(0,1)}e^{-\mu\,h(\xi)\,t}\,|\xi|\,\left|\what w_0(\xi)\right|\,d\xi\right) \\
&\leq\,C\left(\int_{B(0,1)}e^{-2\,\mu\,h(\xi)\,t}\,|\xi|^2\,dx\right)^{1/2}\,\left\|w_0\right\|_{L^2}\,+\,C\,e^{-\mu\, t}\,\min\bigl\{\|w_0\|_{L^1}\,,\,\|w_0\|_{L^2}\bigr\}\,, 
\end{align*}
where, in order to get the last line, we have also used Cauchy-Schwarz inequality.
Passing now in polar coordinates, it is easy to bound
$$
\int_{B(0,1)}e^{-2\,\mu\,h(\xi)\,t}\,|\xi|^2\,d\xi\,\leq\,2\,\pi\int_0^{1}e^{-\mu\,t\,r^4}\,r^3\,dr\,\leq\,C\,(\mu\,t)^{-1}\,,
$$
which finally implies, for some suitable constant $C>0$, also depending on $\mu$,
$$
\|\nabla w(t)\|_{L^\infty}\,\leq\,C\,\left(\left\|w_0\right\|_{L^2}\,+\,\min\bigl\{\|w_0\|_{L^1}\,,\,\|w_0\|_{L^2}\bigr\}\right)\,(\mu\,t)^{-1/2}\,.
$$

Let us consider now the term $\nabla\Delta w$. Applying the same technique as above, we easily find
\begin{align*}
\|\nabla\Delta w(t)\|_{L^\infty}\,&\leq\,C\int_{\R^2}e^{-\mu\,h(\xi)\,t}\,|\xi|^3\,\left|\what w_0(\xi)\right|\,d\xi \\
&\leq\,C\left(\int_{B(0,1)}e^{-\mu\,h(\xi)\,t}\,|\xi|^2\,\left|\mc F(\nabla w_0)(\xi)\right|\,d\xi\,+\,\int_{\R^2\setminus B(0,1)}e^{-\mu\,h(\xi)\,t}\,|\xi|^3\,\left|\what w_0(\xi)\right|\,d\xi\right) \\
&\leq\,C\left(\int_{B(0,1)}e^{-\mu\,h(\xi)\,t}\,|\xi|^2\,dx\right)\,\left\|\nabla w_0\right\|_{L^1}\,+\,C\,e^{-\mu\,t}\,\min\bigl\{\|w_0\|_{L^1}\,,\,\|w_0\|_{L^2}\bigr\}\,,
\end{align*}
At this point, dealing with the integral in the last line is absolutely analogous to what we have done before: this allows us to arrive at the claimed inequality.
\end{proof}

\begin{rem} \label{r:w_decay}
Let us come back to the estimate for $\|\nabla\Delta w(t)\|_{L^\infty}$. Notice that, passing from the second line to the last one, we could use Cauchy-Schwarz inequality
for the former term, to obtain
$$
\|\nabla\Delta w(t)\|_{L^\infty}\,\leq\,C\left(\int_{B(0,1)}e^{-2\,\mu\,h(\xi)\,t}\,|\xi|^2\,dx\right)^{1/2}\,\left\|\nabla^2w_0\right\|_{L^2}\,+\,
C\,e^{-\mu\,t}\,\min\bigl\{\|w_0\|_{L^1}\,,\,\|w_0\|_{L^2}\bigr\}\,,
$$
and this would give rise to the bound
$$
\|\nabla\Delta w(t)\|_{L^\infty}\,\leq\,C\,\Bigl(\min\bigl\{\|w_0\|_{L^1}\,,\,\|w_0\|_{L^2}\bigr\}\,+\,\|\Delta w_0\|_{L^2}\,+\,\|\nabla\Delta w_0\|_{H^{1+s}}\Bigr)\;(1+t)^{-1/2}\,,
$$
\end{rem}

We now consider the case when the external force $f$ is not $0$.
\begin{lemma} \label{l:w-f_decay}
Let $s>0$ and $f\,\in\,L^2\bigl(\R_+;H^{s}\bigr)$. Suppose moreover that there exist $\eta\in\,]0,1[\,$ and a constant $K>0$ such that, for almost every $t\in\R_+$, one has
$$
\|f(t)\|_{H^{s}}\,\leq\,K\,(1+t)^{-1-\eta}\,.
$$
Set $f_0\,=\,\mc A^{-1}(D)f$ (as defined above) and let $g$ be defined by the formula
$$
\what g(t,\xi)\,:=\,\int^t_0e^{-\mu\,h(\xi)\,(t-\tau)}\,\what f_0(\tau,\xi)\,d\tau\,.
$$

Then there exists a constant $\wtilde C>0$ such that the following estimate holds true, for all time $t>0$:
$$
\left\|\nabla g(t)\right\|_{L^\infty}\,+\,\left\|\nabla\Delta g(t)\right\|_{L^\infty}\,\leq\,\wtilde C\,K\,(1+t)^{-1/2}\,.
$$
\end{lemma}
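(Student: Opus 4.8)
The plan is to reduce the two $L^\infty$ bounds to a single weighted $L^1$ estimate on the Fourier side, exactly as in the proofs of Lemmas \ref{l:w_0-prelim} and \ref{l:w_decay}. Since $\widehat{\nabla g}(t,\xi)=i\xi\,\what g(t,\xi)$ and $\widehat{\nabla\Delta g}(t,\xi)=-i\xi|\xi|^2\,\what g(t,\xi)$, Fourier inversion gives
$$\|\nabla g(t)\|_{L^\infty}+\|\nabla\Delta g(t)\|_{L^\infty}\,\leq\,C\int_{\R^2}\bigl(|\xi|+|\xi|^3\bigr)\,|\what g(t,\xi)|\,d\xi\,.$$
Inserting the defining formula for $\what g$ together with $f_0=\mc A^{-1}(D)f$ (so that $\what{f_0}=\what f/a$) and applying Fubini, I would reduce the whole matter to bounding, for $k\in\{1,3\}$, the quantity
$$\int_0^t\!\!\int_{\R^2}|\xi|^k\,e^{-\mu h(\xi)(t-\tau)}\,\frac{|\what f(\tau,\xi)|}{a(\xi)}\,d\xi\,d\tau\,.$$

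Next I would split the inner integral into low frequencies $|\xi|\leq1$ and high frequencies $|\xi|\geq1$, exploiting the two asymptotic regimes of the symbols: on $B(0,1)$ one has $a(\xi)\sim1$ and $h(\xi)\geq|\xi|^4/2$, while on $\R^2\setminus B(0,1)$ one has $a(\xi)\geq|\xi|^4$ and $h(\xi)\geq c>0$ with $h(\xi)\gtrsim|\xi|^2$. For the low-frequency part, since $|\xi|^3\leq|\xi|$ on $B(0,1)$ it is enough to treat $k=1$; I would apply Cauchy--Schwarz against $\|\what f(\tau)\|_{L^2}=\|f(\tau)\|_{L^2}\leq\|f(\tau)\|_{H^s}$ and compute $\int_{B(0,1)}|\xi|^2 e^{-2\mu h(\xi)(t-\tau)}\,d\xi$, which (polar coordinates plus the rescaling $\rho=(t-\tau)^{1/4}r$) is $\leq C(t-\tau)^{-1}$, so its square root yields the kernel $(t-\tau)^{-1/2}$. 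For the high-frequency part I would bound $e^{-\mu h(\xi)(t-\tau)}\leq e^{-c\mu(t-\tau)}$ and apply Cauchy--Schwarz against $\|f(\tau)\|_{H^s}$; the remaining frequency integral $\int_{|\xi|\geq1}|\xi|^{2k}a(\xi)^{-2}(1+|\xi|^2)^{-s}\,d\xi$ converges precisely because $s>0$ (for $k=3$ the integrand decays like $|\xi|^{-2-2s}$).

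In both cases this leaves a time convolution of the form
$$\int_0^t\kappa(t-\tau)\,K(1+\tau)^{-1-\eta}\,d\tau\,,$$
with $\kappa(\sigma)=\sigma^{-1/2}$ (low frequencies) or $\kappa(\sigma)=e^{-c\mu\sigma}$ (high frequencies). Splitting the integral over $[0,t/2]$ and $[t/2,t]$ and using $\eta>0$, the polynomial kernel contributes $\lesssim(1+t)^{-1/2}$ (the piece on $[t/2,t]$ being even $\lesssim(1+t)^{-1/2-\eta}$), while the exponential kernel contributes an exponentially small term; on $[0,2]$ all quantities are uniformly bounded, which absorbs the normalization $(1+t)^{-1/2}\sim t^{-1/2}$ for large $t$. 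Summing the four contributions gives the stated bound.

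The hard part will be the low-frequency contribution to $\nabla\Delta g$: because the elliptic symbol $h$ vanishes to order four at the origin, keeping all three derivatives naively produces the kernel $(t-\tau)^{-1}$, whose time integral diverges near $\tau=t$. The resolution is that these extra derivatives cost nothing on the unit ball, where $|\xi|^3\leq|\xi|$, so one trades the unattainable decay rate $t^{-1}$ for the slower but time-integrable rate $t^{-1/2}$ — which is exactly the rate asserted in the statement. A secondary technical point, easily overlooked, is that convergence of the high-frequency integral for $\nabla\Delta g$ relies on the assumption $s>0$.
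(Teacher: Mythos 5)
Your proof is correct and follows essentially the same route as the paper's: the paper simply invokes Lemma \ref{l:w_decay} and Remark \ref{r:w_decay} (whose proofs contain exactly your low/high-frequency splitting and Cauchy--Schwarz computations, including the crucial replacement of $|\xi|^3$ by $|\xi|$ on the unit ball), bounds the norms of $f_0(\tau)$ by $\|f(\tau)\|_{H^s}$ via the order-$4$ smoothing of $\mc A^{-1}$, and concludes with the convolution inequality of Lemma 2.4 of \cite{Elg} in place of your hand-made splitting of $[0,t]$. In particular, your ``hard part'' --- trading the non-integrable kernel $(t-\tau)^{-1}$ for the integrable rate $(t-\tau)^{-1/2}$ so as to use only $L^2$-type norms of $f$ --- is precisely the content of Remark \ref{r:w_decay}, which is what the paper uses for the $\nabla\Delta g$ term.
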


\begin{proof}
These estimates are deduced in a standard way from the ones of Lemma \ref{l:w_decay}. Indeed, arguing as in the proof of that result, we easily get
\begin{align*}
\left\|\nabla g(t)\right\|_{L^\infty}\,&\leq\,\int_0^t\int_{\R^2}e^{-\mu\,h(\xi)\,(t-\tau)}\,|\xi|\,\left|\what f_0(\tau,\xi)\right|\,d\xi\,d\tau \\
&\leq\,C\int^t_0\frac{1}{(1+t-\tau)^{1/2}}\;\bigl(\left\|f_0(\tau)\right\|_{L^2}\,+\,\left\|\nabla f_0(\tau)\right\|_{H^{1+s}}\bigr)\;d\tau\,.
\end{align*}
At this point, we use the fact that, since $\mc A$ is an operator of order $4$, one can control
$$
\left\|f_0(\tau)\right\|_{L^2}\,+\,\left\|\nabla f_0(\tau)\right\|_{H^{1+s}} 
\,\leq\,C\,\left\|f(\tau)\right\|_{H^{-2+s}}\,\leq\,C\,\left\|f(\tau)\right\|_{H^{s}}\,.
$$
Hence, using the dispersion hypothesis over the last quantity together with Lemma 2.4 of \cite{Elg}, we gather
$$
\left\|\nabla g(t)\right\|_{L^\infty}\,\leq\,C\,K\int^t_0\frac{1}{(1+t-\tau)^{1/2}}\;\frac{1}{(1+\tau)^{1+\eta}}\;d\tau\,\leq\,C'\,K\,(1+t)^{-1/2}\,.
$$

The proof of the estimate for $\left\|\nabla\Delta g(t)\right\|_{L^\infty}$ is absolutely analogous. The only difference is the use of Remark \ref{r:w_decay}
in order to make the $L^2$ norm of $\Delta f_0(\tau)$ appear; noticing that
$$
\left\|\Delta f_0(\tau)\right\|_{L^2}\,+\,\left\|\nabla\Delta f_0(\tau)\right\|_{H^{1+s}}\,\leq\,\left\|f(\tau)\right\|_{H^{s}}
$$
completes the proof of the estimate, and hence of the whole lemma.
\end{proof}

Putting Lemmas \ref{l:w_decay} and \ref{l:w-f_decay} together, by Duhamel's formula \eqref{eq:Fw} we finally get the next statement, which reveals decay properties for the solutions
to the linear problem \eqref{eq:lim-par}.
\begin{prop} \label{p:w_decay}
Let $s>0$. Take an initial datum $w_0\,\in\,H^{4+s}(\R^2)$ and an external force $f\,\in\,L^\infty\bigl(\R_+;H^{s}(\R^2)\bigr)$ such that
$$
\|f(t)\|_{H^{s}}\,\leq\,K\,(1+t)^{-1-\eta}\,,
$$
for some constant $K>0$ and some exponent $\eta\in\,]0,1[\,$, for almost every $t>0$. Let $w$ be the solution to system \eqref{eq:lim-par}, with
initial datum $w_0$ and forcing term $f$.

Then there exist positive constants $C_1$, depending on the viscosity $\mu$, and $K_0$, depending on the $H^{4+s}(\R^2)$-norm of $w_0$ and on $K$ appearing in the estimate for $f$,
such that, for all times $t>0$, one has the inequality
$$
\left\|\nabla w(t)\right\|_{L^\infty}\,+\,\left\|\nabla\Delta w(t)\right\|_{L^\infty}\,\leq\,C_1\,K_0\,(1+t)^{-1/2}\,.
$$
\end{prop}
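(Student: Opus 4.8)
The plan is to exploit the linearity of equation \eqref{eq:lim-par} together with Duhamel's representation \eqref{eq:Fw}, which already splits the solution into a free part and a forced part. Concretely, I would write $w = w_L + g$, where $w_L$ is the solution of \eqref{eq:lim-par} with the same datum $w_0$ but with $f\equiv0$ (so that $\what{w}_L(t,\xi) = e^{-\mu\,h(\xi)\,t}\,\what{w}_0$), and $g$ is exactly the function defined in the statement of Lemma \ref{l:w-f_decay} through $\what{g}(t,\xi) = \int_0^t e^{-\mu\,h(\xi)\,(t-\tau)}\,\what{f}_0(\tau,\xi)\,d\tau$. By linearity of $\nabla$ and $\nabla\Delta$ it then suffices to estimate each of the two pieces separately and add them up.

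For the free part $w_L$, since $w_0 \in H^{4+s}(\R^2)$, I would invoke Lemma \ref{l:w_decay} to bound $\|\nabla w_L(t)\|_{L^\infty}$ by a constant times $(1+t)^{-1/2}$. For the term $\|\nabla\Delta w_L(t)\|_{L^\infty}$ I would rather use the variant recorded in Remark \ref{r:w_decay} than Lemma \ref{l:w_decay} itself: although it yields only the slower rate $(1+t)^{-1/2}$ (which is anyway all that the present statement requires), it has the crucial advantage of involving only $L^2$-based norms of $w_0$ --- namely $\|w_0\|_{L^2}$, $\|\Delta w_0\|_{L^2}$ and $\|\nabla\Delta w_0\|_{H^{1+s}}$ --- instead of the norm $\|\nabla w_0\|_{L^1}$ appearing in Lemma \ref{l:w_decay}. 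Using the embeddings $\|w_0\|_{L^2} + \|\Delta w_0\|_{L^2} + \|\nabla w_0\|_{H^{1+s}} + \|\nabla\Delta w_0\|_{H^{1+s}} \le C\,\|w_0\|_{H^{4+s}}$, together with $\min\{\|w_0\|_{L^1},\|w_0\|_{L^2}\} \le \|w_0\|_{L^2}$, all the constants arising from $w_L$ are controlled by $\|w_0\|_{H^{4+s}}$ and by $\mu$.

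For the forced part $g$, I would first check that the hypotheses of Lemma \ref{l:w-f_decay} are met. The pointwise bound $\|f(t)\|_{H^{s}} \le K\,(1+t)^{-1-\eta}$ with $\eta \in\,]0,1[$ gives $\|f(t)\|_{H^s}^2 \le K^2\,(1+t)^{-2-2\eta}$, which is integrable on $\R_+$ since $2+2\eta > 1$; hence $f \in L^2\bigl(\R_+;H^s\bigr)$ and Lemma \ref{l:w-f_decay} applies verbatim, yielding $\|\nabla g(t)\|_{L^\infty} + \|\nabla\Delta g(t)\|_{L^\infty} \le \wtilde C\,K\,(1+t)^{-1/2}$. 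Adding this to the estimate for $w_L$ and collecting the constants --- those from $w_L$ into a factor depending on $\|w_0\|_{H^{4+s}}$, those from $g$ into a factor depending on $K$ --- produces the announced inequality with $C_1 = C_1(\mu)$ and $K_0 = K_0\bigl(\|w_0\|_{H^{4+s}},K\bigr)$.

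The only genuinely delicate point is the bookkeeping on which norms of $w_0$ the constant is allowed to depend: a naive use of Lemma \ref{l:w_decay} for $\nabla\Delta w_L$ would introduce $\|\nabla w_0\|_{L^1}$, which is not controlled by the Sobolev norm $\|w_0\|_{H^{4+s}}$ on $\R^2$. Switching to the version of Remark \ref{r:w_decay} removes this difficulty at no cost, precisely because the target rate here is merely $(1+t)^{-1/2}$ and not the sharper $(1+t)^{-1}$. Everything else is a routine superposition of two decay estimates already established.
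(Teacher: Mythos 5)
Your proof is correct and follows essentially the same route as the paper, whose proof is precisely the one-line observation that Lemmas \ref{l:w_decay} and \ref{l:w-f_decay} combine through the Duhamel decomposition \eqref{eq:Fw}. Your extra care in invoking Remark \ref{r:w_decay} instead of Lemma \ref{l:w_decay} for $\left\|\nabla\Delta w_L(t)\right\|_{L^\infty}$ --- so that no $\|\nabla w_0\|_{L^1}$ norm appears and the constant depends only on $\|w_0\|_{H^{4+s}}$ and $K$, consistently with the stated rate $(1+t)^{-1/2}$ --- is exactly the reading the paper intends, so it is a welcome clarification rather than a deviation.
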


\subsection{Coming back to the non-linear problem} \label{ss:disp_non-lin}

The decay rate provided by Proposition \ref{p:w_decay} for the solution to the linear problem turns out to be too weak to prove the convergence of $r$ to $w$.
So, we need to establish dispersion properties also for the solution $r$ to the non-linear equation: this is the main goal of the present section.

First of all, we need some preliminary lemmas.
\begin{lemma} \label{l:r-prelim}
Let $r_0\,\in\,H^{3}(\R^2)$ and $f\,\in\,L^2\bigl(\R_+;H^{-2}(\R^2)\bigr)\cap L^1\bigl(\R_+;H^{-2}(\R^2)\bigr)$.
Denote by $r$ the unique solution to \eqref{eq:main} related to the initial datum $r_0$ and bulk force $f$, as given by Theorem \ref{t:weak}.

Then, for all $t\geq0$, $r$ satisfies the estimate
$$
\|r(t)\|^2_{H^3}\,+\,\mu\int^t_0\|\Delta r(\tau)\|^2_{H^2}\,d\tau\,\leq\,C\,.
$$
Moreover, if $r_0\,\in\,H^{4}(\R^2)$ and $f\,\in\,L^2\bigl(\R_+;H^{-1}(\R^2)\bigr)$, then one also gets
$$
\|\Delta r(t)\|^2_{L^2}\,+\,\|\nabla\Delta r(t)\|^2_{L^2}\,+\,\|\Delta^2r(t)\|^2_{L^2}\,+\,\mu\int^t_0\|\Delta^2r(\tau)\|^2_{H^1}\,d\tau\,\leq\,C\,.
$$
\end{lemma}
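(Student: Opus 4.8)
The plan is to re-run the two energy balances behind Propositions \ref{p:a-priori_I} and \ref{p:a-priori_II}, but to suppress the Gronwall step that produced the factor $e^{\mu t}$ there; the extra time-integrability of $f$ will be exploited exactly as in the linear computation sketched in Remark \ref{r:parab-unif}. Since $r$ is the solution furnished by Theorem \ref{t:weak} (resp. Theorem \ref{t:weak_2}), the formal manipulations below are justified by the usual approximation argument.

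For the first bound I would start from the first-type energy identity obtained by testing \eqref{eq:main} against $(\Id-\Delta)r$. Writing $E(t):=\mc E[r(t)]$, which is equivalent to $\|r(t)\|_{H^3}^2$, and recalling that the bilinear term drops out by the last identity in Lemma \ref{l:bilin}, one gets
\[
\frac{d}{dt}E\,+\,\mu\bigl(\|\Delta r\|_{L^2}^2+2\|\nabla\Delta r\|_{L^2}^2+\|\Delta^2r\|_{L^2}^2\bigr)\,=\,\langle f,(\Id-\Delta)r\rangle_{H^{-2}\times H^2}.
\]
The crucial point is the treatment of the forcing term. I would bound $\|(\Id-\Delta)r\|_{H^2}\leq C\sqrt{E}+C\|\Delta^2r\|_{L^2}$, so that $|\langle f,(\Id-\Delta)r\rangle|\leq C\|f\|_{H^{-2}}\sqrt{E}+C\|f\|_{H^{-2}}\|\Delta^2r\|_{L^2}$. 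The second summand is absorbed into the dissipation by Young's inequality, but the first one is deliberately \emph{kept} as $C\|f\|_{H^{-2}}\sqrt{E}$: applying Young here would regenerate low-order terms $\mu\|r\|_{L^2}^2$ which are not controlled by the parabolic smoothing, and this is precisely what forced Gronwall in Proposition \ref{p:a-priori_I}.

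This produces $\tfrac{d}{dt}E\leq C\|f\|_{H^{-2}}\sqrt{E}+\tfrac{C}{\mu}\|f\|_{H^{-2}}^2$. Integrating in time and setting $M(t):=\sup_{[0,t]}E$, I would obtain the closed inequality $M(t)\leq a+b\sqrt{M(t)}$, with $a:=E(0)+\tfrac{C}{\mu}\|f\|_{L^2(\R_+;H^{-2})}^2$ and $b:=C\|f\|_{L^1(\R_+;H^{-2})}$, both finite by the hypotheses on $f$. Solving this quadratic inequality in $\sqrt{M(t)}$ yields $M(t)\leq C$ uniformly in $t$, hence $\|r(t)\|_{H^3}^2\leq C$; feeding this uniform bound back into the time-integrated identity then controls $\mu\int_0^t\|\Delta r\|_{H^2}^2\,d\tau$, which is the first claim. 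For the second bound I would instead use the second-type identity \eqref{est:en-2} (legitimate since now $r_0\in H^4$): with $Y$ as in \eqref{def:Y},
\[
\frac{d}{dt}Y\,+\,2\mu\bigl(\|\Delta^2r\|_{L^2}^2+\|\nabla\Delta^2r\|_{L^2}^2\bigr)\,=\,2\langle f,\Delta^2r\rangle_{H^{-1}\times H^1}.
\]
Here both dissipative norms are present, so $|\langle f,\Delta^2r\rangle|\leq\|f\|_{H^{-1}}\|\Delta^2r\|_{H^1}$ can be \emph{entirely} absorbed by Young at the cost of $\tfrac{C}{\mu}\|f\|_{H^{-1}}^2$; no low-order remainder survives and no Gronwall is needed. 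A direct integration gives $Y(t)+\mu\int_0^t\|\Delta^2r\|_{H^1}^2\,d\tau\leq Y(0)+\tfrac{C}{\mu}\|f\|_{L^2(\R_+;H^{-1})}^2\leq C$, which is the second claim.

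The main obstacle is the bookkeeping in the first estimate: one must recognise that only the top-order piece $\|\Delta^2r\|_{L^2}$ of $\|(\Id-\Delta)r\|_{H^2}$ should be traded against the viscosity, while the remaining $\sqrt{E}$-part has to be retained and closed through the $L^1(\R_+;H^{-2})$-integrability of $f$ by the sup/bootstrap argument, rather than by the exponential Gronwall estimate used previously.
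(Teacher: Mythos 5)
Your proof is correct, and its skeleton is essentially the paper's own: both arguments rest on the same two energy identities (testing against $(\Id-\Delta)r$, resp. $\Delta^2 r$, with the bilinear term vanishing by Lemmas \ref{l:bilin} and \ref{l:bilin-high}), both absorb only the top-order part of the forcing term into the dissipation while keeping the low-order part linear in $\|f\|_{H^{-2}}$, and both then exploit the $L^1\bigl(\R_+;H^{-2}\bigr)$ integrability of $f$; your treatment of the second estimate (full Young absorption, no Gronwall, right-hand side of \eqref{est:en-3} uniformly bounded) is exactly what the paper does. The only divergence is the closing step of the first estimate: the paper linearizes via $\sqrt{X}\,\|f\|_{H^{-2}}\leq (X+1)\,\|f\|_{H^{-2}}$ and applies Gronwall's lemma with the integrable weight $\|f(t)\|_{H^{-2}}$, which produces a uniform constant of exponential type $C\exp\bigl(C\|f\|_{L^1_t(H^{-2})}\bigr)$, whereas you retain the $\sqrt{E}$ structure and close by solving the quadratic inequality $M(t)\leq a+b\sqrt{M(t)}$ for the running supremum $M(t)=\sup_{[0,t]}E$, which avoids Gronwall altogether and yields the polynomial bound $M(t)\leq b^2+2a$. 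Both closures are legitimate (your sup is finite a priori since $r\in\mc C(\R_+;H^3)$ by Theorem \ref{t:weak}, so the bootstrap is justified); yours is marginally more elementary and gives a slightly sharper dependence on the data, while the paper's is the more routine Gronwall-with-$L^1$-weight mechanism.
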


\begin{proof}
Arguing as in the proof of Proposition \ref{p:a-priori_I} and using the same notations adotped therein, it is easy to obtain the following modified version of estimate \eqref{est:X}:
\begin{align}
&X(t)\,+\,\mu\int^t_0\left(\|\Delta r(\tau)\|^2_{L^2}+\|\nabla\Delta r(\tau)\|^2_{L^2}+\|\Delta^2r(\tau)\|^2_{L^2}\right)d\tau\,\leq \label{est:decay-X} \\
&\qquad\qquad\qquad\leq\,X_0\,+\,\frac{C}{\mu}\int^t_0\|f(\tau)\|^2_{H^{-2}}\,d\tau\,+\,C\int^t_0\bigl(\|r(\tau)\|_{L^2}\,+\,\|\nabla r(\tau)\|_{L^2}\bigr)\;\|f(\tau)\|_{H^{-2}}\,d\tau\,. \nonumber
\end{align}
At this point, by Young inequality we can bound
$$
\int^t_0\bigl(\|r(\tau)\|_{L^2}\,+\,\|\nabla r(\tau)\|_{L^2}\bigr)\;\|f(\tau)\|_{H^{-2}}\,d\tau\,\leq\,2\int^t_0\bigl(X(\tau)+1\bigr)\,\|f(\tau)\|_{H^{-2}}\,d\tau\,;
$$
then, forgetting the viscosity term and applying Gronwall's inequality give us
$$
\|r(t)\|_{H^3}\,\leq\,C\left(\|r_0\|_{H^3}\,+\,\|f\|^2_{L^2_t(H^{-2})}\,+\,\|f\|_{L^1_t(H^{-2})}\right)\,\exp\left(C\int^t_0\|f(\tau)\|_{H^{-2}}\,d\tau\right)\,\leq\,C\,.
$$
Now, insterting this estimate into \eqref{est:decay-X} yields, for some positive constants $C$ depending also on $\mu$,
$$ 
X(t)+\mu\int^t_0\left(\|\Delta r(\tau)\|^2_{L^2}+\|\nabla\Delta r(\tau)\|^2_{L^2}+\|\Delta^2r(\tau)\|^2_{L^2}\right)d\tau\,\leq\,
C\left(X_0+\|f\|^2_{L^2_t(H^{-2})}+\|f\|_{L^1_t(H^{-2})}\right)\,.
$$ 
The former inequality is thus proved.

As for the latter one, it is just a matter of following the computations of Proposition \ref{p:a-priori_II}, to get estimate \eqref{est:en-3}.
Noticing that, under our hypotheses, the right-hand side is uniformly bounded in time completes the proof of the second inequality, and thus of the lemma.
\end{proof}

\begin{rem} \label{r:r-prelim}
Easy modifications in the control of the forcing term (keep in mind also Remark \ref{r:parab-unif} above) show that:
\begin{itemize}
 \item the former inequality still holds true even when $f$ is not globally in $L^2(\R_+;H^{-2})$, provided we assume that $f\,\in\,L^1\bigl(\R_+;H^{-1}(\R^2)\bigr)$;
\item the latter inequality still holds true even when $f$ is not globally in $L^2(\R_+;H^{-1})$, provided we assume that $f\,\in\,L^1\bigl(\R_+;L^2(\R^2)\bigr)$.
\end{itemize}
\end{rem}

The second preliminary result which is needed in our study is the following one.
\begin{lemma} \label{l:Fr_point}
Let $r_0\,\in\,H^{3}(\R^2)$ and $f\,\in\,L^2_{\rm loc}\bigl(\R_+;H^{-2}(\R^2)\bigr)\cap L^1\bigl(\R_+;H^{-1}(\R^2)\bigr)$.
Denote by $r$ the unique solution to \eqref{eq:main} related to the initial datum $r_0$ and bulk force $f$, as given by Theorem \ref{t:weak}.

Then $r$ verifies, for almost every $t>0$ and $\xi\in\R^2$, the pointwise estimate
$$
\left|\what r(t,\xi)\right|\,\leq\,\left|\what r_0(\xi)\right|\,+\,|\xi|\,a^{-1}(\xi)\,\sqrt{t}\,\|r_0\|^2_{H^3}\,+\,\int^t_0a^{-1}(\xi)\,\left|\what f(\tau,\xi)\right|\,d\tau\,.
$$
\end{lemma}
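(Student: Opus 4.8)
The plan is to pass to the Fourier side in the space variable, solve the resulting scalar ODE through Duhamel's formula, and then control the nonlinear contribution by exploiting its divergence structure together with the energy bounds already established in Lemma \ref{l:r-prelim}.

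First I would apply the Fourier transform $\Ff$ to equation \eqref{eq:main}. Recalling that the symbol of $\Id-\Delta+\Delta^2$ is $a(\xi)=1+|\xi|^2+|\xi|^4$ and that the symbol of $\Delta^2(\Id-\Delta)$ is $(1+|\xi|^2)|\xi|^4$, and setting $h(\xi):=(1+|\xi|^2)|\xi|^4/a(\xi)\geq0$, dividing by $a(\xi)$ produces, for each fixed $\xi$, the ODE
$$
\d_t\what r + \mu\,h(\xi)\,\what r = a^{-1}(\xi)\bigl(\what f - \widehat{\Lambda(r,r)}\bigr)\,.
$$
Solving it with the integrating factor $e^{\mu h(\xi)t}$ gives
$$
\what r(t,\xi)=e^{-\mu h(\xi)t}\what r_0(\xi)+\int_0^t e^{-\mu h(\xi)(t-\tau)}a^{-1}(\xi)\bigl(\what f(\tau,\xi)-\widehat{\Lambda(r,r)}(\tau,\xi)\bigr)\,d\tau\,.
$$
Since $h(\xi)\geq0$ and $\mu>0$, every exponential factor is bounded by $1$; dropping them and using the triangle inequality reduces the claim to estimating the single term $\int_0^t a^{-1}(\xi)\,|\widehat{\Lambda(r,r)}(\tau,\xi)|\,d\tau$, as the contributions of $\what r_0$ and of $\what f$ are already in the asserted form.

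For the nonlinear term I would use the divergence structure $\Lambda(r,r)=\div\bigl(\nabla^\perp(\Id-\Delta)r\;\Delta^2 r\bigr)$ recorded in Lemma \ref{l:bilin}. On the Fourier side the divergence produces a factor $i\xi$, so that, via the elementary bound $|\what g(\xi)|\leq\|g\|_{L^1}$,
$$
|\widehat{\Lambda(r,r)}(\tau,\xi)|\leq|\xi|\,\bigl\|\nabla^\perp(\Id-\Delta)r(\tau)\;\Delta^2 r(\tau)\bigr\|_{L^1}\leq C\,|\xi|\,\|r(\tau)\|_{H^3}\,\|\Delta^2 r(\tau)\|_{L^2}\,,
$$
where the last step is a Cauchy-Schwarz inequality in $x$ together with the fact that $\nabla^\perp(\Id-\Delta)r$ collects three derivatives of $r$.

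The main point, and the only delicate one, is that $\|\Delta^2 r\|_{L^2}$ involves four derivatives and is therefore \emph{not} available uniformly in time; only its $L^2_t$ norm is under control. Here I would invoke Lemma \ref{l:r-prelim} (together with Remark \ref{r:r-prelim}, which covers precisely the hypothesis $f\in L^1(\R_+;H^{-1})$), giving $\|r\|_{L^\infty_t(H^3)}\leq C$ and $\mu\int_0^t\|\Delta^2 r(\tau)\|_{L^2}^2\,d\tau\leq C$. Pulling the $\tau$-independent factor $|\xi|\,a^{-1}(\xi)$ out of the time integral and applying Cauchy-Schwarz in time then yields
$$
\int_0^t a^{-1}(\xi)\,|\widehat{\Lambda(r,r)}(\tau,\xi)|\,d\tau\leq C\,|\xi|\,a^{-1}(\xi)\,\|r\|_{L^\infty_t(H^3)}\,\sqrt{t}\,\Bigl(\int_0^t\|\Delta^2 r(\tau)\|_{L^2}^2\,d\tau\Bigr)^{1/2}\leq C\,|\xi|\,a^{-1}(\xi)\,\sqrt{t}\,\|r_0\|_{H^3}^2\,,
$$
which is exactly the middle term in the statement (up to the implicit constant, which may depend on $\mu$ and on the norms of $f$). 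Collecting the three contributions concludes the argument. As anticipated, the essential obstacle is the high-order factor $\Delta^2 r$: its lack of uniform-in-time control is compensated precisely by integrating in time and spending the $L^2_tL^2$ energy bound, and this is what generates the characteristic $\sqrt{t}$ growth.
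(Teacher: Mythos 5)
Your proof is correct and follows essentially the same route as the paper's: pass to the Fourier side, eliminate the nonnegative dissipation term (you via Duhamel's formula with the exponentials bounded by $1$, the paper by multiplying by $\what r$ and dropping the viscous term --- an immaterial difference), then exploit $\div u = 0$ to write the nonlinearity in divergence form, bound its Fourier transform by $|\xi|\,\|u\,\Delta^2 r\|_{L^1}$, and close with the energy bounds of Lemma \ref{l:r-prelim} plus Cauchy--Schwarz in time to produce the $\sqrt{t}$ factor. Your explicit appeal to Remark \ref{r:r-prelim} to cover the hypothesis $f\in L^1(\R_+;H^{-1})$ is in fact slightly more careful than the paper's bare citation of Lemma \ref{l:r-prelim}.
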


\begin{proof}
Passing in Fourier variables, we recast the equation for $r$ in the following form:
$$
\d_t\what r\,+\,\mu\,h(\xi)\,\what r\,=\,\what f_0\,-\,a^{-1}(\xi)\,\mc F\bigl(u\cdot\nabla\Delta^2r\bigr)\,,
$$
where we have set $u\,=\,\nabla^\perp(\Id-\Delta)r$ and $f_0\,=\,\mc A^{-1}(D)f$ as before. Multiplying by $\what r$ and forgetting about the viscosity term, we get, after integration in time,
$$
\left|\what r(t,\xi)\right|\,\leq\,\left|\what r_0(\xi)\right|\,+\,\left|\what f_0\right|\,+\,a^{-1}(\xi)\,\left|\mc F\bigl(u\cdot\nabla\Delta^2r\bigr)\right|\,.
$$

Now, since $\div u\,=\,0$, we deduce
$$
\left|\mc F\bigl(u\cdot\nabla\Delta^2r\bigr)\right|\,\leq\,|\xi|\,\left\|u\;\Delta^2r\right\|_{L^1}\,\leq\,|\xi|\,\left\|u\right\|_{L^2}\;\left\|\Delta^2r\right\|_{L^2}\,\leq\,
C\,|\xi|\,\|r\|_{H^3}\,\|\Delta r\|_{H^2}\,.
$$
Plugging this estimate into the previous inequality and using Lemma \ref{l:r-prelim} allow us to arrive at the desired control.
\end{proof}

We are now ready to state and prove dispersive estimates for the solution $r$ to the non-linear problem. We will argue in a similar way as in \cite{C-Wu}
(see also the original approach of \cite{Scho_1985}, \cite{Scho_1986} and \cite{A-B-S}), paying attention to remainders coming from the different operators involved in our computations.
Notice also that our result is consistent with Theorem 3.4 of \cite{C-Wu} (our $\eta$ here plays the same role of their $\alpha$, which has to be taken equal to $1$ to match
the order of the parabolic operator).
\begin{prop} \label{p:r-decay}
Let $r_0\,\in\,H^{3}(\R^2)$ and $f\,\in\,L^2_{\rm loc}\bigl(\R_+;H^{-2}(\R^2)\bigr)\cap L^1\bigl(\R_+;H^{-1}(\R^2)\bigr)$.
Denote by $r$ the unique solution to \eqref{eq:main} related to the initial datum $r_0$ and bulk force $f$, as given by Theorem \ref{t:weak}.
Suppose moreover that there exist positive constants $K_1$ and $K_2$ and exponents $\eta\in\,]0,1[\,$ and $\g\geq0$ such that, for almost every $t>0$ and $\xi\in\R^2$, one has
$$
\|f(t)\|_{H^{-1}}\,\leq\,K_1\,(1+t)^{-1-\eta}\qquad\qquad\mbox{ and }\qquad\qquad
\left|\what f(t,\xi)\right|\,\leq\,K_2\,\left(1\,+\,|\xi|^\g\right)\,.
$$

Then there exists a constant $C$ (just depending on $\mu$, $K_1$, $K_2$ and $\|r_0\|_{H^3}$) such that, 
for all $t>0$, one gets the decay property
$$
\left\|r(t)\right\|_{H^3}\,\leq\,\frac{C}{(1+t)^{\eta/2}}\,.
$$
\end{prop}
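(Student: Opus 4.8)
The plan is to adapt Schonbek's Fourier splitting method, in the spirit of \cite{Scho_1985} and \cite{C-Wu}, to the first-type energy balance. First I would start from the energy identity underlying Proposition \ref{p:a-priori_I} (equivalently, the differential form of the equality in Theorem \ref{t:weak}). Passing to the Fourier side and recalling that $a(\xi)=1+|\xi|^2+|\xi|^4$, the functional $\mc E[r]$ is comparable to $\int_{\R^2}(1+|\xi|^2)\,a(\xi)\,|\what r|^2\,d\xi$ (indeed $2\,\mc E[r]=\int_{\R^2}(1+|\xi|^2)a(\xi)|\what r|^2\,d\xi$), while the viscous dissipation equals $\mu\int_{\R^2}|\xi|^4(1+|\xi|^2)^2|\what r|^2\,d\xi$, so that the balance reads
\[
\frac{d}{dt}\mc E[r]\,+\,\mu\int_{\R^2}|\xi|^4(1+|\xi|^2)^2\,|\what r(t,\xi)|^2\,d\xi\,=\,\langle f,(\Id-\Delta)r\rangle_{H^{-2}\times H^2}\,.
\]

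The core of the argument is a spectral splitting at a time-dependent radius $\rho(t)$. On the high-frequency set $\{|\xi|>\rho(t)\}$ one checks the pointwise comparison $(1+|\xi|^2)a(\xi)\leq C\,\rho^{-4}\,|\xi|^4(1+|\xi|^2)^2$, the ratio of the two densities being $a(\xi)\,|\xi|^{-4}(1+|\xi|^2)^{-1}$, which is decreasing and of size $\rho^{-4}$ at $|\xi|\sim\rho$. Hence the dissipation bounds $\mc E[r]$ from below, up to the low-frequency remainder $\int_{|\xi|\leq\rho}(1+|\xi|^2)a(\xi)|\what r|^2$. Choosing $\rho(t)^4=\kappa/(1+t)$, with $\kappa>0$ tuned so that the resulting dissipation coefficient equals $N/(1+t)$ for a free parameter $N>0$, and using that $(1+|\xi|^2)a(\xi)$ is bounded on the ball, turns the balance into the differential inequality
\[
\frac{d}{dt}\mc E[r]\,+\,\frac{N}{1+t}\,\mc E[r]\,\leq\,\frac{C\,N}{1+t}\int_{|\xi|\leq\rho(t)}|\what r(t,\xi)|^2\,d\xi\,+\,\langle f,(\Id-\Delta)r\rangle\,.
\]
Multiplying by the integrating factor $(1+t)^N$ and integrating reduces the whole problem to estimating, against the weight $(1+s)^{N-1}$, the low-frequency mass $\int_{|\xi|\leq\rho(s)}|\what r(s,\xi)|^2\,d\xi$ together with the forcing term.

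For the two inputs I would proceed as follows. The forcing is handled directly: $|\langle f,(\Id-\Delta)r\rangle|\leq C\|f\|_{H^{-1}}\|r\|_{H^3}$, and since $\|r\|_{H^3}$ is uniformly bounded by Lemma \ref{l:r-prelim} while $\|f(t)\|_{H^{-1}}\leq K_1(1+t)^{-1-\eta}$, after integration this contributes a term of order $(1+t)^{-\eta}$ to $\mc E[r]$; the pointwise bound $|\what f|\leq K_2(1+|\xi|^\gamma)$ guarantees that the associated low-frequency integrals are finite. The low-frequency mass is then estimated through Lemma \ref{l:Fr_point}, which splits $\what r$ into the datum part $|\what r_0|$, the nonlinear part $|\xi|a^{-1}(\xi)\sqrt{t}\,\|r_0\|_{H^3}^2$, and a forcing part. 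The datum part integrates to $\int_{|\xi|\leq\rho(s)}|\what r_0|^2\,d\xi$, which vanishes as $\rho(s)\to0$ because $r_0\in L^2$ and is readily seen to be of lower order; the forcing part is controlled from the decay of $f$ and Lemma 2.4 of \cite{Elg}, exactly as in the linear analysis of Subsection \ref{ss:disp_lin}.

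The main obstacle is the nonlinear part. Plugging Lemma \ref{l:Fr_point} in verbatim is too lossy: the factor $\sqrt{t}$, once squared against the ball volume $\sim\rho^4\sim(1+t)^{-1}$, produces an $O(1)$ contribution that would altogether prevent decay. The remedy is a bootstrap. I would keep the nonlinear term in the sharper form $|\xi|a^{-1}(\xi)\int_0^s\|r\|_{H^3}\|\Delta r\|_{H^2}\,d\tau$ and postulate the target decay $\|r(\tau)\|_{H^3}\leq M(1+\tau)^{-\eta/2}$. Testing the energy balance against a weight $(1+\tau)^\theta$, with $\theta$ in a suitable window, and invoking $\int_0^\infty\|\Delta r\|_{H^2}^2\,d\tau<\infty$ from Lemma \ref{l:r-prelim}, yields a weighted dissipation bound; reinserting it through Cauchy--Schwarz shows that the nonlinear low-frequency contribution decays strictly faster than $(1+t)^{-\eta}$, hence is subleading and does not deteriorate the bootstrap constant. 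The hypothesis $\eta<1$ is exactly what keeps the admissible window for $\theta$ non-empty. Closing the bootstrap then gives $\mc E[r(t)]\leq C(1+t)^{-\eta}$, that is $\|r(t)\|_{H^3}\leq C(1+t)^{-\eta/2}$, with $C$ depending only on $\mu$, $K_1$, $K_2$ and $\|r_0\|_{H^3}$.
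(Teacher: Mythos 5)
Your overall strategy --- Schonbek's Fourier splitting run on the first-type energy balance, with Lemma \ref{l:r-prelim} providing uniform bounds and Lemma \ref{l:Fr_point} controlling low frequencies --- is the same as the paper's, but your implementation of the splitting has a gap that the argument as described cannot close. Because you compare the \emph{full} energy density $(1+|\xi|^2)a(\xi)$ with the dissipation density (which forces the large radius $\rho^4(t)\sim(1+t)^{-1}$), your low-frequency remainder is the \emph{unweighted} mass $\int_{|\xi|\leq\rho(t)}|\what r|^2\,d\xi$, and in particular it contains the datum contribution $\int_{|\xi|\leq\rho(t)}|\what r_0|^2\,d\xi$. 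For a general $r_0\in H^3(\R^2)$ this quantity tends to $0$ with \emph{no rate at all} (take $\what r_0$ concentrated near $\xi=0$), so your claim that it is ``readily seen to be of lower order'' is unjustified and, under the stated hypotheses, false: inserted against the weight $(1+s)^{N-1}$, it yields only $\mc E[r(t)]\to0$ without any algebraic rate. Even adding the assumption $r_0\in L^1$ (which the proposition does not make) would give $\int_{|\xi|\leq\rho}|\what r_0|^2\,d\xi\lesssim\rho^2(t)\sim(1+t)^{-1/2}$, hence at best $\|r(t)\|_{H^3}\lesssim(1+t)^{-1/4}$, short of the claimed $(1+t)^{-\eta/2}$ as soon as $\eta>1/2$. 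Your bootstrap cannot repair this, because the offending term involves only $r_0$ and not the solution, so no postulated decay of $\|r(\tau)\|_{H^3}$ feeds back into it.

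The paper's proof differs exactly at this point, and in a way that matters. It sets $\what R(t,\xi)=|\xi|\,(2+2|\xi|^2+|\xi|^4)^{1/2}\,\what r(t,\xi)$ and applies the splitting only to the dissipation for $\what R$, namely $\int|\xi|^2|\what R|^2\,d\xi\geq\nu^2(t)\int|\what R|^2\,d\xi-\nu^2(t)\int_{B_t(0)}|\what R|^2\,d\xi$, with the smaller radius $\nu^2(t)\sim(1+t)^{-1}$; the unweighted piece $\int|\what r|^2\,d\xi$ is never asked to decay through the splitting, but is absorbed by the crude uniform bound $\mu\,\nu^2(t)\int|\what r|^2\,d\xi\leq\mu\,\nu^2(t)\,\|r_0\|^2_{H^3}$ from Lemma \ref{l:r-prelim} (this is in fact the most delicate point of the paper's own computation). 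The payoff of the weighted splitting is that every term of $\int_{B_t(0)}|\what R|^2\,d\xi$ carries at least a factor $|\xi|^2\leq\nu^2(t)$: the datum part is $\lesssim\nu^2(t)\,\|r_0\|^2_{L^2}$, and the nonlinear part of Lemma \ref{l:Fr_point}, used \emph{verbatim}, contributes $\int_{B_t(0)}|\xi|^4\,(2+2|\xi|^2+|\xi|^4)\,a^{-2}(\xi)\,t\,d\xi\lesssim\nu^6(t)\,t$, so the $\sqrt t$ you worry about is beaten by the extra powers of $|\xi|$ and no bootstrap is needed. Thus your diagnosis that Lemma \ref{l:Fr_point} is ``too lossy'' is an artifact of your unweighted splitting; and the hypothesis $\eta<1$ enters the paper's proof not to open a bootstrap window, but to guarantee that all these low-frequency contributions, which decay at least like $(1+t)^{-1}$, are dominated by the forcing contribution of size $(1+t)^{-\eta}$.
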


\begin{proof}
Multiplying equation \eqref{eq:main} in $L^2$ as to get first-type energy estimates (see Proposition \ref{p:a-priori_I}), by Fourier-Plancherel theorem we get
\begin{align*}
& \frac{1}{2}\,\frac{d}{dt}\int_{\R^2}|\what r(t,\xi)|^2\,\left(1\,+\,2|\xi|^2\,+\,2|\xi|^4\,+\,|\xi|^6\right)\,d\xi\,+ \\
&\qquad\qquad+\,\mu\int_{\R^2}|\xi|^4\,|\what r(t,\xi)|^2\,\left(1\,+\,2\,|\xi|^2\,+\,|\xi|^4\right)\,d\xi\,=\,
\int_{\R^2}\what f(t,\xi)\;(1+|\xi|^2)\what r(t,\xi)\,d\xi\,.
\end{align*}
For convenience of notation, we set $\what R(t,\xi)\,:=\,|\xi|\left(2\,+\,2\,|\xi|^2\,+\,|\xi|^4\right)^{1/2}\,\what r(t,\xi)$. Then, from the previous relation
we easily deduce
\begin{align}
& \hspace{-1cm}
\frac{1}{2}\,\frac{d}{dt}\int|\what r(t,\xi)|^2\,d\xi\,+\,\frac{1}{2}\,\frac{d}{dt}\int|\what R(t,\xi)|^2\,d\xi\,+\,\frac{\mu}{2}\int|\xi|^2\,\left|\what R(t,\xi)\right|^2\,d\xi\,\leq 
\label{est:disp-r_1} \\
&\qquad\qquad\qquad\qquad\qquad
\leq\,C\,\left\|f(t)\right\|_{H^{-1}}\,\left\|r\right\|_{H^3}\,\leq\,C\,\left\|f(t)\right\|_{H^{-1}}\,\left\|r_0\right\|_{H^3}\,. \nonumber
\end{align}

Next, for $t\geq0$, we define the ball $B_t(0)\subset\R^2$ as the ball centered in $0$ and of radius $\nu(t)$, for some function $\nu(t)$ to be determined later. Then we can write
\begin{align*}
\int|\xi|^2\,\left|\what R(t,\xi)\right|^2\,d\xi\,&\geq\,\int_{\R^2\setminus B_t(0)}|\xi|^2\,\left|\what R(t,\xi)\right|^2\,d\xi \\
&\geq\,\nu^2(t)\int_{\R^2}\left|\what R(t,\xi)\right|^2\,d\xi\,-\,\nu^2(t)\int_{B_t(0)}\left|\what R(t,\xi)\right|^2\,d\xi\,;
\end{align*}
plugging this inequality into \eqref{est:disp-r_1} and multiplying everything by $2$ yield
\begin{align}
&\hspace{-1cm} \frac{d}{dt}\int|\what r(t,\xi)|^2\,d\xi\,+\,\frac{d}{dt}\int|\what R(t,\xi)|^2\,d\xi\,+\,\mu\,\nu^2(t)\int_{\R^2}\left|\what R(t,\xi)\right|^2\,d\xi\,\leq 
\label{est:disp-r_2} \\
&\qquad\qquad\qquad\qquad \leq\,C\,\left\|f(t)\right\|_{H^{-1}}\,\left\|r_0\right\|_{H^3}\,+\,\mu\,\nu^2(t)\int_{B_t(0)}\left|\what R(t,\xi)\right|^2\,d\xi\,. \nonumber
\end{align}
At this point, we notice that, after setting $\mc V(t)\,:=\,\mu\int^t_0\nu^2(\tau)\,d\tau$, we can write
$$
\frac{d}{dt}\int|\what R(t,\xi)|^2\,d\xi\,+\,\mu\,\nu^2(t)\int\left|\what R(t,\xi)\right|^2\,d\xi\,=\,e^{-\mc V(t)}\,\frac{d}{dt}\left(e^{\mc V(t)}\,\int\left|\what R(t,\xi)\right|^2\,d\xi\right)\,.
$$
Moreover we have
\begin{align*}
\frac{d}{dt}\int|\what r(t,\xi)|^2\,d\xi\,&=\,e^{-\mc V(t)}\frac{d}{dt}\left(e^{\mc V(t)}\int|\what r(t,\xi)|^2\,d\xi\right)\,-\,\mu\,\nu^2(t)\int|\what r(t,\xi)|^2\,d\xi \\
&\geq\,e^{-\mc V(t)}\frac{d}{dt}\left(e^{\mc V(t)}\int|\what r(t,\xi)|^2\,d\xi\right)\,-\,\mu\,\nu^2(t)\,\|r_0\|^2_{H^3}\,,
\end{align*}
where we have made use of Lemma \ref{l:r-prelim} in passing to the last line.
Putting all these relations into \eqref{est:disp-r_2} we gather
\begin{align}
&e^{-\mc V(t)}\frac{d}{dt}\left(e^{\mc V(t)}\,\left(\left\|r(t)\right\|^2_{L^2}\,+\,\left\|R(t)\right\|^2_{L^2}\right)\right)\,\leq\
\label{est:disp-r_3}  \\
&\qquad\qquad\qquad\qquad \leq\,C\,\left\|f(t)\right\|_{H^{-1}}\,\|r_0\|_{H^3}\,+\,\mu\,\nu^2(t)\,\|r_0\|^2_{H^3}\,+\,\mu\,\nu^2(t)\int_{B_t(0)}\left|\what R(t,\xi)\right|^2\,d\xi\,. \nonumber
\end{align}
for a new constant $C>0$ also depending on $\mu$.

Now, we pass to estimate the last term in the previous inequality: Lemma \ref{l:Fr_point} implies
\begin{align*}
\int_{B_t(0)}\left|\what R(t,\xi)\right|^2\,&=\,\int_{B_t(0)}|\xi|^2\left(2\,+\,2\,|\xi|^2\,+\,|\xi|^4\right)\,\left|\what r(t,\xi)\right|^2\,d\xi \\
&\leq\,2\,\|r_0\|^2_{H^3}\,+\,\int_{B_t(0)}|\xi|^4\,\left(2+2|\xi|^2+|\xi|^4\right)\,a^{-2}(\xi)\,t\,\|r_0\|^4_{H^3}\,d\xi\,+ \\
&\qquad\qquad\qquad\qquad+\int_{B_t(0)}|\xi|^2\,\left(2+2|\xi|^2+|\xi|^4\right)\,a^{-2}(\xi)\left(\int^t_0\left|\what f(\tau,\xi)\right|\,d\tau\right)^2d\xi \\
&\leq\,C\,\left(\|r_0\|^2_{H^3}\,+\,\nu^2(t)\,t\,\|r_0\|^4_{H^3}\,+\,\nu^{4}(t)\,\left(1+\nu^{\g}\right)^2\,t^2\right)\,,
\end{align*}
where we have used also the pointwise hypothesis on $\left|\what f(\tau,\xi)\right|$.

Hence, we can insert this bound into \eqref{est:disp-r_3} and integrate the resulting expression in time: after noticing that $\|R(t)\|^2_{L^2}\,\sim\,\left\|\nabla r(t)\right\|^2_{H^2}$,
we find, for a new constant $C$, depending also on $\mu$ and on $\|r_0\|_{H^3}$,
\begin{align}
e^{\mc V(t)}\,\left\|r(t)\right\|^2_{H^3}\,&\leq\,\left\|r_0\right\|^2_{H^3}\,+\,C\,\int^t_0e^{\mc V(\tau)}\,\left(\nu^2(\tau)+\nu^4(\tau)\,\tau\right)d\tau\,+
\label{est:disp-r_4} \\
&\qquad\qquad
+\,C\int^t_0e^{\mc V(\tau)}\bigl(\nu(\tau)\bigr)^{6}\,\left(1+\nu^{\g}\right)^2\,\tau^2\,d\tau\,+\,C\,\int^t_0e^{\mc V(\tau)}\,\|f(\tau)\|_{H^{-1}}\,d\tau\,. \nonumber
\end{align}
for a new constant $C>0$ also depending on $\mu$.

To conclude the proof, we choose the function $\nu^2(t)\,:=\,\alpha/(1+t)$, for some constant $\alpha>0$: then we get $\mc V(t)\,=\,(1+t)^\alpha$. After remarking that
the worst terms are
$$
\int^t_0e^{\mc V(\tau)}\,\nu^2(\tau)\,d\tau\,\leq\,C\,(1+t)^{\alpha-1} 
\qquad\quad\mbox{ and }\qquad\quad \int^t_0e^{\mc V(\tau)}\,\|f(\tau)\|_{H^{-1}}\,d\tau\,\leq\,C\,(1+t)^{\alpha-\eta}\,,
$$
we finally discover that $\left\|r(t)\right\|^2_{H^3}\,\leq\,C\,(1+t)^{-\eta}$, for a suitable positive constant $C$.

The proof of the proposition is then accomplished.
\end{proof}

We have to remark that, in light of Proposition \ref{p:w_decay} (see also Subsection \ref{ss:disp_end} below) the previous decay is not enough for our scopes.
So, we need also the following statement.
\begin{prop} \label{p:r-decay_high}
Under the hypotheses of Proposition \ref{p:r-decay}, suppose moreover that $r_0\,\in\,H^4(\R^2)$ and that $f\,\in\,L^1\bigl(\R_+;L^2(\R^2)\bigr)$ is such that,
for some $\eta_1\in\,]0,1[\,$, 
$$
\|f(t)\|_{L^2}\,\leq\,K\,(1+t)^{-1-\eta_1}\,.
$$

Then there exists a constant $C$ (just depending on $\mu$, $K_1$, $K_2$ of Proposition \ref{p:r-decay} and on $K$ and $\|r_0\|_{H^4}$) such that, for all $t>0$, one has
$$
\left\|r(t)\right\|_{H^4}\,\leq\,\frac{C}{(1+t)^{\eta_1/2}}\,.
$$
\end{prop}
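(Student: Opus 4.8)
The plan is to run the same Fourier-splitting (Schonbek) argument as in the proof of Proposition \ref{p:r-decay}, but at the level of the \emph{second}-type energy rather than the first one. First I would reduce the statement to a decay estimate for the quantity $Y(t)=\|\Delta r(t)\|^2_{L^2}+\|\nabla\Delta r(t)\|^2_{L^2}+\|\Delta^2r(t)\|^2_{L^2}$ introduced in \eqref{def:Y}. Indeed, one has $\|r\|^2_{H^4}\leq C\bigl(\|r\|^2_{H^3}+Y\bigr)$; moreover, since $\|f(t)\|_{H^{-1}}\leq\|f(t)\|_{L^2}\leq K(1+t)^{-1-\eta_1}$, Proposition \ref{p:r-decay} applies with decay exponent $\eta_1$ and gives $\|r(t)\|_{H^3}\leq C(1+t)^{-\eta_1/2}$. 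Hence it suffices to prove $Y(t)\leq C(1+t)^{-\eta_1}$.

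The key structural point is that, when \eqref{eq:main} is tested against $\Delta^2r$, the convective term vanishes identically by Lemma \ref{l:bilin-high}. Therefore, passing to Fourier variables exactly as in \eqref{est:en-2}, the evolution of $Y$ obeys a purely linear identity,
\[
\frac12\frac{d}{dt}Y(t)+\mu\int_{\R^2}\bigl(|\xi|^8+|\xi|^{10}\bigr)\,|\what r(t,\xi)|^2\,d\xi=\int_{\R^2}|\xi|^4\,\what f(t,\xi)\,\overline{\what r(t,\xi)}\,d\xi .
\]
This is cleaner than the $H^3$ case, where the nonlinear term had to be carried along. The forcing term is controlled by $\|f(t)\|_{L^2}\,\|\Delta^2r(t)\|_{L^2}\leq C\,\|f(t)\|_{L^2}\leq CK(1+t)^{-1-\eta_1}$, where I would use the uniform bound $\|\Delta^2r(t)\|_{L^2}\leq C$ furnished by the second inequality of Lemma \ref{l:r-prelim} (available here because $r_0\in H^4$ and $f\in L^1(\R_+;L^2)$, see Remark \ref{r:r-prelim}).

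Next I would set $\what S(t,\xi):=|\xi|^3(1+|\xi|^2)^{1/2}\what r(t,\xi)$, so that $\|S\|^2_{L^2}=\|\nabla\Delta r\|^2_{L^2}+\|\Delta^2r\|^2_{L^2}$ and the dissipation above is \emph{exactly} $\mu\int|\xi|^2|\what S|^2\,d\xi$; in this way the block $\|\Delta r\|^2_{L^2}$ plays the inert role that $\|r\|^2_{L^2}$ played in Proposition \ref{p:r-decay}. On the complement of the shrinking ball $B_t=B(0,\nu(t))$ one has $\int|\xi|^2|\what S|^2\geq\nu^2\bigl(\|S\|^2_{L^2}-\int_{B_t}|\what S|^2\bigr)$; introducing the integrating factor $e^{\mc V(t)}$ with $\mc V(t)=\mu\int_0^t\nu^2$, and moving the missing low-order dissipation to the right-hand side through $\|\Delta r(t)\|_{L^2}\leq C$, the whole balance takes the same form as \eqref{est:disp-r_3}. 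The low-frequency remainder $\int_{B_t}|\what S|^2$ is then estimated via the pointwise bound of Lemma \ref{l:Fr_point}; here the weight satisfies $|\what S|^2\sim|\xi|^6|\what r|^2$ near the origin, so the ball contribution carries six powers of $|\xi|$ and is even smaller than in the $H^3$ case. Finally, the choice $\nu^2(t)=\alpha/(1+t)$ (so that $e^{\mc V(t)}$ is a fixed power of $1+t$) together with the bookkeeping at the end of the proof of Proposition \ref{p:r-decay}, where the dominant contribution is now the forcing integral $\int_0^t e^{\mc V}\|f\|_{L^2}\,d\tau$, yields $Y(t)\leq C(1+t)^{-\eta_1}$ and hence the claim.

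The main obstacle is exactly the one already met in Proposition \ref{p:r-decay}: the second-type energy does not dissipate its lowest block $\|\Delta r\|^2_{L^2}$, so that this frequency content must be reabsorbed through the pointwise Fourier representation of Lemma \ref{l:Fr_point}, with careful tracking of the powers of $\nu(t)$ and $t$ in the ball term. The extra work relative to the $H^3$ case is purely of bookkeeping nature --- matching the final rate to the stronger forcing decay $\eta_1$ and verifying that the uniform-in-time estimates of Lemma \ref{l:r-prelim} are available at this regularity --- whereas the genuinely nonlinear difficulty is in fact \emph{absent}, thanks to the cancellation of Lemma \ref{l:bilin-high}.
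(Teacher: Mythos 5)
Your proposal follows the same architecture as the paper's proof: second-type energy obtained by testing on $\Delta^2 r$ (with the cancellation of Lemma \ref{l:bilin-high}), Schonbek-type Fourier splitting over a shrinking ball of radius $\nu(t)$ with the integrating factor $e^{\mc V(t)}$, the low-frequency ball term controlled through the pointwise bound of Lemma \ref{l:Fr_point}, and the forcing term through the uniform bound of Lemma \ref{l:r-prelim} and Remark \ref{r:r-prelim}. Your quantity $\what S$ is exactly the paper's $\what R_1=\bigl(|\xi|^6+|\xi|^8\bigr)^{1/2}\what r$, and your preliminary reduction (re-applying Proposition \ref{p:r-decay} with exponent $\eta_1$, which is legitimate since $\|f\|_{H^{-1}}\leq\|f\|_{L^2}$) is consistent with, and indeed needed by, the paper's bookkeeping.

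There is, however, one genuine gap, precisely at the point where you deviate from the paper: the treatment of the undissipated block $\|\Delta r\|^2_{L^2}$ of $Y$. You propose to move it to the right-hand side ``through $\|\Delta r(t)\|_{L^2}\leq C$'', i.e.\ to accept a source term of size $C\,\mu\,\nu^2(t)$. This cannot work, for a structural reason: since $\mc V(t)=\mu\int_0^t\nu^2(\tau)\,d\tau$, one has $\mu\,\nu^2(\tau)\,e^{\mc V(\tau)}=\frac{d}{d\tau}e^{\mc V(\tau)}$ \emph{exactly}, so that
\[
e^{-\mc V(t)}\int_0^t C\,\mu\,\nu^2(\tau)\,e^{\mc V(\tau)}\,d\tau\,=\,C\,\bigl(1-e^{-\mc V(t)}\bigr)\,\xrightarrow[t\to+\infty]{}\,C\,,
\]
whatever the choice of $\nu$. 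Hence this term contributes a constant, not a decaying quantity, and your differential inequality only yields $Y(t)\leq C$ --- boundedness, with no rate. This is exactly why the paper, at the corresponding point (estimate \eqref{est:disp-r_5}), does \emph{not} use the uniform bound but instead invokes Proposition \ref{p:r-decay} to write $\|\Delta r(\tau)\|^2_{L^2}\lesssim(1+\tau)^{-\eta}$: the source then becomes $\mu\,\nu^2(\tau)\,O\bigl((1+\tau)^{-\eta}\bigr)$, whose weighted time integral is $O\bigl(e^{\mc V(t)}(1+t)^{-\eta}\bigr)$ and does leave decay after multiplication by $e^{-\mc V(t)}$. The repair is immediate and already in your hands: your own first step gives $\|\Delta r(t)\|^2_{L^2}\leq\|r(t)\|^2_{H^3}\leq C(1+t)^{-\eta_1}$, and inserting this decay (rather than the uniform bound) into the inert-block term makes its contribution $O\bigl((1+t)^{-\eta_1}\bigr)$, after which the rest of your bookkeeping closes as in the paper.
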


\begin{proof}
The proof of the previous statement follows the main lines of the proof to Proposition \ref{p:r-decay}. So, let us just sketch it.

We start by multiplying equation \eqref{eq:main} by $\Delta^2r$, as in the proof of Proposition \ref{p:a-priori_II}; after passing in Fourier variables, 
thanks to the latter inequality in Lemma \ref{l:r-prelim} we find
\begin{align*}
& \hspace{-1cm}
\frac{1}{2}\,\frac{d}{dt}\int|\xi|^4\,|\what r(t,\xi)|^2\,d\xi\,+\,\frac{1}{2}\,\frac{d}{dt}\int|\what R_1(t,\xi)|^2\,d\xi\,+\,\mu\int|\xi|^2\,\left|\what R_1(t,\xi)\right|^2\,d\xi\,\leq \\
&\qquad\qquad\qquad\qquad\qquad
\leq\,C\,\left\|f(t)\right\|_{L^2}\,\left\|\Delta^2r\right\|_{L^2}\,\leq\,C\,\left\|f(t)\right\|_{L^{2}}\,\left\|r_0\right\|_{H^4}\,,
\end{align*}
where we have defined $\what R_1(t,\xi)\,:=\,\bigl(|\xi|^6+|\xi|^8\bigr)^{1/2}\,\what r(t,\xi)$.

As above, we define $B_t(0)$ to be the ball of center $0$ and radius $\nu(t)$, for some positive function $\nu$ to be defined later on. Therefore we can estimate
\begin{align*}
\int|\xi|^2\,\left|\what R_1(t,\xi)\right|^2\,d\xi\,&\geq\,\nu^2(t)\int_{\R^2}\left|\what R_1(t,\xi)\right|^2\,d\xi\,-\,\nu^2(t)\int_{B_t(0)}\left|\what R_1(t,\xi)\right|^2\,d\xi\,.
\end{align*}
Moreover, if we set $\mc V(t)\,:=\,\mu\int^t_0\nu^2(\tau)\,d\tau$, then we have
$$
\frac{d}{dt}\int|\what R_1(t,\xi)|^2\,d\xi\,+\,\mu\,\nu^2(t)\int\left|\what R_1(t,\xi)\right|^2\,d\xi\,=\,
e^{-\mc V(t)}\,\frac{d}{dt}\left(e^{\mc V(t)}\,\int\left|\what R_1(t,\xi)\right|^2\,d\xi\right)\,,
$$
while Lemma \ref{l:r-prelim} implies that
\begin{align*}
\frac{d}{dt}\int|\xi|^4\,|\what r(t,\xi)|^2\,d\xi\,&\geq\,e^{-\mc V(t)}\frac{d}{dt}\left(e^{\mc V(t)}\int|\xi|^4\,|\what r(t,\xi)|^2\,d\xi\right)\,-\,\mu\,\nu^2(t)\,\|\Delta r(t)\|^2_{L^2}\,.
\end{align*}
Putting these relations together, we deduce the estimate
\begin{align}
& \hspace{-0.5cm}
e^{-\mc V(t)}\,\frac{d}{dt}\left(e^{\mc V(t)}\left(\|\Delta r(t)\|^2_{L^2}\,+\,\left\|R_1(t)\right\|^2_{L^2}\right)\right)\,\leq \label{est:disp-r_5} \\
&\qquad\qquad
\leq\,C\,\left\|f(t)\right\|_{L^{2}}\,\left\|r_0\right\|_{H^4}\,+\,\mu\,\nu^2(t)\,\|\Delta r(t)\|^2_{L^2}\,+\,\mu\,\nu^2(t)\int_{B_t(0)}\left|\what R_1(t,\xi)\right|^2\,d\xi\,. \nonumber
\end{align}

Once again, we resort to Lemma \ref{l:Fr_point} to control the last term in the right-hand side of the previous inequality. After some easy computations, we gather
\begin{align*}
\int_{B_t(0)}\left|\what R_1(t,\xi)\right|^2\,d\xi\,&\leq\,\nu^2(t)\,\|r_0\|^2_{H^3}\,+\,t\,\nu^4(t)\,\|r_0\|_{H^3}^4\,+\,\nu^4(t)\,\bigl(1+\nu^{\g}(t)\bigr)^2\,t^2\,.
\end{align*}
Using this estimate in \eqref{est:disp-r_5}, together with the hypothesis over $f$ and Proposition \ref{p:r-decay} for the term $\|\Delta r(t)\|^2_{L^2}$,
we finally find, after integration in time, for some constant $C$ also depending on the norm of the initial datum,
\begin{align*}
\hspace{-0.5cm}
e^{\mc V(t)}\,\left\|\Delta r(t)\right\|^2_{H^2}\,&\leq\,\|r_0\|_{H^4}^2\,+\,\int^t_0e^{\mc V(\tau)}\,(1+\tau)^{-1-\eta_1}\,d\tau\,+\,\int^t_0e^{\mc V(\tau)}\,\nu^2(\tau)\,(1+\tau)^{-\eta}\,+ \\
&\qquad +\,\int^t_0e^{\mc V(\tau)}\,\nu^4(\tau)\,\left(1\,+\,\tau\,\nu^2(\tau)\right)\,d\tau\,+\,\int^t_0e^{\mc V(\tau)}\,\nu^6(\tau)\,\tau^2\,\bigl(1+\nu^{\g}(\tau)\bigr)^2\,d\tau\,.
\end{align*}
As before, we define $\nu^2(t)\,:=\,\alpha_1/(1+t)$, with $\alpha_1>0$ to be fixed. Then, the worst term is the first integral in the right-hand side:
$$
\int^t_0e^{\mc V(\tau)}\,(1+\tau)^{-1-\eta_1}\,d\tau\,\leq\,C\,(1+t)^{\alpha_1-\eta_1}\,,
$$
while the other terms all present a better bound (namely, a bound in terms of a lower power of $1+t$). The desired inequality easily follows.
\end{proof}

\subsection{Completing the proof} \label{ss:disp_end}

We are now ready to complete the proof of Theorem \ref{th:long-time}. To this end, let us introduce the quantities
$$
z(t,x)\,:=\,r(t,x)\,-\,w(t,x)\qquad\qquad\mbox{ and }\qquad\qquad z_0\,:=\,r_0\,-\,w_0\,\equiv\,0\,.
$$
Then, $z$ satisfies the equation
\begin{equation} \label{eq:delta}
\d_t\bigl(\Id-\Delta+\Delta^2\bigr)z\,+\,\mu\,\Delta^2\bigl(\Id-\Delta\bigr)z\,=\,-\,u\cdot\nabla\Delta^2r\,,
\end{equation}
where $u\,=\,\nabla^\perp(\Id-\Delta)r$, related to the initial datum $z_0\,=\,0$.

First of all, we need the equivalent version of Lemma \ref{l:Fr_point}.
\begin{lemma} \label{l:Fz_point}
Under the hypotheses of Theorem \ref{th:long-time}, $z$ fulfills the pointwise estimate
$$
\left|\what z(t,\xi)\right|\,\leq\,|\xi|\,a^{-1}(\xi)\,\sqrt{t}\,\|r_0\|^2_{H^3}
$$
\end{lemma}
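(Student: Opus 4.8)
The plan is to reproduce, almost verbatim, the argument used for Lemma~\ref{l:Fr_point}, taking advantage of two simplifications proper to the difference $z = r - w$. First I would note that, since $r$ and $w$ carry the same external force $f$ and the same initial datum $r_0 = w_0$, subtracting equation \eqref{eq:lim-par} from \eqref{eq:main} makes $f$ disappear and produces exactly equation \eqref{eq:delta}, with vanishing initial datum $z_0 = r_0 - w_0 = 0$. Passing to Fourier variables and dividing by the symbol $a(\xi)$ of $\mc A(D) = \Id - \Delta + \Delta^2$, this becomes
$$
\d_t\what z\,+\,\mu\,h(\xi)\,\what z\,=\,-\,a^{-1}(\xi)\,\mc F\bigl(u\cdot\nabla\Delta^2 r\bigr)\,,
$$
with $u = \nabla^\perp(\Id-\Delta)r$ and $h$ as in \eqref{def:Lambda-a}.

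Since $z_0 = 0$ and no linear forcing survives, Duhamel's formula (exactly as in \eqref{eq:Fw}) gives the clean representation
$$
\what z(t,\xi)\,=\,-\,\int_0^t e^{-\mu\,h(\xi)\,(t-\tau)}\,a^{-1}(\xi)\,\mc F\bigl(u\cdot\nabla\Delta^2 r\bigr)(\tau,\xi)\,d\tau\,,
$$
whence, bounding the exponential factor by $1$ (equivalently, multiplying the equation by $\overline{\what z}$ and discarding the nonnegative viscous term $\mu\,h(\xi)\,|\what z|^2$, as done for Lemma~\ref{l:Fr_point}),
$$
\left|\what z(t,\xi)\right|\,\leq\,a^{-1}(\xi)\int_0^t\left|\mc F\bigl(u\cdot\nabla\Delta^2 r\bigr)(\tau,\xi)\right|\,d\tau\,.
$$

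To estimate the integrand I would reuse the bound already proved for Lemma~\ref{l:Fr_point}: as $\div u = 0$, one has $u\cdot\nabla\Delta^2 r = \div\bigl(u\,\Delta^2 r\bigr)$, so that by H\"older's inequality
$$
\left|\mc F\bigl(u\cdot\nabla\Delta^2 r\bigr)\right|\,\leq\,|\xi|\,\left\|u\,\Delta^2 r\right\|_{L^1}\,\leq\,|\xi|\,\|u\|_{L^2}\,\|\Delta^2 r\|_{L^2}\,\leq\,C\,|\xi|\,\|r\|_{H^3}\,\|\Delta r\|_{H^2}\,.
$$
Finally, I would invoke Lemma~\ref{l:r-prelim}, whose hypotheses hold here since $r_0 \in H^{4+s}\hookrightarrow H^3$ and the decay of $f$ assumed in Theorem~\ref{th:long-time} guarantees $f \in L^2\bigl(\R_+;H^{-2}\bigr)\cap L^1\bigl(\R_+;H^{-2}\bigr)$: this gives the uniform control $\|r(\tau)\|_{H^3}\,\leq\,C\,\|r_0\|_{H^3}$ together with $\int_0^t\|\Delta r(\tau)\|^2_{H^2}\,d\tau\,\leq\,C\,\|r_0\|^2_{H^3}$. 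By Cauchy--Schwarz the latter yields $\int_0^t\|\Delta r(\tau)\|_{H^2}\,d\tau\,\leq\,\sqrt{t}\,\bigl(\int_0^t\|\Delta r(\tau)\|^2_{H^2}\,d\tau\bigr)^{1/2}\,\leq\,C\,\sqrt{t}\,\|r_0\|_{H^3}$, and combining the two bounds produces precisely
$$
\left|\what z(t,\xi)\right|\,\leq\,|\xi|\,a^{-1}(\xi)\,\sqrt{t}\,\|r_0\|^2_{H^3}\,.
$$
Since every ingredient is already at hand, there is no real obstacle; the only mild subtlety — the passage to the differential inequality for $|\what z|$ — is entirely bypassed by the Duhamel representation above, which also makes transparent why the bound is cleaner than the one in Lemma~\ref{l:Fr_point} (no $|\what r_0(\xi)|$ term and no force term, owing to $z_0 = 0$ and the cancellation of $f$).
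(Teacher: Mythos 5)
Your proposal is correct and follows essentially the same route as the paper: Fourier-side reformulation of \eqref{eq:delta}, the divergence-free bound $\left|\mc F\bigl(u\cdot\nabla\Delta^2 r\bigr)\right|\leq C\,|\xi|\,\|r\|_{H^3}\,\|\Delta r\|_{H^2}$, and Lemma \ref{l:r-prelim} plus Cauchy--Schwarz in time to produce the $\sqrt{t}$ factor. Your use of the Duhamel representation with the exponential bounded by $1$ is only a cosmetic variant of the paper's step of multiplying by $\what z$ and discarding the viscous term, so the two arguments coincide in substance.
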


\begin{proof}
As done above, we recast equation \eqref{eq:delta} on the Fourier side: we have
$$
\d_t\what z\,+\,\mu\,h(\xi)\,\what z\,=\,-\,a^{-1}(\xi)\,\mc F\bigl(u\cdot\nabla\Delta^2r\bigr)\,.
$$
Since $\div u\,=\,0$, we deduce
$$
\left|\mc F\bigl(u\cdot\nabla\Delta^2r\bigr)\right|\,\leq\,|\xi|\,\left\|u\;\Delta^2r\right\|_{L^1}\,\leq\,|\xi|\,\left\|u\right\|_{L^2}\;\left\|\Delta^2r\right\|_{L^2}\,\leq\,
C\,\|r\|_{H^3}\,\|\Delta r\|_{H^2}\,.
$$
The desired estimate is thus a striaghtforward consequence of Lemma \ref{l:r-prelim}.
\end{proof}

\begin{rem} \label{r:disp-data}
We exlicitly point out here that, under our hypotheses, we can assume that there exists a constant $C>0$ large enough, so that
$$
\|w(t)\|^2_{H^3}\,+\,\mu\int^t_0\|\Delta w(\tau)\|_{H^2}^2\,d\tau\,+\,\|r(t)\|^2_{H^3}\,+\,\mu\int^t_0\|\Delta r(\tau)\|_{H^2}^2\,d\tau\,\leq\,C
$$
for all times $t\geq0$. This is a consequence of Lemma \ref{l:r-prelim} and Remarks \ref{r:parab-unif} and \ref{r:r-prelim}.
\end{rem}

We are now in the position of proving Theorem \ref{th:long-time}. We argue in a similar way as in Proposition \ref{p:r-decay}: so, let us perform energy estimates of first type
for equation \eqref{eq:delta}. We observe that
$$
\int_{\R^2}\nabla^\perp(\Id-\Delta)r\cdot\nabla(\Id-\Delta)z\,\Delta^2r\,dx\,=\,\int_{\R^2}\nabla^\perp(\Id-\Delta)r\cdot\nabla(\Id-\Delta)w\,\Delta^2r\,dx\,;
$$
hence, after passing in Fourier variables, the same computations as before yield
\begin{align}
& \hspace{-0.5cm}
\frac{d}{dt}\int|\what z(t,\xi)|^2\,d\xi\,+\,\frac{d}{dt}\int|\what Z(t,\xi)|^2\,d\xi\,+\,\mu\int|\xi|^2\,\left|\what Z(t,\xi)\right|^2\,d\xi\,\leq 
\label{est:disp-z_1} \\
&\qquad\qquad\qquad\qquad\qquad
\leq\,C\,\left\|\nabla(\Id-\Delta)r(t)\right\|_{L^2}\,\left\|\Delta^2r(t)\right\|_{L^2}\,\left\|\nabla(\Id-\Delta)w(t)\right\|_{L^2}\,, \nonumber
\end{align}
where we have defined $\what Z(t,\xi)\,:=\,|\xi|\left(2\,+\,2\,|\xi|^2\,+\,|\xi|^4\right)^{1/2}\,\what z(t,\xi)$ as above.

Next, for $t\geq0$, we introduce the ball $B_t(0)\subset\R^2$ as the ball centered in $0$ and of radius $\nu(t)$, for some function $\nu(t)$ to be determined later. Then we can write
\begin{align*}
\int|\xi|^2\,\left|\what Z(t,\xi)\right|^2\,d\xi\,&\geq\,\nu^2(t)\int_{\R^2}\left|\what Z(t,\xi)\right|^2\,d\xi\,-\,\nu^2(t)\int_{B_t(0)}\left|\what Z(t,\xi)\right|^2\,d\xi\,,
\end{align*}
and plugging this inequality into \eqref{est:disp-z_1} we find, after setting $\mc V(t)\,:=\,\mu\int^t_0\nu^2(\tau)\,d\tau$,
\begin{align}
&\frac{d}{dt}\int|\what z(t,\xi)|^2\,d\xi\,+\,
e^{-\mc V(t)}\,\frac{d}{dt}\left(e^{\mc V(t)}\,\int\left|\what Z(t,\xi)\right|^2\,d\xi\right)
\label{est:disp-z_2} \\
&\qquad \leq\,C\,\left\|\nabla(\Id-\Delta)r(t)\right\|_{L^2}\,\left\|\Delta^2r(t)\right\|_{L^2}\,\left\|\nabla(\Id-\Delta)w(t)\right\|_{L^2}\,+\,
\mu\,\nu^2(t)\int_{B_t(0)}\left|\what Z(t,\xi)\right|^2\,d\xi\,. \nonumber
\end{align}

Repeating the computations performed for $\what r$ in the proof of Proposition \ref{p:r-decay}, and using Remark \ref{r:disp-data} above, we can bound
\begin{align*}
\frac{d}{dt}\int|\what z(t,\xi)|^2\,d\xi\,&=\,e^{-\mc V(t)}\frac{d}{dt}\left(e^{\mc V(t)}\int|\what z(t,\xi)|^2\,d\xi\right)\,-\,\mu\,\nu^2(t)\int|\what z(t,\xi)|^2\,d\xi \\
&\geq\,e^{-\mc V(t)}\frac{d}{dt}\left(e^{\mc V(t)}\int|\what z(t,\xi)|^2\,d\xi\right)\,-\,C\,\mu\,\nu^2(t)\,.
\end{align*}
On the other hand, the last term in the right-hand side of \eqref{est:disp-z_2} can be controlled in view of Lemma \ref{l:Fz_point}: denoting by $C$ a positive constant
possibly depending also on the norm of the initial datum $r_0$, we gather
\begin{align*}
\int_{B_t(0)}\left|\what Z(t,\xi)\right|^2\,&=\,\int_{B_t(0)}|\xi|^2\left(2\,+\,2\,|\xi|^2\,+\,|\xi|^4\right)\,\left|\what r(t,\xi)\right|^2\,d\xi \\
&\leq\,C\int_{B_t(0)}|\xi|^4\,\left(2+2|\xi|^2+|\xi|^4\right)\,a^{-2}(\xi)\,t\,\|r_0\|^4_{H^3}\,d\xi\;\leq\;\,C\,\nu^2(t)\,t\,.
\end{align*}
Therefore, plugging these inequalities into \eqref{est:disp-z_2} leads us to
\begin{align}
&e^{-\mc V(t)}\frac{d}{dt}\left(e^{\mc V(t)}\,\left(\left\|z(t)\right\|^2_{L^2}\,+\,\left\|Z(t)\right\|^2_{L^2}\right)\right)\,\leq\
\label{est:disp-z_3}  \\
&\qquad\qquad
\leq\,C\,\Bigl(\left\|\nabla(\Id-\Delta)r(t)\right\|_{L^2}\,\left\|\Delta^2r(t)\right\|_{L^2}\,\left\|\nabla(\Id-\Delta)w(t)\right\|_{L^2}\,+\,\nu^4(t)\,t\,+\,\nu^2(t)\Bigr)\,. \nonumber
\end{align}
for a new constant $C>0$ also depending on $\mu$.

At this point, we use estimates provided by Propositions \ref{p:w_decay}, \ref{p:r-decay} and \ref{p:r-decay_high} in order to bound the first term in the right-hand side
of the previous inequality. Since $\|Z(t)\|^2_{L^2}\,\sim\,\left\|\nabla z(t)\right\|^2_{H^2}$, an integration of \eqref{est:disp-z_3} in time yields
\begin{align}
e^{\mc V(t)}\,\left\|z(t)\right\|^2_{H^3}\,&\leq\,C\,\int^t_0e^{\mc V(\tau)}\,(1+\tau)^{-(\eta+1/2)}\,d\tau\,+\,
C\int^t_0e^{\mc V(\tau)}\,\bigl(\nu^4(\tau)\,\tau\,+\,\nu^2(\tau)\bigr)\,d\tau\,.
\label{est:disp-z_4}
\end{align}

To conclude the proof, we choose as before the function $\nu^2(t)\,:=\,\beta/(1+t)$, for some constant $\beta>0$, so that $\mc V(t)\,=\,(1+t)^\beta$. First of all, we notice
that
$$
\int^t_0e^{\mc V(\tau)}\,\bigl(\nu^4(\tau)\,\tau\,+\,\nu^2(\tau)\bigr)\,d\tau\,\leq\,C\,\left((1+t)^{\beta-2}\,+\,(1+t)^{\beta-1}\right)\,\leq\,C\,(1+t)^{\beta-1}\,.
$$
Then, we treat the other integral: we have
$$
\int^t_0e^{\mc V(\tau)}\,(1+\tau)^{-(\eta+1/2)}\,d\tau\,\leq\,C\,(1+t)^{\beta+1/2-\eta}\,,
$$
and this inequality completes the proof of Theorem \ref{th:long-time}.

\appendix

\section{Fourier analysis toolbox} \label{app:LP}

We recall here the main ideas of Littlewood-Paley theory, which we exploited in the previous analysis.
We refer e.g. to Chapter 2 of \cite{B-C-D} for details.

For simplicity of exposition, let us deal with the $\R^d$ case; however, the whole construction can be adapted also to the $d$-dimensional torus $\T^d$.

\medbreak
First of all, let us introduce the so called ``Littlewood-Paley decomposition'', based on a non-homogeneous dyadic partition of unity with
respect to the Fourier variable. 

We, fix a smooth radial function $\chi$ supported in the ball $B(0,2)$, equal to $1$ in a neighborhood of $B(0,1)$
and such that $r\mapsto\chi(r\,e)$ is nonincreasing over $\R_+$ for all unitary vectors $e\in\R^d$. Set
$\varphi\left(\xi\right)=\chi\left(\xi\right)-\chi\left(2\xi\right)$ and
$\vphi_j(\xi):=\vphi(2^{-j}\xi)$ for all $j\geq0$.

The dyadic blocks $(\Delta_j)_{j\in\Z}$ are defined by\footnote{Throughout we agree  that  $f(D)$ stands for 
the pseudo-differential operator $u\mapsto\mc{F}^{-1}(f\,\mc{F}u)$.} 
$$
\Delta_j\,:=\,0\quad\mbox{ if }\; j\leq-2,\qquad\Delta_{-1}\,:=\,\chi(D)\qquad\mbox{ and }\qquad
\Delta_j\,:=\,\varphi(2^{-j}D)\quad \mbox{ if }\;  j\geq0\,.
$$
We  also introduce the following low frequency cut-off operator:
\begin{equation} \label{eq:S_j}
S_ju\,:=\,\chi(2^{-j}D)\,=\,\sum_{k\leq j-1}\Delta_{k}\qquad\mbox{ for }\qquad j\geq0\,.
\end{equation}
By Remark 2.11 of \cite{B-C-D}, the operators $S_j$ and $\Delta_j$ map $L^p$ into itself, for all $j\geq-1$ and all $p\in[1,+\infty]$, with norms independent of $j$ and $p$.

The following classical property holds true: for any $u\in\mc{S}'$, then one has the equality~$u=\sum_{j}\Delta_ju$ in the sense of $\mc{S}'$.
Let us also mention the so-called \emph{Bernstein's inequalities}, which explain the way derivatives act on spectrally localized functions.
  \begin{lemma} \label{l:bern}
Let  $0<r<R$.   A constant $C$ exists so that, for any nonnegative integer $k$, any couple $(p,q)$ 
in $[1,+\infty]^2$, with  $p\leq q$,  and any function $u\in L^p$,  we  have, for all $\lambda>0$,
$$
\displaylines{
{\rm supp}\, \widehat u \subset   B(0,\lambda R)\quad
\Longrightarrow\quad
\|\nabla^k u\|_{L^q}\, \leq\,
 C^{k+1}\,\lambda^{k+d\left(\frac{1}{p}-\frac{1}{q}\right)}\,\|u\|_{L^p}\;;\cr
{\rm supp}\, \widehat u \subset \{\xi\in\R^d\,|\, r\lambda\leq|\xi|\leq R\lambda\}
\quad\Longrightarrow\quad C^{-k-1}\,\lambda^k\|u\|_{L^p}\,
\leq\,
\|\nabla^k u\|_{L^p}\,
\leq\,
C^{k+1} \, \lambda^k\|u\|_{L^p}\,.
}$$
\end{lemma}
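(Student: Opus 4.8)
The plan is to derive both inequalities from Young's convolution inequality, after representing the spectrally localized function (or its derivatives) as a convolution against a fixed, rescaled Schwartz kernel. First I would fix a smooth function $\phi$, supported in the ball $B(0,2R)$ and identically equal to $1$ on $B(0,R)$. Since $\supp\what u\subset B(0,\lambda R)$, we have $\what u(\xi)=\phi(\xi/\lambda)\,\what u(\xi)$, and hence, for every multi-index $\alpha$ with $|\alpha|=k$,
\begin{equation*}
\what{\partial^\alpha u}(\xi)\,=\,(i\xi)^\alpha\,\phi(\xi/\lambda)\,\what u(\xi)\,=\,\lambda^{k}\,\psi_\alpha(\xi/\lambda)\,\what u(\xi)\,,\qquad\mbox{ with }\quad\psi_\alpha(\zeta):=(i\zeta)^\alpha\,\phi(\zeta)\,.
\end{equation*}
Setting $g_\alpha:=\mc F^{-1}\psi_\alpha$ (a fixed Schwartz function) and undoing the rescaling, this reads $\partial^\alpha u=\lambda^{k}\,\lambda^{d}\,g_\alpha(\lambda\,\cdot)*u$.

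Then I would apply Young's inequality $\|h*u\|_{L^q}\leq\|h\|_{L^m}\|u\|_{L^p}$ with $1+1/q=1/m+1/p$, together with the scaling identity $\|\lambda^{d}g_\alpha(\lambda\,\cdot)\|_{L^m}=\lambda^{d(1-1/m)}\|g_\alpha\|_{L^m}=\lambda^{d(1/p-1/q)}\|g_\alpha\|_{L^m}$, obtained by a change of variables. Summing over $|\alpha|=k$ yields the first inequality, with the exponent $k+d(1/p-1/q)$ on $\lambda$. The second inequality splits into two bounds: the upper one is exactly the first inequality specialized to $p=q$ (the annulus being contained in $B(0,R\lambda)$). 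For the lower bound I would exploit that, on the annulus $\{r\lambda\leq|\xi|\leq R\lambda\}$, the symbol $\sigma(\xi):=\sum_{|\alpha|=k}|\xi^\alpha|^2$ is bounded from below by a positive multiple of $\lambda^{2k}$. Fixing a smooth cut-off $\widetilde\phi$ equal to $1$ on that annulus and supported in $\{r\lambda/2\leq|\xi|\leq 2R\lambda\}$, the identity
\begin{equation*}
\what u(\xi)\,=\,\sum_{|\beta|=k}\frac{\widetilde\phi(\xi/\lambda)\,\overline{(i\xi)^\beta}}{\sigma(\xi)}\,\what{\partial^\beta u}(\xi)
\end{equation*}
(which follows since $\sum_{|\beta|=k}\overline{(i\xi)^\beta}(i\xi)^\beta=\sigma(\xi)$ and $\widetilde\phi(\xi/\lambda)=1$ on $\supp\what u$) expresses $u$ as a finite sum of convolutions of the $\partial^\beta u$ against rescaled fixed kernels, whose symbols scale like $\lambda^{-k}$. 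An application of Young's inequality with $m=1$ then produces the factor $\lambda^{-k}$ and hence the lower bound $\lambda^{k}\|u\|_{L^p}\leq C^{k+1}\|\nabla^k u\|_{L^p}$.

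The routine part is the scaling bookkeeping; the point requiring genuine care is the tracking of the constant in the form $C^{k+1}$, uniformly in $p$, $q$ and $\lambda$. This amounts to showing that the $L^m$ norms of the fixed kernels $g_\alpha=\mc F^{-1}[(i\,\cdot)^\alpha\phi]$ (and of their analogues $\mc F^{-1}[\widetilde\phi\,\overline{(i\,\cdot)^\beta}/\sigma]$ in the annulus case) grow at most geometrically in $k=|\alpha|$ and can be bounded independently of the Lebesgue exponents. This follows from estimating finitely many derivatives of $\phi$ and $\widetilde\phi$, together with the fact that, on the support of $\widetilde\phi$, the denominator $\sigma$ stays bounded away from zero at scale $\lambda^{2k}$; the polynomial-in-$k$ number of multi-indices of order $k$ is likewise absorbed into $C^{k+1}$. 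The argument being entirely classical, for complete details I would refer to \cite{B-C-D}.
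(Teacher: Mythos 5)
Your proof is correct and follows essentially the canonical approach: the paper itself does not prove this lemma but recalls it from Chapter 2 of \cite{B-C-D}, whose argument is exactly your scheme --- spectral cut-off, convolution against rescaled Schwartz kernels, Young's inequality for the first estimate and the $p=q$ upper bound, and inversion of $\nabla^k$ on the annulus through a compactly supported multiplier divided by a $k$-homogeneous symbol bounded below there, with the geometric-in-$k$ constant tracked through finitely many derivatives of the fixed symbols. The only slip is one of wording: $\widetilde\phi$ should be a fixed cut-off in the rescaled variable $\zeta=\xi/\lambda$ (equal to $1$ for $r\leq|\zeta|\leq R$, supported in $r/2\leq|\zeta|\leq 2R$) rather than a $\lambda$-dependent function, which is what your displayed identity and the homogeneity of $\sigma$ implicitly assume.
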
   

By use of Littlewood-Paley decomposition, we can define the class of Besov spaces.
\begin{defin} \label{d:B}
  Let $s\in\R$ and $1\leq p,r\leq+\infty$. The \emph{non-homogeneous Besov space}
$B^{s}_{p,r}$ is defined as the subset of tempered distributions $u$ for which
$$
\|u\|_{B^{s}_{p,r}}\,:=\,
\left\|\left(2^{js}\,\|\Delta_ju\|_{L^p}\right)_{j\in\N}\right\|_{\ell^r}\,<\,+\infty\,.
$$
\end{defin}

Besov spaces are interpolation spaces between the Sobolev ones. In fact, for any $k\in\N$ and $p\in[1,+\infty]$
we have the following chain of continuous embeddings:
$$
 B^k_{p,1}\hookrightarrow W^{k,p}\hookrightarrow B^k_{p,\infty}\,,
$$
where  $W^{k,p}$ denotes the classical Sobolev space of $L^p$ functions with all the derivatives up to the order $k$ in $L^p$.
Moreover, for all $s\in\R$ we have the isomorphism of Banach spaces $B^s_{2,2}\cong H^s$, with
\begin{equation} \label{eq:LP-Sob}
\|f\|_{H^s}\,\sim\,\left(\sum_{j\geq-1}2^{2 j s}\,\|\Delta_jf\|^2_{L^2}\right)^{1/2}\,.
\end{equation}
More in general, the previous isomorphism is an isomorphism of Hilbert spaces. As a matter of fact, if we define the $B^s_{2,2}$ scalar product by the formula,
\begin{equation*}
\lan\langle f,\, g\ran\rangle_{B_{2,2}^{\s}} := \sum_{q\geq -1} 2^{2\,q\,\s}\langle \Delta_q f\,,\, \Delta_q g\rangle_{L^2}\,,
\end{equation*}
by Proposition 2.10 of \cite{B-C-D} we get a scalar product which is equivalent to the classical one over $H^s$.

As an immediate consequence of the first Bernstein's inequality, one gets the following embedding result.
\begin{prop}\label{p:embed}
The space $B^{s_1}_{p_1,r_1}$ is continuously embedded in the space $B^{s_2}_{p_2,r_2}$ for all indices satisfying $p_1\,\leq\,p_2$ and
$$
s_2\,<\,s_1-d\left(\frac{1}{p_1}-\frac{1}{p_2}\right)\qquad\mbox{ or }\qquad
s_2\,=\,s_1-d\left(\frac{1}{p_1}-\frac{1}{p_2}\right)\;\;\mbox{ and }\;\;r_1\,\leq\,r_2\,. 
$$
\end{prop}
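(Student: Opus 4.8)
The plan is to prove the embedding block by block, reducing everything to a single use of Bernstein's inequality (Lemma \ref{l:bern}) followed by an elementary estimate on weighted $\ell^r$ sequences. Concretely, I would fix $u\in B^{s_1}_{p_1,r_1}$ and, for each $j\geq-1$, observe that $\Delta_j u$ has Fourier support in a ball of radius proportional to $2^j$ (a fixed ball $B(0,2)$ when $j=-1$). Since $p_1\leq p_2$, the first Bernstein inequality applied with $k=0$, $\lambda\sim 2^j$, $p=p_1$ and $q=p_2$ gives a constant $C$ independent of $j$ with
$$
\|\Delta_j u\|_{L^{p_2}}\,\leq\,C\,2^{\,j\,d\left(\frac{1}{p_1}-\frac{1}{p_2}\right)}\,\|\Delta_j u\|_{L^{p_1}}\,.
$$
This is the only analytic input; all the rest is bookkeeping on the dyadic scales.

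Next I would introduce the sequence $c_j:=2^{js_1}\|\Delta_j u\|_{L^{p_1}}$, whose $\ell^{r_1}$ norm is exactly $\|u\|_{B^{s_1}_{p_1,r_1}}$, and the parameter $\sigma:=s_1-s_2-d(1/p_1-1/p_2)\geq0$ (nonnegative precisely by the hypotheses of the statement). Multiplying the Bernstein estimate by $2^{js_2}$ then recasts it as $2^{js_2}\|\Delta_j u\|_{L^{p_2}}\leq C\,2^{-j\sigma}\,c_j$, so the goal becomes to control the $\ell^{r_2}$ norm of the left-hand side by $\|(c_j)\|_{\ell^{r_1}}$. I would split into the two cases of the statement. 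If $\sigma>0$, I use that $c_j\leq\|(c_j)\|_{\ell^{r_1}}$ for every $j$, reducing the sequence to $C\,2^{-j\sigma}\,\|u\|_{B^{s_1}_{p_1,r_1}}$; since $(2^{-j\sigma})_{j\geq-1}$ is a geometrically decaying sequence, it lies in $\ell^{r_2}$ for any $r_2\in[1,+\infty]$, which yields the embedding with no constraint relating $r_1$ and $r_2$. If instead $\sigma=0$, the factor disappears and the estimate reads $2^{js_2}\|\Delta_j u\|_{L^{p_2}}\leq C\,c_j$; here I would invoke the continuous inclusion $\ell^{r_1}\hookrightarrow\ell^{r_2}$ valid for $r_1\leq r_2$, which closes this borderline case.

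This argument is essentially routine, so there is no serious obstacle; the only two points requiring mild care are structural rather than hard. First, the low-frequency block $j=-1$ must be treated with the ball version of Bernstein (its spectrum being a ball, not an annulus), but this merely turns the exponential factor into a harmless constant and never affects summability, which is governed only by the high-frequency tail $j\to+\infty$. Second, one must keep track of which summability index is being controlled: in the strict case the decay $2^{-j\sigma}$ does all the work and $r_2$ is free, whereas in the endpoint case $\sigma=0$ the relation $r_1\leq r_2$ is exactly what is needed for $\ell^{r_1}\hookrightarrow\ell^{r_2}$. I would conclude by noting that both cases deliver $\|u\|_{B^{s_2}_{p_2,r_2}}\leq C\,\|u\|_{B^{s_1}_{p_1,r_1}}$, which is the asserted embedding.
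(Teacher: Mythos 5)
Your proof is correct and follows precisely the route the paper indicates: the result is stated there as ``an immediate consequence of the first Bernstein inequality,'' which is exactly your single analytic input (applied blockwise with $k=0$, $\lambda\sim 2^j$), the rest being the elementary $\ell^{r}$ bookkeeping you carry out, with the strict case handled by geometric decay and the endpoint case by $\ell^{r_1}\hookrightarrow\ell^{r_2}$. Nothing further is needed.
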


We recall also Lemma 2.73 of \cite{B-C-D}.
\begin{lemma} \label{l:Id-S}
If $1\leq r<+\infty$, for any $f\in B^s_{p,r}$ one has
$$
\lim_{j\ra+\infty}\left\|f\,-\,S_jf\right\|_{B^s_{p,r}}\,=\,0\,.
$$
\end{lemma}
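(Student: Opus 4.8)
The plan is to reduce the whole statement to the vanishing of the tail of a convergent series of non-negative reals, the convergence coming precisely from the hypothesis $r<+\infty$. First I would record that, since $\mathrm{Id}=\sum_{k}\Delta_k$ in $\mc S'$ and $S_j=\sum_{k\leq j-1}\Delta_k$ for $j\geq0$, one has $\mathrm{Id}-S_j=\sum_{k\geq j}\Delta_k$. Hence, by the almost-orthogonality property $\Delta_k\Delta_l\equiv0$ whenever $|k-l|\geq2$ (Proposition 2.10 of \cite{B-C-D}), I would compute $\Delta_k(\mathrm{Id}-S_j)f=\sum_{l\geq j,\,|l-k|\leq1}\Delta_k\Delta_lf$, which vanishes identically for $k\leq j-2$ and otherwise involves only the three neighbouring blocks $\Delta_{k-1}f$, $\Delta_kf$, $\Delta_{k+1}f$.

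The second step is the quantitative estimate. Writing $c_l:=2^{ls}\|\Delta_lf\|_{L^p}$, so that $(c_l)_l\in\ell^r$ by assumption, I would use the uniform boundedness of $\Delta_k$ on $L^p$ (Remark 2.11 of \cite{B-C-D}) together with the comparability $2^{ks}\leq2^{|s|}2^{ls}$ valid for $|k-l|\leq1$ to obtain, for every $k\geq j-1$,
$$
2^{ks}\bigl\|\Delta_k(\mathrm{Id}-S_j)f\bigr\|_{L^p}\,\leq\,C\sum_{\substack{l\geq j\\|l-k|\leq1}}c_l\,,
$$
the left-hand side being $0$ for $k\leq j-2$. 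Raising to the power $r$, using $(a+b+c)^r\leq3^{r-1}(a^r+b^r+c^r)$, and summing over $k$ (each $c_l^r$ being counted at most three times) would then yield
$$
\bigl\|f-S_jf\bigr\|_{B^s_{p,r}}^r\,=\,\sum_{k\geq j-1}\Bigl(2^{ks}\|\Delta_k(\mathrm{Id}-S_j)f\|_{L^p}\Bigr)^r\,\leq\,C'\sum_{l\geq j}c_l^r\,,
$$
for a constant $C'$ depending only on $s$ and $r$.

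The final step is immediate: since $r<+\infty$ and $(c_l)_l\in\ell^r$, the tail $\sum_{l\geq j}c_l^r$ tends to $0$ as $j\to+\infty$, whence the claim. I do not expect a genuine obstacle here, as the argument is essentially bookkeeping within the Littlewood--Paley calculus. The only points requiring care are the harmless weight shift between $2^{ks}$ and $2^{ls}$ on neighbouring blocks, and—conceptually more important—the fact that the finiteness of $r$ is used in an essential way: it is exactly this dominated-tail argument that fails for $r=+\infty$ (the statement being false in $B^s_{p,\infty}$), so the hypothesis $1\leq r<+\infty$ cannot be dropped.
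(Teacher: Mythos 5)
Your proof is correct. The paper itself gives no argument for this lemma (it is quoted as Lemma 2.73 of \cite{B-C-D}), and your reasoning --- almost-orthogonality of the dyadic blocks to show $\Delta_k(f-S_jf)=0$ for $k\leq j-2$, uniform $L^p$-boundedness of $\Delta_k$ with the harmless weight shift on neighbouring blocks, and then the vanishing tail of the convergent $\ell^r$ series (which is exactly where $r<+\infty$ enters) --- is precisely the standard proof given in that reference.
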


Let us now introduce the paraproduct operator (after J.-M. Bony, see \cite{Bony}). Constructing the paraproduct operator relies on the observation that, 
formally, any product  of two tempered distributions $u$ and $v,$ may be decomposed into 
\begin{equation}\label{eq:bony}
u\,v\;=\;T_uv\,+\,T_vu\,+\,R(u,v)\,,
\end{equation}
where we have defined
\begin{equation} \label{eq:T-R}
T_uv\,:=\,\sum_jS_{j-1}u\Delta_j v,\qquad\qquad\mbox{ and }\qquad\qquad
R(u,v)\,:=\,\sum_j\sum_{|j'-j|\leq1}\Delta_j u\,\Delta_{j'}v\,.
\end{equation}
The above operator $T$ is called ``paraproduct'' whereas $R$ is called ``remainder''.
We recall that, for all $u$ and $v$ in $\mc S'$, the sequence $\bigl(S_{j-1}u\,\Delta_jv\bigr)_{j\in\N}$ is spectrally supported in dyadic annuli, whose radius is proportional to
$2^j$.

The paraproduct and remainder operators have many nice continuity properties. 
The following ones have been of constant use in this paper (see the proof in e.g. Chapter 2 of \cite{B-C-D}).
\begin{prop}\label{p:op}
For any $(s,p,r)\in\R\times[1,\infty]^2$ and $t>0$, the paraproduct operator 
$T$ maps continuously $L^\infty\times B^s_{p,r}$ in $B^s_{p,r}$ and  $B^{-t}_{\infty,\infty}\times B^s_{p,r}$ in $B^{s-t}_{p,r}$.
Moreover, the following estimates hold:
$$
\|T_uv\|_{B^s_{p,r}}\,\leq\, C\,\|u\|_{L^\infty}\,\|\nabla v\|_{B^{s-1}_{p,r}}\qquad\mbox{ and }\qquad
\|T_uv\|_{B^{s-t}_{p,r}}\,\leq\, C\|u\|_{B^{-t}_{\infty,\infty}}\,\|\nabla v\|_{B^{s-1}_{p,r}}\,.
$$

For any $(s_1,p_1,r_1)$ and $(s_2,p_2,r_2)$ in $\R\times[1,\infty]^2$ such that 
$s_1+s_2>0$, $1/p:=1/p_1+1/p_2\leq1$ and $1/r:=1/r_1+1/r_2\leq1$,
the remainder operator $R$ maps continuously $B^{s_1}_{p_1,r_1}\times B^{s_2}_{p_2,r_2}$ into $B^{s_1+s_2}_{p,r}$.
In the case $s_1+s_2=0$, provided $r=1$, operator $R$ is continuous from $B^{s_1}_{p_1,r_1}\times B^{s_2}_{p_2,r_2}$ with values
in the space $B^{0}_{p,\infty}$.
\end{prop}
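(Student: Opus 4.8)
The plan is to reduce everything to the two elementary facts underlying Bony's calculus: that both $S_{j-1}u\,\Delta_j v$ and $\Delta_j u\,\Delta_{j'}v$ with $|j-j'|\le1$ are spectrally localised in a dyadic \emph{annulus} of size $2^j$ in the paraproduct, whereas the remainder blocks live in \emph{balls}. First I would treat the paraproduct. Writing $T_uv=\sum_j S_{j-1}u\,\Delta_j v$ as in \eqref{eq:T-R}, I note that $S_{j-1}u$ has spectrum in a ball of radius $\sim 2^{j-1}$ and $\Delta_j v$ in an annulus of radius $\sim 2^j$, so each summand is supported in a fixed annulus $\{c_1 2^j\le|\xi|\le c_2 2^j\}$. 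By the standard quasi-orthogonality estimate for annuli (a consequence of Lemma \ref{l:bern}), it suffices to bound $2^{js}\|S_{j-1}u\,\Delta_j v\|_{L^p}$ in $\ell^r$. Hölder's inequality and the uniform boundedness of $S_{j-1}$ on $L^\infty$ give $\|S_{j-1}u\,\Delta_j v\|_{L^p}\le\|u\|_{L^\infty}\|\Delta_j v\|_{L^p}$, which yields the mapping $L^\infty\times B^s_{p,r}\to B^s_{p,r}$ at once; the $\|\nabla v\|_{B^{s-1}_{p,r}}$ version follows from the lower Bernstein inequality $\|\Delta_j v\|_{L^p}\le C 2^{-j}\|\Delta_j\nabla v\|_{L^p}$ on the high-frequency blocks $j\ge0$, the single block $\Delta_{-1}$ being handled separately. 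For the second mapping I would instead estimate $\|S_{j-1}u\|_{L^\infty}\le\sum_{k\le j-2}\|\Delta_k u\|_{L^\infty}\le C\,2^{jt}\|u\|_{B^{-t}_{\infty,\infty}}$, the geometric sum converging precisely because $t>0$; inserting the factor $2^{jt}$ and relabelling the Besov weight gives continuity $B^{-t}_{\infty,\infty}\times B^s_{p,r}\to B^{s-t}_{p,r}$.

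For the remainder $R(u,v)=\sum_j\sum_{|j'-j|\le1}\Delta_j u\,\Delta_{j'}v$ I would group the sum into dyadic pieces $R_k:=\sum_{\max(j,j')=k,\,|j-j'|\le1}\Delta_j u\,\Delta_{j'}v$; the crucial difference is that now each $R_k$ is spectrally supported in a \emph{ball} of radius $\sim 2^k$, not an annulus. Hölder's inequality with $1/p=1/p_1+1/p_2$ together with the definitions of the Besov norms yields $\|R_k\|_{L^p}\le C\,2^{-k(s_1+s_2)}\,c_k\,\|u\|_{B^{s_1}_{p_1,r_1}}\|v\|_{B^{s_2}_{p_2,r_2}}$, where $(c_k)\in\ell^r$ with $1/r=1/r_1+1/r_2$ by a discrete Hölder inequality applied to the two normalising sequences. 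Since $\Delta_q$ annihilates $R_k$ once $2^k$ is much smaller than $2^q$, one has $\|\Delta_q R(u,v)\|_{L^p}\le\sum_{k\ge q-N_0}\|R_k\|_{L^p}$; multiplying by $2^{q(s_1+s_2)}$ exhibits the right-hand side as the discrete convolution of $(c_k)$ with the kernel $2^{(q-k)(s_1+s_2)}$ supported on $\{k\ge q-N_0\}$. When $s_1+s_2>0$ this kernel belongs to $\ell^1$, so Young's convolution inequality in $\ell^r$ delivers the claimed continuity into $B^{s_1+s_2}_{p,r}$.

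The borderline case $s_1+s_2=0$ is where the only genuine care is needed, and it is the main obstacle: the kernel degenerates to the sequence constant on $\{k\ge q-N_0\}$, which is not summable, so Young's inequality no longer closes for general $r$. Under the extra hypothesis $r=1$ the sequence $(c_k)$ itself lies in $\ell^1$, and I would bound directly $\sup_q\|\Delta_q R(u,v)\|_{L^p}\le\sup_q\sum_{k\ge q-N_0}\|R_k\|_{L^p}\le\|(c_k)\|_{\ell^1}\,\|u\|_{B^{s_1}_{p_1,1}}\|v\|_{B^{s_2}_{p_2,1}}$, which produces the weaker target space $B^0_{p,\infty}$. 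Throughout, the verification that the grouped sequences indeed belong to the asserted $\ell^r$ spaces and the treatment of finitely many low-frequency blocks are the routine bookkeeping steps I would relegate to Lemma \ref{l:bern} and the definition of the Besov norms; the whole argument is the classical one of \cite{Bony}, and for the quasi-orthogonality lemmas distinguishing annuli from balls I would point to Chapter 2 of \cite{B-C-D}.
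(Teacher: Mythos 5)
Your proposal is correct and follows essentially the same route as the proof the paper relies on (it gives none of its own, deferring to Chapter 2 of \cite{B-C-D}): spectral localization of $S_{j-1}u\,\Delta_j v$ in dyadic annuli plus H\"older for the paraproduct, and ball localization plus a discrete Young/convolution argument for the remainder, with the $\ell^1$-summability of the kernel $2^{m(s_1+s_2)}$ failing exactly at $s_1+s_2=0$. The only clarification worth making is that the block $\Delta_{-1}$ needs no ``separate handling'': with the convention $S_ju=\sum_{k\leq j-1}\Delta_k u$ and $\Delta_k=0$ for $k\leq-2$, all terms of $T_uv$ with $j\leq0$ vanish identically, and it is precisely this vanishing (rather than any estimate on $\|\Delta_{-1}v\|_{L^p}$, which cannot be controlled by $\|\nabla v\|_{B^{s-1}_{p,r}}$) that makes the stated bounds in terms of $\|\nabla v\|_{B^{s-1}_{p,r}}$ true.
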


We recall also a classical commutator estimate (see e.g. Lemma 2.97 of \cite{B-C-D}).
\begin{lemma} \label{l:commut}
Let $\Phi\in\mc{C}^1(\R^d)$ such that $\bigl(1+|\,\cdot\,|\bigr)\what{\Phi}\,\in\,L^1$. There exists a constant $C$ such that,
for any function $h$ for which $\nabla h\in L^{p}(\R^d)$, for any $f\in L^q(\R^d)$ and for all $\lambda>0$, one has
$$
\left\|\bigl[\Phi(\lambda^{-1}D),h\bigr]f\right\|_{L^r}\,\leq\,C\,\lambda^{-1}\,\left\|\nabla h\right\|_{L^p}\,\|f\|_{L^q}\,,
$$
where $r\in[1,+\infty]$ satisfies the relation $1/r\,=\,1/p\,+\,1/q$.
\end{lemma}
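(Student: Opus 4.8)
The plan is to realize $\Phi(\lambda^{-1}D)$ as a convolution operator and to reduce the commutator to an explicit integral operator whose kernel has a computable first-order moment. First I would write, for a test function $F$, that $\Phi(\lambda^{-1}D)F = K_\lambda * F$ with $K_\lambda := \mc F^{-1}\bigl(\Phi(\lambda^{-1}\,\cdot\,)\bigr)$; by the scaling property of the Fourier transform one has $K_\lambda(x) = \lambda^d\, g(\lambda x)$, where $g := \mc F^{-1}\Phi$. Note that $g$ differs from $\what\Phi(-\,\cdot\,)$ only by a multiplicative constant, so the hypothesis $(1+|\,\cdot\,|)\what\Phi \in L^1$ is equivalent to the finiteness of the first-order moment $\int_{\R^d}(1+|x|)\,|g(x)|\,dx$.

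Next I would expand the commutator. Since $\bigl[\Phi(\lambda^{-1}D),h\bigr]f = K_\lambda*(hf) - h\,(K_\lambda*f)$, a direct computation together with the change of variables $z = x-y$ gives the pointwise representation
$$
\bigl[\Phi(\lambda^{-1}D),h\bigr]f(x) = \int_{\R^d} \lambda^d\, g(\lambda z)\,\bigl(h(x-z)-h(x)\bigr)\,f(x-z)\,dz\,.
$$
The crucial algebraic step is then to exploit the gradient structure of $h$ through the elementary identity $h(x-z)-h(x) = -\int_0^1 \nabla h(x-\tau z)\cdot z\,d\tau$, which produces the factor $|z|$ responsible for the gain of one power of $\lambda^{-1}$.

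I would then estimate the $L^r$ norm by applying Minkowski's integral inequality, to move the norm inside the integrals in $z$ and in $\tau$, followed by H\"older's inequality with $1/r = 1/p + 1/q$; crucially, the translation invariance of the Lebesgue norms yields $\|\nabla h(\,\cdot\,-\tau z)\|_{L^p} = \|\nabla h\|_{L^p}$ and $\|f(\,\cdot\,-z)\|_{L^q} = \|f\|_{L^q}$, uniformly in $z$ and $\tau$. This leaves the scalar kernel integral $\int_{\R^d} \lambda^d\,|g(\lambda z)|\,|z|\,dz$, which after the change of variables $w = \lambda z$ equals $\lambda^{-1}\int_{\R^d}|w|\,|g(w)|\,dw$. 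The latter is finite and bounded by a constant $C$ multiple of $\|(1+|\,\cdot\,|)\what\Phi\|_{L^1}$ by the opening remark, giving exactly the claimed bound with the sharp factor $\lambda^{-1}$.

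The statement being a classical estimate, there is no deep obstacle; the only points requiring some care are the two bookkeeping steps, namely keeping track of the $\lambda$-scalings (both the amplitude $\lambda^d$ of the kernel and the Jacobian of the final change of variables, which together yield precisely $\lambda^{-1}$) and verifying that the first-order moment of the kernel is controlled exactly by the assumption $(1+|\,\cdot\,|)\what\Phi\in L^1$. By density it suffices to establish the estimate for smooth $f$ and $h$, which justifies all the manipulations above.
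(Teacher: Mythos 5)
Your proof is correct and follows essentially the same route as the paper, which does not prove this lemma itself but cites the classical Lemma 2.97 of \cite{B-C-D}: the standard proof there is precisely your scheme of writing $\Phi(\lambda^{-1}D)$ as convolution against the kernel $\lambda^d g(\lambda\,\cdot\,)$, expanding the commutator, applying the first-order Taylor formula $h(x-z)-h(x)=-\int_0^1\nabla h(x-\tau z)\cdot z\,d\tau$, and concluding by Minkowski and H\"older with the change of variables $w=\lambda z$. Your final observation that the constant is controlled by $\int_{\R^d}|w|\,|g(w)|\,dw$ also recovers exactly the paper's remark after the lemma, namely that $C$ depends only on the $L^1$ norm of $|x|\,k(x)$ with $k=\mc F^{-1}_\xi\Phi$.
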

Going along the lines of the proof, it is easy to see that the constant $C$ depends just on the $L^1$ norm
of the function $|x|\,k(x)$, where $k\,=\,\mc{F}_\xi^{-1}\Phi$ is the inverse Fourier transform of $\Phi$.

To conclude, let us quote a compactness result (see Theorem 2.94 of \cite{B-C-D}).
\begin{thm} \label{t:comp}
For any $s'<s$, for all smooth $\phi$ in the Schwartz class $\mc{S}(\R^d)$, the multiplication by $\phi$ is a compact operator from
$B^s_{p,\infty}$ into $B^{s'}_{p,1}$.
\end{thm}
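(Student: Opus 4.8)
The statement is the standard Rellich--type compactness property for Besov spaces, and the plan is to exhibit the multiplication operator $M_\phi\colon u\mapsto\phi u$ as an operator--norm limit of compact operators from $B^s_{p,\infty}$ into $B^{s'}_{p,1}$; since the compact operators form a closed subspace of the bounded ones, this suffices. As a preliminary I would record that $M_\phi$ is bounded from $B^s_{p,\infty}$ into itself: writing $\phi u=T_\phi u+T_u\phi+R(\phi,u)$ via Bony's decomposition \eqref{eq:bony} and using that $\phi\in\mc S(\R^d)$ lies in every $B^t_{\infty,\infty}$, the continuity properties of Proposition \ref{p:op} give the claim; combined with the embedding $B^s_{p,\infty}\hookrightarrow B^{s'}_{p,1}$ for $s'<s$ (Proposition \ref{p:embed}), $M_\phi$ is \emph{a fortiori} bounded into $B^{s'}_{p,1}$. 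The whole difficulty is to upgrade boundedness to compactness.

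For $N\in\N$ set $T_N:=S_N M_\phi$, so that $M_\phi u-T_N u=(\Id-S_N)(\phi u)$. Since $(\Id-S_N)v$ is spectrally supported in $\{|\xi|\geq2^N\}$, only the blocks $\Delta_j$ with $j\geq N-1$ survive, and for $u$ in the unit ball of $B^s_{p,\infty}$ I would estimate
\[
\bigl\|M_\phi u-T_N u\bigr\|_{B^{s'}_{p,1}}\,\leq\,C\sum_{j\geq N-1}2^{js'}\bigl\|\Delta_j(\phi u)\bigr\|_{L^p}\,\leq\,C\,\bigl\|\phi u\bigr\|_{B^s_{p,\infty}}\sum_{j\geq N-1}2^{j(s'-s)}\,\leq\,C'\,2^{N(s'-s)}\,,
\]
where the strict inequality $s'<s$ makes the geometric series converge. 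This bound is uniform over the unit ball, so $T_N\to M_\phi$ in operator norm and it remains only to show that each $T_N$ is compact.

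Fix $N$. Every element of the range of $T_N$ has spectrum in a fixed ball $B(0,2^{N+1})$, so by the Bernstein inequalities of Lemma \ref{l:bern} only finitely many dyadic blocks are nonzero there and $\|\cdot\|_{B^{s'}_{p,1}}\leq C_N\|\cdot\|_{L^p}$ on this subspace. It is thus enough to prove that $T_N$ sends bounded sets of $B^s_{p,\infty}$ to relatively compact sets of $L^p(\R^d)$. Let $(u_n)_n$ be bounded in $B^s_{p,\infty}$ and put $w_n:=T_N u_n=h_N*(\phi u_n)$ with $h_N:=\mc F^{-1}\bigl(\chi(2^{-N}\,\cdot\,)\bigr)\in\mc S(\R^d)$. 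I would invoke the Fr\'echet--Kolmogorov compactness criterion (or Arzel\`a--Ascoli together with tightness when $p=\infty$): uniform $L^p$--boundedness and equicontinuity under translations both follow from the spectral localization via Lemma \ref{l:bern}, the latter through $\|w_n(\cdot+\zeta)-w_n\|_{L^p}\leq|\zeta|\,\|\nabla w_n\|_{L^p}\leq C\,2^N|\zeta|\,\|w_n\|_{L^p}$, uniformly in $n$.

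The remaining, genuinely delicate, point is the equi--tightness $\sup_n\|w_n\|_{L^p(|x|>R)}\to0$ as $R\to+\infty$, which is exactly where the rapid decay of $\phi$ must enter, and which I expect to be the crux of the proof. Since $s$ may be negative, $\phi u_n$ need not belong to $L^p$ and no pointwise argument on it is available; instead I would argue by duality. Writing $w_n(x)=\langle u_n,g_x\rangle$ with $g_x(y):=\phi(y)\,h_N(x-y)\in\mc S(\R^d)$, the duality $\bigl(B^{-s}_{p',1}\bigr)'=B^s_{p,\infty}$, where $1/p+1/p'=1$, gives $|w_n(x)|\leq\|u_n\|_{B^s_{p,\infty}}\,\|g_x\|_{B^{-s}_{p',1}}$. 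For $|x|$ large every $y$ satisfies either $|y|\geq|x|/2$ or $|x-y|\geq|x|/2$, so the Schwartz decay of $\phi$ and of $h_N$ controls finitely many seminorms of $g_x$ and yields $\|g_x\|_{B^{-s}_{p',1}}\leq C_M\,(1+|x|)^{-M}$ for every $M$, uniformly in $n$. Choosing $M$ with $Mp>d$ and integrating over $\{|x|>R\}$ gives the desired tightness, whence $(w_n)_n$ is relatively compact in $L^p$, each $T_N$ is compact, and finally $M_\phi$ is compact. Both the Schwartz hypothesis on $\phi$ and the duality formulation (needed to accommodate distributional $u_n$ when $s<0$) are essential precisely in this last step.
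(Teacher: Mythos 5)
Your proof is correct, but there is nothing in the paper to compare it against: Theorem \ref{t:comp} appears in Appendix \ref{app:LP} only as a quoted tool, cited from Theorem 2.94 of \cite{B-C-D}, and the paper itself gives no proof of it. Judged on its own merits, your argument is sound and is essentially the standard route: (1) boundedness of $M_\phi$ on $B^s_{p,\infty}$ via Bony's decomposition and Proposition \ref{p:op}; (2) operator-norm convergence $S_N M_\phi\to M_\phi$ in $\mc L\bigl(B^s_{p,\infty};B^{s'}_{p,1}\bigr)$, which is exactly where the strict inequality $s'<s$ enters; (3) compactness of each $S_N M_\phi$ by reducing, through the spectral localization of its range, to relative compactness in $L^p$, obtained from the Fr\'echet--Kolmogorov criterion (resp.\ Ascoli when $p=\infty$). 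The one ingredient you invoke that is not in the paper's toolbox is the duality estimate $|\langle u,g\rangle|\leq C\,\|u\|_{B^s_{p,\infty}}\,\|g\|_{B^{-s}_{p',1}}$ for $g\in\mc S(\R^d)$ and $1/p+1/p'=1$; this is classical and easy to prove within the paper's framework (write $\langle u,g\rangle=\sum_j\langle\Delta_j u,\widetilde\Delta_j g\rangle$ with $\widetilde\Delta_j:=\Delta_{j-1}+\Delta_j+\Delta_{j+1}$ and apply H\"older in $x$ and in $j$), and it is indeed the right device to get equi-tightness when $s<0$, where $\phi u_n$ need not be a function. Two cosmetic remarks: the inequality $\|v\|_{B^{s'}_{p,1}}\leq C_N\|v\|_{L^p}$ on the range of $S_N$ follows from the uniform $L^p$-boundedness of the blocks $\Delta_j$ (only finitely many of which are nonzero there), not really from Bernstein; and one should observe that spectral support in $\overline{B(0,2^{N+1})}$ is preserved under $L^p$ limits, so that limit points stay in the subspace where that inequality holds and $L^p$-compactness does upgrade to $B^{s'}_{p,1}$-compactness. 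Neither point affects the validity of your proof.
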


{\small

}

\end{document}